\documentclass{mscs}
\usepackage{amsmath, amssymb, amscd, amsfonts, amstext,
  amsbsy, enumerate, mathrsfs, hyperref, upgreek, mathtools, stmaryrd}
\usepackage[shadow]{todonotes}


\newcommand{\F}{\mathcal{F}}
\newcommand{\G}{\mathcal{G}}
\newcommand{\RR}{\mathbb{R}}
\newcommand{\QQ}{\mathbb{Q}}

\newcommand{\id}{\operatorname{id}}

\newcommand{\pow}{\mathscr{P}}

\newcommand{\Int}{\operatorname{int}}
\newcommand{\rk}{\operatorname{rk}}
\newcommand{\diam}{\operatorname{diam}}
\newcommand{\dom}{\operatorname{dom}}
\newcommand{\range}{\operatorname{range}}



\newcommand{\ZF}{{\sf ZF}}


\newtheorem{theorem}{Theorem}[section]
\newtheorem{lemma}[theorem]{Lemma}
\newtheorem{corollary}[theorem]{Corollary}
\newtheorem{proposition}[theorem]{Proposition}

\newtheorem{question}[theorem]{Question}

\newtheorem{claim}{Claim}[theorem]

\newtheorem{definition}[theorem]{Definition}

\newtheorem{example}[theorem]{Example}
\newtheorem{remark}[theorem]{Remark}

\begin{document}

\title{Wadge-like reducibilities on arbitrary quasi-Polish spaces}

\date{\today}

\author[L.~Motto Ros, P.~Schlicht and V.~Selivanov]{L\ls U\ls C\ls A\ns M\ls O\ls T\ls T\ls O\ns R\ls O\ls S$^1$, P\ls H\ls I\ls L\ls I\ls P\ls P\ns S\ls C\ls H\ls L\ls I\ls C\ls H\ls T$^2$ \ns and \ns V\ls I\ls C\ls T\ls O\ls R\ns  S\ls E\ls L\ls I\ls V\ls A\ls N\ls O\ls V$^3$%
\thanks{Victor Selivanov has been supported by a Marie Curie International Research Staff Exchange Scheme Fellowship within the 7th European Community Framework Programme and by DFG-RFBR Grant  436 RUS 113/1002/01.}\\
$^1$ Abteilung f\"ur Mathematische Logik, Mathematisches Institut, Albert-Ludwigs-Universit\"at Freiburg,  \addressbreak
Eckerstra\ss e 1,
 D-79104 Freiburg im Breisgau,
Germany. \addressbreak
\texttt{luca.motto.ros@math.uni-freiburg.de}
\addressbreak
$^2$ Mathematisches Institut,  Universit\"at Bonn,
Endenicher Allee 60, 53115 Bonn, Germany.
\addressbreak
\texttt{schlicht@math.uni-bonn.de}
\addressbreak
$^3$ A.~P.~Ershov Institute of
Informatics Systems, Siberian Division Russian Academy of Sciences,
\addressbreak Novosibirsk, Russia.
\addressbreak
\texttt{vseliv@iis.nsk.su}
}


\maketitle

\begin{abstract}
The structure of the Wadge degrees on
zero-dimensional spaces is very simple  (almost well-ordered), but
for many other natural non-zero-dimensional spaces (including
the space of reals)   this structure  is much more complicated. We
consider weaker notions of reducibility, including the so-called \( \mathbf{\Delta}^0_\alpha \)-reductions, and try to find for various natural topological spaces \( X \) the least ordinal \( \alpha_X \) such
that for every \( \alpha_X \leq \beta < \omega_1 \) the degree-structure induced on \( X \) by the \( \mathbf{\Delta}^0_\beta \)-reductions is
simple (i.e.\ similar to the Wadge hierarchy on the
Baire space). We show that \( \alpha_X \leq \omega \) for every quasi-Polish
space \( X \), that \( \alpha_X \leq 3 \) for
quasi-Polish spaces of dimension \( \neq \infty \), and that this last bound is in fact optimal for many (quasi-)Polish spaces, including the real line and its powers.
\end{abstract}

\tableofcontents

\section{Introduction}\label{sectionintro}

A subset \( A \) of the Baire space
\( \mathcal{N}=\omega^\omega \) is \emph{Wadge reducible} to a subset \( B \) if and only if
\( A=f^{-1}(B) \) for some continuous function \( f \colon \mathcal{N} \to \mathcal{N} \).
The structure of Wadge degrees (i.e.\ the quotient-structure of
\( (\mathscr{P}(\mathcal{N}),\leq_\mathsf{W}) \)) is fairly well understood and turns our
to be rather simple.  In particular, the structure
\( (\mathbf{B}(\mathcal{N}),\leq_\mathsf{W}) \) of the Wadge degrees of  Borel sets is semi-well-ordered by~\cite{wad84}, i.e.\ it has no infinite descending chain and for every
$A,B \in \mathbf{B}(\mathcal{N})$ we have $A \leq_\mathsf{W}B$ or
$\mathcal{N} \setminus B \leq_\mathsf{W} A$, which implies that the antichains have size at most two.
More generally: if all the Boolean combinations of sets in a pointclass \( \boldsymbol{\Gamma} \subseteq \mathscr{P}(\mathcal{N}) \) (closed under continuous preimages) are determined, then the Wadge structure restricted to \( \boldsymbol{\Gamma} \) is semi-well-ordered. For example, under the Axiom of Projective Determinacy \( \mathsf{PD} \) the structure of the Wadge degrees of projective sets is semi-well-ordered, and under the Axiom of Determinacy \( \mathsf{AD} \), the whole Wadge degree-structure remains semi-well-ordered.

The Wadge degree-structure refines the structure of levels (more
precisely, of the Wadge complete sets in those levels) of several important
hierarchies, like the stratification of the Borel sets in \( \mathbf{\Sigma}^0_\alpha \) and \(\mathbf{\Pi}^0_\alpha \) sets, or the Hausdorff-Kuratowski difference hierarchies, and serves as a nice tool to measure the topological complexity
of many problems of interest in descriptive set theory (DST)~\cite{ke94}, automata theory~\cite{pp04,s08a}, and computable analysis (CA)~\cite{wei00}.

There are several reasons and several ways to generalize the
Wadge reducibility $\leq_\mathsf{W}$  on the Baire space. For example, one can consider

\begin{enumerate}[(1)]
\item other natural classes of reducing functions
in place of the continuous functions;

\item more complicated topological spaces instead
of \( \mathcal{N} \) (the notion of Wadge reducibility makes
sense for arbitrary topological spaces);

\item reducibility between functions rather than reducibility between sets (the sets may be
identified with their characteristic functions);

\item more complicated reductions than the many-one reductions by continuous functions.
\end{enumerate}

In any of the mentioned directions a certain progress has been
achieved, although in many cases the situation typically becomes more
complicated than in the classical case.

For what concerns the possibility of using other sets of functions as reducibilities between subsets of \( \mathcal{N} \), in a series of papers, A.~Andretta, D.~A.~Martin, and L.~Motto Ros
 considered the degree-structures obtained by replacing continuous functions with one of the following classes:
\begin{enumerate}[(a)]
\item
the class of Borel functions, i.e.\ of those \( f \colon \mathcal{N} \to \mathcal{N} \) such that \( f^{-1}(U) \) is Borel for every open (equivalently, Borel) set \( U \) (see~\cite{andmar});
\item
the class \( \mathsf{D}_\alpha \) of \( \mathbf{\Delta}^0_\alpha \)-functions (for \( \alpha < \omega_1 \)), i.e.\ of those \( f \colon \mathcal{N} \to \mathcal{N} \) such that \( f^{-1}(D) \in \mathbf{\Delta}^0_\alpha \) for every \( D \in \mathbf{\Delta}^0_\alpha \) (see~\cite{an06} for the case \( \alpha = 2 \), and~\cite{ros09} for the general case);
\item
for \( \gamma < \omega_1 \) an additively closed ordinal, the collection \( \mathcal{B}_\gamma \) of all functions of Baire class \( < \gamma \), i.e.\ of those \( f \colon \mathcal{N} \to \mathcal{N} \) for which there is \( \alpha < \gamma \) such that \( f^{-1}(U) \in \mathbf{\Sigma}^0_\alpha \) for every open set \( U \) (see~\cite{motbaire})\footnote{Notice that we cannot take a single level of the Baire stratification because in general it is not closed under composition, and hence does not give a preorder when used as reducibility between sets of reals.};
\item
the class of \( \mathbf{\Sigma}^1_n \) functions (for \( n \in \omega \)), i.e.\ the class of those \( f \colon \mathcal{N} \to \mathcal{N} \) such that \( f^{-1}(U) \in \mathbf{\Sigma}^1_n \) for every open (equivalently, \( \mathbf{\Sigma}^1_n \)) set \( U \) (see~\cite{motsuperamenable}).
\end{enumerate}
It turns out that the degree-structures resulting from (a)--(c), as for the Wadge degrees case, are all semi-well-ordered when restricted to the class of Borel sets or, provided that all Boolean combinations of sets in \( \boldsymbol{\Gamma} \) are determined, to any pointclass \( \boldsymbol{\Gamma} \subseteq \pow (\mathcal{N}) \) closed under continuous preimages (hence, in particular, to the entire \( \pow(\mathcal{N}) \) when \( \mathsf{AD} \) is assumed),\footnote{In fact, for the cases of Borel functions and \( \mathbf{\Delta}^0_\alpha \)-functions, the corresponding degree-structures are even isomorphic to the Wadge one.}
and that under the full \( \mathsf{AD} \) also the degree-structures resulting from (d) are semi-well-ordered (on the entire \( \pow (\mathcal{N}) \)): thus, we obtain a series of natural
classifications of subsets of the
Baire space which are weaker than the Wadge one.

Concerning Polish spaces different from the Baire space, using the
methods developed in~\cite{wad84} it is immediate to check that
the structure of Wadge degrees on any zero-dimensional Polish
space remains semi-well-ordered (this follows also from
Proposition~\ref{propfr2}). On the other hand, P.~Hertling showed
in~\cite{he96}  that the Wadge hierarchy on the real line \( \RR
\) is much more complicated than the structure of Wadge degrees on
the Baire space. In particular, there are infinite antichains and
infinite descending chains in the structure of Wadge degrees of
$\mathbf{\Delta}^0_2$ sets. Recently, this result has been
considerably strengthened in~\cite{sc10}. Moreover, P.~Schlicht
also showed  in~\cite{sc11} that the structure of Wadge degrees on
\emph{any} non zero-dimensional metric space must contain infinite
antichains, and V.~Selivanov  showed in~\cite{s05} that the Wadge
hierarchy is  more complicated also when considering other natural
topological spaces (e.g.\ the so-called \( \omega \)-algebraic
domains).

As already noted, if one passes from continuous reductions between sets to continuous reductions between functions, the situation becomes much more intricate. Even when considering the simplest
possible generalization, namely continuous reductions between partitions of the Baire space into
\( 3 \leq k \in \omega \) subsets, the degree-structure obtained is rather complicated, e.g.\  there are antichains of arbitrarily large finite size. On the other hand, it is still a well-quasi-order (briefly, a wqo), i.e.\ it has neither infinite
descending chains nor infinite antichains: hence it can still serve
as a scale to measure the topological complexity of $k$-partitions
of the Baire space --- see the end of Subsection~\ref{subsectionreducibilities} and the references contained therein.

In the fourth direction (more complicated reductions), the so called Weihrauch reducibility became recently
rather popular: it turns out to be very useful in characterizing the
topological complexity of some important computational problems, and also in understanding
the computational content of some important classical mathematical
theorems ---
see e.g.\ \cite{he96,bg11,bg11a,ksz10}.

In this paper we aim to make the first three kinds of generalizations interact with each other,
namely we will consider some weaker versions of the Wadge reducibility (including the ones mentioned above), and study the degree structures induced by them on arbitrary \emph{quasi-Polish} spaces, a collection of  spaces recently identified in~\cite{br} by M.~de
Brecht  as a natural class of spaces for DST and
CA. Each of these degree-structures should be intended as a tool for measuring the complexity of subsets (or partitions) of the space under consideration: a structure like the Wadge one is nearly optimal for this goal, but, as already noticed, we get a reasonable notion of complexity also if the structure is just a wqo. If instead the degree structure contains infinite antichains but is
well-founded, then we can at least assign a rank to the degrees (even if this rank could be not completely meaningful), while if it is also ill-founded it becomes completely useless as a notion of classification.
These considerations justify the following terminology: a degree-structure obtained by considering a notion of reducibility (between sets or partitions) on a topological space will be called

\begin{enumerate}[-]
\item
\textbf{\emph{very good}} if it is semi-well-ordered;
\item
\textbf{\emph{good}} if it is a wqo;
\item
\textbf{\emph{bad}} if it contains infinite antichains;
\item
\textbf{\emph{very bad}} if it contains both infinite descending chains and infinite antichains.
\end{enumerate}

By the results mentioned above, the Wadge hierarchy on any non zero-dimensional Polish space is always
bad, but we will show that for many other natural reducibilities, the corresponding hierarchy is very good on a great number of spaces. This is obtained by computing the minimal complexity of an isomorphism between such spaces and the Baire space. In particular, after recalling
some preliminaries in Section~\ref{sectionprel} and introducing various reducibility notions in Section~\ref{sectionreduc}, we will show in Section~\ref{sectionisom} that all uncountable quasi-Polish spaces are pairwise \( \mathsf{D}_\omega \)-isomorphic, and that any quasi-Polish space of topological dimension \( \neq \infty \) is even \( \mathsf{D}_3 \)-isomorphic to \( \mathcal{N} \) (and that, in general, the indices \( \omega \) and \( 3 \)  cannot be lowered). This fact, together with the results from~\cite{ros09,motbaire}, implies that the degree-structures induced by the classes of functions \( \mathsf{D}_\alpha \) and \( \mathcal{B}_\gamma \) (where \( \gamma < \omega_1 \) is additively closed) on \emph{any} uncountable quasi-Polish space are very good (when restricted to the degrees of Borel sets, or even, under corresponding determinacy assumptions, to the degrees of sets in any larger pointclass \( \boldsymbol{\Gamma} \subseteq \pow(\mathcal{N}) \)) whenever \( \alpha \geq \omega \), and that the same is true also for \( \alpha \geq 3 \) when considering quasi-Polish spaces of dimension \( \neq \infty \). In Section~\ref{sectionhierarchies} we will show that these results are nearly optimal by showing that the degree-structure induced by the class of functions \( \mathsf{D}_2 \) is (very) bad on many natural Polish spaces (like the real line \( \RR \) and its powers), and that the Wadge hierarchy can fail to be very good also on extremely simple countable quasi-Polish spaces.

\section{Notation and preliminaries}\label{sectionprel}

In this section  we introduce a great deal of notation that will be
used throughout the paper. The notation for pointclasses and for
isomorphisms between topological spaces will be introduced at the
beginning of Sections~\ref{sectionreduc} and~\ref{sectionisom},
respectively.

\subsection{Notation}

Unless otherwise specified, we will always work in
\( \ZF + \mathsf{DC} \), i.e.\ in the usual Zermelo-Fr\ae nkel set
theory together with the Axiom of Dependent Choice.

We freely  use the standard set-theoretic notation like \( |X| \)
for the cardinality of $X$, $X\times Y$ for the Cartesian product of
\( X \) and \( Y \), \( X \sqcup Y \) for the disjoint union of \( X
\) and \( Y \), $Y^X$ for the set of all functions $f \colon X\to
Y$, and $\mathscr{P}(X)$ for the set of all subsets of $X$. Given a
product \( X \times Y \) we denote by \( \pi_0 \) (respectively, \(
\pi_1 \)) the projection on the first (respectively, the second)
coordinate. For $A\subseteq X$, $\overline{A}$ denotes the
complement $X\setminus A$ of $A$ in $X$. For $\mathcal{A}\subseteq
\mathscr{P}(X)$, $BC(\mathcal{A})$ denotes the Boolean closure of
$\mathcal{A}$, i.e. the set of finite Boolean combinations of sets
in $\mathcal{A}$. We identify each nonzero natural number \( n \)
with the set of its predecessors \( \{ 0, \dotsc, n-1 \} \), and the
set of natural numbers, which will be denoted by \( \omega \), with
its order type under \( < \). The first uncountable ordinal is
denoted by \( \omega_1 \), while the class of all ordinal numbers is
denoted by \( \mathsf{On} \). Given a set \( X \) and a natural
number \( i \in \omega \), we let \( [X]^i  = \{ Y \subseteq X \mid
|Y|  = i \} \). Given an arbitrary partially ordered set \( (X, \leq
) \) (briefly, a \emph{poset}), we denote by \( < \) its strict
part, i.e.\ the relation on \( X \) defined by \( x < y \iff {x \leq
y \wedge x \neq y }\).

\subsection{Spaces and pointclasses}\label{subsectionspaces}

We assume the reader be familiar with the basic notions of
topology. The collection of all open subsets of a space $X$ (i.e.\
the topology of \( X \)) is denoted by $\tau_X$ or, when the space
is clear from the context, simply by \( \tau \). We abbreviate
``topological space'' to ``space'' and denote by \( \mathscr{X} \)
the collection of all (topological) spaces. Let \( \mathscr{Y}
\subseteq \mathscr{X} \): we say that \( X \in \mathscr{X} \) is
\emph{universal for \( \mathscr{Y} \)} if every \( Y \in \mathscr{Y}
\) can be \emph{topologically embedded} into \( X \), i.e.\ there is a
subspace \( X' \subseteq X \) such that \(Y\) is homeomorphic to \(
X'\) (where \( X' \) is endowed with the relative topology inherited from \( X \)).
A space $X$ is \emph{connected} if
there are no nonempty clopen proper subsets of $X$, and \emph{totally disconnected} if every connected subset contains at most one point.
A space $X$ is
called \emph{locally connected} if every element has arbitrarily
small connected open neighborhoods.
A space \( X \) is called \emph{\(\sigma\)-compact} if it can be written as a countable union of compact sets.
For  any space $X$, define the transfinite descending sequence
$\langle X^{(\alpha)} \mid \alpha \in \mathsf{On} \rangle$ of closed subsets of $X$ as follows:
$X^{(0)}=X$, $X^{(\alpha+1)} = $ the set of non-isolated points of
$X^{(\alpha)}$ (where $x$ is an isolated point of a space \( X \) if $\{x\}$ is
open in \( X \)), and $X^{(\alpha)}=\bigcap\{X^{(\beta)}\mid \beta<\alpha\}$ if $\alpha$ is a limit ordinal.  The space
$X$ is called {\em scattered} if and only if $\bigcap_{\alpha \in \mathsf{On}}
X^{(\alpha)}=\emptyset$.

Let ${\mathcal N}=\omega^\omega$ be the set of all infinite
sequences of natural numbers (i.e.\ of all functions
$\xi \colon \omega \to \omega$). Let $\omega^*$ be the set of
finite sequences of elements of $\omega$, including the empty
sequence. For $\sigma\in\omega^*$ and $\xi\in{\mathcal N}$, we
write $\sigma\sqsubseteq \xi$ to denote that $\sigma$ is an
initial segment of $\xi$. We denote the concatenation of $\sigma$
and $\xi$ by
$\sigma\xi=\sigma\cdot\xi$ , and  the set of all
extensions of $\sigma$ in $\mathcal{N}$ by $\sigma\cdot\mathcal{N}$. For $\xi \in \mathcal{N}$, we
can write $\xi = \xi(0) \xi(1)\cdots$ where $\xi(i)\in\omega$ for each
$i<\omega$.
Notations in the style of regular expressions like
$0^\omega$, $0^m1^n$ or $0^\ast 1$ have the obvious standard
meaning: for example, \( 0^\omega \) is the \( \omega \)-sequence constantly equal to \( 0 \), \( 0^m 1^n \) is the sequence formed by \( m \)-many \( 0 \)'s followed by \( n \)-many \( 1 \)'s, \( 0^\ast 1 = \{ 0^m 1 \mid m \in \omega \} \) is the set of all sequences constituted by a finite (possibly empty) sequence of \( 0 \)'s followed by (a unique) \( 1 \),  and so on.

When we endow \( \mathcal{N} \) with the product  of the discrete
topologies on \(\omega\) we obtain the so-called \emph{Baire
space}. This topology coincides with the topology generated by
 the collection of sets of the form
$\sigma\cdot\mathcal{N}$ for $\sigma\in\omega^*$. The Baire space
is of primary interest for DST and CA: its importance stems from
the fact that many countable objects are coded straightforwardly
by elements of $\mathcal{N}$, and it has very specific topological
properties. In particular, it is a perfect zero-dimensional space
and  the spaces ${\mathcal N}^2$, ${\mathcal N}^\omega$,
$\omega \times {\mathcal N}={\mathcal N}\sqcup{\mathcal
N} \sqcup \dotsc$ (endowed with the product topology) are all
homeomorphic to ${\mathcal N}$.

The subspace ${\mathcal C}=2^\omega$ of ${\mathcal N}$  formed by
the infinite binary strings (endowed with the relative topology
inherited from \( \mathcal{N} \)) is known as the {\em Cantor
space}. In this paper, we will also consider the space $\omega$
(with the discrete topology), the space $\mathbb{R}$ of reals
(with the standard topology), and the space of irrationals number (with the relative topology inherited from
$\mathbb{R}$), which is homeomorphic to \( \mathcal{N} \).

A {\em pointclass} on the space \( X \) is a collection \(
\mathbf{\Gamma}(X) \)   of subsets of \( X \). A {\em family of
pointclasses} is a family \( \mathbf{\Gamma}=\{\mathbf{\Gamma}(X)
\mid X  \in \mathscr{X} \} \) indexed by arbitrary topological
spaces such that each \( \mathbf{\Gamma}(X) \) is a pointclass on
\( X \) and \( \mathbf{\Gamma} \) is closed under continuous
preimages, i.e.\ \( f^{-1}(A)\in\mathbf{\Gamma}(X) \) for every \(
A\in\mathbf{\Gamma}(Y) \) and every continuous function \( f
\colon X\to Y \) (families of pointclasses are sometimes called  \emph{boldface pointclasses} by other authors). In particular, any pointclass
$\mathbf{\Gamma}(X)$ in such a family is downward closed under the
Wadge reducibility on \( X \).

Trivial examples of families of pointclasses are $\mathcal{E},\mathcal{F}$,
where $\mathcal{E}(X)=\{\emptyset\}$ and $\mathcal{F}(X)=\{X\}$ for any space $X \in \mathscr{X}$.
Another basic example  is given by the collection $\{\tau_X \mid X  \in \mathscr{X} \}$ of the topologies of all the spaces.

Finally, we define some operations on families of pointclasses
which are relevant to hierarchy theory. The usual set-theoretic
operations will be applied to the families of pointclasses
pointwise: for example, the union $\bigcup_i \mathbf{\Gamma}_i$ of
the families of pointclasses
$\mathbf{\Gamma}_0,\mathbf{\Gamma}_1,\ldots$ is defined by
$(\bigcup_i\mathbf{\Gamma}_i)(X)=\bigcup_i\mathbf{\Gamma}_i(X)$. A
large class of such operations is induced by the set-theoretic
operations of L.~V.~Kantorovich and E.~M.~Livenson which are now
better known under the name ``$\omega$-Boolean operations'' (see~\cite{s11a} for the general definition). Among them are the
operation $\mathbf{\Gamma}\mapsto\mathbf{\Gamma}_\sigma$, where
$\mathbf{\Gamma}(X)_\sigma$ is the set of all countable unions of
sets in $\mathbf{\Gamma}(X)$, the operation
$\mathbf{\Gamma}\mapsto\mathbf{\Gamma}_c$, where
$\mathbf{\Gamma}(X)_c$ is the set of all complements of sets in
$\mathbf{\Gamma}(X)$, and the operation
$\mathbf{\Gamma}\mapsto\mathbf{\Gamma}_d$, where
$\mathbf{\Gamma}(X)_d$ is the set of all differences of sets in
$\mathbf{\Gamma}(X)$.

\subsection{Classical hierarchies in arbitrary spaces}\label{subsectionhier}

First we   recall from~\cite{s04a} the
definition of the Borel hierarchy  in arbitrary spaces.

\begin{definition}\label{defbh}
For \( \alpha<\omega_1 \), define the family of poinclasses  \(
\mathbf{\Sigma}^0_\alpha =  \{ {\bf\Sigma}^0_\alpha(X) \mid X \in
\mathscr{X}\} \) by induction on $\alpha$ as follows:
${\bf\Sigma}^0_0(X)=\{\emptyset\}$, ${\bf\Sigma}^0_1(X) = \tau_X$,
and ${\bf\Sigma}^0_2(X) = ((\mathbf{\Sigma}^0_1(X))_d)_\sigma$  is
the collection of all countable unions of differences
of open sets. For \( \alpha > 2 \), ${\bf\Sigma}^0_\alpha(X) =
(\bigcup_{\beta<\alpha}({\bf\Sigma}^0_\beta(X))_c)_\sigma$ is the
class of countable unions  of sets in \(
\bigcup_{\beta<\alpha}({\bf\Sigma}^0_\beta(X))_c) \).

We also let \( {\bf\Pi}^0_\beta(X)= (\mathbf{\Sigma}^0_\beta(X))_c
\)  and \( \mathbf{\Delta}^0_\alpha = \mathbf{\Sigma}^0_\alpha
\cap \mathbf{\Pi}^0_\alpha \). We call a set \emph{proper  \(
{\bf\Sigma}^0_\beta(X) \)} if it is in \( {\bf\Sigma}^0_\beta(X)
\setminus {\bf\Pi}^0_\beta(X) \).
\end{definition}

Notice that by definition \( \mathbf{\Sigma}^0_0  = \mathcal{E} \) and \(
\mathbf{\Pi}^0_0 = \mathcal{F} \). When \( \lambda < \omega_1 \)
is a limit ordinal, we let \( \mathbf{\Sigma}^0_{<\lambda} =
\bigcup_{\alpha < \lambda } \mathbf{\Sigma}^0_\alpha \), and
similarly \( \mathbf{\Pi}^0_{<\lambda} = \bigcup_{\alpha < \lambda
} \mathbf{\Pi}^0_\alpha \) and \( \mathbf{\Delta}^0_{<\lambda} =
\bigcup_{\alpha < \lambda } \mathbf{\Delta}^0_\alpha \). Notice
that all of \( \mathbf{\Sigma}^0_{< \lambda} \), \(
\mathbf{\Pi}^0_{< \lambda} \) and \( \mathbf{\Delta}^0_{< \lambda
} \) are families of pointclasses as well.

The sequence $\langle \boldsymbol{\Sigma}^0_\alpha(X) , \boldsymbol{\Pi}^0_\alpha(X), \boldsymbol{\Delta}^0_\alpha(X)\mid \alpha<\omega_1
\rangle$ is called the {\em Borel hierarchy} of $X$. The pointclasses
${\bf\Sigma}^0_\alpha(X)$, ${\bf\Pi}^0_\alpha(X)$ are the {\em
non-selfdual levels} of the hierarchy (i.e.\ they are the levels which are not closed under complementation), while the pointclasses
${\bf\Delta}^0_\alpha(X)={\bf\Sigma}^0_\alpha(X)\cap{\bf\Pi}^0_\alpha(X)$
are the {\em self-dual levels}  (as is usual in
DST, we will apply this terminology also to the levels of the other
hierarchies considered below).  The pointclass ${\mathbf B}(X)$ of
{\em Borel sets} of $X$ is the union of all levels of the Borel
hierarchy, and \( \mathbf{B} = \{ \mathbf{B}(X) \mid X \in \mathscr{X} \} \) is the family of pointclasses of Borel sets.   It is straightforward to check by induction on \(
\alpha,\beta < \omega_1 \) that using Definition~\ref{defbh} one has the
following result.

\begin{proposition}\label{propbh}
For every  $X \in \mathscr{X}$ and for all $\alpha <\beta < \omega_1$,
${\bf\Sigma}^0_\alpha(X), \mathbf{\Pi}^0_\alpha(X) \subseteq {\bf\Delta}^0_\beta(X) \subseteq \boldsymbol{\Sigma}^0_\beta(X),\boldsymbol{\Pi}^0_\beta(X)$.
\end{proposition}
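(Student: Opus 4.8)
The plan is to prove, by induction on $\beta < \omega_1$, the statement $P(\beta)$: for every $X \in \mathscr{X}$ and every $\alpha < \beta$ one has $\mathbf{\Sigma}^0_\alpha(X) \cup \mathbf{\Pi}^0_\alpha(X) \subseteq \mathbf{\Delta}^0_\beta(X)$. Once all the $P(\beta)$ are established, the Proposition is immediate, since $\mathbf{\Delta}^0_\beta(X) \subseteq \mathbf{\Sigma}^0_\beta(X)$ and $\mathbf{\Delta}^0_\beta(X) \subseteq \mathbf{\Pi}^0_\beta(X)$ hold by the very definition $\mathbf{\Delta}^0_\beta = \mathbf{\Sigma}^0_\beta \cap \mathbf{\Pi}^0_\beta$. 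To prove $P(\beta)$ I would first observe that $\mathbf{\Delta}^0_\beta(X)$ is self-dual and $\mathbf{\Pi}^0_\alpha = (\mathbf{\Sigma}^0_\alpha)_c$, so $\mathbf{\Pi}^0_\alpha(X) \subseteq \mathbf{\Delta}^0_\beta(X)$ is equivalent to $\mathbf{\Sigma}^0_\alpha(X) \subseteq \mathbf{\Delta}^0_\beta(X)$; hence it suffices to prove, for every $\alpha < \beta$, the two inclusions (A) $\mathbf{\Sigma}^0_\alpha(X) \subseteq \mathbf{\Sigma}^0_\beta(X)$ and (B) $\mathbf{\Pi}^0_\alpha(X) \subseteq \mathbf{\Sigma}^0_\beta(X)$ (the latter being just $\mathbf{\Sigma}^0_\alpha(X) \subseteq \mathbf{\Pi}^0_\beta(X)$ read through complementation).

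For $\beta > 2$ both (A) and (B) are essentially immediate from the uniform clause $\mathbf{\Sigma}^0_\beta(X) = \bigl( \bigcup_{\gamma < \beta} (\mathbf{\Sigma}^0_\gamma(X))_c \bigr)_\sigma = \bigl( \bigcup_{\gamma < \beta} \mathbf{\Pi}^0_\gamma(X) \bigr)_\sigma$: for (B) one has $\mathbf{\Pi}^0_\alpha(X) \subseteq \bigcup_{\gamma < \beta} \mathbf{\Pi}^0_\gamma(X) \subseteq \mathbf{\Sigma}^0_\beta(X)$ directly; for (A), every set in $\mathbf{\Sigma}^0_\alpha(X)$ is a countable union of ``generators'', each of which lies in some $\mathbf{\Pi}^0_\gamma(X)$ with $\gamma < \alpha < \beta$ (when $\alpha > 2$), or is a difference of two open sets, hence lies in $\mathbf{\Pi}^0_2(X) \subseteq \bigcup_{\gamma < \beta}\mathbf{\Pi}^0_\gamma(X)$ (when $2 \leq \alpha$, using $2 < \beta$), or is open, hence again in $\mathbf{\Pi}^0_2(X)$ (when $\alpha = 1$), or is $\emptyset$ (when $\alpha = 0$); since the operation $(\cdot)_\sigma$ is monotone this yields $\mathbf{\Sigma}^0_\alpha(X) \subseteq \mathbf{\Sigma}^0_\beta(X)$. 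Thus the whole content of the induction sits at the low levels $\beta \in \{1,2,3\}$, where the non-uniform definitions $\mathbf{\Sigma}^0_1 = \tau$ and $\mathbf{\Sigma}^0_2 = ((\mathbf{\Sigma}^0_1)_d)_\sigma$ have to be unwound by hand. Concretely I would record the following elementary facts: $\emptyset, X \in \mathbf{\Delta}^0_1(X)$; every open set $U = U \setminus \emptyset$ and every closed set $\overline U = X \setminus U$ is a difference of two open sets, hence lies in $\mathbf{\Sigma}^0_2(X)$; $\mathbf{\Sigma}^0_2(X)$ is closed under finite (indeed countable) unions, being of the form $(\cdot)_\sigma$; and therefore the complement $\overline U \cup V$ of a difference $U \setminus V$ of open sets lies in $\mathbf{\Sigma}^0_2(X)$, so that $(\mathbf{\Sigma}^0_1(X))_d \subseteq \mathbf{\Pi}^0_2(X)$, and likewise $\mathbf{\Sigma}^0_1(X) \cup \mathbf{\Pi}^0_1(X) \subseteq \mathbf{\Delta}^0_2(X)$. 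These observations give $P(1)$ and $P(2)$ and supply exactly the facts invoked above in the case $\beta = 3$; for $\beta > 3$ nothing beyond the uniform argument is needed.

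The argument has no genuine obstacle: it is ``straightforward by induction'' precisely because the hierarchy is cumulative by design and the operations $(\cdot)_\sigma$, $(\cdot)_c$, $(\cdot)_d$ are monotone, so each step reduces to matching ordinal indices ($\gamma < \alpha < \beta \Rightarrow \gamma < \beta$) together with monotonicity of $(\cdot)_\sigma$. The only point demanding care — and the one I would write out in full — is the bookkeeping at levels $1$ and $2$: there one must not invoke the clause ``$\mathbf{\Sigma}^0_\beta = (\bigcup_{\gamma<\beta}(\mathbf{\Sigma}^0_\gamma)_c)_\sigma$'', which is postulated only for $\beta > 2$, but instead verify the relevant inclusions directly in terms of $\tau$ and of differences of open sets, as indicated above.
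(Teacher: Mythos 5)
Your proposal is correct, and it fills in exactly the routine induction that the paper leaves to the reader (the paper offers no written proof beyond the remark that the claim ``is straightforward to check by induction'' from Definition~\ref{defbh}). The reduction to the two inclusions $\mathbf{\Sigma}^0_\alpha \subseteq \mathbf{\Sigma}^0_\beta$ and $\mathbf{\Pi}^0_\alpha \subseteq \mathbf{\Sigma}^0_\beta$ via complementation, the uniform treatment of levels $\beta>2$ through monotonicity of $(\cdot)_\sigma$, and the hand-checking at levels $1$ and $2$ (in particular that differences of open sets lie in $\mathbf{\Pi}^0_2$, which is precisely where the modified level-$2$ clause earns its keep) are exactly the intended argument.
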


Thus if \( \lambda < \omega_1 \)  is a limit ordinal we have \(
\mathbf{\Sigma}^0_{< \lambda} = \mathbf{\Pi}^0_{< \lambda} =
\mathbf{\Delta}^0_{< \lambda} \).

\begin{remark}
Definition~\ref{defbh} applies to all the spaces \( X \in \mathscr{X}
\), and Proposition~\ref{propbh} holds true in the full
generality. Note that Definition~\ref{defbh} differs from the
classical definition for Polish spaces (see e.g.\ \cite[Section
11.B]{ke94}) only for the level 2, and that for the case of Polish
spaces our definition of Borel hierarchy is equivalent to the
classical one. The classical definition cannot be
applied in general  to non metrizable spaces \( X \) (like e.g.\ the non
discrete \(\omega\)-algebraic domains) precisely because with that definition the
inclusion \( \mathbf{\Sigma}^0_1(X)
 \subseteq \mathbf{\Sigma}^0_2(X) \) may fail.
\end{remark}

The Borel hierarchy is refined by the difference  hierarchies
(over the family of pointclasses \( \mathbf{\Sigma}^0_\alpha \),
\( \alpha < \omega_1 \)) introduced by Hausdorff and Kuratowski. Recall that an
ordinal $\alpha$ is called {\em even}  (respectively, {\em odd})
if $\alpha=\lambda+n$ where $\lambda$ is either zero or a limit
ordinal, $n<\omega$, and the number $n$ is even (respectively,
odd). For an ordinal $\alpha$, let $r(\alpha)=0$ if $\alpha$ is
even and $r(\alpha)=1$, otherwise. For any ordinal $1 \leq \alpha < \omega_1$,
consider the operation $D_\alpha$ sending any  sequence $\langle
A_\beta \mid \beta<\alpha \rangle$ of subsets of a space \( X \)
 to the subset of \( X \)
\[
D_\alpha(\langle A_\beta \mid \beta<\alpha \rangle)=
\bigcup\Big\{A_\beta\setminus\bigcup\nolimits_{\gamma<\beta}A_\gamma\mid
\beta<\alpha,\,r(\beta) \neq r(\alpha)\Big\}.
\]

\begin{definition}
 For any ordinal $1 \leq \alpha<\omega_1$ and  any family of pointclasses $\mathbf{\Gamma}$,
let $D_\alpha(\mathbf{\Gamma})(X)$ be the class of all sets of the form
$D_\alpha(\langle A_\beta \mid \beta<\alpha \rangle)$, where the $A_\beta$'s form an increasing (with respect to inclusion) sequence of sets in $\mathbf{\Gamma}(X)$, and then set \( D_\alpha(\boldsymbol{\Gamma}) = \{ D_\alpha(\boldsymbol{\Gamma})(X) \mid X \in \mathscr{X} \} \).

To simplify the notation, when \( \mathbf{\Gamma} = \mathbf{\Sigma}^0_1 \) we set
${\bf\Sigma}^{-1}_\alpha(X)=D_\alpha({\bf\Sigma}^0_1)(X)$, \( {\bf\Pi}^{-1}_\alpha(X) = \{ X \setminus A  \mid A \in \mathbf{\Sigma}^{-1}_\alpha(X) \} \), and \( \mathbf{\Delta}^{-1}_\alpha(X) = \mathbf{\Sigma}^{-1}_\alpha(X) \cap \mathbf{\Pi}^{-1}_\alpha(X) \) for every \( 1 \leq \alpha < \omega_1 \). Finally, we further set \( \mathbf{\Sigma}^{-1}_0(X) = \{ \emptyset \} \) and \( \mathbf{\Pi}^{-1}_0(X) = \{ X \} \).
\end{definition}

\noindent
For example, ${\bf\Sigma}^{-1}_4(X)$ and ${\bf\Sigma}^{-1}_\omega(X)$ consist of the the sets of the form, respectively,
$(A_1\setminus A_0)\cup (A_3\setminus A_2)$  and
$\bigcup_{i<\omega}(A_{2i+1}\setminus A_{2i})$, where the $A_i$'s form an increasing sequence of
open subsets of $X$. Notice that the requirement that the sequence of the \( A_\beta \)'s  be increasing can be dropped (yielding to an equivalent definition of the pointclass \( D_\alpha(\boldsymbol{\Gamma})(X) \)) when \( \boldsymbol{\Gamma}(X) \) is closed under countable unions. This in particular applies to the case when \( \boldsymbol{\Gamma} \) is the pointclass of open sets or one of the pointclasses \( \boldsymbol{\Sigma}^0_\alpha \).
Moreover, it is easy to see that any level of the difference hierarchy over \( \mathbf{\Sigma}^0_\alpha \), \( \alpha < \omega_1 \),
is again a family of pointclasses.

\subsection{$\omega$-continuous domains}\label{omega-cont}

In this section we will briefly review the notation and some (basic) facts concerning \(\omega\)-continuous domains which will be used in the following sections. For all undefined notions and for a more detailed presentation of this topic (as well as for all omitted proofs) we refer the reader to the standard monograph~\cite{bookdomains}.

Let \( (X , \leq) \) be  an arbitrary poset.
The {\em Alexandrov topology} on
\( (X, \leq) \) is formed by taking the upward closed subsets of \( X \) as
the open sets.   The continuous functions between two spaces endowed with the
Alexandrov topology coincide with the monotone (with respect to their partial orders) functions.

Let  \( (X,\leq) \) be a poset. A set $D\subseteq X$ is {\em directed} if any two
elements of $D$ have an upper bound in $D$. The poset \( (X,\leq) \) is called a {\em directed-complete partial
order}  (briefly, \emph{dcpo}) if any non-empty directed subset of $X$ has a
supremum in $X$.  The {\em Scott topology}
on a dcpo $(X,\leq)$ is formed by taking as open sets all the
upward closed sets $U\subseteq X$ such that $D\cap
U\not=\emptyset$ whenever $D$ is a non-empty directed subset of
$X$ whose supremum is in $U$. As it is well-known, every dcpo endowed with the Scott topology is automatically a \( T_0 \) space (it is enough to observe that if \( x \nleq y \) then \( x \in U \) and \( y \notin U \) for \( U = \{ z \in X \mid z \nleq y \} \), which is clearly Scott open). Note that the order $\leq$ may be
recovered from the Scott topology because it coincides
with its specialization order: \( x \leq y \) if and only if \( x \) belongs to the closure of \( \{ y \} \) with respect to the Scott topology.  An element $c \in X$ is {\em compact} if the set $\uparrow \! \!
c=\{x \mid c \leq x\}$ is open, and the set of all compact elements of \( X \) is denoted by \( X_0 \).
Note that for every \( c \in X_0 \), $\uparrow \! \! c$ is the
smallest open neighborhood of $c$, and that if \( (X, \leq)
\) has a top element, then the closure of every non-empty open set
is the entire space.

A dcpo $(X, \leq)$  is an {\em algebraic domain} if $\{\uparrow \! \!
c\mid c\in X_0\}$ is a basis for the Scott topology of $X$. An {\em $\omega$-algebraic
domain} is an algebraic domain $X$ such that $X_0$ is
countable.
 An important example of an
$\omega$-algebraic domain is the space  $P\omega$ of subsets of
$\omega$ with the Scott topology on the directed-complete lattice
$(\mathscr{P}(\omega), \subseteq)$ (\( P \omega \) is sometimes
called the \emph{Scott domain}):  in this space, the compact
elements are precisely the  finite subsets of $\omega$. Another
natural example which will be frequently considered in this paper is
\( (\omega^{\leq \omega}, \sqsubseteq ) \), where \( \omega^{\leq
\omega} = \omega^* \cup \omega^\omega \): in this case, the compact
elements are exactly the sequences in \( \omega^* \).

For any dcpo  $(X,\leq)$ and $x,y\in X$, let $x\ll y$ mean that
$y\in \Int(\uparrow \! \! x)$ where $\Int$ is the interior operator.
This relation is transitive and $x\ll y$ implies $x\leq y$. A dcpo
$(X, \leq)$ is a {\em continuous domain} if for any Scott-open set
$U$ and any $x\in U$ there is $b\in U$ with $b\ll x$. $(X,\leq)$ is
an {\em $\omega$-continuous domain} if there is a countable set
$B\subseteq X$ such that for any Scott-open set $U$ and any $x\in U$
there is $b\in U\cap B$ with $b\ll x$. Note that any
$\omega$-algebraic domain is an $\omega$-continuous domain because
we can take $B=X_0$.

In the next proposition we characterize scattered dcpo's  with the Scott topology.

\begin{proposition}\label{propco2}
Let $(X,\leq)$ be a dcpo with the  Scott topology. Then $X$ is
scattered if and only if there is no infinite \( \leq \)-ascending chain
$x_0 < x_1< \dotsc$ in $X$.
 \end{proposition}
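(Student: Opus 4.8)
The plan is to establish the two implications separately; the second one carries essentially all the content.

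\emph{From a chain to non-scatteredness.} Suppose there is an infinite $\leq$-ascending chain $x_0 < x_1 < \dotsb$, and set $A = \{ x_n \mid n \in \omega \}$. The crucial remark is that no $x_n$ can be an isolated point of any set $S$ with $A \subseteq S$: if $U$ is Scott-open and $x_n \in U$, then $U$ is upward closed, so $x_{n+1} \in U$, and hence $U \cap S \supseteq \{ x_n, x_{n+1} \} \neq \{ x_n \}$. A straightforward transfinite induction then gives $A \subseteq X^{(\alpha)}$ for every $\alpha \in \mathsf{On}$ --- this is clear for $\alpha = 0$ and at limit stages, while at a successor step $\alpha + 1$ we apply the remark with $S = X^{(\alpha)}$ to conclude that every $x_n \in X^{(\alpha)}$ is a non-isolated point of $X^{(\alpha)}$, i.e.\ $x_n \in X^{(\alpha+1)}$. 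Hence $\emptyset \neq A \subseteq \bigcap_{\alpha} X^{(\alpha)}$, so $X$ is not scattered.

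\emph{From the absence of chains to scatteredness.} Assume $X$ has no infinite $\leq$-ascending chain. Using $\DC$, every non-empty $Y \subseteq X$ then has a $\leq$-maximal element: otherwise the relation $>$ is serial on $Y$ and $\DC$ produces an infinite ascending chain inside $Y$. Applied to a non-empty \emph{directed} set $D \subseteq X$, a maximal element of $D$ is by directedness its greatest element, and therefore equals $\sup D$; so $\sup D \in D$ for every non-empty directed $D$. Consequently the second clause in the definition of a Scott-open set becomes vacuous, so \emph{every} upward closed subset of $X$ is Scott-open and the Scott topology coincides with the Alexandrov topology. It follows that for any $Y \subseteq X$ a point $x \in Y$ is isolated in $Y$ precisely when $x$ is $\leq$-maximal in $Y$: if it is, then $\uparrow\!x$ is open and $\uparrow\!x \cap Y = \{ x \}$, whereas if some $y \in Y$ satisfies $y > x$ then every open neighbourhood of $x$, being upward closed, also contains $y$. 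In particular $X^{(\alpha+1)}$ is $X^{(\alpha)}$ with its $\leq$-maximal elements removed. Now the non-increasing sequence $\langle X^{(\alpha)} \mid \alpha \in \mathsf{On} \rangle$ stabilizes, say at $P = \bigcap_{\alpha} X^{(\alpha)}$; from $P = X^{(\alpha)} = X^{(\alpha+1)}$ for large $\alpha$ we get that $P$ coincides with $P$ minus its $\leq$-maximal elements, i.e.\ $P$ has no maximal element, which forces $P = \emptyset$. Hence $X$ is scattered.

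\emph{Main obstacle.} The only non-routine point is the structural observation in the second part: the hypothesis forces $\sup D = \max D$ for every non-empty directed $D$, hence collapses the Scott topology to the Alexandrov topology; once this is in place the rest is elementary combinatorics of maximal elements and of stabilizing decreasing sequences of sets. One should also double-check that the only choice principle used is $\DC$ (it is, and we are working in $\ZF + \DC$).
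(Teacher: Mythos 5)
Your proof is correct, and in both directions it reorganizes (and slightly streamlines) the paper's argument rather than following a genuinely different strategy. For the implication from a chain to non-scatteredness, the paper takes the supremum \( x \) of the chain as its witness and must therefore invoke the directed-set clause of Scott-openness to see that \( x \) survives every derivative; along the way it proves, exactly as you do, that each \( x_n \) survives because every open neighbourhood of \( x_n \) is upward closed and hence contains \( x_{n+1} \). You have correctly noticed that this sub-fact already finishes the proof, so the detour through the supremum is unnecessary and only upward-closedness of open sets is needed. For the converse, the paper introduces the rank function of the well-founded reversed order and asserts by induction that \( X^{(\alpha)}=\{x\mid \rk(x)\geq\alpha\} \); your argument instead makes explicit the facts that this induction silently relies on, namely that the no-chain hypothesis forces \( \sup D\in D \) for every non-empty directed \( D \), hence collapses the Scott topology to the Alexandrov topology, hence makes the isolated points of any subset exactly its maximal elements, and you then conclude by a stabilization argument instead of computing the derivatives exactly. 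Both routes appeal to \( \DC \) at the same point (to pass from ``no infinite ascending chain'' to the existence of maximal elements in every non-empty subset). A small bonus of your version is that it establishes en route the coincidence of the Scott and Alexandrov topologies, which the paper records separately as Corollary~\ref{cor21}; a small loss is that the paper's rank computation yields the exact description of each \( X^{(\alpha)} \), which your stabilization argument does not. The only step you leave implicit is the standard \( \ZF \) fact that a non-increasing \( \On \)-indexed sequence of subsets of a set must stabilize, which is routine.
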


\begin{proof}  Let $X$ have no infinite ascending chain, i.e.\ $(X,\geq)$ is well-founded. Consider the (unique) rank function
\( \rk \colon X \to \mathsf{On} \) defined by $\rk(x)=\sup\{\rk(y)+1\mid x<y\}$
for each $x\in X$. By induction,  $X^{(\alpha)}=\{x \in
X \mid \rk(x) \geq \alpha\}$ for every ordinal $\alpha \in \mathsf{On}$, hence
$\bigcap_{\alpha \in \mathsf{On} } X^{(\alpha)}=\emptyset$ and $X$ is scattered.

It remains to show that if $X$ has an infinite ascending chain
$x_0<x_1<\dotsc$ then $X$ is not scattered. It suffices to check
that the supremum $x$ of this chain is in $\bigcap_{ \alpha \in \mathsf{On}}
X^{(\alpha)}$. First notice that for every \( X' \subseteq X \) containing all the \( x_n \)'s, each \( x_n \) is not isolated in \( X' \) because \( x_m \in {\uparrow \! \! x_n \cap X'} \) (the smallest open set of \( X' \) containing \( x_n \)) for every \( m \geq n \). In particular, by induction on \( \alpha \in \mathsf{On} \) one can easily  show that \( x_n \in X^{(\alpha)} \) for every \( n \in \omega \).  We now check by induction that $x \in
X^{(\alpha)}$ for each $\alpha \in \mathsf{On}$. This is obvious when $\alpha=0$ or
$\alpha$ is a limit ordinal, so assume that $\alpha=\beta+1$.
Suppose, for a contradiction, that $x \notin X^{(\alpha)}$. Then by the inductive hypothesis
$x \in X^{(\beta)}\setminus X^{(\beta+1)}$, so $\{ x \}$ is Scott-open in $X^{(\beta)}$. Since $x_n\in X^{(\beta)}$ for all
$n<\omega$, then $x_n \in \{x\}$ for some $n<\omega$, which is a
contradiction because necessarily \( x_n \neq x \) for every \( n \in \omega \).
 \end{proof}

\begin{corollary}\label{cor21}
In any scattered dcpo, the Scott topology coincides with the
Alexandrov topology. The continuous functions between scattered
dcpo's coincide with the monotone functions.
 \end{corollary}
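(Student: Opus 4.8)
The plan is to deduce both assertions from Proposition~\ref{propco2}. Let $(X,\leq)$ be a scattered dcpo. By Proposition~\ref{propco2}, $X$ contains no infinite $\leq$-ascending chain, or equivalently, $(X,\geq)$ is well-founded. Since the Alexandrov topology always refines the Scott topology (every Scott-open set is upward closed by definition), it suffices to prove the reverse inclusion, namely that every upward closed set $U \subseteq X$ is Scott-open.

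The key point is the following claim: in a dcpo with no infinite $\leq$-ascending chain, every non-empty directed subset $D$ has a greatest element, which is then its supremum. Indeed, by well-foundedness of $(X,\geq)$ the set $D$ has a $\leq$-maximal element $m$; given any $d \in D$, directedness yields some $d' \in D$ with $d' \geq d$ and $d' \geq m$, and maximality of $m$ forces $d' = m$, whence $d \leq m$. Thus $m = \max D = \sup D \in D$.

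Granting the claim, fix an upward closed $U \subseteq X$ and a non-empty directed $D$ with $\sup D \in U$. By the claim $\sup D = \max D \in D$, so $\sup D \in D \cap U$ and in particular $D \cap U \neq \emptyset$; hence $U$ is Scott-open. This shows that the Scott and Alexandrov topologies on $X$ coincide. For the second assertion, let $X$ and $Y$ be scattered dcpo's and $f \colon X \to Y$ an arbitrary function. By the part just proved, the Scott topology on each of $X$ and $Y$ agrees with the corresponding Alexandrov topology, so $f$ is continuous for the Scott topologies if and only if it is continuous for the Alexandrov topologies, which by the observation recalled at the beginning of Section~\ref{omega-cont} holds exactly when $f$ is monotone.

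The argument presents no genuine obstacle; the only step requiring a moment's care is the claim that a directed subset of a dcpo with no infinite ascending chain attains its supremum, and this is an immediate consequence of directedness together with the well-foundedness of the reversed order.
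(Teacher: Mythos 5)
Your argument is correct and is exactly the (omitted) argument the paper intends: by Proposition~\ref{propco2} the reversed order is well-founded, so every non-empty directed set attains its supremum as a greatest element, whence every upward closed set is Scott-open and the two topologies coincide; the statement about continuous versus monotone functions then follows from the remark on Alexandrov topologies made at the start of Subsection~\ref{omega-cont}. The one step needing care --- that a directed subset of a dcpo with no infinite ascending chain has a maximum --- is handled correctly via maximality plus directedness.
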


For future reference, we recall  a characterization of the levels of
the difference hierarchy over open sets in $\omega$-algebraic
domains obtained in~\cite{s04a} (in~\cite{s08} this was extended
to the context of $\omega$-continuous domains). Let $(X,\leq)$ be an
$\omega$-algebraic domain. A set $A\subseteq X$ is called {\em
approximable} if for any $x\in A$ there is a compact element
$c\leq x$ with $[c,x]\subseteq A$, where $[c,x]=\{y\in X\mid c\leq
y\leq x\}$.

Let \( (X,\leq ) \) be a dcpo endowed with the Scott topology. Given \( A \subseteq X \) and \( n \in \omega \),
a nondecreasing sequence \( a_0 \leq \dotsc \leq a_n \) of compact elements of \( X \) is said to be \emph{alternating for \( A \)} if \( a_i \in A \iff a_{i+1} \notin A \) for every \( i < n \). Notice that in this case we necessarily have \( a_0 < \dotsc < a_n \). For this reason, a sequence as above will be also called \emph{alternating chain for \( A \)}.
 An {\em alternating
tree for $A\subseteq X$} is a monotone function
$f \colon (T, \sqsubseteq) \to (X_0,\leq)$ such that:
\begin{enumerate}[(1)]
\item
$T\subseteq\omega^*$ is a well-founded tree (i.e.\ the partial
order $(T,\sqsupseteq)$ is  well-founded), and
\item
$f(\sigma)\in
A \iff f(\sigma n)\not\in A$, for each $\sigma n\in T$ (i.e.\ the image under \( f \) of any branch of \( T \) is an alternating chain for \( A \)).
\end{enumerate}
 The {\em
rank} of $f$ is the rank of  $(T,\sqsupseteq)$. An alternating tree $f$ is called 1-alternating
(respectively, 0-alternating) if $f(\emptyset)\in A$ (respectively,
$f(\emptyset)\not\in A$).

\begin{theorem} (\cite[Theorem 2.9]{s04a} and \cite[Proposition 4.13]{s05}) \label{t-dh}
Let
$X$ be an $\omega$-algebraic domain, $\alpha<\omega_1$, and
$A\subseteq X$. Then $A\in{\bf\Sigma}^{-1}_\alpha$ if and only if
$A$ and $X\setminus A$ are approximable and there is no
1-alternating tree of rank $\alpha$ for $A$. Moreover, if
$\alpha<\omega$  then any set in
$\mathbf{\Sigma}^{-1}_\alpha \setminus \mathbf{\Pi}^{-1}_\alpha$
(resp. in
$\mathbf{\Delta}^{-1}_{\alpha+1} \setminus (\mathbf{\Sigma}^{-1}_\alpha \cup \mathbf{\Pi}^{-1}_\alpha)$)
is Wadge complete in $\mathbf{\Sigma}^{-1}_\alpha$ (resp. in
$\mathbf{\Delta}^{-1}_{\alpha+1}$).
\end{theorem}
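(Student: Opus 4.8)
The plan is to prove the equivalence and then deduce the completeness clause, following \cite[Theorem~2.9]{s04a} and \cite[Proposition~4.13]{s05}; since the arguments are somewhat technical I only sketch them. For the direction from \( A\in\mathbf{\Sigma}^{-1}_\alpha \) to the combinatorial condition, write \( A=D_\alpha(\langle A_\beta\mid\beta<\alpha\rangle) \) with the \( A_\beta \) open (hence upward closed) and increasing, and set \( m(x)=\min\{\beta<\alpha\mid x\in A_\beta\} \), with \( m(x)=\alpha \) if \( x\notin\bigcup_{\beta<\alpha}A_\beta \), so that \( x\in A\iff m(x)<\alpha\wedge r(m(x))\neq r(\alpha) \) and \( m \) is order-reversing. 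Since \( \{\uparrow c\mid c\in X_0\} \) is a basis, for each \( x \) one may choose a compact \( c\leq x \) with \( c\in A_{m(x)} \) (if \( m(x)<\alpha \)) or any compact \( c\leq x \) (if \( m(x)=\alpha \)), and then \( m \) is constant on \( [c,x] \); hence \( [c,x] \) is contained in \( A \) or in \( X\setminus A \) according to the membership of \( x \), so \( A \) and \( X\setminus A \) are approximable. If now \( f\colon(T,\sqsubseteq)\to(X_0,\leq) \) is a \( 1 \)-alternating tree for \( A \), then \( f \) being monotone with \( f(\emptyset)\in A \) every \( f(\sigma) \) lies above \( f(\emptyset)\in A_{m(f(\emptyset))} \), so \( m(f(\sigma))\leq m(f(\emptyset))<\alpha \); and the alternation clause together with ``\( (\cdot)\in A\iff r(m(\cdot))\neq r(\alpha) \)'' forces \( r(m(f(\sigma n)))\neq r(m(f(\sigma))) \) while \( m(f(\sigma n))\leq m(f(\sigma)) \), hence \( m(f(\sigma n))<m(f(\sigma)) \). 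An easy induction gives \( \rk_{(T,\sqsupseteq)}(\sigma)\leq m(f(\sigma)) \), so \( \rk(f)\leq m(f(\emptyset))<\alpha \); thus no \( 1 \)-alternating tree of rank \( \alpha \) for \( A \) exists.

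For the converse, assume \( A \) and \( X\setminus A \) are approximable and there is no \( 1 \)-alternating tree of rank \( \alpha \) for \( A \). The key preliminary fact is that there is no infinite chain \( x_0<x_1<\dotsc \) with \( x_i\in A\iff x_{i+1}\notin A \): otherwise, letting \( x=\sup_i x_i \) and assuming (without loss of generality) \( x\in A \), approximability yields a compact \( c\leq x \) with \( [c,x]\subseteq A \), and since \( c \) is compact and \( \{x_i\} \) is directed with supremum \( x \) we get \( c\leq x_i \) for some \( i \), so \( x_i,x_{i+1}\in[c,x]\subseteq A \), a contradiction. Hence the relation \( c\prec d\iff c,d\in X_0\wedge c<d\wedge(c\in A\not\Leftrightarrow d\in A) \) has no infinite ascending sequence, so one can define \( \rho\colon X_0\to\On \) by \( \rho(c)=\sup\{\rho(d)+1\mid c\prec d\} \); a routine induction identifies \( \rho(c) \) with the supremum of the ranks of alternating trees \( f \) for \( A \) with \( f(\emptyset)=c \), so the hypothesis says exactly that \( \rho(c)<\alpha \) for every \( c\in A\cap X_0 \). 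One then manufactures an increasing sequence \( \langle A_\beta\mid\beta<\alpha\rangle \) of open sets with \( A=D_\alpha(\langle A_\beta\mid\beta<\alpha\rangle) \) by assigning to each compact \( c \) a ``level'' \( \ell(c)\in\alpha+1 \) refining \( \rho(c) \) whose parity records whether \( c\in A \) (so that \( r(\ell(c))\neq r(\alpha)\iff c\in A \) whenever \( \ell(c)<\alpha \)), arranged to be order-reversing on \( X_0 \), and putting \( A_\beta=\{x\mid\exists c\in X_0\,(c\leq x\wedge\ell(c)\leq\beta)\} \); approximability is precisely what lets one transfer \( \ell \) from compact elements to arbitrary points and verify \( A=D_\alpha(\langle A_\beta\mid\beta<\alpha\rangle) \). (Alternatively, use \( \rho \) to build a Scott-continuous \( g \) reducing \( A \) to a canonical \( \mathbf{\Sigma}^{-1}_\alpha \)-complete set in \( \omega^{\leq\omega} \) and appeal to the closure of the family of pointclasses \( \mathbf{\Sigma}^{-1}_\alpha \) under continuous preimages.) I expect this direction to be the main obstacle: the subtle point is that the parity of \( \ell(c) \) is not preserved under suprema — the familiar source of complications in the difference hierarchy — so the recursion defining \( \ell \) (or the reduction \( g \)) must be set up with care, treating successor and limit \( \alpha \) separately, to ensure that \( \ell \) is genuinely \( (\alpha+1) \)-valued with the correct parities and that each \( A_\beta \) is really Scott open.

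Finally, let \( \alpha=n<\omega \). If \( A\in\mathbf{\Sigma}^{-1}_n\setminus\mathbf{\Pi}^{-1}_n \), the equivalence applied to \( A \) and to \( X\setminus A \) shows that \( A \) and \( X\setminus A \) are approximable and there is no \( 1 \)-alternating tree of rank \( n \) for \( A \), but — since \( X\setminus A\notin\mathbf{\Sigma}^{-1}_n \) while \( X\setminus A,A \) are approximable — there \emph{is} one for \( X\setminus A \), i.e.\ a \( 0 \)-alternating tree \( f \) of rank \( n \) for \( A \); as \( n \) is finite, \( f \) has a branch \( \emptyset=\tau_0\sqsubset\tau_1\sqsubset\dotsc\sqsubset\tau_n \) with \( \rk_{(T,\sqsupseteq)}(\tau_j)=n-j \), along which \( f(\tau_j)\in A\iff j \) is odd. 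Given any space \( Y \) and \( B=D_n(\langle B_i\mid i<n\rangle)\in\mathbf{\Sigma}^{-1}_n(Y) \) with the \( B_i \) open and increasing, put \( m_B(y)=\min\{i<n\mid y\in B_i\} \) (and \( m_B(y)=n \) if \( y\notin\bigcup_i B_i \)) and define \( g(y)=f(\tau_{n-m_B(y)}) \). For a compact \( c \), the set \( g^{-1}(\uparrow c)=\{y\mid m_B(y)\leq n-j_c\} \), where \( j_c \) is least with \( c\leq f(\tau_{j_c}) \) (and \( j_c=n+1 \) if there is no such \( j \)), equals \( \emptyset \), \( Y \), or some \( B_k \), hence is open, so \( g \) is continuous; and \( g(y)\in A\iff n-m_B(y) \) is odd \( \iff y\in B \), so \( B=g^{-1}(A)\leq_{\mathsf{W}}A \). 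Thus \( A \) is Wadge complete in \( \mathbf{\Sigma}^{-1}_n \). The case \( A\in\mathbf{\Delta}^{-1}_{n+1}\setminus(\mathbf{\Sigma}^{-1}_n\cup\mathbf{\Pi}^{-1}_n) \) is treated the same way: from \( A\in\mathbf{\Sigma}^{-1}_{n+1}\cap\mathbf{\Pi}^{-1}_{n+1} \) together with \( A\notin\mathbf{\Sigma}^{-1}_n \) and \( A\notin\mathbf{\Pi}^{-1}_n \) one obtains both a \( 0 \)-alternating and a \( 1 \)-alternating tree of rank \( n \) for \( A \), which suffices to reduce an arbitrary \( B\in\mathbf{\Delta}^{-1}_{n+1} \) to \( A \) by the analogous navigation (selecting the tree whose root parity matches the index at which the \( \mathbf{\Delta}^{-1}_{n+1} \)-representation of the input first becomes active), the verification of continuity being exactly as above; we omit the routine but notation-heavy details, for which see \cite{s05}.
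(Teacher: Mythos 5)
The paper itself offers no proof of this statement: it is imported as a black box from Selivanov's work (\cite[Theorem 2.9]{s04a} and \cite[Proposition 4.13]{s05}), so your attempt can only be measured against what a complete proof would require. Two of your four pieces are essentially complete and correct. The forward implication is sound: $m$ is order-reversing because the $A_\beta$ are upward closed, approximability follows from choosing a compact $c\leq x$ with $c\in A_{m(x)}$ (on which $m$ is constant on $[c,x]$), and the induction $\rk_{(T,\sqsupseteq)}(\sigma)\leq m(f(\sigma))$ correctly bounds the rank of any $1$-alternating tree below $\alpha$. The $\mathbf{\Sigma}^{-1}_n$-hardness argument is also a genuine proof: a rank-$n$ well-founded tree has a branch with ranks $n,n-1,\dotsc,0$, the map $g(y)=f(\tau_{n-m_B(y)})$ has the right parity behaviour, and your observation that $g^{-1}(\uparrow c)$ is always $\emptyset$, $Y$ or some $B_k$ is exactly the correct continuity check in an algebraic domain.

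The genuine gaps are the two you flag yourself, and they are not minor. For the converse of the equivalence, the well-foundedness of $\prec$ and the rank $\rho$ are the easy part; the content of \cite[Theorem 2.9]{s04a} is the construction of $\ell\colon X_0\to\alpha+1$ and the verification that $A=D_\alpha(\langle A_\beta\rangle)$ for $A_\beta=\bigcup\{\uparrow c\mid \ell(c)\leq\beta\}$. The dangerous point, which your sketch does not address, is that $\min\{\ell(c)\mid c\in X_0,\ c\leq x\}$ could \emph{a priori} be attained at a compact $d\leq x$ on the wrong side of $A$, below the value at the witness $c_0$ supplied by approximability; excluding this requires using that $\downarrow x\cap X_0$ is directed (so $d$ is dominated by some $e\in[c_0,x]$ with $d\prec e$, forcing $\rho(e)<\rho(d)$) together with a parity analysis that is genuinely delicate at limit $\alpha$. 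None of this is carried out. Likewise, for the $\mathbf{\Delta}^{-1}_{n+1}$ clause, possessing both a $0$- and a $1$-alternating chain of length $n+1$ is the right raw material, but ``selecting the tree whose root parity matches'' is not yet a construction: one needs a normal form representing $B$ and $Y\setminus B$ simultaneously from which a single continuous $g$ can be read off, and since the two chains need not be comparable in $X$ the continuity check is not the same as in the $\mathbf{\Sigma}^{-1}_n$ case. So the proposal is a correct and well-oriented outline, but the converse implication and the $\mathbf{\Delta}^{-1}_{n+1}$-completeness — the two hardest parts of the statement — remain unproved.
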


\subsection{Polish and quasi-Polish spaces}\label{subsectionpolish}

Recall that a space \( X \) is {\em Polish} if it is countably
based and admits a metric \( d \) compatible with its topology such that \( (X,d) \)
is a complete metric space. Examples of Polish spaces are the
Baire  space, the Cantor space, the space of reals \( \RR \) and
its Cartesian powers \( \RR^n \) (\( n \in \omega \)),  the closed
unit interval \( [0,1] \),
 the Hilbert cube \( [0,1]^\omega \), and the space \( \RR^\omega \).
It is well-known that both the Hilbert cube and \( \RR^\omega \) are universal for Polish spaces (see e.g.\
\cite[Theorem 4.14]{ke94}).

A natural variant of Polish spaces has recently emerged, the so-called \emph{quasi-Polish} spaces. This class includes all Polish spaces and all \(\omega\)-continuous domains (the main objects under consideration in DST and domain theory, respectively), and provides a unitary approach to their topological analysis. Moreover, it has shown to be a relevant class of spaces for CA. In the rest of this section we will provide the definition of these spaces and recall some of their properties that will be used later.

Given a set \( X \), call a function \( d \) from \( X\times X \) to the nonnegative reals \emph{quasi-metric} whenever \( x=y \) if and only if  \( d(x,y)=d(y,x)=0 \), and \( d(x,y)\leq d(x,z)+d(z,y) \) (but we don't require \( d \) to be symmetric). In particular, every metric is a quasi-metric. Every quasi-metric on \( X \) canonically induce the topology \( \tau_d \) on \( X \), where \( \tau_d \) is the topology generated by the open balls \( B_d(x, \varepsilon) = \{ y \in X \mid d(x,y) < \varepsilon \} \) for \( x \in X \) and \( 0 \neq \varepsilon \in \RR^+ \). A (topological) space \( X \) is called \emph{quasi-metrizable} if there is a quasi-metric on \( X \) which generates its topology.
If \( d \) is a quasi-metric on \( X \), let \( \hat{d} \) be the metric on \( X \) defined by \( \hat{d}(x,y)= \max\{d(x,y),d(y,x)\} \). A sequence \( \langle x_n \mid n \in \omega \rangle \) is called \emph{\( d \)-Cauchy sequence} if for every \( 0 \neq \varepsilon \in \RR^+ \) there is \( N \in \omega \) such that \( d(x_n,x_m) < \varepsilon \) for all \( N \leq n \leq m \). We say that the quasi-metric \( d \) on \( X \) is \emph{complete} if every \( d \)-Cauchy sequence converges with respect to \( \hat{d} \) (notice that this definition is coherent with the notion of completeness for a metric \( d \), as in this case \( \hat{d} = d \)).

\begin{definition}
A \( T_0 \) space \( X \) is called \emph{quasi-Polish} if it is countably based and there is a complete quasi-metric which generates its topology. When we fix a particular compatible complete quasi-metric \( d \) on \( X \), we say that \( (X,d) \) is a \emph{quasi-Polish metric space}.
\end{definition}

Notice that every Polish space is automatically quasi-Polish, but, as recalled above, also every \(\omega\)-continuous domain is quasi-Polish by \cite[Corollary 45]{br}. For example, a complete quasi-metric which is compatible with the topology of the Scott domain \( P \omega \) is given by \( d(x,y) = 0 \) if \( x \subseteq y \) and \( d(x,y) = 2^{-(n+1)} \) if \( n \) is the smallest element in \( x \setminus y \) (for every, \( x,y \subseteq \omega \)).  De Brecht's paper~\cite{br} shows that there is  a reasonable descriptive
set theory for the class of quasi-Polish spaces which extends the classical theory for Polish spaces in many directions, for example:

\begin{proposition}\cite[Corollary 23]{br} \label{propsubspace}
A subspace of a quasi-Polish space \( X \) is quasi-Polish if and only if it is \( \mathbf{\Pi}^0_2(X) \).
\end{proposition}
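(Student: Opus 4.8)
The proposal is to prove the two implications separately, both relying on the \emph{transitivity} of $\mathbf{\Pi}^0_2$: if $B \in \mathbf{\Pi}^0_2(Z)$ and $A \in \mathbf{\Pi}^0_2(B)$ (with $B$ carrying the subspace topology), then $A \in \mathbf{\Pi}^0_2(Z)$. This follows at once from the fact that $\mathbf{\Pi}^0_2$ is a family of pointclasses — hence closed under continuous preimages, and in particular invariant under homeomorphisms and under intersection with a subspace — together with its being closed under countable intersections: writing $A = \bigcap_n (C_n \cup V_n)$ with $C_n$ closed and $V_n$ open in $B$, one lifts the $C_n, V_n$ to closed subsets $D_n$ and open subsets $W_n$ of $Z$, so that $A = \left( \bigcap_n (D_n \cup W_n) \right) \cap B$, and both $\bigcap_n (D_n \cup W_n) \in \mathbf{\Pi}^0_2(Z)$ and $B \in \mathbf{\Pi}^0_2(Z)$, whence $A \in \mathbf{\Pi}^0_2(Z)$.

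For the direction ($\Leftarrow$) I would factor through the Scott domain $P\omega$, which is universal for countably based $T_0$ spaces via the embedding $x \mapsto \{n \mid x \in B_n\}$ attached to a countable base $\{B_n\}_{n \in \omega}$. It then suffices to establish: (1) the image of a quasi-Polish space under such an embedding is $\mathbf{\Pi}^0_2$ in $P\omega$ — this is where completeness of a compatible quasi-metric is used; and (2) conversely, every $\mathbf{\Pi}^0_2$ subspace of $P\omega$ is quasi-Polish. Granting (1) and (2): if $Y \in \mathbf{\Pi}^0_2(X)$, then, identifying $X$ with its homeomorphic image in $P\omega$ — which is $\mathbf{\Pi}^0_2$ by (1) — transitivity yields $Y \in \mathbf{\Pi}^0_2(P\omega)$, whence $Y$ is quasi-Polish by (2). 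To prove (2) one uses the closure of the class of quasi-Polish spaces under closed subspaces (restrict a compatible complete quasi-metric: a Cauchy sequence inside a closed set converges to a point of that set), under countable products (combine the factors' quasi-metrics with weights $2^{-n}$), and — the only genuinely non-trivial closure property — under \emph{open} subspaces, the exact analogue of the classical fact that open subsets of Polish spaces are Polish, obtained by adjoining to the quasi-metric a term that diverges as one approaches the closed complement. Since $\mathbf{\Pi}^0_2(P\omega)$ is the family of countable intersections of sets $C \cup V$ with $C$ closed and $V$ open, one then shows every such $C \cup V$ quasi-Polish by gluing the complete quasi-metrics of its closed and open parts, and finally passes to an intersection $\bigcap_n Z_n$ of quasi-Polish subspaces $Z_n \supseteq Y$ of $P\omega$ by equipping $Y$ with the quasi-metric $d|_Y + \sum_n 2^{-n}\min\{1, d_n\}$ (for $d_n$ a compatible complete quasi-metric on $Z_n$) and checking completeness by hand.

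For the direction ($\Rightarrow$), suppose $Y \subseteq X$ is quasi-Polish, witnessed by a compatible complete quasi-metric $\rho$ on $Y$. I would adapt the classical Lavrentiev argument to show that $Y$ is $\mathbf{\Pi}^0_2$ (in fact $G_\delta$) in its closure $\overline{Y}$; since $\overline{Y} \in \mathbf{\Pi}^0_1(X) \subseteq \mathbf{\Pi}^0_2(X)$ by Proposition~\ref{propbh}, transitivity of $\mathbf{\Pi}^0_2$ then gives $Y \in \mathbf{\Pi}^0_2(X)$. Concretely, for $x \in \overline{Y}$ put $\operatorname{osc}(x) = \inf\{\diam_{\hat{\rho}}(Y \cap U) \mid U \text{ a neighborhood of } x\}$; one checks that $\{x \in \overline{Y} \mid \operatorname{osc}(x) < \varepsilon\}$ is open in $\overline{Y}$ for each $\varepsilon > 0$, and then argues that $Y = \bigcap_n \{x \in \overline{Y} \mid \operatorname{osc}(x) < 1/n\}$: the inclusion $\subseteq$ is immediate, while for $\supseteq$ one takes $x_k \in Y$ with $x_k \to x$, uses $\operatorname{osc}(x) = 0$ to see that $(x_k)_k$ is a $\rho$-Cauchy sequence, lets $y \in Y$ be its $\rho$-limit, and must conclude $y = x$.

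The main obstacle, common to both directions, is the absence of the Hausdorff separation axiom: a quasi-Polish space such as $P\omega$ is merely $T_0$, so in ($\Leftarrow$) the diagonal of a countable product need not be closed — which is precisely what blocks the usual treatment of countable intersections — while in ($\Rightarrow$) one cannot deduce $y = x$ from uniqueness of sequential limits. Getting around this requires exploiting the asymmetry of the quasi-metric in an essential way, typically by working with the forward distance $d(\cdot, F) = \inf_{z \in F} d(\cdot, z)$ to a closed set $F$, which is only lower semicontinuous, rather than with the symmetrized distance $\hat{d}(\cdot, F)$. This, together with setting up the embedding in (1) and the open-subspace lemma in (2), is where the genuinely new work lies; the remaining steps are careful but routine analogues of the classical theory of Polish spaces.
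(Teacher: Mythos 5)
The paper offers no proof of this proposition: it is quoted directly from de Brecht \cite[Corollary 23]{br}, so your proposal can only be measured against the literature. Its architecture --- transitivity of \( \mathbf{\Pi}^0_2 \), factoring through the universal space \( P\omega \), quasi-metric surgery for (\(\Leftarrow\)) and a Lavrentiev-type argument for (\(\Rightarrow\)) --- is indeed the shape of de Brecht's argument, and your diagnosis that the failure of the Hausdorff axiom is the central obstacle is accurate.

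There is, however, a concrete error in the one device you actually commit to. In the (\(\Rightarrow\)) direction you set \( \operatorname{osc}(x)=\inf\{\diam_{\hat{\rho}}(Y\cap U)\mid U\ni x \text{ open}\} \) and call the inclusion \( Y\subseteq\bigcap_n\{x\mid\operatorname{osc}(x)<1/n\} \) ``immediate''. It is false: take \( Y=X=P\omega \) with the complete quasi-metric \( d \) recalled in Subsection~\ref{subsectionpolish}. Scott-open sets are upward closed and \( \emptyset \) is the bottom element, so the only open neighborhood of \( \emptyset \) is \( P\omega \) itself, whence \( \operatorname{osc}(\emptyset)=\diam_{\hat{d}}(P\omega)=1/2 \) and \( \emptyset \) is excluded from your intersection even though it belongs to \( Y \). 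The underlying reason is that a forward ball \( B_\rho(x,\delta) \) of a quasi-metric need not have small \( \hat{\rho} \)-diameter: for \( a,b\in B_\rho(x,\delta) \) one controls \( \rho(x,a) \) and \( \rho(x,b) \) but not \( \rho(a,x) \). So the symmetrized oscillation is the wrong invariant --- it does not vanish on \( Y \) --- and the correct route (de Brecht's) replaces it by an intrinsically asymmetric \( \mathbf{\Pi}^0_2 \) condition on basic open sets expressing that the trace on \( Y \) of the neighborhood filter of \( x \) is Cauchy and converges back to \( x \), with the identification of the limit handled via the specialization order rather than uniqueness of limits. Your closing paragraph names this last difficulty but supplies no mechanism for it, and the same unresolved issue (non-uniqueness of \( \hat{d} \)-limits) is exactly what is hiding in the ``check completeness by hand'' steps of your (\(\Leftarrow\)) direction, at the gluing of \( C\cup V \) and at countable intersections. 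In short: the obstacles are correctly located, but the one explicit construction is broken and the deferred steps are the ones carrying the real content.
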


 It is not difficult to see that if \( (X, d) \) is a quasi-Polish metric space then \( (X, \hat{d}) \) is a Polish metric space, and that the following holds.

\begin{proposition}\cite[Corollary 14]{br}\label{propqp}
For every quasi-Polish metric space \( (X, d) \), $ \tau_d \subseteq \tau_{\hat{d}}\subseteq{\mathbf
\Sigma}^0_2(X,\tau_d)$. Hence, in particular,  $\mathbf{\Sigma}^0_{<\omega}(X,\tau_d)=\mathbf{\Sigma}^0_{<\omega}(X,\tau_{\hat{d}})$, and the identity function $\id_X$ is continuous from  $(X,\tau_{\hat{d}})$ to $(X,\tau_d)$ and is  ${\mathbf
\Sigma}^0_2$-measurable from $(X,\tau_d)$ to $(X,\tau_{\hat{d}})$.
\end{proposition}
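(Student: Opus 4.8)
The plan is to establish the two inclusions $\tau_d\subseteq\tau_{\hat d}$ and $\tau_{\hat d}\subseteq\mathbf{\Sigma}^0_2(X,\tau_d)$ and then read off all the remaining assertions from these together with the fact, already noted just above, that $(X,\hat d)$ is a Polish — hence second countable — metric space. The first inclusion is trivial: since $d(x,y)\leq\hat d(x,y)$ we have $B_{\hat d}(x,\varepsilon)\subseteq B_d(x,\varepsilon)$, so each basic $\tau_d$-open ball $B_d(x,\varepsilon)$ is a union of $\hat d$-balls (one around each of its points) and hence is $\tau_{\hat d}$-open.

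For the inclusion $\tau_{\hat d}\subseteq\mathbf{\Sigma}^0_2(X,\tau_d)$ the key observation is that although the forward distance function $y\mapsto d(x,y)$ is in general only $\tau_d$-upper semicontinuous, the backward one $y\mapsto d(y,x)$ is $\tau_d$-\emph{lower} semicontinuous for every fixed $x\in X$: if $d(y_0,x)>c$, then for every $y\in B_d\bigl(y_0,\,d(y_0,x)-c\bigr)$ the triangle inequality $d(y_0,x)\leq d(y_0,y)+d(y,x)$ gives $d(y,x)>c$, so that $\{y\in X\mid d(y,x)>c\}\in\tau_d$ for all $c\geq 0$. Consequently, for every $x\in X$ and every $\varepsilon>0$,
\[
B_{\hat d}(x,\varepsilon)=\bigcup_{n\geq 1}\Bigl(B_d(x,\varepsilon)\setminus\{y\in X\mid d(y,x)>\varepsilon-\tfrac1n\}\Bigr)
\]
exhibits $B_{\hat d}(x,\varepsilon)$ as a countable union of differences of $\tau_d$-open sets, i.e.\ as a member of $\mathbf{\Sigma}^0_2(X,\tau_d)$ in the sense of Definition~\ref{defbh}. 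Since $(X,\hat d)$ is second countable, every $\tau_{\hat d}$-open set is a countable union of such balls, and hence lies in $\mathbf{\Sigma}^0_2(X,\tau_d)$ because this pointclass is closed under countable unions.

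The remaining statements are now formal consequences. The map $\id_X\colon(X,\tau_{\hat d})\to(X,\tau_d)$ is continuous precisely because $\tau_d\subseteq\tau_{\hat d}$, and $\id_X\colon(X,\tau_d)\to(X,\tau_{\hat d})$ is $\mathbf{\Sigma}^0_2$-measurable precisely because $\tau_{\hat d}\subseteq\mathbf{\Sigma}^0_2(X,\tau_d)$. For the equality of the finite Borel levels: from $\tau_d\subseteq\tau_{\hat d}$ and the monotonicity of the operations in Definition~\ref{defbh} one gets $\mathbf{\Sigma}^0_\alpha(X,\tau_d)\subseteq\mathbf{\Sigma}^0_\alpha(X,\tau_{\hat d})$ for all $\alpha<\omega_1$, hence $\mathbf{\Sigma}^0_{<\omega}(X,\tau_d)\subseteq\mathbf{\Sigma}^0_{<\omega}(X,\tau_{\hat d})$; conversely, starting from $\tau_{\hat d}\subseteq\mathbf{\Sigma}^0_2(X,\tau_d)$, a routine induction on $n<\omega$ — using that each $\mathbf{\Sigma}^0_m(X,\tau_d)$ is closed under countable unions and finite intersections and contains the complements of the sets in the lower levels — yields $\mathbf{\Sigma}^0_n(X,\tau_{\hat d})\subseteq\mathbf{\Sigma}^0_{n+1}(X,\tau_d)$, and therefore $\mathbf{\Sigma}^0_{<\omega}(X,\tau_{\hat d})\subseteq\mathbf{\Sigma}^0_{<\omega}(X,\tau_d)$; the two inclusions give the claimed equality.

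I expect the only genuinely delicate point to be the $\tau_d$-lower semicontinuity of $y\mapsto d(y,x)$, that is, applying the triangle inequality with the arguments in the right order so as to exploit the forward balls that generate $\tau_d$; one should also keep in mind that the passage from an arbitrary $\tau_{\hat d}$-open set to a countable union of $\hat d$-balls genuinely uses the second countability (separability) of $(X,\hat d)$. Everything else is bookkeeping with the definitions of the Borel hierarchy and of $\mathbf{\Sigma}^0_2$-measurability.
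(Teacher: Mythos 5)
The paper does not actually prove this proposition: it is imported without proof from de Brecht's paper as \cite[Corollary 14]{br}, so there is no in-paper argument to compare against. Your proof is correct and is essentially the standard argument from the cited source: the $\tau_d$-lower semicontinuity of $y\mapsto d(y,x)$ is exactly the right key point, the resulting decomposition of $B_{\hat d}(x,\varepsilon)$ as a countable union of differences of $\tau_d$-open sets matches Definition~\ref{defbh} of $\mathbf{\Sigma}^0_2$, and the remaining claims do follow by the bookkeeping you indicate (the induction giving $\mathbf{\Sigma}^0_n(X,\tau_{\hat d})\subseteq\mathbf{\Sigma}^0_{n+1}(X,\tau_d)$ uses only closure properties that the generalized hierarchy of Definition~\ref{defbh} indeed possesses).
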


This implies that each quasi-Polish space is in fact a standard Borel space, and hence that the Souslin's separation theorem holds in the context of quasi-Polish spaces \cite[Theorem 58]{br}: if \( X \) is quasi-Polish then \( \mathbf{B}(X) = \mathbf{\Delta}^1_1(X) \). Moreover, the Borel hierarchy on uncountable quasi-Polish spaces does not collapse.

\begin{proposition}\cite[Theorem  18]{br}
If  $X$ is a quasi-Polish space, then for all $\alpha <\beta < \omega_1$,
${\bf\Sigma}^0_\alpha(X), \mathbf{\Pi}^0_\alpha(X) \subsetneq {\bf\Delta}^0_\beta(X) \subsetneq \boldsymbol{\Sigma}^0_\beta(X), \boldsymbol{\Pi}^0_\beta(X)$.
\end{proposition}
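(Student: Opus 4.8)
\emph{Proof proposal.} The inclusions \( \subseteq \) in the statement are exactly those furnished by Proposition~\ref{propbh}, and for \( \alpha = 0 \) they are trivially proper since \( X \neq \emptyset \); so, taking \( X \) to be uncountable (as the sentence preceding the statement indicates), the whole claim reduces to showing that the Borel hierarchy of \( X \) does not collapse at any level, i.e.\ that \( \mathbf{\Sigma}^0_\gamma(X) \neq \mathbf{\Pi}^0_\gamma(X) \) for every \( 1 \leq \gamma < \omega_1 \). This reduction is purely formal: from \( \mathbf{\Sigma}^0_\alpha(X) = \mathbf{\Delta}^0_\beta(X) \) one gets \( \mathbf{\Pi}^0_\alpha(X) \subseteq \mathbf{\Delta}^0_\beta(X) = \mathbf{\Sigma}^0_\alpha(X) \), hence \( \mathbf{\Sigma}^0_\alpha(X) = \mathbf{\Pi}^0_\alpha(X) \) by complementation, and the remaining three cases are similar or immediate. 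Moreover, a routine transfinite induction from Definition~\ref{defbh} and Proposition~\ref{propbh} shows that \( \mathbf{\Sigma}^0_\gamma(X) = \mathbf{\Pi}^0_\gamma(X) \) forces \( \mathbf{\Sigma}^0_\delta(X) = \mathbf{\Sigma}^0_\gamma(X) \) for all \( \delta \geq \gamma \), hence \( \mathbf{B}(X) = \mathbf{\Sigma}^0_\gamma(X) \); so collapse at level \( \gamma \) is equivalent to \( \mathbf{\Sigma}^0_\gamma(X) = \mathbf{B}(X) \), and it suffices to prove \( \mathbf{\Sigma}^0_\gamma(X) \subsetneq \mathbf{B}(X) \) for every \( 1 \leq \gamma < \omega_1 \).

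Here is where I would use quasi-Polishness. Fix a compatible complete quasi-metric \( d \) on \( X \). By Proposition~\ref{propqp} we have \( \tau_d \subseteq \tau_{\hat d} \), so an easy induction (monotonicity of the Borel hierarchy in the topology) gives \( \mathbf{\Sigma}^0_\gamma(X, \tau_d) \subseteq \mathbf{\Sigma}^0_\gamma(X, \tau_{\hat d}) \) for every \( \gamma \). Also by Proposition~\ref{propqp}, \( \tau_{\hat d} \subseteq \mathbf{\Sigma}^0_2(X, \tau_d) \subseteq \mathbf{B}(X, \tau_d) \); since \( \mathbf{B}(X, \tau_d) \) is a \( \sigma \)-algebra (closed under complements level by level, and under countable unions because any countable family of Borel sets lies inside a single \( \mathbf{\Sigma}^0_\alpha \)), it equals the \( \sigma \)-algebra generated by \( \tau_d \); likewise \( \mathbf{B}(X, \tau_{\hat d}) \) is the \( \sigma \)-algebra generated by \( \tau_{\hat d} \), and since \( \tau_d \subseteq \tau_{\hat d} \subseteq \mathbf{B}(X,\tau_d) \) these coincide, so \( \mathbf{B}(X, \tau_d) = \mathbf{B}(X, \tau_{\hat d}) \). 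Finally \( (X, \hat d) \) is a Polish metric space (noted just before the statement), uncountable because \( X \) is; hence by the classical non-collapse of the Borel hierarchy on uncountable Polish spaces — obtained via the usual \( \mathbf{\Sigma}^0_\gamma \)-universal sets and a diagonal argument, recalling that on Polish spaces Definition~\ref{defbh} agrees with the classical Borel hierarchy — we have \( \mathbf{\Sigma}^0_\gamma(X, \tau_{\hat d}) \subsetneq \mathbf{B}(X, \tau_{\hat d}) \). Combining: \( \mathbf{\Sigma}^0_\gamma(X, \tau_d) \subseteq \mathbf{\Sigma}^0_\gamma(X, \tau_{\hat d}) \subsetneq \mathbf{B}(X, \tau_{\hat d}) = \mathbf{B}(X, \tau_d) \), as needed.

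The argument is largely soft; the step deserving care is the low-level bookkeeping of the second paragraph, where the non-standard definition of \( \mathbf{\Sigma}^0_2 \) (countable unions of differences of open sets) must be handled when checking monotonicity and the \( \sigma \)-algebra claim. In particular one should \emph{not} attempt to prove the stronger level-by-level equality \( \mathbf{\Sigma}^0_\gamma(X, \tau_d) = \mathbf{\Sigma}^0_\gamma(X, \tau_{\hat d}) \): this already fails at \( \gamma = 1 \) whenever \( \tau_d \) is strictly coarser than \( \tau_{\hat d} \) (as for the Scott domain \( P\omega \)), which is exactly why one routes the comparison through \( \mathbf{B}(X) \) and why the reduction of the first paragraph is essential. (An alternative route would be to transfer the classical non-collapse from the Cantor space along an embedding \( \mathcal{C} \hookrightarrow X \) obtained from a perfect-set argument for quasi-Polish spaces; but producing such an embedding for the topology \( \tau_d \) itself, rather than for the easy metric topology \( \tau_{\hat d} \), is more delicate than the change-of-topology argument above.)
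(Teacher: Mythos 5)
The paper does not actually prove this proposition: it is quoted verbatim from de~Brecht's paper as \cite[Theorem 18]{br}, so there is no in-text argument to compare yours against. Judged on its own, your proof is correct. The formal reduction in your first paragraph (properness everywhere $\Leftrightarrow$ non-collapse $\mathbf{\Sigma}^0_\gamma \neq \mathbf{\Pi}^0_\gamma$ at every level $\gamma \geq 1$, plus the trivial $\alpha = 0$ case) is sound, as is the observation that collapse at one level propagates upward under Definition~\ref{defbh}, so that it suffices to separate each $\mathbf{\Sigma}^0_\gamma(X)$ from $\mathbf{B}(X)$. The change-of-topology step is the right move: $\tau_d \subseteq \tau_{\hat d}$ gives level-by-level monotonicity $\mathbf{\Sigma}^0_\gamma(X,\tau_d) \subseteq \mathbf{\Sigma}^0_\gamma(X,\tau_{\hat d})$, while $\tau_{\hat d} \subseteq \mathbf{\Sigma}^0_2(X,\tau_d)$ forces the two Borel $\sigma$-algebras to coincide (your justification that $\mathbf{B}$ is a $\sigma$-algebra uses that countably many levels are bounded below $\omega_1$, which is fine under the paper's ambient $\mathsf{ZF}+\mathsf{DC}$), and the classical non-collapse for the uncountable Polish space $(X,\hat d)$ finishes the job. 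Two remarks. First, you were right to insert ``uncountable'': as printed the hypothesis is missing it, and the statement is false for, say, $X=\omega$ discrete or indeed any countable quasi-Polish space (by Proposition~\ref{propsingl}(1) every subset is then $\mathbf{\Sigma}^0_3$); the sentence preceding the proposition and de~Brecht's original Theorem~18 both carry the uncountability hypothesis. Second, your warning against seeking level-by-level equality of the two hierarchies is well taken --- Proposition~\ref{propqp} only identifies $\mathbf{\Sigma}^0_{<\omega}$ for the two topologies, not individual levels --- and routing the comparison through $\mathbf{B}(X)$ is exactly what makes the argument clean.
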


As for universal quasi-Polish spaces, we have the following result:

\begin{proposition}\cite[Corollary 24]{br}\label{proppi2}
A space is quasi-Polish if and only if it is homeomorphic to a ${\mathbf
\Pi}^0_2$-subset of $P\omega$ (with the relative topology inherited from \( P \omega \)). In particular, $P\omega$ is a universal quasi-Polish space.
\end{proposition}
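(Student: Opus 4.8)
The plan is to prove the two implications of the equivalence separately; the ``in particular'' clause will then drop out for free. The right-to-left direction is essentially immediate: if \( X \) is homeomorphic to some \( Y \subseteq P\omega \) with \( Y \in \mathbf{\Pi}^0_2(P\omega) \), then since \( P\omega \) is quasi-Polish (as recalled above, via an explicit complete quasi-metric), Proposition~\ref{propsubspace} tells us that \( Y \) is quasi-Polish; and quasi-Polishness is visibly a homeomorphism invariant — one transports a compatible complete quasi-metric along the homeomorphism and checks that \( d \)-Cauchy sequences and \( \hat d \)-convergence are preserved — so \( X \) is quasi-Polish as well.

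For the left-to-right direction the first step is to build the ``standard'' topological embedding of a countably based \( T_0 \) space into \( P\omega \). Fix a countable base \( \{ B_n \mid n \in \omega \} \) of \( X \) and define \( e \colon X \to P\omega \) by \( e(x) = \{ n \in \omega \mid x \in B_n \} \). I would then check that: \( e \) is injective because \( X \) is \( T_0 \); \( e \) is continuous because the sets \( {\uparrow} F = \{ S \subseteq \omega \mid F \subseteq S \} \), for \( F \) a finite subset of \( \omega \), form a basis of the Scott topology of \( P\omega \) and \( e^{-1}({\uparrow} F) = \bigcap_{n \in F} B_n \) is open in \( X \); and \( e \) is open as a map onto its image because \( e(B_m) = e(X) \cap {\uparrow}\{m\} \) with \( {\uparrow}\{m\} \) Scott-open. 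Hence \( e \) is a homeomorphism between \( X \) and the subspace \( e(X) \) of \( P\omega \).

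The second step is then to observe that \( e(X) \), being homeomorphic to \( X \), is quasi-Polish, so that Proposition~\ref{propsubspace} — now applied with ambient space \( P\omega \) — yields \( e(X) \in \mathbf{\Pi}^0_2(P\omega) \); this completes the nontrivial implication. For the universality statement, the left-to-right direction has already shown that every quasi-Polish space embeds topologically into \( P\omega \), and \( P\omega \) is itself quasi-Polish, so it is universal for the class of quasi-Polish spaces.

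The argument has no genuinely hard step once Proposition~\ref{propsubspace} is available; the only places that call for (routine) care are identifying a basis of the Scott topology on \( P\omega \) and verifying that the canonical map \( e \) is a homeomorphism onto its image for an arbitrary countably based \( T_0 \) space, and noting that quasi-Polishness passes to \( e(X) \) from \( X \) by homeomorphism so that Proposition~\ref{propsubspace} can legitimately be invoked in that direction.
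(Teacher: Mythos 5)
Your proof is correct: the canonical map \( e(x) = \{ n \in \omega \mid x \in B_n \} \) is indeed a topological embedding of any countably based \( T_0 \) space into \( P\omega \), and combining it with Proposition~\ref{propsubspace} (applied in both directions, together with the homeomorphism-invariance of quasi-Polishness) gives exactly the stated equivalence and the universality of \( P\omega \). The paper itself offers no proof --- the proposition is quoted verbatim from de Brecht's work --- and your argument is precisely the standard derivation of that result from the subspace characterization, so it matches the intended route.
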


A quasi-Polish space need not to be \( T_1 \). However one can still prove that the complexity of the singletons is not too high. Recall from e.g.\ \cite{br} that a space \( X \) \emph{satisfies the \( T_D \)-axiom} if  \( \{ x \} \) is the intersection of an open and a closed set for every \( x \in X \).

\begin{proposition}\label{propsingl}
\begin{enumerate}[(1)]
\item
\cite[Proposition 8]{br} If $X$ is a countably based $T_0$-space then $\{ x \}\in{\mathbf
\Pi}^0_2(X$) for any $x\in X$.
\item
\cite[Theorem 65]{br} A countably based space is scattered if and only if it is a countable quasi-Polish space satisfying the \( T_D \)-axiom.
\item
(Folklore) If \( (X,\leq) \) is a dcpo endowed with the Scott topology, then \( \{ c \} \) is the intersection of an open set and a closed set for every compact element \( c \in X_0 \).
\end{enumerate}
\end{proposition}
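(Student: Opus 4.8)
The plan is to treat the three items separately, since items (1) and (2) are quoted from~\cite{br} and item (3) is the only one needing a genuine argument here. For item (1), the argument is the standard one: if \( X \) is countably based and \( T_0 \), fix a countable basis \( \{ U_n \mid n \in \omega \} \) and observe that for \( x \in X \) the set \( \{ x \} \) is the intersection of \( \bigcap \{ U_n \mid x \in U_n \} \) (a \( \mathbf{\Pi}^0_2 \)-set, being a countable intersection of open sets) with \( \bigcap \{ X \setminus \overline{\{ y \}} \mid y \notin \overline{\{ x \}} \} \); using the \( T_0 \)-separation and the countable basis one checks this second set is also \( \mathbf{\Pi}^0_2 \), and their intersection is exactly \( \{ x \} \). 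For item (2), one simply cites~\cite[Theorem 65]{br}.

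The substantive part is item (3). Let \( (X, \leq) \) be a dcpo with the Scott topology and let \( c \in X_0 \) be a compact element. I would take the open set to be \( \mathord{\uparrow} c = \{ x \in X \mid c \leq x \} \), which is Scott-open precisely because \( c \) is compact, as recalled in Subsection~\ref{omega-cont}. For the closed set, I would take \( C = X \setminus \bigcup \{ \mathord{\uparrow} d \mid d \in X_0,\ d \nleq c \} \); this is closed, being the complement of a union of Scott-open sets. It then remains to verify that \( \mathord{\uparrow} c \cap C = \{ c \} \). The inclusion \( \supseteq \) is clear: \( c \in \mathord{\uparrow} c \) trivially, and \( c \in C \) because if \( d \in X_0 \) with \( c \in \mathord{\uparrow} d \), i.e.\ \( d \leq c \), then \( d \nleq c \) fails, so no such \( d \) witnesses \( c \) being removed. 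For the inclusion \( \subseteq \), suppose \( x \in \mathord{\uparrow} c \cap C \) but \( x \neq c \); then \( c \leq x \) and \( c \neq x \), so \( c < x \). The idea is to produce a compact element \( d \) with \( d \leq x \) and \( d \nleq c \), which would place \( x \) in the removed union and contradict \( x \in C \).

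The main obstacle is exactly this last step: in an arbitrary dcpo (not assumed algebraic or continuous) there need not be enough compact elements below \( x \) to separate it from \( c \). However, I claim \( c \) itself essentially does the job after a small adjustment — or rather, one should note that since \( c < x \), the element \( c \) is below \( x \) but is \emph{not} strictly below itself, so \( c \) does not directly serve. The correct move is: since \( c \leq x \) and \( c \neq x \), consider whether \( c \) already separates — actually here one uses that \( \mathord{\uparrow} c \) is the \emph{smallest} open neighbourhood of \( c \) (as noted in the excerpt for compact elements), and the specialization order fact that \( x \) is in the closure of \( \{ x \} \) only. I would instead argue directly: the set \( \{ c \} \) is open in the subspace \( \mathord{\uparrow} c \) if and only if there is no \( x > c \), and otherwise one shows \( \mathord{\uparrow} c \setminus \{ c \} \) is open in \( \mathord{\uparrow} c \), which (being upward closed and inaccessible by directed suprema from outside, since \( c \) is compact) pulls back to a Scott-open subset \( V \) of \( X \) with \( V \cap \mathord{\uparrow} c = \mathord{\uparrow} c \setminus \{ c \} \); then taking the closed set to be \( X \setminus V \) gives \( \mathord{\uparrow} c \cap (X \setminus V) = \{ c \} \) as desired. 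Verifying that \( \mathord{\uparrow} c \setminus \{ c \} \) is indeed Scott-open in \( \mathord{\uparrow} c \) — that is, that \( c \) is not the supremum of a directed set contained in \( \mathord{\uparrow} c \setminus \{ c \} \) — is immediate from compactness of \( c \), since such a directed set would have to meet every open neighbourhood of its supremum, in particular \( \mathord{\uparrow} c \), forcing some member to equal or exceed \( c \) while lying strictly above... which forces membership strictly above \( c \), consistent, so in fact one needs the sharper observation that \( \{ c \} \) together with \( \mathord{\uparrow} c \setminus \{ c \} \) partitions \( \mathord{\uparrow} c \) and the latter is upward closed; I would spell out that if \( D \subseteq \mathord{\uparrow} c \setminus \{c\} \) is directed with \( \sup D = c \) then \( c \in D \) since \( \mathord{\uparrow} c \) is open, a contradiction. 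This completes the proof; the only real content is this compactness observation, everything else being bookkeeping with upward-closed sets.
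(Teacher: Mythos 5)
The paper itself gives no proof here: items (1) and (2) are quoted from de Brecht's paper and item (3) is labelled folklore, so the only comparison available is with the evident intended arguments. For item (3) the intended one\mbox{-}liner is \( \{c\} = \uparrow \! \! c \cap \mathrm{cl}(\{c\}) \): the set \( \uparrow \! \! c \) is open by the very definition of compactness of \( c \), and since the specialization order of the Scott topology is \( \leq \) (as recalled in Subsection~\ref{omega-cont}), \( \mathrm{cl}(\{c\}) = \{ x \mid x \leq c \} \), whence the intersection is \( \{c\} \) by antisymmetry. Your route to (3) is salvageable but both longer and flawed as written. You rightly abandon the first construction (a general dcpo has no supply of compact elements \( d \nleq c \)). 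The second construction --- take \( V = \uparrow \! \! c \setminus \{c\} \) and intersect \( \uparrow \! \! c \) with \( X \setminus V \) --- does work, but what must be checked is that \( V \) is Scott-open \emph{in \( X \)}: upward closedness is easy, and inaccessibility by directed suprema needs a genuine argument (if \( D \) is directed with \( \sup D \in V \), openness of \( \uparrow \! \! c \) gives some \( d_0 \in D \) with \( d_0 \geq c \); if no element of \( D \) were strictly above \( c \), then \( d_0 = c \in D \), and for any \( d \in D \) a common upper bound of \( c \) and \( d \) inside \( D \) lies in \( \uparrow \! \! c \), hence equals \( c \), so \( D \subseteq \{ y \mid y \leq c \} \) and \( \sup D \leq c \), a contradiction). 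What you actually verify --- that no directed \( D \subseteq \uparrow \! \! c \setminus \{c\} \) has supremum \( c \) --- is vacuously true (every element of that set is strictly above \( c \)) and is not the openness condition you need; the ``pulls back to a Scott-open subset of \( X \)'' step is exactly the point left unjustified.

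There is also a genuine error in your sketch of item (1). The set \( \bigcap \{ X \setminus \mathrm{cl}(\{y\}) \mid y \notin \mathrm{cl}(\{x\}) \} \) fails to contain \( x \) whenever some \( y \) lies strictly above \( x \) in the specialization order (then \( y \notin \mathrm{cl}(\{x\}) \) but \( x \in \mathrm{cl}(\{y\}) \)); for instance in \( P\omega \) it misses \( x = \emptyset \) entirely, so the claimed identity is false in general. The standard argument behind \cite[Proposition 8]{br} is simply \( \{x\} = \bigcap \{ U_n \mid x \in U_n \} \cap \bigcap \{ X \setminus U_n \mid x \notin U_n \} \) for a countable basis \( \{ U_n \mid n \in \omega \} \): a point \( z \) in this intersection has exactly the same basic neighbourhoods as \( x \), hence equals \( x \) by the \( T_0 \) axiom, and the right-hand side is a countable intersection of open and closed sets, hence \( \mathbf{\Pi}^0_2(X) \) in the sense of Definition~\ref{defbh}.
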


Of course, a quasi-Polish space need not to have any other special topological property: for example, all nondiscrete \(\omega\)-continuous domain are not Hausdorff and not regular. As for metrizability, we have the following result:

\begin{proposition}\cite[Corollary 42]{br} \label{propmetrizable}
A metrizable space is quasi-Polish if and only if it is Polish.
\end{proposition}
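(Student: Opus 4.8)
The backward direction is immediate, since every Polish space is quasi-Polish (as noted right after the definition of quasi-Polish spaces). For the forward direction, the plan is to realize a metrizable quasi-Polish space as a $\mathbf{\Pi}^0_2$ subset of a genuinely Polish ambient space, and then to fall back on the classical characterization of Polish subspaces due to Alexandrov.

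So let $X$ be metrizable and quasi-Polish, and fix a metric $\rho$ compatible with its topology. Being quasi-Polish, $X$ is countably based, hence the metric space $(X,\rho)$ is separable; let $(\widehat{X},\widehat{\rho})$ denote its metric completion. Then $\widehat{X}$ is a complete separable metric space, i.e.\ a Polish space, and the canonical isometry identifies $X$ with a (dense) subspace of $\widehat{X}$. Since $\widehat{X}$ is Polish it is in particular quasi-Polish, and $X$ is a quasi-Polish subspace of $\widehat{X}$; hence Proposition~\ref{propsubspace} yields $X \in \mathbf{\Pi}^0_2(\widehat{X})$. As observed in the remark following Proposition~\ref{propbh}, on the Polish space $\widehat{X}$ the pointclass $\mathbf{\Pi}^0_2$ of Definition~\ref{defbh} coincides with the classical one, i.e.\ with the $G_\delta$ subsets of $\widehat{X}$. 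Thus $X$ is a $G_\delta$ subspace of the completely metrizable space $\widehat{X}$, so by the classical theorem of Alexandrov it is itself completely metrizable; being also separable, it is Polish, as required.

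The argument is essentially routine once the right ambient space has been isolated. The only point that requires a little care is the identification of $\mathbf{\Pi}^0_2$ in the sense of Definition~\ref{defbh} with the classical $G_\delta$ sets: this is exactly where one needs the ambient space to be Polish (equivalently, metrizable), since Definition~\ref{defbh} and the classical Borel hierarchy can disagree at level $2$ only on non-metrizable spaces — hence passing to the metric completion $\widehat{X}$, rather than to the universal quasi-Polish space $P\omega$ of Proposition~\ref{proppi2}, is what makes the reduction to the classical theory possible. (Alternatively, one could embed the separable metrizable space $X$ into the Hilbert cube $[0,1]^\omega$ by the Urysohn embedding theorem and run the same argument there, using that $[0,1]^\omega$ is Polish.)
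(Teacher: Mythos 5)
Your argument is correct. Note that the paper itself offers no proof of this statement: it is quoted directly from de Brecht's paper as \cite[Corollary 42]{br}, so there is nothing internal to compare against. What you have produced is a clean, self-contained derivation from two other facts the paper also imports from \cite{br}, namely Proposition~\ref{propsubspace} (a subspace of a quasi-Polish space is quasi-Polish iff it is \( \mathbf{\Pi}^0_2 \) there) and the observation that on metrizable spaces the modified level-\( 2 \) classes of Definition~\ref{defbh} agree with the classical ones. Each step checks out: metrizable plus countably based gives separability, the completion \( \widehat{X} \) is Polish and hence quasi-Polish, quasi-Polishness is a topological invariant so the isometric copy of \( X \) in \( \widehat{X} \) is a quasi-Polish subspace and therefore \( \mathbf{\Pi}^0_2(\widehat{X}) = G_\delta \), and Alexandrov's theorem finishes the job. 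There is also no circularity, since Proposition~\ref{propsubspace} is established in \cite{br} independently of and prior to the metrizability corollary. Your closing remark correctly identifies the one delicate point (why one must complete into a \emph{metrizable} ambient space rather than embed into \( P\omega \)); the alternative via the Urysohn embedding into \( [0,1]^\omega \) works equally well.
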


Finally, among the various characterizations of the class of quasi-Polish spaces presented in~\cite{br}, the following one will be of interest for the results of this paper.

\begin{proposition} \cite[Theorem 53]{br} \label{propquasiPolishalgebraic}
A topological space \( X \) is a quasi-Polish space if and only if it is homeomorphic to the set of non-compact elements of some \(\omega\)-algebraic domain.
\end{proposition}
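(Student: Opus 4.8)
The statement is an equivalence; I would treat the two directions separately, the right-to-left one being a short corollary of results already established and the left-to-right one requiring an explicit construction of an $\omega$-algebraic domain.

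For the direction from right to left, suppose $X$ is homeomorphic to $D \setminus D_0$ (with the relative Scott topology) for some $\omega$-algebraic domain $D$, where $D_0$ is its countable set of compact elements. Since $D$ is in particular $\omega$-continuous it is quasi-Polish, so by Proposition~\ref{propsubspace} it suffices to check that $D \setminus D_0 \in \mathbf{\Pi}^0_2(D)$, i.e.\ that $D_0 \in \mathbf{\Sigma}^0_2(D)$. By Proposition~\ref{propsingl}(3) each singleton $\{c\}$ with $c \in D_0$ is the intersection of an open set with a closed set, hence of the form $U \setminus W$ for open $U,W$, hence a member of $\mathbf{\Sigma}^0_2(D)$; as $D_0$ is countable and $\mathbf{\Sigma}^0_2(D)$ is closed under countable unions (it is the $\sigma$-closure of a family, by Definition~\ref{defbh}), $D_0 = \bigcup_{c \in D_0}\{c\} \in \mathbf{\Sigma}^0_2(D)$, and we are done.

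For the converse, the plan is as follows. Using Proposition~\ref{proppi2} I would first replace $X$ by a homeomorphic copy $A \in \mathbf{\Pi}^0_2(P\omega)$, and write $A = \bigcap_{n\in\omega}(\overline{U_n}\cup V_n)$ with $U_n,V_n \subseteq P\omega$ open --- the canonical form of a $\mathbf{\Pi}^0_2$ set, since by definition $\mathbf{\Sigma}^0_2$ consists of countable unions of differences of open sets. Then I would build an $\omega$-algebraic domain $D$ by ``unfolding'' this presentation into a poset of finite approximations: an approximation records a compact element $s$ of $P\omega$ (a finite subset of $\omega$) together with a ``certification level'' $k\in\omega$ counting how many of the constraints $\overline{U_n}\cup V_n$ have been verified so far along the approximation, the order being arranged so that genuinely enlarging the $P\omega$-part forces the level $k$ to grow. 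Taking $D$ to be the ideal completion of this poset yields an $\omega$-algebraic domain whose compact elements are exactly these finite approximations; the non-principal ideals along which the level tends to $\infty$ should be precisely those whose $P\omega$-coordinates converge to a point of $A$, and this correspondence should be a bijection onto $D\setminus D_0$ and a homeomorphism for the relative Scott topology, recovering $X$.

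The hard part will be the design and verification of this construction. One must arrange that (i) the ideal completion is genuinely $\omega$-algebraic with exactly the intended compact elements; (ii) the induced map from $D\setminus D_0$ to $X$ is a bijection --- there should be no spurious non-compact ideals (a naive product $P\omega\times(\omega+1)$ fails, since every element with an infinite $P\omega$-coordinate would automatically be non-compact), and the finite approximations must be canonical enough that distinct non-principal ideals determine distinct points of $A$; and (iii) this bijection is a homeomorphism for the relative Scott topology. Handling (ii) is where one is forced to let the certification level grow only along completions of the $P\omega$-coordinate that certify every constraint $\overline{U_n}\cup V_n$, and where completeness of the quasi-metric of $X$ enters, guaranteeing that each fully certifying non-principal ideal does converge to a (unique) point of $A$; handling (iii) requires relating the Scott-basic open sets of $D$ restricted to $D\setminus D_0$ to the basic open sets of $A$.
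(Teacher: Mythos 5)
This proposition is not proved in the paper at all: it is quoted verbatim from de Brecht (\cite[Theorem 53]{br}), so there is no internal argument to measure yours against, and your proposal has to be judged on its own terms. Your right-to-left direction is complete and correct, and uses exactly the tools the paper makes available: an \(\omega\)-algebraic domain \(D\) is \(\omega\)-continuous and hence quasi-Polish, Proposition~\ref{propsingl}(3) writes each compact singleton as an open set minus an open set and hence puts it in \(\mathbf{\Sigma}^0_2(D)\), so the countable set \(D_0\) is \(\mathbf{\Sigma}^0_2(D)\), its complement is a \(\mathbf{\Pi}^0_2\) subspace, and Proposition~\ref{propsubspace} finishes the argument.

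The left-to-right direction, however, is a plan rather than a proof. You never actually define the poset of ``finite approximations'': the order on pairs \((s,k)\) is specified only by the informal requirement that enlarging the \(P\omega\)-part should force the certification level to grow, and the three obligations you list --- that the ideal completion is \(\omega\)-algebraic with exactly the intended compact elements, that the fully certifying non-principal ideals biject with \(A\), and that this bijection is a homeomorphism for the relative Scott topologies --- are precisely the content of the theorem and are all left undischarged. Obligation (ii) is where the substance lies: the order must be arranged so that an ideal whose \(P\omega\)-coordinates accumulate at a point outside \(A = \bigcap_n(\overline{U_n}\cup V_n)\) cannot keep raising its level (this is where the constraint sets must be consulted at each step, via the basic open sets generating \(U_n\) and \(V_n\)), and so that two distinct non-principal ideals cannot determine the same point of \(A\); none of this interplay between \(s\), \(k\), and the chosen presentation of \(A\) is written down. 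The strategy is sound and close in spirit to de Brecht's own construction, but as it stands the forward implication has no proof, only a statement of what a proof would have to accomplish.
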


\subsection{Reducibilities}\label{subsectionreducibilities}

In this subsection we introduce and briefly discuss some notions of reducibility
which serve as tools for measuring the topological
complexity of problems (e.g.\ sets, partitions, and so on) in DST and CA.

\begin{definition}
Let \( X  \in \mathscr{X} \) be a topological space. A collection of functions \( \mathcal{F} \) from \( X \) to itself is called \emph{reducibility (on \( X \))} if it contains the identity function \( \id_X \) and is closed under composition.
\end{definition}

Given a reducibility \( \F \) on \( X \), one can consider the preorder \( \leq^X_\F \) on \( \mathscr{P}(X) \) associated to \( \F \) obtained by setting for \(A,B \subseteq X \)
\[
A \leq^X_\F B \iff A = f^{-1}(B) \text{ for some } f \in \F.
 \]
The preorder \( \leq^X_ \F \) canonically induces the equivalence relation
\[
A \equiv^X_\F B \iff {{A \leq^X_\F B} \wedge {B \leq^X_\F A}}.
 \]
Given \(A \subseteq X \), the set \( [A]^X_\F = \{ B \subseteq X \mid A \equiv^X_\F B \} \) is called the \emph{\( \F \)-degree} of \( A \) (in \( X \)).
We denote
the set of \( \F \)-degrees by \( \mathcal{D}^X_\F \).  Notice that
\( \leq^X_\F \) canonically induces on \( \mathcal{D}^X_\F \) the partial order \( [A]^X_\F \leq [B]^X_\F \iff A \leq^X_\F B \). The structures \( (\mathscr{P}(X), \leq^X_\F) \)   or its \( \equiv^X_\F \)-quotient \( (\mathcal{D}^X_\F, \leq ) \) are both called \emph{\( \F \)-hierarchy (on \( X \))} or \emph{hierarchy of the \( \F \)-degrees}. When the space \( X \) is clear from the context, we drop the superscript referring to \( X \) in all the notation above. Notice that if \( \F \) is the collection of all continuous functions, then \( \leq_\F \) coincides with the Wadge reducibility \( \leq_\mathsf{W} \). We will also sometimes consider the restriction of the \( \F \)-hierarchy to some suitable pointclass \( \mathbf{\Gamma}(X) \): in this case, the structure \( (\mathbf{\Gamma}(X), \leq_\F ) \) and its \( \equiv_\F \)-quotient (whenever it is well-defined) will be called \emph{\( (\mathbf{\Gamma}, \F) \)-hierarchy}.

An interesting variant of the reducibility between subsets of \( X \) considered above is obtained by considering \( X \)-namings instead of subsets of \( X \).

\begin{definition}
Let \( X \) be a topological space. An \emph{\( X \)-naming} is a function $\nu$ with domain  $X$.
\end{definition}

There are several
natural reducibility notions for namings, the most basic of which
is the following generalization of \( \leq_\mathsf{W} \).

\begin{definition} \cite{s05, s11a}
An \( X \)-naming $\mu$ is {\em Wadge reducible} to an \( X \)-naming
$\nu$ (in symbols $\mu \leq_{\mathsf{W}} \nu$) if $\mu = \nu\circ f$ for some
continuous function $f \colon X \to X$.

An \( X \)-naming $\mu$ is {\em
Wadge equivalent} to $\nu$ (in symbols $\mu \equiv_\mathsf{W} \nu$), if $\mu \leq_\mathsf{W} \nu$
and $\nu \leq_\mathsf{W} \mu$.
\end{definition}

For any set $S$, one can then consider the preorder $(S^X,\leq_\mathsf{W})$ (or its \( \equiv_\mathsf{W} \)-quotient structure). This gives a  generalization of the preorder formed by the classical Wadge
reducibility on subsets of $X$, because if
$S=\{0,1\}$ then the structures $(\mathscr{P}(X), \leq_\mathsf{W})$ and
$(S^X,\leq_\mathsf{W})$ are isomorphic: $A\leq_\mathsf{W} B$ if and only if $c_A\leq_\mathsf{W}
c_B$, where $c_A \colon X \to 2$ is the characteristic function
of the set $A \subseteq X$. Passing to an arbitrary set \( S \), the Wadge reducibility between \( X \)-namings on \( S \) corresponds to the continuous reducibility between partitions of \( X \) in (at most) \( |S| \)-many pieces. For this reason, when \( S  = \kappa\) is a cardinal number the elements of \( \kappa^X \) are also called \emph{\( \kappa \)-partitions of \( X \)}. Moreover, as for the Wadge reducibility between subsets of \( X \),  we can consider the restriction of \( (S^X,\leq_\mathsf{W}) \) to a pointclass \( \mathbf{\Gamma}(X) \). In particular, when \( S = k \) (for some \( k \in \omega \)) we denote by \( (\mathbf{\Gamma}(X))_k \) the set of \( k \)-partitions \( \nu \in k^X
 \) such that \( \nu^{-1}(i) \in \mathbf{\Gamma}(X) \) for every \( i < k \).

Already for $3\leq k \in \omega$, the structure $(k^\mathcal{N},\leq_\mathsf{W})$ becomes much more complicated than the structure of Wadge degrees on \( \mathcal{N} \), but when restricted to suitable pointclasses it is quite well understood.
In~\cite{ems87} it is shown that the structure
$((\mathbf{\Delta}^1_1(\mathcal{N}))_k,\leq_\mathsf{W})$ is a wqo,
i.e.\ it has neither infinite descending chain nor infinite
antichain. In~\cite{he93,s07a} the \( \equiv_\mathsf{W} \)-quotient structures of, respectively,
$((BC(\mathbf{\Sigma}^0_1)(\mathcal{N}))_k,\leq_\mathsf{W})$ and
$((\mathbf{\Delta}^0_2(\mathcal{N}))_k,\leq_\mathsf{W})$  were
characterized in terms of the relation of homomorphism between
finite and, respectively, countable well-founded $k$-labeled forests.
The mentioned characterizations considerably clarify the corresponding
structures $((BC(\mathbf{\Sigma}^0_1)(\mathcal{N}))_k,\leq_\mathsf{W})$ and
$((\mathbf{\Delta}^0_2(\mathcal{N}))_k,\leq_\mathsf{W})$, and led to deep definability theories for them developed in~\cite{ks07,ks09,ksz09}. In particular, both structures have undecidable first-order theories, and their automorphism groups are isomorphic to the symmetic group on $k$. Similar results are also known for $k$-partitions of $\omega^{\leq\omega}$, see~\cite{s10}.

Of course one can consider other variants on the notion of continuous reducibility between \( X \)-namings. For example,  given a reducibility \( \F \) on \( X \) and a set \( S \), one can consider the \( \F \)-reducibility \( \leq^{S,X}_\F \) between \( X \)-namings defined in the obvious way (as usual, when \( S \) and/or \( X \) are clear from the context we will drop any reference to them in the notation): in this paper we will also provide some results related to this more general notion of reducibility.

\section{Some examples of reducibilities}\label{sectionreduc}

There are various notions of reducibility \( \mathcal{F} \) that have been considered in the literature (see e.g.\ \cite{an06, andmar, ros09, motbaire, motsuperamenable}). In this section we will provide several examples which are relevant for this paper.

Let \( \mathbf{\Gamma},\mathbf{\Delta} \) be two families of pointclasses and \( X,Y \) be arbitrary topological spaces. We denote by \( \mathbf{\Gamma}\mathbf{\Delta}(X,Y) \) (respectively,  \( \mathbf{\Gamma}\mathbf{\Delta}[X,Y]\)) the collection of functions \( f \colon X\to Y \) such that \( f^{-1}(A) \in \mathbf{\Gamma}(X) \) for all \( A\in\mathbf{\Delta}(Y) \) (respectively,  \( f(A)\in\mathbf{\Delta}(Y) \) for all \( A\in\mathbf{\Gamma}(X) \)). Notice that if \( f \colon X \to Y \) is an injection then \( f \in \mathbf{\Gamma} \mathbf{\Delta}(X,Y) \iff f^{-1} \in \mathbf{\Delta} \mathbf{\Gamma}[f(X),X] \). If moreover \( f \colon X \to Y \) is such that \( f(X) \in \mathbf{\Delta} \) and \( \mathbf{\Delta} \) is closed under finite intersections, then \( f \in \mathbf{\Gamma} \mathbf{\Delta}[X,Y] \iff f^{-1} \in \mathbf{\Delta} \mathbf{\Gamma}(f(X),X) \). We abbreviate \( \mathbf{\Gamma}\mathbf{\Gamma}(X,Y) \) with  \( \mathbf{\Gamma}(X,Y) \) and \( \mathbf{\Gamma}\mathbf{\Gamma}[X,Y] \) with \( \mathbf{\Gamma}[X,Y]\). When writing \( \mathbf{\Gamma}\subseteq\mathbf{\Gamma}' \) we mean that \( \mathbf{\Gamma}(X)\subseteq\mathbf{\Gamma}'(X) \) for all topological spaces \( X \).

\begin{remark}\label{remfun}
Let \( \mathbf{\Gamma},\mathbf{\Gamma}',\mathbf{\Delta},\mathbf{\Delta}', \mathbf{\Lambda} \) be families of pointclasses and  \( X,Y,Z \) be arbitrary topological spaces.
\begin{enumerate}[(1)]
 \item
If \( \mathbf{\Gamma} \subseteq \mathbf{\Gamma}' \) and \( \mathbf{\Delta} \subseteq \mathbf{\Delta}' \) then \( \mathbf{\Gamma}\mathbf{\Delta}'(X,Y) \subseteq \mathbf{\Gamma}'\mathbf{\Delta}(X,Y) \)  and \( \mathbf{\Gamma}' \mathbf{\Delta}[X,Y] \subseteq \mathbf{\Gamma}\mathbf{\Delta}'[X,Y]\).

 \item
If \(  f \in \mathbf{\Gamma}\mathbf{\Lambda}(X,Y) \) and \( g \in \mathbf{\Lambda}\mathbf{\Delta}(Y,Z) \) then \( g \circ f \in \mathbf{\Gamma}\mathbf{\Delta}(X,Z) \).

 \item
 If \( f \in \mathbf{\Gamma} \mathbf{\Lambda}[X,Y] \) and \( g \in \mathbf{\Lambda}\mathbf{\Delta}[Y,Z] \) then \( g \circ f \in \mathbf{\Gamma}\mathbf{\Delta}[X,Z] \).
  \item
\( \mathbf{\Gamma}(X,X) \) is closed under composition and contains the identity function (hence is a reducibility on \( X \)).
 \end{enumerate}
 \end{remark}

In this paper, we will often consider the sets of functions given by using the levels of the Borel hierarchy in the above definitions. For ease of notation, when
\( \mathbf{\Gamma} = \mathbf{\Sigma}^0_\alpha \) and
\( \mathbf{\Delta} = \mathbf{\Sigma}^0_\beta \) (for
\( \alpha, \beta < \omega_1 \)) we will write
\( \mathbf{\Sigma}^0_{\alpha,\beta} (X,Y) \) and
\( \mathbf{\Sigma}^0_{\alpha,\beta}[X,Y] \) instead of
\( \mathbf{\Sigma}^0_\alpha \mathbf{\Sigma}^0_\beta(X,Y) \) and
\( \mathbf{\Sigma}^0_\alpha \mathbf{\Sigma}^0_\beta[X,Y] \), respectively. Similarly, we
will write \( \mathbf{\Sigma}^0_{< \alpha,\beta}(X,Y) \) instead of
\( \mathbf{\Sigma}^0_{<\alpha} \mathbf{\Sigma}^0_\beta(X,Y) \).
Moreover, we will often denote the class of continuous functions
\( \mathbf{\Sigma}^0_{1}(X,Y) \) by \( \mathsf{W}(X,Y) \), and write \( \mathsf{W}(X) \) (or even
just \( \mathsf{W} \), if \( X \) is clear from the context) instead of
\( \mathsf{W}(X,X) \). (The
symbol \( \mathsf{W} \) stands for W.\ Wadge, who was the first to initiate in~\cite{wad84} a systematic study of the quasi-order \( \leq_{\mathsf{W}} \)
on \( \mathcal{N} \).)

Some of these classes of functions are well-known in DST. For example,
\( \mathbf{\Sigma}^0_\alpha(X,Y) \) coincides (for \( \alpha \geq 1 \) and
\( X = Y = \mathcal{N} \)) with the class of functions \( \mathsf{D}_\alpha \)
considered in~\cite{ros09}: for this reason, the class
\( \mathbf{\Sigma}^0_\alpha(X,Y) \) will be often denoted by
\( \mathsf{D}_\alpha(X,Y) \). When \( X=Y \) we will simplify a little bit the
notation by setting \( \mathsf{D}_\alpha(X) = \mathsf{D}_\alpha(X,X) \), and
even drop the reference to \(X \) when such space is clear from the context. Notice that the classes \( \mathsf{D}_\alpha(X) \)
are always reducibilities by Remark~\ref{remfun}(4). It follows immediately from the above definitions that $\mathsf{D}_\alpha(X,Y)=\mathbf{\Pi}^0_\alpha(X,Y)$  for $1 \leq \alpha<\omega_1$, and that if \( \alpha \geq 2 \) then \( \mathsf{D}_\alpha(X,Y) =\mathbf{\Delta}^0_\alpha(X,Y) \); when $Y$ is zero-dimensional, then we also have that $\mathsf{D}_1(X,Y)=\mathbf{\mathbf{\Delta}}^0_1(X,Y)$.

Another well-known class is \( \mathbf{\Sigma}^0_{\alpha,1}(X,Y) \),
the collection of all $\mathbf{\Sigma}^0_\alpha$-measurable
functions from $X$ to $Y$. Recall that by \cite[Theorem 24.3]{ke94}
if \( \alpha = \beta +1 \) and \( X,Y \) are metrizable with \( Y \)
separable, the class \( \mathbf{\Sigma}^0_{\alpha,1} \) coincides
with the class of Baire class \(\beta\) functions (as defined e.g.\
in \cite[Definition 24.1]{ke94}). The classes \(
\mathbf{\Sigma}^0_{\alpha,1}(X,X) \) are not closed under
composition if \( \alpha > 1 \): as computed in \cite[Theorem
6.4]{motbaire}, the closure under composition of \(
\mathbf{\Sigma}^0_{\alpha,1}(X,X) \) is given by \(
\mathcal{B}_{\gamma}(X) = \bigcup_{\beta < \gamma}
\mathbf{\Sigma}^0_{\beta,1}(X,X) \), where \( \gamma = \alpha \cdot
\omega \) is the first additively closed ordinal above \( \alpha \)
(as usual, we will drop the reference to \( X \) whenever such space
will be clear from the context). Hence, when \( \gamma \) is
additively closed the set \( \mathcal{B}_{\gamma}(X) \) is a
reducibility on \( X \). The reducibilities \( \mathcal{B}_\gamma(X)
\) and their induced degree-structures have been studied in~\cite{motbaire}. Notice also that in general \(
\bigcup_{\alpha<\gamma} \mathsf{D}_\alpha(X) \subsetneq
\mathcal{B}_\gamma(X) \subsetneq \mathbf{\Sigma}^0_{< \gamma,1}(X,X)
\)

We now state some properties of these classes of functions.

\begin{proposition}\label{propsigmafun}
\begin{enumerate}[(1)]
 \item If $1\leq\alpha<\beta<\omega_1$ then $\mathsf{D}_\alpha(X,Y)\subseteq \mathsf{D}_\beta(X,Y)$, and if \(\beta\) is limit \( \mathsf{D}_\alpha(X,Y) \subseteq \mathbf{\Sigma}^0_{< \beta}(X,Y) \subseteq \mathsf{D}_\beta(X,Y) \).

  \item
Let \( 1 \leq \alpha,\beta < \omega_1 \) and \( \delta = \max \{ \alpha,\beta \} \cdot \omega \) (i.e.\ \(\delta\) is the first additively closed ordinal strictly above \(\alpha\) and \(\beta\)). Then $\mathbf{\Sigma}^0_{\alpha,\beta}(X,Y) \subseteq \mathbf{\Sigma}^0_{< \delta}(X,Y)$. In particular, \( \bigcup_{\alpha< \gamma} \mathbf{\Sigma}^0_{\alpha,1} (X,Y) \subseteq \mathbf{\Sigma}^0_{< \gamma}(X,Y) \subseteq \mathsf{D}_\gamma(X,Y) \) for every additively closed \( \gamma < \omega_1 \).
 \end{enumerate}
 \end{proposition}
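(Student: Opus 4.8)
The plan is to prove both parts by induction on the relevant ordinals, reducing everything to the basic composition behaviour of the Borel pointclasses recorded in Proposition~\ref{propbh} and the definition of the families $\mathbf{\Sigma}^0_{\alpha,\beta}$.

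For part~(1), suppose $f \in \mathsf{D}_\alpha(X,Y) = \mathbf{\Sigma}^0_\alpha \mathbf{\Sigma}^0_\alpha(X,Y)$, i.e.\ $f^{-1}(A) \in \mathbf{\Sigma}^0_\alpha(X)$ for every $A \in \mathbf{\Sigma}^0_\alpha(Y)$. Since $\mathsf{D}_\alpha(X,Y) = \mathbf{\Pi}^0_\alpha(X,Y)$ for $1 \leq \alpha$ (as noted right before Proposition~\ref{propsigmafun}), it suffices to check $f^{-1}(B) \in \mathbf{\Sigma}^0_\beta(X)$ for every $B \in \mathbf{\Sigma}^0_\beta(Y)$. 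If $\beta = \alpha+1$ or more generally $\alpha < \beta$, one unwinds Definition~\ref{defbh}: a set in $\mathbf{\Sigma}^0_\beta(Y)$ is a countable union of complements of sets in lower levels, so $f^{-1}$ of it is a countable union of complements of sets of the form $f^{-1}(C)$ with $C$ in a level $\leq \alpha$ (using Proposition~\ref{propbh} to absorb all levels below $\alpha$ into $\mathbf{\Sigma}^0_\alpha$, resp.\ $\mathbf{\Pi}^0_\alpha$); each such $f^{-1}(C)$ lies in $\mathbf{\Sigma}^0_\alpha(X)$ or $\mathbf{\Pi}^0_\alpha(X)$, hence in $\mathbf{\Delta}^0_{\alpha+1}(X) \subseteq \mathbf{\Sigma}^0_\beta(X)$ by Proposition~\ref{propbh} again, and a countable union of such stays in $\mathbf{\Sigma}^0_\beta(X)$. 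The limit case is then immediate: if $\beta$ is limit then $\mathbf{\Sigma}^0_{<\beta} = \bigcup_{\gamma < \beta}\mathbf{\Sigma}^0_\gamma$ and the first inclusion $\mathsf{D}_\alpha(X,Y) \subseteq \mathbf{\Sigma}^0_{<\beta}(X,Y)$ holds because $\alpha < \beta$, while $\mathbf{\Sigma}^0_{<\beta}(X,Y) \subseteq \mathsf{D}_\beta(X,Y)$ follows from the fact that for limit $\beta$ one has $\mathbf{\Sigma}^0_{<\beta} = \mathbf{\Delta}^0_{<\beta} \subseteq \mathbf{\Delta}^0_\beta \subseteq \mathbf{\Pi}^0_\beta$ on every space, so $f^{-1}$ sends $\mathbf{\Sigma}^0_\beta(Y)$-sets (countable unions of $\mathbf{\Pi}^0_{<\beta}(Y)$-sets) into $\mathbf{\Sigma}^0_\beta(X)$.

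For part~(2), the strategy is to reduce $\mathbf{\Sigma}^0_{\alpha,\beta}(X,Y)$ to the case $\beta = 1$ and then apply the known composition bound. Concretely: if $f \in \mathbf{\Sigma}^0_{\alpha,\beta}(X,Y)$, then $f^{-1}$ sends open sets of $Y$ into $\mathbf{\Sigma}^0_\alpha(X)$ (taking $A \in \tau_Y \subseteq \mathbf{\Sigma}^0_\beta(Y)$), so $f \in \mathbf{\Sigma}^0_{\alpha,1}(X,Y)$; conversely one checks by induction on $\beta$, exactly as in part~(1), that $\mathbf{\Sigma}^0_{\alpha,1}(X,Y) \subseteq \mathbf{\Sigma}^0_{\alpha+\beta, \beta}(X,Y)$ or the sharper statement needed, but the clean route is: a $\mathbf{\Sigma}^0_\beta(Y)$-set is built from open sets of $Y$ by at most $\beta$-many applications of countable union and complement, hence its $f$-preimage is built from $\mathbf{\Sigma}^0_\alpha(X)$-sets by the same operations, which lands it in $\mathbf{\Sigma}^0_{\alpha + \beta}(X)$; since $\alpha + \beta < \max\{\alpha,\beta\}\cdot\omega = \delta$ and $\delta$ is additively closed, $\mathbf{\Sigma}^0_{\alpha+\beta}(X) \subseteq \mathbf{\Sigma}^0_{<\delta}(X)$, giving $\mathbf{\Sigma}^0_{\alpha,\beta}(X,Y) \subseteq \mathbf{\Sigma}^0_{<\delta}(X,Y)$. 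For the ``in particular'' clause, specialize to $\beta = 1$: each $\mathbf{\Sigma}^0_{\alpha,1}(X,Y) \subseteq \mathbf{\Sigma}^0_{<\gamma}(X,Y)$ for $\alpha < \gamma$ (since $\alpha \cdot \omega \leq \gamma$ by additive closure of $\gamma$), take the union over $\alpha < \gamma$, and finish with $\mathbf{\Sigma}^0_{<\gamma}(X,Y) \subseteq \mathsf{D}_\gamma(X,Y)$ from part~(1) applied at the limit ordinal $\gamma$.

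The main obstacle I anticipate is bookkeeping rather than conceptual: making the induction on $\beta$ precise for the ``preimage of a $\mathbf{\Sigma}^0_\beta$-set'' claim requires carefully tracking how Definition~\ref{defbh}'s level-2 anomaly (countable unions of \emph{differences} of open sets, rather than of closed sets) interacts with preimages, and making sure that $f^{-1}$ of a difference of $\mathbf{\Sigma}^0_\alpha(Y)$-sets lands where expected when $\alpha = 1$ versus $\alpha \geq 2$; but since $f^{-1}$ commutes with all Boolean operations and countable unions, and Proposition~\ref{propbh} lets us freely absorb lower levels, this is entirely routine. I would present the $\beta$-induction once, in the form ``$f \in \mathbf{\Sigma}^0_{\alpha,1}(X,Y) \Rightarrow f^{-1}(B) \in \mathbf{\Sigma}^0_{\alpha+\beta}(X)$ for all $B \in \mathbf{\Sigma}^0_\beta(Y)$'', and derive everything else from it together with additive closure of $\delta$ and $\gamma$.
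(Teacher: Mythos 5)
Your overall plan is sound and, for part~(2), genuinely different from the paper's. The paper first disposes of the case \( \alpha \leq \beta \) via part~(1) and then, for \( \beta < \alpha \), proves by induction on \( \gamma \) the two-sided shift lemma \( \mathbf{\Sigma}^0_{\alpha,\beta}(X,Y) \subseteq \mathbf{\Sigma}^0_{\alpha+\gamma,\beta+\gamma}(X,Y) \), which it then iterates along \( \gamma = \alpha' \cdot k \) (where \( \beta + \alpha' = \alpha \)) so that both subscripts climb to the common closure ordinal \( \beta + (\alpha' \cdot \omega) \leq \delta \). You instead prove the one-sided absolute bound ``\( f \in \mathbf{\Sigma}^0_{\alpha,1}(X,Y) \) implies \( f^{-1}(B) \in \mathbf{\Sigma}^0_{\alpha+\eta}(X) \) for every \( B \in \mathbf{\Sigma}^0_\eta(Y) \)'' by induction on \( \eta \), and let additive closure absorb \( \alpha + \eta \). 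Both inductions run on the same engine (preimages commute with Boolean operations and countable unions, plus Proposition~\ref{propbh} and the level-\( 2 \) anomaly of Definition~\ref{defbh}); yours avoids the case split and the choice of \( \alpha' \), and in fact shows that \( \mathbf{\Sigma}^0_{\alpha,\beta}(X,Y) \subseteq \mathbf{\Sigma}^0_{\alpha,1}(X,Y) \subseteq \mathbf{\Sigma}^0_{<\lambda}(X,Y) \) for \emph{every} additively closed \( \lambda > \alpha \), which includes \( \lambda = \delta \) as a special case.

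Two places need tightening before this is a complete proof. First, in part~(1) you say that a \( \mathbf{\Sigma}^0_\beta(Y) \)-set is a countable union of complements of sets ``in a level \( \leq \alpha \)'', absorbing lower levels via Proposition~\ref{propbh}. That absorption only goes upward: when \( \beta > \alpha + 1 \) the union ranges over complements of sets at levels strictly between \( \alpha \) and \( \beta \), and those cannot be pushed down to level \( \alpha \). What you actually need is the statement ``\( f \in \mathsf{D}_\alpha(X,Y) \Rightarrow f \in \mathsf{D}_\eta(X,Y) \) for all \( \alpha \leq \eta \leq \beta \)'', proved by induction on \( \eta \); your displayed computation is precisely the successor step of that induction, so the repair is purely organizational, but as written the argument only covers \( \beta = \alpha + 1 \). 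Second, the sentence deducing \( \mathbf{\Sigma}^0_{\alpha,\beta}(X,Y) \subseteq \mathbf{\Sigma}^0_{<\delta}(X,Y) \) from ``\( f^{-1} \) of a \( \mathbf{\Sigma}^0_\beta(Y) \)-set lands in \( \mathbf{\Sigma}^0_{\alpha+\beta}(X) \subseteq \mathbf{\Sigma}^0_{<\delta}(X) \)'' is too quick: membership in \( \mathbf{\Sigma}^0_{<\delta}(X,Y) \) requires controlling preimages of sets at \emph{every} level \( \eta < \delta \), not just at level \( \beta \). Your closing paragraph's universally quantified lemma does supply this (apply it with an arbitrary \( \eta < \delta \) and use \( \alpha + \eta < \delta \)), so nothing is missing conceptually, but that intermediate sentence should be rewritten to quantify over \( \eta \).
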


\begin{proof}
The proof of  (1) is straightforward, so we just consider (2). If
$\alpha\leq \beta$ then
$\mathbf{\Sigma}^0_{\alpha,\beta}(X,Y)\subseteq\mathbf{\Sigma}^0_\beta(X,Y)\subseteq
\mathbf{\Sigma}^0_{<\delta}(X,Y)$ by (1), hence we can assume \(
\beta<\alpha \). Arguing by induction on \( \gamma < \omega_1 \),
one easily obtains  \( \mathbf{\Sigma}^0_{\alpha,\beta}(X,Y)
\subseteq \mathbf{\Sigma}^0_{\alpha+\gamma,\beta+\gamma}(X,Y) \).
Let \( \alpha' \leq \alpha \) be such that \( \beta + \alpha' =
\alpha \),  and let \( \gamma = \alpha + (\alpha' \cdot \omega) =
\beta + (\alpha' \cdot \omega)  \leq \alpha \cdot \omega =
\delta\). We claim that \( \mathbf{\Sigma}^0_{\alpha,\beta}
\subseteq \mathbf{\Sigma}^0_{< \gamma}(X,Y) \), which obviously
implies the desired result. Let \( f \in
\mathbf{\Sigma}^0_{\alpha,\beta}(X,Y) \) and \( A \in
\mathbf{\Sigma}^0_{<\gamma}(Y) \). Then for some \( k<\omega \) we
have \( A \in \mathbf{\Sigma}^0_{\beta+(\alpha' \cdot k)}(Y) \),
hence \( f^{-1}(A) \in \mathbf{\Sigma}^0_{\alpha+(\alpha' \cdot
k)}(X) \subseteq \mathbf{\Sigma}^0_{<\gamma}(X) \), as required.
 \end{proof}

\noindent
In particular, by Proposition~\ref{propsigmafun}(2) all Baire class \( 1 \) functions (i.e.\ the functions in \( \mathbf{\Sigma}^0_{2,1}(X,Y)  \)) are in \( \mathbf{\Sigma}^0_{< \omega}(X,Y) \), and hence also in \( \mathsf{D}_\omega(X,Y) \).

Other kind of reducibilities which will be considered in this paper are given by classes of piecewise defined functions.
Given a family of pointclasses \( \mathbf{\Gamma} \), a \emph{\( \mathbf{\Gamma} \)-partition} of a space \( X \) is a sequence
 \( \langle D_n \mid n \in \omega \rangle \) of pairwise disjoint sets
 from \( \mathbf{\Gamma}(X) \) such that \( X = \bigcup_{n \in \omega} D_n \). Notice that, in particular, every \( \mathbf{\Sigma}^0_\alpha \)-partition
 of \( X \) is automatically a \( \mathbf{\Delta}^0_\alpha \)-partition of \( X \) (for every  \( \alpha < \omega_1 \)).

\begin{definition} \cite{motgames}
Given two spaces \( X, Y  \in \mathscr{X}\), a collection of functions  \( \F \) from (subsets of) \( X \) to \( Y \), and an ordinal \( \alpha < \omega_1 \), we will denote by
\( \mathsf{D}_\alpha^{\F}(X,Y) \) \footnote{Notice that for \( X = Y = \mathcal{N} \), this class of functions was denoted by \( \tilde{\mathsf{D}}^\F_\alpha \) in~\cite{motgames}. However, here we will not use the other class of piecewise defined functions considered in that paper, so we can safely simplify the notation dropping the decoration on the symbol \( \mathsf{D} \).} the collection
of those \( f
\colon X \to Y\) for which there is a \( \mathbf{\Sigma}^0_\alpha \)-partition
(equivalently,
a \( \mathbf{\Delta}^0_\alpha \)-partition)
\( \langle D_n \mid n \in \omega \rangle \) of \( X \) and
a sequence \( \langle f_n \colon D_n \to Y \mid n \in \omega \rangle \) of functions from \( \F \) such
that  \( f = \bigcup_{n \in \omega} f_n \).
\end{definition}

In particular, we will be interested in the classes
\( \mathsf{D}^\mathsf{W}_\alpha(X,Y) =
\mathsf{D}^{\mathsf{W}(\subseteq X,Y)}_\alpha(X,Y) \), where \( \mathsf{W}( \subseteq X, Y) = \bigcup_{X' \subseteq X} \mathsf{W}(X',Y) \),  for various
\( \alpha < \omega_1 \).  Notice also that
\( \mathbf{\Sigma}^0_\alpha(X,Y) = \mathsf{D}^{\mathbf{\Sigma}^0_\alpha(\subseteq X,Y)}_\alpha(X,Y) \) with \( \mathbf{\Sigma}^0_\alpha(\subseteq X,Y) = \bigcup_{X' \subseteq X} \boldsymbol{\Sigma}^0_\alpha(X',Y) \),  and that
\( \mathsf{D}^\mathsf{W}_\alpha(X,Y)  \subseteq
\mathsf{D}_\alpha(X,Y) \). As for the other classes of functions, we will write
\( \mathsf{D}^\mathsf{W}_\alpha(X) \) instead of
\( \mathsf{D}^\mathsf{W}_\alpha(X,X) \), and even drop the reference to
\( X \) when there is no danger of confusion. It is not hard to check that each of the
\( \mathsf{D}^\mathsf{W}_\alpha(X) \) is a reducibility on \( X \).

It is a remarkable theorem of Jayne and Rogers \cite[Theorem 5]{JR82} (but
see also~\cite{motsem,kacmotsem} for a shorter and simpler proof) that if
\( X,Y \) are Polish spaces\footnote{In fact the Jayne-Rogers result is even
more general, in that its conclusion holds also when \( X, Y \) are arbitrary metric spaces
with \( X \) an absolute Souslin-\( \mathscr{F} \) set.} then \( \mathsf{D}_2 (X,Y) =
\mathsf{D}^\mathsf{W}_2(X,Y) \). This result has been recently extended to the
level \( 3 \) (for the special case \( X  = Y = \mathcal{N} \)) by B. Semmes.

\begin{theorem} \cite{sem}  \label{theorsem}
\( \mathsf{D}_3(\mathcal{N}) = \mathsf{D}^\mathsf{W}_3 ( \mathcal{N}) \).
\end{theorem}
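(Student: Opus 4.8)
One inclusion is immediate: $\mathsf{D}^\mathsf{W}_3(\mathcal{N})\subseteq\mathsf{D}_3(\mathcal{N})$ is the special case of the general fact, recorded above, that $\mathsf{D}^\mathsf{W}_\alpha(X,Y)\subseteq\mathsf{D}_\alpha(X,Y)$. Indeed, if $f=\bigcup_n f_n$ for a $\mathbf{\Delta}^0_3$-partition $\langle D_n\mid n\in\omega\rangle$ of $\mathcal{N}$ with each $f_n\colon D_n\to\mathcal{N}$ continuous, then for any $B\in\mathbf{\Sigma}^0_3(\mathcal{N})$ one has $f^{-1}(B)=\bigcup_n(C_n\cap D_n)$, where $C_n\in\mathbf{\Sigma}^0_3(\mathcal{N})$ is obtained from the fact that continuous preimages of $\mathbf{\Sigma}^0_3$ sets are $\mathbf{\Sigma}^0_3$; since $\mathbf{\Sigma}^0_3$ is closed under finite intersections and countable unions, $f^{-1}(B)\in\mathbf{\Sigma}^0_3$, and the same argument applied to $\mathcal{N}\setminus B$ gives $\mathcal{N}\setminus f^{-1}(B)\in\mathbf{\Sigma}^0_3$, so $f\in\mathbf{\Delta}^0_3(\mathcal{N},\mathcal{N})=\mathsf{D}_3(\mathcal{N})$. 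The content of the theorem is therefore the converse inclusion $\mathsf{D}_3(\mathcal{N})\subseteq\mathsf{D}^\mathsf{W}_3(\mathcal{N})$, a deep theorem of B.~Semmes \cite{sem}; the plan is to reconstruct his game-theoretic argument.

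The first step is to attach to each $f\in\mathsf{D}_3(\mathcal{N})$ an infinite two-player game $G(f)$ in which Player~I enumerates a point $x\in\mathcal{N}$ one coordinate at a time, while Player~II attempts to produce $f(x)$ under constrained revision rules that generalize simultaneously the ``eraser game'' (whose Player~II-strategies characterize the Baire class $1$, equivalently $\mathbf{\Sigma}^0_2$-measurable, functions) and the ``backtrack game'' (whose Player~II-strategies characterize the $\mathbf{\Delta}^0_2$-functions, which by the Jayne--Rogers theorem \cite{JR82} coincide with $\mathsf{D}^\mathsf{W}_2$). Concretely, Player~II maintains a countable, dynamically reorganized tree of ``tapes''; on each tape she writes a candidate value for $f(x)$ under the eraser discipline (any cell may be overwritten, but only finitely often), and she may moreover activate, deactivate and re-root tapes, finitely often at each stage; a run is won by Player~II precisely when the ``limit tape'' is eventually well-defined and its content equals $f(x)$. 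The whole difficulty of the construction is to design the moves of $G(f)$ so that legal plays mirror the three-quantifier combinatorics of $\mathbf{\Sigma}^0_3$ sets --- countable unions of $\mathbf{\Pi}^0_2$, hence $\bigcup\bigcap(\text{open})$, sets --- tightly enough for both implications below to go through.

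The second step has two halves. First, if $f\in\mathsf{D}_3(\mathcal{N})$ then Player~II has a winning strategy in $G(f)$: using that $f^{-1}(\sigma\cdot\mathcal{N})\in\mathbf{\Sigma}^0_3(\mathcal{N})$ for every $\sigma\in\omega^*$, one fixes $\mathbf{\Sigma}^0_3$-codes of these preimages (countably many choices, available in $\ZF+\DC$), feeds their $\bigcup\bigcap$-presentations into the tape-tree, and runs the corresponding eraser-type substrategies in parallel, reorganizing the tree exactly when the analysis of some $f^{-1}(\sigma\cdot\mathcal{N})$ forces Player~II to abandon a guess. Second, from any winning strategy $\tau$ for Player~II one extracts the decomposition: playing $\tau$ against $x\in\mathcal{N}$ produces a run whose ``eventual configuration'' $n(x)$ is well-defined and ranges over a countable set; setting $D_n=\{x\in\mathcal{N}\mid n(x)=n\}$, the assertion ``the configuration is eventually $n$'' is an eventuality statement about a continuous function of $x$, so each $D_n$ is $\mathbf{\Sigma}^0_3$ and, after mild massaging, $\langle D_n\mid n\in\omega\rangle$ is a $\mathbf{\Sigma}^0_3$-partition of $\mathcal{N}$; on $D_n$ the strategy $\tau$ never again re-roots the relevant part of the tree, hence reads off $f(x)$ by a finite-delay continuous procedure, so $f\restriction D_n$ is continuous and $f=\bigcup_n f\restriction D_n\in\mathsf{D}^\mathsf{W}_3(\mathcal{N})$.

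The hard part --- and the heart of Semmes' thesis --- is the joint success of these two halves: engineering $G(f)$ so that $\mathbf{\Delta}^0_3$-measurability of $f$ is convertible into a winning strategy \emph{and} every winning strategy is convertible back into a $\mathbf{\Sigma}^0_3$-partition with continuous pieces; getting the bookkeeping of the tape-tree to match the $\bigcup\bigcap$-alternations of $\mathbf{\Sigma}^0_3$ exactly is what makes the combinatorics delicate. Determinacy plays only an auxiliary role here: although $G(f)$ has Borel payoff (since $f$ is Borel) and is hence determined, the implication we actually use, ``$f\in\mathsf{D}_3(\mathcal{N})\Rightarrow$ Player~II wins'', is established by constructing the strategy by hand from the Borel structure of $f$, so no appeal to determinacy beyond $\ZF+\DC$ is needed. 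I would also bear in mind that game-free proofs of the same statement exist (for instance via topology refinements combined with Jayne--Rogers, or via recursion-theoretic methods), which trade the problem of designing $G(f)$ for tight control of which sets stay ``simple'' under the auxiliary constructions.
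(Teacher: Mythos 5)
First, a point of calibration: the paper does not prove this theorem at all --- it is quoted as a black box from Semmes' thesis \cite{sem}, so there is no in-paper argument to compare yours against. Your easy inclusion \( \mathsf{D}^\mathsf{W}_3(\mathcal{N}) \subseteq \mathsf{D}_3(\mathcal{N}) \) is correct and complete. The converse, however, is where the entire content of the theorem lives, and what you have written for it is a plan rather than a proof: the game \( G(f) \) is never actually defined (legal moves, the revision discipline on the tape-tree, and the payoff set are all described only impressionistically), and both implications --- ``\( f \in \mathsf{D}_3 \) gives Player~II a winning strategy'' and ``a winning strategy yields a \( \mathbf{\Sigma}^0_3 \)-partition into continuous pieces'' --- are asserted rather than established. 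You say yourself that the joint engineering of these two halves is the heart of the matter; that is precisely the part that is missing.

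There is also a concrete sign that the extraction step, as sketched, cannot be right. You argue that each \( D_n = \{ x \mid n(x) = n \} \) is \( \mathbf{\Sigma}^0_3 \) because ``the configuration is eventually \( n \)'' is an eventuality statement about a continuous function of \( x \). But such a statement has the form \( \exists N \, \forall m \geq N \, (\text{clopen condition on } x) \), which is \( \mathbf{\Sigma}^0_2 \), not merely \( \mathbf{\Sigma}^0_3 \). If that reasoning were sound you would have produced a \( \mathbf{\Sigma}^0_2 \)-partition and hence proved \( \mathsf{D}_3(\mathcal{N}) = \mathsf{D}^\mathsf{W}_2(\mathcal{N}) \), which is false: for \( A \in \mathbf{\Delta}^0_3(\mathcal{N}) \setminus \mathbf{\Delta}^0_2(\mathcal{N}) \) the characteristic function \( c_A \) lies in \( \mathsf{D}_3(\mathcal{N}) \), but any decomposition of \( c_A \) into continuous pieces on a \( \mathbf{\Delta}^0_2 \)-partition \( \langle D_n \mid n \in \omega \rangle \) would exhibit \( A = \bigcup_n (A \cap D_n) \) and \( \overline{A} = \bigcup_n (\overline{A} \cap D_n) \) as countable unions of intersections of \( \mathbf{\Delta}^0_2 \) sets with relatively open sets, forcing \( A \in \mathbf{\Delta}^0_2 \). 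So the stabilization invariant extracted from a winning strategy cannot be a simple ``eventual configuration''; it must itself encode a genuinely level-three (double-limit) phenomenon, and specifying that invariant --- together with the matching game --- is exactly the combinatorial work of Semmes' proof that your proposal leaves undone.
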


Whether this result can be extended to all \( 3 < n < \omega \) is a major open problem, but notice however that, as observed e.g.\ in~\cite{andslo,ros09}, it is not possible to generalize the Jayne-Rogers theorem to levels \( \alpha \geq \omega \). To see this we need to recall the definition of containment between functions introduced in~\cite{sol}, and the definition of a very special function, called the Pawlikowski function. We call embedding any function between two topological spaces which is an homeomorphism on its range.

\begin{definition} \cite{sol} \label{defcontainment}
Let \( X_0,X_1,Y_0,Y_1 \in \mathscr{X} \) be topological spaces and consider \( f \colon X_0 \to Y_0 \) and \( g \colon X_1 \to Y_1 \). We say that \( f \) is \emph{contained} in \( g \), and we write \( f \sqsubseteq g \), just in case there are two embeddings \( \varphi \colon
X_0 \to X_1\) and \( \psi \colon f(X_0) \to Y_1\)  such that \( \psi
\circ f = g \circ \varphi \).
\end{definition}

It is not hard to check that if \( f \) and \( g \) are as in Definition~\ref{defcontainment} and \( f \sqsubseteq g \), then \( g \in \mathbf{\Sigma}^0_{\alpha,\beta}(X_1,Y_1) \) implies \( f \in \mathbf{\Sigma}^0_{\alpha,\beta}(X_0,Y_0) \) for every \( 1 \leq \alpha, \beta < \omega_1 \).

Let us endow the space \( (\omega + 1)^\omega \) with the product of the order topology on \(  \omega + 1 \). The \emph{Pawlikowski function} is the function \( P \colon (\omega+1)^\omega \to \mathcal{N} \) defined by
\[
 P(x)(n) =
\begin{cases}
x(n)+1 &  \text{if }x(n) \in \omega \\
0 & \text{if }x(n) = \omega.
\end{cases}
\]
Since \( (\omega+1)^\omega \) is a perfect nonempty compact metrizable zero-dimensional space, it is homeomorphic to the Cantor space \( \mathcal{C} \) by Brouwer's theorem \cite[Theorem 7.4]{ke94}: therefore, using the fact that \( \mathcal{N} \) is homeomorphic to a \( G_\delta \) subset of \( \mathcal{C} \),  \( P \) can actually be regarded as a function from \( \mathcal{C} \) to \( \mathcal{C} \) .

\begin{definition}
Let \( X,Y \) be arbitrary Polish spaces. We say that a function \( f \colon X \to Y \) \emph{can be decomposed} into countably many continuous functions (briefly, it is \emph{decomposable}) if there is some partition \( \langle D_n \mid n \in \omega \rangle \)  of \(X \) into countably many pieces (of arbitrary complexity) such that \( f \restriction D_n \) is continuous for every \( n < \omega \).
\end{definition}

\noindent
Notice in particular that all the functions in \( \bigcup_{\alpha<\omega_1} \mathsf{D}^\mathsf{W}_\alpha(X,Y) \) are decomposable by definition, and that if \( f \sqsubseteq g \) and \( g \) is decomposable, then \( f \) is decomposable as well.
The function \( P \) was introduced as an example of a Baire class \( 1 \) function (hence \( P \in \mathbf{\Sigma}^0_{2,1}(\mathcal{C}, \mathcal{C}) \)) which is not decomposable --- see e.g.\ \cite{sol}. Using this fact, we can now prove that the Jayne-Rogers theorem cannot be generalized to the infinite levels of the Borel hierarchy for several uncountable quasi-Polish spaces.

\begin{proposition}
Assume that \( X,Y \) are uncountable quasi-Polish spaces  and \( \omega \leq \alpha < \omega_1 \). If \( \mathcal{C} \) can be embedded into \( Y \), then \( \mathsf{D}_\alpha^\mathsf{W}(X,Y) \subsetneq \mathsf{D}_\alpha(X,Y) \). In particular, \( \mathsf{D}_\alpha^\mathsf{W}(X,Y) \subsetneq \mathsf{D}_\alpha(X,Y) \) for \( Y \) a Polish space, \( Y = \omega^{\leq \omega} \), or \( Y = P \omega \).
\end{proposition}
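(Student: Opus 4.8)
The plan is to produce, for any such \( X,Y,\alpha \), a single function \( f\colon X\to Y \) that belongs to \( \mathsf{D}_\alpha(X,Y) \) but not to \( \mathsf{D}^\mathsf{W}_\alpha(X,Y) \); since \( \mathsf{D}^\mathsf{W}_\alpha(X,Y)\subseteq\mathsf{D}_\alpha(X,Y) \) always holds, this gives at once the strict inclusion. The function \( f \) will be a copy of the Pawlikowski function \( P\colon\mathcal{C}\to\mathcal{C} \) transplanted into \( X \) and \( Y \). First I would record the two properties of \( P \) that make this work: \( P \) is of Baire class \( 1 \), so \( P\in\mathbf{\Sigma}^0_{2,1}(\mathcal{C},\mathcal{C}) \) and hence (this is the only place \( \alpha\geq\omega \) enters) by Proposition~\ref{propsigmafun} and the remark following it \( P\in\mathbf{\Sigma}^0_{<\omega}(\mathcal{C},\mathcal{C})\subseteq\mathsf{D}_\omega(\mathcal{C},\mathcal{C})\subseteq\mathsf{D}_\alpha(\mathcal{C},\mathcal{C}) \); and \( P \) is not decomposable.

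Next I would fix the two copies of the Cantor space. Being an uncountable quasi-Polish space, \( X \) contains a subspace \( A \) homeomorphic to \( \mathcal{C} \) (every uncountable quasi-Polish space has a homeomorphic copy of \( \mathcal{C} \); see~\cite{br}); since \( A \) is then homeomorphic to a Polish, hence quasi-Polish, space, Proposition~\ref{propsubspace} gives \( A\in\mathbf{\Pi}^0_2(X) \), so that both \( A \) and \( X\setminus A \) lie in \( \mathbf{\Delta}^0_\alpha(X) \) (using \( \alpha>2 \)). Fix a homeomorphism \( e\colon\mathcal{C}\to A \) and, using the hypothesis, an embedding \( \psi_0\colon\mathcal{C}\to Y \). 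Then define \( f\colon X\to Y \) by setting \( f(x)=\psi_0(P(e^{-1}(x))) \) for \( x\in A \) and \( f(x)=\psi_0(0^\omega) \) for \( x\notin A \).

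Then I would verify the two required properties of \( f \). For \( f\notin\mathsf{D}^\mathsf{W}_\alpha(X,Y) \): note that \( P\sqsubseteq f \) in the sense of Definition~\ref{defcontainment}, witnessed by \( e\colon\mathcal{C}\to X \) and \( \psi_0\restriction P(\mathcal{C})\colon P(\mathcal{C})\to Y \), since \( (\psi_0\restriction P(\mathcal{C}))\circ P=\psi_0\circ P=f\circ e \). If \( f \) were in \( \mathsf{D}^\mathsf{W}_\alpha(X,Y) \) there would be a partition \( \langle D_n\mid n\in\omega\rangle \) of \( X \) with each \( f\restriction D_n \) continuous; then \( \langle E_n=e^{-1}(D_n\cap A)\mid n\in\omega\rangle \) would partition \( \mathcal{C} \) with each \( P\restriction E_n=\psi_0^{-1}\circ(f\restriction(D_n\cap A))\circ(e\restriction E_n) \) continuous, contradicting the non-decomposability of \( P \). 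For \( f\in\mathsf{D}_\alpha(X,Y) \): the restriction \( f\restriction A=\psi_0\circ P\circ e^{-1} \) is of Baire class \( 1 \) as a map from \( A \) to \( Y \), so \( f\restriction A\in\mathbf{\Sigma}^0_{2,1}(A,Y)\subseteq\mathsf{D}_\alpha(A,Y)=\mathbf{\Sigma}^0_\alpha(A,Y) \) exactly as for \( P \) above, while \( f\restriction(X\setminus A) \) is constant, hence in \( \mathbf{\Sigma}^0_\alpha(X\setminus A,Y) \); since \( \langle A,X\setminus A,\emptyset,\emptyset,\dotsc\rangle \) is a \( \mathbf{\Sigma}^0_\alpha \)-partition of \( X \), this shows \( f\in\mathsf{D}^{\mathbf{\Sigma}^0_\alpha(\subseteq X,Y)}_\alpha(X,Y)=\mathbf{\Sigma}^0_\alpha(X,Y)=\mathsf{D}_\alpha(X,Y) \). (Alternatively one checks directly, using \( A\in\mathbf{\Delta}^0_\alpha(X) \), that \( f^{-1}(B)\in\mathbf{\Pi}^0_\alpha(X) \) for every \( B\in\mathbf{\Pi}^0_\alpha(Y) \).)

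For the ``in particular'' clause it suffices to observe that \( \mathcal{C} \) embeds into each of the listed spaces: into any uncountable Polish space by the classical perfect set property, into \( \omega^{\leq\omega} \) because the latter contains a homeomorphic copy of \( \mathcal{N}\supseteq\mathcal{C} \), and into \( P\omega \) by Proposition~\ref{proppi2} together with the fact that \( \mathcal{C} \) is quasi-Polish. As for difficulty: the only substantive external input is the perfect set property used to obtain the Cantor subspace \( A\subseteq X \); everything else is bookkeeping with the Borel hierarchy, the chain of inclusions in Proposition~\ref{propsigmafun}, and the containment relation, the one conceptual point being that for \( \alpha\geq\omega \) the Baire class \( 1 \) function \( P \) already sits inside \( \mathsf{D}_\alpha \) while remaining non-decomposable --- which is precisely what obstructs a Jayne--Rogers-type decomposition theorem at these levels.
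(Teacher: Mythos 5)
Your overall strategy is the paper's: transplant the Pawlikowski function into \( X \) and \( Y \), use \( P \in \mathbf{\Sigma}^0_{<\omega} \subseteq \mathsf{D}_\alpha \) together with non-decomposability of \( P \), and pull back a hypothetical decomposition of the transplanted function to one of \( P \). But there is a genuine gap at the point where you fix the copy of \( \mathcal{C} \) inside \( X \): you assert that every uncountable quasi-Polish space contains a subspace \emph{homeomorphic} to \( \mathcal{C} \), citing de Brecht. That is not what the cited source gives, and it is precisely the claim the paper goes out of its way to avoid. The paper's Claim only produces a \emph{continuous injection} \( \varphi \colon \mathcal{C} \to X \) onto a \( \boldsymbol{\Pi}^0_2 \) set: one applies Cantor--Bendixson to the Polish refinement \( (X,\tau_{\hat d}) \) and then uses \( \tau_d \subseteq \tau_{\hat d} \subseteq \mathbf{\Sigma}^0_2(X,\tau_d) \). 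For a non-Hausdorff quasi-Polish \( X \) there is no reason the resulting injection is an embedding (the relative \( \tau_d \)-topology on the image may be strictly coarser than that of \( \mathcal{C} \)); this asymmetry is exactly why the proposition's hypothesis demands an embedding of \( \mathcal{C} \) into \( Y \) but nothing beyond uncountability of \( X \), and why the subsequent corollary needs \( Y \) Hausdorff to upgrade a continuous injection into \( Y \) to an embedding. Your uses of this assumption are real: you invoke \( e^{-1} \) being continuous to conclude \( f \restriction A \) is Baire class \( 1 \), you invoke \( A \cong \mathcal{C} \) to get \( A \in \mathbf{\Pi}^0_2(X) \) via Proposition~\ref{propsubspace}, and your claim \( P \sqsubseteq f \) requires \( e \) to be an embedding by Definition~\ref{defcontainment}.

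The good news is that the gap is repairable without changing your architecture, because the only place you genuinely need an embedding is on the \( Y \)-side (to push open sets of \( \mathcal{C} \) back through \( \psi_0 \) in the non-decomposability argument), and that is given by hypothesis. Replace \( e \) by the continuous injection \( \varphi \) with \( \boldsymbol{\Pi}^0_2 \) range furnished by the paper's Claim: the decomposition-pullback argument you wrote out only uses continuity of \( e \), so it survives verbatim; and the complexity estimate degrades from \( \mathbf{\Sigma}^0_{2,1} \) to \( \mathbf{\Sigma}^0_{3,1} \) (since \( \varphi^{-1} \) is only \( \mathbf{\Sigma}^0_2 \)-measurable), which is still inside \( \mathbf{\Sigma}^0_{<\omega}(X,Y) \subseteq \mathsf{D}_\alpha(X,Y) \) for \( \alpha \geq \omega \) --- this is exactly how the paper computes \( P' \in \mathbf{\Sigma}^0_{3,1}(X,Y) \). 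You should also drop the appeal to \( P \sqsubseteq f \) and keep only the direct pullback argument. As written, however, the proof rests on an unsupported topological claim about arbitrary uncountable quasi-Polish spaces.
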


\begin{proof}
First observe that \( P \in \mathbf{\Sigma}^0_{< \omega}(\mathcal{C},\mathcal{C}) \subseteq \mathsf{D}_\alpha(\mathcal{C}) \) by Proposition~\ref{propsigmafun}, but \( P \notin \bigcup_{\alpha < \omega_1} \mathsf{D}^\mathsf{W}_\alpha(\mathcal{C}) \) because \( P \) is not even decomposable.
Suppose now that \( X,Y \) are arbitrary uncountable quasi-Polish spaces, and further assume that \( \mathcal{C} \) embeds into \( Y \).

\begin{claim}\label{claiminjection}
Let \( Z \) be an uncountable quasi-Polish space. Then there is a
continuous injection of \( \mathcal{C} \) onto a \(
\boldsymbol{\Pi}^0_2 \) subset of \( Z \).
\end{claim}

\begin{proof}[Proof of the Claim.]
Let \( d \) be  a complete quasi-metric on \( Z \) compatible with
its topology. By the Cantor-Bendixon theorem \cite[Theorem
6.4]{ke94}, there is an embedding \( \varphi \colon \mathcal{C} \to
(Z, \tau_{\hat{d}}) \). Notice that the range of \( \varphi \) is
automatically \( \tau_{\hat{d}} \)-closed. Therefore by Proposition~\ref{propqp} we have that \( \varphi \colon \mathcal{C} \to (Z,
\tau_d) \) is continuous as well, and that its range is a \(
\boldsymbol{\Pi}^0_2 \) set with respect to \( \tau_d \).
\end{proof}

Let \( \varphi \colon \mathcal{C} \to X \) be a continuous
injection with \( \varphi(\mathcal{C}) \in \boldsymbol{\Pi}^0_2(X)
\),  and let \( \psi \colon \mathcal{C} \to Y \) be an embedding.
 Pick an arbitrary \( y_0 \in Y \) and define \( P' \colon X \to Y \)
by setting \( P'(x) = \psi(P(\varphi^{-1}(x))) \) if \( x \in
\varphi(\mathcal{C}) \) and \( P'(x) = y_0 \) otherwise. It is
straightforward to check that \( P' \in
\mathbf{\Sigma}^0_{3,1}(X,Y) \subseteq   \mathbf{\Sigma}^0_{<
\omega}(X,Y) \subseteq \mathsf{D}_\alpha(X,Y) \) by Proposition~\ref{propsigmafun}. We will now show that \( P' \colon X \to Y \)
is not decomposable, which clearly implies that \( P' \notin
\mathsf{D}^\mathsf{W}_\alpha(X,Y) \), as required. Assume towards
a contradiction that \( P' \) is decomposable,  and let \( \langle
X_n \mid n \in \omega \rangle \) be a countable partition of \( X
\) such that \( P'_n = P' \restriction X_n \) is continuous for
every \( n \in \omega \). Then \( \langle \varphi^{-1}(X_n) \mid n
\in \omega  \rangle \) is a countable partition of \( \mathcal{C}
\): we will show that for every \( n \in \omega \) the function \(
P_n =  P \restriction \varphi^{-1}(X_n) \) is continuous,
contradicting the fact that \( P \) is not decomposable. Let \( U
\subseteq \mathcal{C} \) be open. Since \( \psi \) is an
embedding, there is an open \( V \subseteq Y \) such that \(
\psi(U) = V \cap \psi(\mathcal{C}) \). By definition of \( P' \),
\( P_n^{-1}(U) = \varphi^{-1}((P'_n)^{-1}(V)) \), which is open in
\( \varphi^{-1}(X_n) \) because \( \varphi \) and \( P'_n \) are
both continuous: therefore \( P_n \) is continuous.
\end{proof}

\begin{corollary}
Assume that \( X,Y \) are uncountable quasi-Polish spaces  and \( \omega \leq \alpha < \omega_1 \). If \( Y \) is Hausdorff, then \( \mathsf{D}_\alpha^\mathsf{W}(X,Y) \subsetneq \mathsf{D}_\alpha(X,Y) \).
\end{corollary}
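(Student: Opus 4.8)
The plan is to derive the corollary directly from the Proposition just proved, which asserts that \( \mathsf{D}_\alpha^\mathsf{W}(X,Y) \subsetneq \mathsf{D}_\alpha(X,Y) \) whenever \( \mathcal{C} \) can be embedded into \( Y \). So the whole task reduces to showing that every uncountable \emph{Hausdorff} quasi-Polish space \( Y \) contains a homeomorphic copy of the Cantor space.

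First I would apply Claim~\ref{claiminjection} to \( Y \): since \( Y \) is an uncountable quasi-Polish space, there is a continuous injection \( \varphi \colon \mathcal{C} \to Y \) (its range is even \( \boldsymbol{\Pi}^0_2(Y) \), but this extra information is not needed here). The only feature of \( \varphi \) that matters is that it is a continuous injection whose domain \( \mathcal{C} \) is compact.

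Next I would invoke the classical fact that a continuous bijection from a compact space onto a Hausdorff space is a homeomorphism. Concretely: since \( Y \) is Hausdorff, the subspace \( \varphi(\mathcal{C}) \) is Hausdorff, and \( \varphi \colon \mathcal{C} \to \varphi(\mathcal{C}) \) is a continuous bijection from the compact space \( \mathcal{C} \) onto the Hausdorff space \( \varphi(\mathcal{C}) \); hence it sends closed (equivalently, compact) subsets of \( \mathcal{C} \) to compact, and therefore closed, subsets of \( \varphi(\mathcal{C}) \). Thus \( \varphi \) is a closed map, so \( \varphi^{-1} \) is continuous on \( \varphi(\mathcal{C}) \), i.e.\ \( \varphi \) is an embedding of \( \mathcal{C} \) into \( Y \). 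Applying the Proposition with this embedding then yields \( \mathsf{D}_\alpha^\mathsf{W}(X,Y) \subsetneq \mathsf{D}_\alpha(X,Y) \), as required.

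I do not expect a genuine obstacle; the one point requiring a little care is that the injection produced by Claim~\ref{claiminjection} need not be an embedding in general (in the non-Hausdorff setting \( \tau_d \) on \( Y \) may be strictly coarser than \( \tau_{\hat d} \)), so the Hausdorffness hypothesis is precisely what is used to upgrade the continuous injection coming from the Cantor--Bendixson argument to a topological embedding.
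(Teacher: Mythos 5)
Your proposal is correct and follows essentially the same route as the paper: both obtain a continuous injection \( \varphi \colon \mathcal{C} \to Y \) from Claim~\ref{claiminjection} and then upgrade it to an embedding via the standard compact-to-Hausdorff argument (the paper phrases this as \( \varphi(U) \) being open in \( \varphi(\mathcal{C}) \) because \( \varphi(\mathcal{C}\setminus U) \) is compact, hence closed in \( Y \), which is exactly your closed-map observation). Your closing remark correctly identifies where the Hausdorff hypothesis is used.
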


\begin{proof}
It is enough to show that if  \( Y \) is an uncountable Hausdorff quasi-Polish space then there is an embedding of \( \mathcal{C} \) into \( Y \). Let \( \varphi \colon \mathcal{C} \to Y \) be a continuous injection (which exists by Claim~\ref{claiminjection}). We want to show that for every open \( U \subseteq \mathcal{C} \), \( \varphi(U) \) is open in \( \varphi(\mathcal{C}) \). Since \( \mathcal{C} \) is compact, \( \mathcal{C} \setminus U \) is compact as well. Since \( \varphi \) is continuous, then \( D = \varphi(\mathcal{C} \setminus U) \) is compact as well, and hence also closed in \( Y \). But then \( \varphi(U) = (Y \setminus D) \cap \varphi(\mathcal{C}) \) is an open set with respect to the relative topology of \( \varphi(\mathcal{C}) \), as required.
\end{proof}

The Pawlikowski function \( P \) can in fact be used to characterize decomposable  functions within certain Borel classes. In~\cite{sol}, Solecki  proved that if \( f \in \mathbf{\Sigma}^0_{2,1}(X,Y) \) with \( X,Y \) Polish spaces\footnote{Solecki's theorem applies to a slightly wider context, i.e.\ to the case when \( X \) is an analytic  space and \( Y \) is separable metric.}, then \( f \) is decomposable if and only if \( P \not\sqsubseteq f \). Using
the technique of changes of topologies and arguing by induction on
\( n < \omega \), this characterization can easily be
extended\footnote{In~\cite[Theorem 1.1]{pawsab},
Solecki's characterization
of decomposable functions is further extended (using different and more
involved methods) to the even wider context of all Borel functions from an
analytic space \( X \) to a separable metrizable space \( Y \), but here we will
not need the above characterization in such generality.} to the wider context of
functions in \( \bigcup_{1 \leq n < \omega} \boldsymbol{\Sigma}^0_{n,1}(X,Y) \).

\begin{theorem} \cite[Lemma 5.7]{motdec} \label{theornondecomposable}
Let \( X,Y \) be Polish spaces and let \( f \) be in
\(\boldsymbol{\Sigma}^0_{n,1}(X,Y) \) for some \( 1 \leq n <
\omega \). Then \( f \) is decomposable if and only if \( P
\not\sqsubseteq f \).
\end{theorem}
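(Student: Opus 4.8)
The direction ``$P \sqsubseteq f \Rightarrow f$ not decomposable'' holds for an \emph{arbitrary} function $f$, so no hypothesis on the Borel class of $f$ is needed here: if $\varphi \colon (\omega+1)^\omega \to X$ and an embedding $\psi$ of the range of $P$ into $Y$ witness $P \sqsubseteq f$, i.e.\ $\psi \circ P = f \circ \varphi$, and $\langle X_k \mid k \in \omega \rangle$ is a partition of $X$ with each $f \restriction X_k$ continuous, then $\langle \varphi^{-1}(X_k) \mid k \in \omega \rangle$ is a countable partition of $(\omega+1)^\omega$ on which $P \restriction \varphi^{-1}(X_k) = \psi^{-1} \circ (f \restriction X_k) \circ (\varphi \restriction \varphi^{-1}(X_k))$ is a composition of continuous maps, hence continuous; thus $P$ would be decomposable, contradicting the non-decomposability of $P$ recalled above.

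For the converse, ``$P \not\sqsubseteq f \Rightarrow f$ decomposable'', I would argue by induction on $n$. The cases $n = 1$ ($f$ is then continuous, so trivially decomposable) and $n = 2$ (Solecki's theorem \cite{sol}, which holds even when the domain is only analytic) serve as the base. For the inductive step fix $n \geq 2$, assume the statement at level $n$, and let $f \in \boldsymbol{\Sigma}^0_{n+1,1}(X,Y)$ with $P \not\sqsubseteq f$ and $X$ Polish with topology $\tau = \tau_X$.

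The first step is a \emph{change of topology on the domain}: I claim there is a Polish topology $\tau'$ on $X$, finer than $\tau$ and with the same Borel sets, such that every $\tau'$-open set lies in $\boldsymbol{\Sigma}^0_2(X,\tau)$ and $f \colon (X,\tau') \to Y$ belongs to $\boldsymbol{\Sigma}^0_{n,1}$. To obtain it, fix a countable basis of $Y$: each preimage $f^{-1}(U)$ of a basic open set is in $\boldsymbol{\Sigma}^0_{n+1}(X,\tau) = (\boldsymbol{\Pi}^0_n(X,\tau))_\sigma$, and expanding this representation recursively down to level $2$ of the Borel hierarchy one isolates a single countable family $\mathcal{D}$ of $\boldsymbol{\Sigma}^0_2(X,\tau)$-sets out of which all the $f^{-1}(U)$ are built by finitely iterated countable unions, countable intersections and complementations. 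Writing each $D \in \mathcal{D}$ as a countable union of $\tau$-closed sets and adjoining all these (countably many) closed sets to $\tau$, the standard Kuratowski technique of refining Polish topologies (see e.g.~\cite[Section 13]{ke94}) yields such a $\tau'$: it is Polish with the same Borel sets, its open sets are countable unions of finite intersections of $\boldsymbol{\Delta}^0_2(X,\tau)$-sets and hence lie in $\boldsymbol{\Sigma}^0_2(X,\tau)$, each $D \in \mathcal{D}$ becomes $\tau'$-open, and therefore --- turning the $\boldsymbol{\Pi}^0_k(\tau)$-sets in the expansion into $\boldsymbol{\Pi}^0_{k-1}(\tau')$-sets one level at a time --- each $f^{-1}(U)$ becomes $\boldsymbol{\Sigma}^0_n(X,\tau')$.

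The induction is then closed as follows. The crucial observation is that $P \not\sqsubseteq f$ is preserved when $f$ is regarded as a map out of $(X,\tau')$: an embedding $\varphi \colon (\omega+1)^\omega \to (X,\tau')$ is in particular a continuous injection into the coarser Hausdorff space $(X,\tau)$, and since $(\omega+1)^\omega$ is compact its restriction to the image is a continuous bijection from a compact space onto a Hausdorff space, hence a homeomorphism; thus $\varphi$ is an embedding into $(X,\tau)$ as well, the target-side embedding is unaffected, and $P \sqsubseteq \bigl(f \colon (X,\tau') \to Y\bigr)$ would give $P \sqsubseteq f$. Applying the inductive hypothesis to $f \colon (X,\tau') \to Y$, which is $\boldsymbol{\Sigma}^0_{n,1}$, we get a countable partition $X = \bigsqcup_k X_k$ --- which we may take with Borel pieces, since a decomposition of a Borel function can always be refined to one with Borel pieces --- on which $f$ is $\tau'$-continuous. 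Because $\tau'$-open sets are $\boldsymbol{\Sigma}^0_2(X,\tau)$, each $f \restriction X_k$ is $\boldsymbol{\Sigma}^0_{2,1}$ for the \emph{original} topology, $X_k$ is Borel hence analytic, and $P \not\sqsubseteq f \restriction X_k$ (as $X_k$ carries the subspace topology); so Solecki's theorem splits each $X_k$ into countably many pieces on which $f$ is $\tau$-continuous, and all these pieces together witness that $f$ is decomposable. I expect the main obstacle to be the change-of-topology lemma together with this transfer of $P \not\sqsubseteq f$ across the refinement --- the latter being exactly the point where the compactness of $\dom(P) = (\omega+1)^\omega$ is essential.
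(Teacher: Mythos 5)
Your proposal is correct and follows exactly the route the paper indicates for this statement (which it cites from Lemma 5.7 of Motto Ros's paper on decomposable Borel functions and describes as ``the technique of changes of topologies'' combined with induction on \( n \), with Solecki's theorem as the base case): refine the domain topology inside \( \boldsymbol{\Sigma}^0_2(\tau) \) so that \( f \) drops one Borel level, transfer \( P \not\sqsubseteq f \) across the refinement via the compactness of \( (\omega+1)^\omega \), and finish by applying Solecki's theorem on the (Borel-refined) pieces. The details you supply --- the level bookkeeping in the change-of-topology lemma, the refinement of the decomposition to Borel pieces, and the observation that a \( \tau' \)-embedding of a compact space is automatically a \( \tau \)-embedding --- are all sound.
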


Finally, we recall from \cite[Proposition 6.6]{motbaire} the following result on the topological complexity of \( P \).

\begin{proposition} \label{propPisnotinD_n}
For every \( n \in \omega \),  \( P \notin \mathsf{D}_n(\mathcal{C}) \).
\end{proposition}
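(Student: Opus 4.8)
The plan is to show that $P \notin \mathsf{D}_n(\mathcal{C})$ by exhibiting a single set $A \in \mathbf{\Delta}^0_n(\mathcal{C})$ whose preimage $P^{-1}(A)$ fails to be $\mathbf{\Delta}^0_n$ (equivalently, fails to be $\mathbf{\Sigma}^0_n$), thereby witnessing $P \notin \mathbf{\Pi}^0_n(\mathcal{C},\mathcal{C}) = \mathsf{D}_n(\mathcal{C})$ (using the identity $\mathsf{D}_\alpha(X,Y) = \mathbf{\Pi}^0_\alpha(X,Y)$ from Section~\ref{sectionreduc}, so that it suffices to find one bad $\mathbf{\Pi}^0_n$, or indeed one bad $\mathbf{\Delta}^0_n$, target set). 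The natural candidate is built from the coordinate structure of $P$: recall $P \colon (\omega+1)^\omega \to \mathcal{N}$ sends $x$ to the sequence whose $n$-th entry is $x(n)+1$ if $x(n)\in\omega$ and $0$ if $x(n)=\omega$. Thus for a fixed coordinate $n$, the set $\{y \in \mathcal{N} \mid y(n) = 0\}$ is clopen in $\mathcal{N}$, but its $P$-preimage is $\{x \mid x(n) = \omega\}$, which is closed but not open in $(\omega+1)^\omega$ — already giving $P \notin \mathsf{D}_1$. To push this to level $n$ one iterates: one wants a set $A_n$ which is properly $\mathbf{\Delta}^0_n$ (or properly $\mathbf{\Sigma}^0_n$) in $\mathcal{N}$ and such that $P^{-1}(A_n)$ is strictly more complex than $\mathbf{\Delta}^0_n$ in $(\omega+1)^\omega$.

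The key mechanism is that $P$ increases Borel complexity by exactly one level in a controlled, ``coordinatewise'' way: the preimage of an open condition on coordinate $n$ is a $\mathbf{\Pi}^0_1$ condition, and forming countable unions/intersections and alternations over coordinates lets one transfer a properly $\mathbf{\Sigma}^0_k$ set in $\mathcal{N}$ to a set that is properly $\mathbf{\Sigma}^0_{k+1}$ (or properly $\mathbf{\Pi}^0_{k+1}$, depending on the parity/shape) in the domain. First I would make this precise: build by induction on $k$ a homeomorphic copy inside $(\omega+1)^\omega$ of a product $(\omega+1)^\omega \times \cdots$ on which $P$ restricts to (a copy of) $P$ itself on fewer ``active'' coordinates, realizing the containment $P \sqsubseteq P$ in a self-similar fashion; then use the standard universal/complete sets for $\mathbf{\Sigma}^0_k(\mathcal{N})$ and track how $P$ moves them. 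Concretely, one can take $A$ to be a $\mathbf{\Delta}^0_n$-complete subset of $\mathcal{N}$ of a convenient syntactic form (e.g.\ a difference-hierarchy set over $\mathbf{\Sigma}^0_{n-1}$, or a set of the Wadge-complete shape in $\mathbf{\Delta}^0_n$), arrange it to live on the coordinates that $P$ touches, and compute that $P^{-1}(A)$ is $\mathbf{\Sigma}^0_n$-hard in $(\omega+1)^\omega$ — in fact properly $\mathbf{\Sigma}^0_{n}\setminus\mathbf{\Pi}^0_n$ or worse — by reducing a known $\mathbf{\Sigma}^0_n$-complete set of $\mathcal{N}$ to it via a continuous map into $(\omega+1)^\omega$. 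Since the Borel hierarchy does not collapse on the uncountable quasi-Polish (indeed Polish) space $\mathcal{C}$, this shows $P^{-1}(A) \notin \mathbf{\Delta}^0_n(\mathcal{C})$, so $P \notin \mathbf{\Pi}^0_n(\mathcal{C},\mathcal{C}) = \mathsf{D}_n(\mathcal{C})$.

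Alternatively — and this may be the cleaner route given what is already available — I would invoke Theorem~\ref{theornondecomposable}: since $P \in \boldsymbol{\Sigma}^0_{2,1}(\mathcal{C},\mathcal{C})$ and $P$ is not decomposable (the defining property of $P$, recalled just before Theorem~\ref{theornondecomposable}), we have $P \sqsubseteq P$ trivially, but more to the point, if $P$ were in $\mathsf{D}_n(\mathcal{C}) = \mathbf{\Pi}^0_n(\mathcal{C},\mathcal{C})$ one could try to leverage decomposability-type consequences; however $\mathsf{D}_n$ does not in general imply $\mathsf{D}^{\mathsf{W}}_n$ for $n \geq 4$, so this alone is not enough and one genuinely needs the complexity computation. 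So the honest approach is the first one. \textbf{The main obstacle} I anticipate is the bookkeeping in the inductive step: one must choose the complete target set $A$ of exactly the right syntactic shape so that $P^{-1}(A)$ lands \emph{properly} above $\mathbf{\Delta}^0_n$ rather than merely in $\mathbf{\Sigma}^0_n$, and this requires carefully exploiting the asymmetry in $P$ (open preimages of coordinate conditions become closed, not open) at each level so that the hardness propagates without ``saturating'' too early. Getting the parity and the direction of the alternation right across the induction — so that at stage $n$ one has a genuine properly $\mathbf{\Sigma}^0_n$ (or $\mathbf{\Pi}^0_n$) preimage — is the delicate point; everything else (continuity of the reducing maps, the non-collapse of the hierarchy on $\mathcal{C}$) is routine and already supplied by the preliminaries.
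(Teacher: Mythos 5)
The paper does not actually prove this proposition: it is imported as a black box from \cite[Proposition 6.6]{motbaire}, so there is no in-paper argument to compare yours against. That said, your first strategy is the right one and is essentially how the cited result is proved: since $\mathsf{D}_n(\mathcal{C})=\mathbf{\Delta}^0_n(\mathcal{C},\mathcal{C})$ for $n\geq 2$, it suffices to exhibit one $A\in\mathbf{\Delta}^0_n$ with $P^{-1}(A)\notin\mathbf{\Delta}^0_n$, and the engine is exactly the asymmetry you identify, namely that the clopen condition ``$y(m)=0$'' pulls back to the closed-but-not-open condition ``$x(m)=\omega$''. (Your middle paragraph on Theorem~\ref{theornondecomposable} is a dead end, as you yourself conclude, so it does no harm.)

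The gap is that your write-up never produces the witness sets: the entire content of the proof is deferred to ``bookkeeping in the inductive step'', and the vaguer self-similarity idea ($P\sqsubseteq P$ on sub-products, universal sets) is more machinery than is needed. The induction closes with completely explicit sets, and you should write them down. Fix a tree of infinite pairwise disjoint coordinate blocks and let $A_{n+1}\subseteq\mathcal{N}$ be defined by $n$ alternating quantifiers over these blocks with matrix ``$y(m)=0$'': thus $A_1=\{y\mid y(0)=0\}$ is clopen, $A_2=\{y\mid \exists m\,(y(m)=0)\}$ is open, $A_3=\{y\mid \forall k\,\exists m\in N_k\,(y(m)=0)\}$ is $\mathbf{\Pi}^0_2$, and in general $A_{n+1}\in\mathbf{\Sigma}^0_n\cup\mathbf{\Pi}^0_n\subseteq\mathbf{\Delta}^0_{n+1}$. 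Then $P^{-1}(A_{n+1})$ is the same quantifier string with matrix ``$x(m)=\omega$''. The base of the properness computation is that $\{x\in(\omega+1)^\omega\mid \exists m\in N\,(x(m)=\omega)\}$ is $\mathbf{\Sigma}^0_2$-complete for every infinite $N$ (its complement is homeomorphic to $\mathcal{N}\times\mathcal{C}$, which is not $\sigma$-compact, hence not $\mathbf{\Sigma}^0_2$ in the compact space $(\omega+1)^\omega$), and the inductive step is the standard fact that countable intersections (resp.\ unions) of $\mathbf{\Sigma}^0_j$-complete (resp.\ $\mathbf{\Pi}^0_j$-complete) sets supported on independent coordinate blocks are $\mathbf{\Pi}^0_{j+1}$-complete (resp.\ $\mathbf{\Sigma}^0_{j+1}$-complete). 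Hence $P^{-1}(A_{n+1})$ is $\mathbf{\Sigma}^0_{n+1}$- or $\mathbf{\Pi}^0_{n+1}$-complete, so not $\mathbf{\Delta}^0_{n+1}$, and $P\notin\mathsf{D}_{n+1}(\mathcal{C})$ (the case $n=1$ being your clopen/closed observation). The parity issue you worry about does not arise: the one-level bump happens once, at the innermost existential quantifier, and completeness on independent blocks propagates it outward mechanically; the completeness argument is also exactly what rules out the ``saturating too early'' scenario. Finally, a word is needed (which you omit) on transporting $A_{n+1}$ from $\mathcal{N}$ to $\mathcal{C}$: since $\mathcal{N}\in\mathbf{\Pi}^0_2(\mathcal{C})$ and $P$ takes values in $\mathcal{N}$, the sets $A_{n+1}$ remain in $\mathbf{\Delta}^0_{n+1}(\mathcal{C})$ for $n\geq 2$, and for $n\leq 1$ one uses clopen subsets of $\mathcal{C}$ directly.
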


Combining all above results together, we have the following
proposition (see also Lemma 5.8 in~\cite{motdec}).

\begin{proposition}\label{propDndec}
Let \( X,Y \) be Polish spaces. For every \( n < \omega \) and \( f \in \mathsf{D}_n(X,Y) \), \( f \) is decomposable.
\end{proposition}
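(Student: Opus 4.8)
The plan is to prove the contrapositive-flavored statement directly: a function $f \in \mathsf{D}_n(X,Y)$ with $X,Y$ Polish is decomposable. The strategy is to reduce to Theorem~\ref{theornondecomposable} (Solecki's characterization extended to finite Borel levels), which says that for $f \in \boldsymbol{\Sigma}^0_{n,1}(X,Y)$, decomposability is equivalent to $P \not\sqsubseteq f$. So the proof splits into two tasks: first, show that $\mathsf{D}_n(X,Y) \subseteq \boldsymbol{\Sigma}^0_{n,1}(X,Y)$ (or at least into some $\boldsymbol{\Sigma}^0_{m,1}(X,Y)$ with $m < \omega$) so that Theorem~\ref{theornondecomposable} applies; second, show that $P \not\sqsubseteq f$ for any $f \in \mathsf{D}_n(X,Y)$.

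For the first task, recall that $\mathsf{D}_n(X,Y) = \mathbf{\Sigma}^0_n(X,Y) = \mathbf{\Sigma}^0_n \mathbf{\Sigma}^0_n(X,Y)$ by definition, and for $n \geq 2$ this equals $\mathbf{\Delta}^0_n(X,Y)$; in any case every $f \in \mathsf{D}_n(X,Y)$ satisfies $f^{-1}(U) \in \boldsymbol{\Sigma}^0_n(X)$ for every open $U \subseteq Y$, which is precisely the statement $f \in \boldsymbol{\Sigma}^0_{n,1}(X,Y)$. So this inclusion is essentially immediate from the definitions. (One should be slightly careful with the degenerate case $n = 0$, where $\mathsf{D}_0$ is not really defined as a sensible reducibility — but presumably the intended range is $1 \leq n < \omega$, or $n=0$ is vacuous.)

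For the second task, the key input is Proposition~\ref{propPisnotinD_n}, which states $P \notin \mathsf{D}_n(\mathcal{C})$ for every $n \in \omega$. Suppose toward a contradiction that $P \sqsubseteq f$ for some $f \in \mathsf{D}_n(X,Y)$. By the observation recorded right after Definition~\ref{defcontainment}, $f \in \mathbf{\Sigma}^0_{\alpha,\beta}(X,Y)$ together with $P \sqsubseteq f$ implies $P \in \mathbf{\Sigma}^0_{\alpha,\beta}(\mathcal{C},\mathcal{C})$ — here $\alpha = \beta = n$ — so $P \in \mathbf{\Sigma}^0_{n,n}(\mathcal{C},\mathcal{C}) = \mathsf{D}_n(\mathcal{C})$ (using $\mathbf{\Sigma}^0_n(X,Y) = \mathbf{\Sigma}^0_n\mathbf{\Sigma}^0_n(X,Y)$ via the abbreviation convention in the text). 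This contradicts Proposition~\ref{propPisnotinD_n}. Hence $P \not\sqsubseteq f$, and by Theorem~\ref{theornondecomposable} (applicable since $f \in \boldsymbol{\Sigma}^0_{n,1}(X,Y)$ from the first task), $f$ is decomposable.

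The main (and really only) obstacle I anticipate is bookkeeping: making sure that the domain $X$ is in the class of spaces to which Theorem~\ref{theornondecomposable} applies — it is stated for $X,Y$ Polish, which matches our hypothesis exactly, so there is no gap — and verifying the small index-matching step that $P \sqsubseteq f$ with $f$ of level $\mathbf{\Sigma}^0_{n,n}$ forces $P$ to be of level $\mathbf{\Sigma}^0_{n,n}$ hence in $\mathsf{D}_n(\mathcal{C})$. Everything else is a direct chaining of the three cited results (the definitional inclusion into $\boldsymbol{\Sigma}^0_{n,1}$, the containment-closure remark after Definition~\ref{defcontainment}, Proposition~\ref{propPisnotinD_n}, and Theorem~\ref{theornondecomposable}), so the proof should be short.
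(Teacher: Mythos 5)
Your proof is correct and follows essentially the same route as the paper's: the paper's one-line argument is exactly the chain $P \not\sqsubseteq f$ (via the observation after Definition~\ref{defcontainment} and Proposition~\ref{propPisnotinD_n}) followed by Theorem~\ref{theornondecomposable}. The extra bookkeeping you supply (the inclusion $\mathsf{D}_n(X,Y) \subseteq \boldsymbol{\Sigma}^0_{n,1}(X,Y)$, which is immediate from Remark~\ref{remfun}(1) since $\mathbf{\Sigma}^0_1 \subseteq \mathbf{\Sigma}^0_n$) is left implicit in the paper but is correctly handled.
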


\begin{proof}
By Proposition~\ref{propPisnotinD_n} and the observation following Definition~\ref{defcontainment}, we have that \( P \not\sqsubseteq f \), hence \( f \) is decomposable by Theorem~\ref{theornondecomposable}.
\end{proof}

\section{Isomorphisms of minimal complexity between quasi-Polish spaces}\label{sectionisom}

The following definition extends in various directions the topological notion of homeomorphism.

\begin{definition}\label{defhomeovariants}
\begin{enumerate}[(1)]
\item
Let \( \F \) be a collection of functions between topological spaces, and \( X,Y \in \mathscr{X} \). We say that \( X \) and \( Y \) are \emph{\( \F \)-isomorphic} (\( X \simeq_\F Y \) in symbols) if there is a bijection \( f \colon X \to Y \) such that both \( f \) and \( f^{-1} \) belong to \( \F \).

\item
If \( \mathbf{\Gamma} \) is a family of pointclasses, we say that two topological spaces \( X , Y \) are \emph{\( \mathbf{\Gamma} \)-isomorphic} (\( X \simeq_{\mathbf{\Gamma}} Y  \) in symbols) if \( X \simeq_\F Y \) where \( \F = \bigcup \{ \mathbf{\Gamma}(X,Y) \mid X,Y \in \mathscr{X} \} \). This is obviously equivalent to requiring that \( X \simeq_\G Y \) where \( \G = \bigcup \{ \mathbf{\Gamma}[X,Y] \mid X,Y \in \mathscr{X} \} \).

\item
We say that \( X,Y \in \mathscr{X} \) are \emph{piecewise homeomorphic} (\( X \simeq_{\mathsf{pw}} Y \) in symbols) if there are countable partitions \( \langle X_n \mid n \in \omega \rangle , \langle Y_n \mid n \in \omega \rangle \) of, respectively, \( X \) and \( Y \) such that \( X_n \) and \( Y_n \) are homeomorphic for every \( n \in \omega \).

\item
Given a family of pointclasses \( \mathbf{\Gamma} \), we say that \( X \) and \( Y \) are \emph{\( \mathbf{\Gamma} \)-piecewise homeomorphic} (\( X \simeq_{\mathsf{pw}(\mathbf{\Gamma})} Y \) in symbols) if and only if there are partitions \( \langle X_n \mid n \in \omega \rangle , \langle Y_n \mid n \in \omega \rangle \) of, respectively, \( X \) and \( Y \) consisting of sets in \( \mathbf{\Gamma} \) such that \( X_n \) and \( Y_n \) are homeomorphic for every \( n \in \omega \).
\end{enumerate}
\end{definition}

It is obvious that if \( \F \subseteq \G \) are two sets of functions between topological spaces and \( X,Y \in \mathscr{X} \) are such that \( X \simeq_\F Y \) then \( X \simeq_\G Y \). Since the notion of \( \mathbf{\Sigma}^0_\alpha \)-isomorphism, \( \mathbf{\Pi}^0_\alpha \)-isomorphism, and \( \mathbf{\Delta}^0_\alpha \)-isomorphism all coincide for \( 2 \leq \alpha < \omega_1 \), for simplicity of notation we will write \( X \simeq_\alpha Y \) instead of \( X \simeq_{\mathbf{\Sigma}^0_\alpha} Y \). Similarly, when \( \F = \bigcup   \{ \mathsf{D}^\mathsf{W}_\alpha(X,Y) \mid X,Y \in \mathscr{X} \} \) we will simply write \( X \simeq^\mathsf{W}_\alpha Y \) instead of \( X \simeq_\F Y \).

\begin{lemma}\label{lemmapwhomeo}
Let \( 1 \leq \alpha < \omega_1 \) and \( X,Y \in \mathscr{X} \). Then \( X \simeq^\mathsf{W}_\alpha Y \) if and only if \( X \simeq_{\mathsf{pw}(\mathbf{\Sigma}^0_\alpha)} Y \) (equivalently, if and only if  \( X \simeq_{\mathsf{pw}(\mathbf{\Delta}^0_\alpha)} Y \)).

Similarly, let \( \F \) be the class of all decomposable functions. Then \( X \simeq_\F Y \) if and only if \( X \simeq_{\mathsf{pw}} Y \).
\end{lemma}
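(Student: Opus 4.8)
The plan is to prove both equivalences essentially from the definitions, the only subtlety being the bookkeeping of partitions and the passage between $\mathbf{\Sigma}^0_\alpha$-partitions and $\mathbf{\Delta}^0_\alpha$-partitions (already noted in the paper to be automatic). I will treat the first equivalence ($X \simeq^\mathsf{W}_\alpha Y \iff X \simeq_{\mathsf{pw}(\mathbf{\Sigma}^0_\alpha)} Y$) in detail; the second ($\F$ = decomposable functions) is the literal specialization of the argument with no complexity restriction, so I would simply remark that the same proof works verbatim (replacing ``$\mathbf{\Sigma}^0_\alpha$-partition'' by ``arbitrary countable partition'' throughout).

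For the direction from right to left, suppose $\langle X_n \mid n \in \omega \rangle$, $\langle Y_n \mid n \in \omega\rangle$ are $\mathbf{\Sigma}^0_\alpha$-partitions (equivalently $\mathbf{\Delta}^0_\alpha$-partitions) of $X$, $Y$ with homeomorphisms $h_n \colon X_n \to Y_n$. Set $f = \bigcup_n h_n$; then $f \colon X \to Y$ is a bijection. To see $f \in \mathsf{D}^\mathsf{W}_\alpha(X,Y)$, observe that each $h_n$, being continuous on the subspace $X_n$, lies in $\mathsf{W}(X_n, Y) \subseteq \mathsf{W}(\subseteq X, Y)$ (post-compose with the inclusion $Y_n \hookrightarrow Y$), and $\langle X_n \mid n \in \omega\rangle$ is the required partition; hence $f \in \mathsf{D}^\mathsf{W}_\alpha(X,Y)$ by definition. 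Symmetrically $f^{-1} = \bigcup_n h_n^{-1} \in \mathsf{D}^\mathsf{W}_\alpha(Y,X)$ using the partition $\langle Y_n \mid n \in \omega\rangle$. Thus $X \simeq^\mathsf{W}_\alpha Y$.

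For the direction from left to right, let $f \colon X \to Y$ be a bijection with $f \in \mathsf{D}^\mathsf{W}_\alpha(X,Y)$ and $f^{-1} \in \mathsf{D}^\mathsf{W}_\alpha(Y,X)$. Fix a $\mathbf{\Sigma}^0_\alpha$-partition $\langle D_n \mid n \in \omega\rangle$ of $X$ with $f \restriction D_n$ continuous, and a $\mathbf{\Sigma}^0_\alpha$-partition $\langle E_m \mid m \in \omega\rangle$ of $Y$ with $f^{-1}\restriction E_m$ continuous. Consider the common refinement $X_{n,m} = D_n \cap f^{-1}(E_m)$. Since $f \restriction D_n$ is continuous (as a map $D_n \to Y$) and $E_m \in \mathbf{\Sigma}^0_\alpha(Y)$, we get $f^{-1}(E_m) \cap D_n \in \mathbf{\Sigma}^0_\alpha(D_n) \subseteq \mathbf{\Sigma}^0_\alpha(X)$ (the last inclusion because $D_n \in \mathbf{\Sigma}^0_\alpha(X)$ and $\mathbf{\Sigma}^0_\alpha$ is closed under finite intersections, as is clear from Proposition~\ref{propbh} and Definition~\ref{defbh}), so $\langle X_{n,m} \mid n,m \in \omega\rangle$ is a countable $\mathbf{\Sigma}^0_\alpha$-partition of $X$. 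Let $Y_{n,m} = f(X_{n,m})$. The key point is that $f \restriction X_{n,m}$ is a homeomorphism onto $Y_{n,m}$: it is continuous as a restriction of $f \restriction D_n$, and its inverse is the restriction of $f^{-1}\restriction E_m$ (since $X_{n,m} \subseteq f^{-1}(E_m)$, i.e.\ $f(X_{n,m}) \subseteq E_m$), which is continuous on $E_m$ and hence on $Y_{n,m}$. It remains to check that $\langle Y_{n,m}\rangle$ is a $\mathbf{\Sigma}^0_\alpha$-partition of $Y$: since $f$ is a bijection these sets are pairwise disjoint and cover $Y$, and $Y_{n,m} = (f^{-1}\restriction E_m)^{-1}(X_{n,m} ) = f(X_{n,m}) \subseteq E_m$ is $\mathbf{\Sigma}^0_\alpha$ in $E_m$ (pull back $X_{n,m} \cap D_n \in \mathbf{\Sigma}^0_\alpha$ along the continuous $f^{-1}\restriction E_m$), hence $\mathbf{\Sigma}^0_\alpha$ in $Y$. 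Re-indexing by $\omega$ gives $X \simeq_{\mathsf{pw}(\mathbf{\Sigma}^0_\alpha)} Y$, and the parenthetical equivalence with $\mathbf{\Delta}^0_\alpha$-partitions is the remark after the definition of $\mathbf{\Gamma}$-partition.

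**The main obstacle** — really the only place needing care — is verifying that the refined pieces $Y_{n,m}$ land in the correct pointclass of $Y$: one must route the argument through the continuity of $f^{-1}\restriction E_m$ rather than trying to directly compute the image $f(X_{n,m})$, since images of $\mathbf{\Sigma}^0_\alpha$ sets under $\mathsf{D}_\alpha$ functions need not be $\mathbf{\Sigma}^0_\alpha$. Everything else is routine closure-under-finite-intersection bookkeeping for the families of pointclasses $\mathbf{\Sigma}^0_\alpha$ (and $\mathbf{\Delta}^0_\alpha$), which is immediate from Definition~\ref{defbh} and Proposition~\ref{propbh}.
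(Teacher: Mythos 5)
Your proof is correct and follows essentially the same route as the paper's: the right-to-left direction is immediate from the definition of $\mathsf{D}^\mathsf{W}_\alpha$, and the left-to-right direction passes to the common refinement $X'_n \cap f^{-1}(Y'_m)$ (and its image), checking that the pieces stay in $\mathbf{\Sigma}^0_\alpha$ and that $f$ restricts to homeomorphisms between corresponding pieces. Your write-up merely spells out the relativization and closure-under-intersection bookkeeping that the paper leaves implicit.
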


\begin{proof}
We just consider the first part of the lemma,
as the second one can be proved in a similar way.
The direction from right to left directly follows from the definition of
\( \mathsf{D}^\mathsf{W}_\alpha(X,Y) \), so let us assume that
\( f \colon X \to Y \) is a bijection such that
\( f \in \mathsf{D}^\mathsf{W}_\alpha(X,Y) \) and
\( f^{-1} \in \mathsf{D}^\mathsf{W}_\alpha(Y,X) \). By definition, there are
partitions \( \langle X'_n  \mid n \in \omega \rangle \) and
\( \langle Y'_m \mid m \in \omega \rangle \) of, respectively, \( X \) and \( Y \)
in \( \mathbf{\Sigma}^0_\alpha \) pieces such that \( f \restriction X'_n \) and
\( f^{-1} \restriction Y'_m \) are continuous for every \( n,m \leq \omega \).
This implies that for every \( n,m < \omega \) the sets
\( X_{\langle n ,m \rangle} = X'_n \cap f^{-1}(Y'_m) \) and
\( Y_{\langle m,n \rangle} = Y'_m \cap f(X'_n) \) (where
\( \langle \cdot, \cdot \rangle \) denotes a bijection between
\( \omega \times \omega \) and \( \omega \)) are in
\( \mathbf{\Sigma}^0_\alpha \) as well and form two countable partitions of,
respectively \( X \) and \( Y \). Then it is easy to see that
\( f \restriction X_{\langle n,m \rangle} \colon X_{\langle n,m \rangle} \to Y_{\langle m,n \rangle} \) is a bijection witnessing that
\( X_{\langle n,m \rangle} \) and \( Y_{\langle m,n \rangle} \) are
homeomorphic, hence we are done.
\end{proof}

It is a classical result of DST that  every two uncountable Polish
spaces \( X , Y \) are \( \mathbf{B} \)-isomorphic (see e.g.\
\cite[Theorem 15.6]{ke94}). The next proposition extends this
result to the context of uncountable quasi-Polish spaces and
computes an upper bound for the complexity of the
Borel-isomorphism according to Definition~\ref{defhomeovariants}.

\begin{proposition}\label{propgeneralhomeo}
Let \( \F = \bigcup \{ \mathbf{\Sigma}^0_{3,1}(X,Y) \mid X,Y \in \mathscr{X} \} \) and let \( X,Y \) be two uncountable quasi-Polish spaces. Then \( X \cong_\F Y \). In particular, \( X \cong_{\mathbf{\Delta}^0_{< \omega}} Y \) and hence also \( X \cong_\omega Y \).
\end{proposition}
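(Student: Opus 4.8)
The plan is to run the classical Cantor--Schr\"oder--Bernstein argument behind the Borel isomorphism theorem (cf.\ \cite[Theorem 15.6]{ke94}), but while keeping track of the Borel complexity of everything in sight. All the complexity that arises will come from the gap between a complete quasi-metric \( d \) on a space and its symmetrization \( \hat d \), which by Proposition~\ref{propqp} is only a one-step (\( \mathbf{\Sigma}^0_2 \)) gap; this is what will eventually give the bound \( 3 \) and nothing bigger.

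First I would manufacture two cross-injections of low complexity. On the one hand, by Claim~\ref{claiminjection} together with Proposition~\ref{propqp} (applied to \( Y \) and the metric \( \hat d_Y \)) there is a \emph{continuous} injection \( a \colon \mathcal{C} \to Y \) with \( \mathbf{\Pi}^0_2 \) range and with \( a^{-1} \colon a(\mathcal{C}) \to \mathcal{C} \) being \( \mathbf{\Sigma}^0_2 \)-measurable. On the other hand, by Proposition~\ref{proppi2} \( X \) is homeomorphic to a \( \mathbf{\Pi}^0_2 \) subspace \( \tilde X \) of \( P\omega \); since the metric \( \hat d \) associated to the complete quasi-metric on \( P\omega \) recalled in Section~\ref{subsectionpolish} induces exactly the topology of \( \mathcal{C} \), Proposition~\ref{propqp} shows that the inclusion \( (\tilde X , \tau_d) \hookrightarrow (\tilde X , \tau_{\hat d}) \subseteq \mathcal{C} \) is a \( \mathbf{\Sigma}^0_2 \)-measurable injection with \( \mathbf{\Pi}^0_2 \) range and \emph{continuous} inverse; composing with \( X \cong \tilde X \) yields a \( \mathbf{\Sigma}^0_2 \)-measurable injection \( b \colon X \to \mathcal{C} \) with \( \mathbf{\Pi}^0_2 \) range and continuous inverse. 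Then \( i = a \circ b \colon X \to Y \) is a \( \mathbf{\Sigma}^0_2 \)-measurable injection; symmetrically one obtains \( j \colon Y \to X \). One also records the (modest) complexity of the ranges and of the partial inverses of \( i \) and \( j \).

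Next I would feed \( i \) and \( j \) into the Cantor--Schr\"oder--Bernstein scheme: this produces a bijection \( h \colon X \to Y \) with \( h \restriction A = i \) and \( h \restriction (X \setminus A) = j^{-1} \), where \( A = \bigcup_{n \in \omega} (j \circ i)^n \bigl( X \setminus j(Y) \bigr) \); the task is to bound the Borel complexity of \( A \) and then to conclude that \( h^{-1}(V) \in \mathbf{\Sigma}^0_3(X) \) for every open \( V \subseteq Y \) — equivalently \( h \in \mathbf{\Sigma}^0_{3,1}(X,Y) \) — and symmetrically \( h^{-1} \in \mathbf{\Sigma}^0_{3,1}(Y,X) \), which is exactly \( X \cong_\F Y \). (If carrying the bound through the raw back-and-forth proves too lossy, one can instead interleave the recursion with the change of topology from \( \tau_{\hat d} \) to \( \tau_d \) step by step, since the only thing that threatens to push the index above \( 3 \) is the \emph{iteration} of the \( \mathbf{\Sigma}^0_2 \) jump.) The remaining assertions are then automatic: by Proposition~\ref{propsigmafun}(2) we have \( \mathbf{\Sigma}^0_{3,1}(X,Y) \subseteq \mathbf{\Sigma}^0_{<\omega}(X,Y) = \mathbf{\Delta}^0_{<\omega}(X,Y) \) (recall \( 3 \cdot \omega = \omega \) and that \( \mathbf{\Sigma}^0_{<\omega} = \mathbf{\Delta}^0_{<\omega} \) since \( \omega \) is limit), so \( X \cong_{\mathbf{\Delta}^0_{<\omega}} Y \); and since \( \mathbf{\Delta}^0_{<\omega} \subseteq \mathbf{\Delta}^0_\omega \subseteq \mathbf{\Sigma}^0_\omega \) by Proposition~\ref{propbh}, also \( X \cong_\omega Y \).

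I expect the main obstacle to be precisely the complexity bookkeeping in the Schr\"oder--Bernstein step: one must check that the switching set \( A \) stays inside \( \mathbf{\Sigma}^0_3(X) \) and that gluing the \( \mathbf{\Sigma}^0_2 \)-measurable piece \( i \) to the piece \( j^{-1} \) along this decomposition does not move \( h \) past level \( 3 \) (and likewise for \( h^{-1} \)). This is the only genuinely delicate point, and it is where the specific choice of ingredients — continuous on one side, \( \mathbf{\Sigma}^0_2 \)-measurable with continuous inverse on the other, both with \( \mathbf{\Pi}^0_2 \) ranges arising from the \( \hat d \)-refinement — has to be exploited carefully.
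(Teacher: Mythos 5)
Your overall framing --- pass to the Polish refinements \( \tau_{\hat d_X}, \tau_{\hat d_Y} \) via Proposition~\ref{propqp} and pay one \( \mathbf{\Sigma}^0_2 \)-level for each change of topology --- is exactly the right idea, and it is also how the paper proceeds. But the step you yourself flag as "the only genuinely delicate point" is a genuine gap, and as set up it does not close. Your cross-injections \( i = a \circ b \) and \( j \) are only \( \mathbf{\Sigma}^0_2 \)-measurable with \( \mathbf{\Sigma}^0_2 \)-measurable partial inverses \emph{in the original topologies}, so the forward image \( (j \circ i)(S) = ((j\circ i)^{-1})^{-1}(S) \cap \range(j \circ i) \) of a \( \mathbf{\Sigma}^0_\alpha \) set is only guaranteed to be \( \mathbf{\Sigma}^0_{\alpha+1} \) (preimages under a \( \mathbf{\Sigma}^0_2 \)-measurable map climb one level per application). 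Hence the switching set \( A = \bigcup_{n} (j\circ i)^n(X \setminus j(Y)) \) is only seen to lie in \( \mathbf{\Delta}^0_{<\omega}(X) \), not in \( \mathbf{\Sigma}^0_3(X) \). That is enough for the two "in particular" conclusions \( X \cong_{\mathbf{\Delta}^0_{<\omega}} Y \) and \( X \cong_\omega Y \), but it does not prove the main claim \( X \cong_\F Y \) for \( \F = \bigcup\{\mathbf{\Sigma}^0_{3,1}(X,Y) \mid X,Y \in \mathscr{X}\} \). The parenthetical remark about "interleaving the recursion with the change of topology" is precisely the missing argument, not a proof of it; and note that even running the Schr\"oder--Bernstein scheme entirely inside the Polish refinements with the tools available in the paper (Lemma~\ref{lemmaSB} with \( \alpha = 2 \)) only yields a \( \mathsf{D}^\mathsf{W}_3 \)-isomorphism of the refinements, i.e.\ a map that is \( \mathbf{\Sigma}^0_3 \)-measurable for \( \tau_{\hat d} \), which after the topology change lands at level \( 4 \), again one level too high.

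The paper avoids all of this by black-boxing the Polish-level construction: by the classical theorem of Kuratowski (cited as \cite[p.~212]{kur}), any two uncountable Polish spaces admit a bijection \( g \) such that both \( g \) and \( g^{-1} \) are \( \mathbf{\Sigma}^0_2 \)-measurable --- strictly better than what a naive back-and-forth with \( \mathbf{\Pi}^0_2 \)-ranged injections gives --- and then simply sets \( f = \id_Y \circ g \circ \id_X^{-1} \), so that for \( V \in \tau_{d_Y} \subseteq \tau_{\hat d_Y} \) one gets \( f^{-1}(V) = g^{-1}(V) \in \mathbf{\Sigma}^0_2(X,\tau_{\hat d_X}) \subseteq \mathbf{\Sigma}^0_3(X,\tau_{d_X}) \), and symmetrically for \( f^{-1} \). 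To repair your write-up with minimal change, replace the hand-rolled Schr\"oder--Bernstein step by an appeal to this classical \( \mathbf{\Sigma}^0_2 \)-isomorphism theorem for uncountable Polish spaces (or reprove it, which requires doing the back-and-forth with \emph{continuous} injections whose ranges and iterated images stay at a fixed low level in the Polish topologies); the rest of your argument, including the deduction of the "in particular" statements from Proposition~\ref{propsigmafun}(2), is fine.
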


\begin{proof}
 Let \( d_X \) and \( d_Y \) be complete quasi-metrics compatible with the topologies of, respectively, \( X \) and \( Y \), and let \( \hat{d}_X, \hat{d}_Y \) be the metrics induced by \( d_X \) and \( d_Y \). Then by Proposition~\ref{propqp} \( \id_X \colon (X \tau_{\hat{d}_X}) \to (X, \tau_{d_X}) \) is a continuous function with \( \mathbf{\Sigma}^0_2 \)-measurable inverse, and similarly for \( \id_Y \colon (Y, \tau_{\hat{d}_Y}) \to (Y, \tau_{d_Y}) \). Since \( (X , \tau_{\hat{d}_X} ) \) and \( (Y , \tau_{\hat{d}_Y} ) \) are uncountable Polish spaces, by e.g.\ \cite[p.\ 212]{kur} there is a bijection \( g \colon (X,\tau_{\hat{d}_X}) \to (Y , \tau_{\hat{d}_Y} ) \) such that both \( g \) and \( g^{-1} \) are \( \mathbf{\Sigma}^0_2 \)-measurable. Hence \( f = \id_Y \circ g \circ \id^{-1}_X \) is a bijection between \( (X, \tau_{d_X}) \) and \( (Y , \tau_{d_Y} ) \) such that both \( f \) and \( f^{-1} \) are \( \mathbf{\Sigma}^0_3 \)-measurable.

The second part of the Proposition follows from Proposition~\ref{propsigmafun}(2).
\end{proof}

\begin{proposition} \label{propisoquasiPolishalgebraic}
Every quasi-Polish space is \( \mathsf{D}^\mathsf{W}_4 \)-isomorphic to an \(\omega\)-algebraic domain.
\end{proposition}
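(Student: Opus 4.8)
# Proof Proposal

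\emph{The plan is to} combine the characterization of quasi-Polish spaces in terms of $\omega$-algebraic domains (Proposition~\ref{propquasiPolishalgebraic}) with the structure of such domains to produce the required isomorphism of low complexity. By Proposition~\ref{propquasiPolishalgebraic}, if $X$ is quasi-Polish then there is an $\omega$-algebraic domain $D$ such that $X$ is homeomorphic to the set $X^* = D \setminus D_0$ of non-compact elements of $D$, where $D_0$ denotes the (countable) set of compact elements. So it suffices to show that $D$ and $X^*$ are $\mathsf{D}^\mathsf{W}_4$-isomorphic, i.e.\ that the natural inclusion-like correspondence between $D$ and $X^*$ (modulo adjusting the countably many compact points) can be realized by a bijection $f$ with $f, f^{-1}$ both in $\mathsf{D}^\mathsf{W}_4$.

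\emph{The key steps} are as follows. First I would set $D_0 = \{ c_n \mid n \in \omega \}$ and observe that $D = X^* \sqcup D_0$. The idea is to "absorb" the countably many extra compact points into $X^*$ by a Hilbert-hotel style shuffle. For this I need a countable subset $\{ y_n \mid n \in \omega \} \subseteq X^*$ which is suitably "discrete" relative to the piecewise structure available — concretely, I would like each $\{ y_n \}$ and each $\{ c_n \}$ to be of bounded Borel complexity (a $\mathbf{\Delta}^0_\alpha$-type set) in the respective spaces. By Proposition~\ref{propsingl}(3), for a dcpo with the Scott topology each singleton $\{ c \}$ with $c$ compact is the intersection of an open and a closed set, hence $\mathbf{\Delta}^0_2$ in $D$; and by Proposition~\ref{propsingl}(1), every singleton in a countably based $T_0$ space is $\mathbf{\Pi}^0_2$. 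Next, I would define $f \colon D \to X^*$ by: $f(x) = x$ for $x \in X^* \setminus \{ y_n \mid n \in \omega \}$; $f(c_n) = y_{2n}$; and $f(y_n) = y_{2n+1}$. This is a bijection. To check $f \in \mathsf{D}^\mathsf{W}_4(D, X^*)$, partition $D$ into the $\mathbf{\Sigma}^0_\alpha$ (for $\alpha$ small, say $\alpha \le 4$) pieces $\{c_n\}$, $\{y_n\}$ (each a singleton, hence of low complexity), and the remainder $R = X^* \setminus \{ y_n \}$, which is a co-countable $\mathbf{\Pi}^0_2$-type set inside $X^*$ and hence at worst $\mathbf{\Sigma}^0_3$ in $D$ (using Proposition~\ref{propqp} to pass between the quasi-metric topology on $X^*$ and on $D$). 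On each singleton piece $f$ is trivially continuous (the domain is a point), and on $R$ it is the identity, hence continuous. The same analysis applies symmetrically to $f^{-1}$. Since every $\mathbf{\Sigma}^0_\alpha$-partition with $\alpha \le 4$ is in particular a $\mathbf{\Delta}^0_4$-partition, this shows $f, f^{-1} \in \mathsf{D}^\mathsf{W}_4$, and by Lemma~\ref{lemmapwhomeo} (or directly) that $X$ and $D$ are $\mathsf{D}^\mathsf{W}_4$-isomorphic.

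\emph{The main obstacle} I anticipate is the precise bookkeeping of the Borel complexities so that they genuinely stay at level $\le 4$. In particular: (i) ensuring that the homeomorphic copy $X^*$ sits inside $D$ in a way that lets me compute the complexity of $R$ and of the singletons $\{ y_n \}$ \emph{as subsets of $D$} (not just of $X^*$) — here Proposition~\ref{propsubspace} guarantees $X^*$ is $\mathbf{\Pi}^0_2(D)$, and one must then relativize correctly; and (ii) verifying that a countable union of singletons of complexity $\mathbf{\Delta}^0_2$ (resp.\ $\mathbf{\Pi}^0_2$) sits at the right level, so that the complement $R$ lands in $\mathbf{\Sigma}^0_3$ or $\mathbf{\Sigma}^0_4$ and the partitions used are genuinely $\mathbf{\Sigma}^0_4$-partitions. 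A secondary subtlety is choosing the sequence $\langle y_n \rangle$ inside $X^*$ so that the pieces really are pairwise disjoint and cover the space; picking the $y_n$ to be, say, a convergent sequence or any fixed countable subset of the (necessarily infinite) space $X^*$ suffices, since no topological separation of the $y_n$ from each other is actually needed once each $\{ y_n \}$ is handled as its own piece of the partition. Once these complexity computations are pinned down, the rest is the routine verification that piecewise-continuous bijections with piecewise-continuous inverses give the claimed isomorphism.
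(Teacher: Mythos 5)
Your proposal is correct and follows essentially the same route as the paper: start from Proposition~\ref{propquasiPolishalgebraic}, absorb the countably many compact points by a Hilbert-hotel shuffle on a countably infinite subset of the copy of $X$, and verify via Proposition~\ref{propsingl} and Lemma~\ref{lemmapwhomeo} that all pieces of the resulting partitions are (at worst) $\mathbf{\Pi}^0_3$, hence $\mathbf{\Delta}^0_4$. The only point to tidy up is your parenthetical claim that $X^*$ is ``necessarily infinite'': a quasi-Polish space may be finite, and in that case one simply observes (as the paper does) that the space is itself already an $\omega$-algebraic domain.
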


\begin{proof}
Let \( Y \) be a quasi-Polish space. We can clearly assume that \( Y
\) is infinite (otherwise \( Y \) itself is an \(\omega\)-algebraic
domain). By Proposition~\ref{propquasiPolishalgebraic}, there is an
\(\omega\)-algebraic domain $X$ and a function \( f \colon Y \to X
\) such that \( f \) is an homeomorphism between \( Y \) and \( X
\setminus X_0 \), where \( X_0 \) is the (countable) set of compact
elements of \( X \) (see Subsection~\ref{omega-cont}). By Lemma~\ref{lemmapwhomeo}, it is enough to show that \( Y
\cong_{\mathsf{pw}(\mathbf{\Delta}^0_4)} X \).
 Let \( \langle x_n \mid n \in \omega \rangle \) be an enumeration without
repetitions of \( X_0 \) and \( \langle y_n \mid n \in \omega \rangle \) be an
enumeration without repetitions of an infinite countable subset \( Y_0 \) of
\( Y \) such that $Y\setminus Y_0$ is non-empty. Then \( \langle Y \setminus Y_0, \{ y_n \} \mid n \in \omega \rangle \)
and \( \langle X \setminus (X_0 \cup f(Y_0)), \{ f(y_n) \},
\{ x_n \} \mid n \in \omega \rangle \) are countable partitions of, respectively,
\( Y \) and \( X \) into \( \mathbf{\Pi}^0_3 \) pieces by Proposition~\ref{propsingl}(1). The function
\( f \restriction (Y \setminus Y_0) \) is an homeomorphism between
\( Y \setminus Y_0 \) and \( X \setminus (X_0 \cup f(Y_0)) \). For every \( n \in \omega \), the function sending \( y_{2n} \) to \( f(y_n) \) is an homeomorphism between
\( \{ y_{2n} \} \) and \( \{ f(y_n) \} \), while the function sending
\( y_{2n+1} \) to \( x_n \) is an homeomorphism between
\( \{ y_{2n+1} \} \) and \( \{ x_n \} \). Hence
\( Y \cong_{\mathsf{pw}(\mathbf{\Delta}^0_4)} X \), as required.
\end{proof}

\begin{proposition}\label{propgeneralhomeoctbl}
\begin{enumerate}[(1)]
 \item
Let \( X,Y \) be countable countably based  \( T_0 \)-spaces. Then \( X \simeq^\mathsf{W}_3 Y \) if and only if \( |X|=|Y| \).
 \item
Let \( X,Y \) be countable \( T_1 \) spaces. Then  \( X \simeq^\mathsf{W}_2 Y \) if and only if \( |X|=|Y| \).
\item
Let \( X,Y \) be scattered countably based spaces. Then  \( X \simeq^\mathsf{W}_2 Y \) if and only if \( |X|=|Y| \).
\end{enumerate}

In particular, \( X \simeq^\mathsf{W}_3 Y \) (respectively, \( X \simeq^\mathsf{W}_2 Y \)) for \(X,Y\) countable quasi-Polish (respectively, Polish) spaces of the same cardinality.
\end{proposition}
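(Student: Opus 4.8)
The plan is to handle the three equivalences uniformly by reducing each, via Lemma~\ref{lemmapwhomeo}, to a statement about $\mathbf{\Delta}^0_\alpha$-piecewise homeomorphism, and then to exploit the fact that a countable space always carries the partition into its singletons. The ``only if'' direction is trivial in every case: by Definition~\ref{defhomeovariants}(1) an $\F$-isomorphism is in particular a bijection, so $X \simeq^\mathsf{W}_\alpha Y$ forces $|X| = |Y|$.

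For the ``if'' direction assume $|X| = |Y|$. Any bijection $X \to Y$ matches the partition of $X$ into singletons with the partition of $Y$ into singletons, and any two one-point spaces are homeomorphic; moreover these partitions are countable since $X$ and $Y$ are. By Lemma~\ref{lemmapwhomeo} it therefore suffices to check, in each case, that every singleton of the ambient space lies in $\mathbf{\Delta}^0_\alpha$ for the relevant $\alpha$. This is exactly what the three hypotheses provide:
\begin{enumerate}[(1)]
\item for countably based $T_0$ spaces, $\{x\} \in \mathbf{\Pi}^0_2 \subseteq \mathbf{\Delta}^0_3$ by Proposition~\ref{propsingl}(1) and Proposition~\ref{propbh}, whence $X \simeq^\mathsf{W}_3 Y$;
\item for $T_1$ spaces, $\{x\}$ is closed, so $\{x\} \in \mathbf{\Pi}^0_1 \subseteq \mathbf{\Delta}^0_2$ by Proposition~\ref{propbh}, whence $X \simeq^\mathsf{W}_2 Y$;
\item for scattered countably based spaces, Proposition~\ref{propsingl}(2) shows that the space is countable and satisfies the $T_D$-axiom, so each $\{x\}$ is a difference of two open sets and hence lies in $\mathbf{\Delta}^0_2$, whence $X \simeq^\mathsf{W}_2 Y$.
\end{enumerate}
The final ``in particular'' assertion is then immediate: a countable quasi-Polish space is countably based and $T_0$ by definition, so~(1) applies; a countable Polish space is metrizable, hence $T_1$, so~(2) applies (one could equally invoke~(3), since a countable Polish space is scattered and countably based).

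There is no substantial difficulty here. The two points that deserve a word of care are: first, one should argue through the equivalence $X \simeq^\mathsf{W}_\alpha Y \iff X \simeq_{\mathsf{pw}(\mathbf{\Delta}^0_\alpha)} Y$ furnished by Lemma~\ref{lemmapwhomeo}, rather than attempting to exhibit a single reducing function; and second, in item~(3) the claim that a locally closed set (in particular, a difference of two open sets) belongs to $\mathbf{\Delta}^0_2$ should be verified against the general Borel hierarchy of Definition~\ref{defbh} --- this follows since both open and closed sets are $\mathbf{\Sigma}^0_2$ and $\mathbf{\Sigma}^0_2$ is closed under finite intersections and unions, so both $U \cap C$ and its complement are $\mathbf{\Sigma}^0_2$.
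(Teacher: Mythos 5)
Your proof is correct and follows essentially the same route as the paper: both arguments observe that any bijection between the two countable spaces works, because the partition into singletons is a countable $\mathbf{\Delta}^0_3$- (respectively $\mathbf{\Delta}^0_2$-) partition by Proposition~\ref{propsingl} (respectively by closedness of points, or the $T_D$-axiom), and singletons are trivially homeomorphic. Routing this through Lemma~\ref{lemmapwhomeo} rather than directly through the definition of $\mathsf{D}^\mathsf{W}_\alpha$ is only a cosmetic difference, and your verification that a locally closed set lies in $\mathbf{\Delta}^0_2$ for the hierarchy of Definition~\ref{defbh} is a worthwhile extra check.
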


\begin{proof*} (1) For the nontrivial direction, notice that by Proposition~\ref{propsingl}(1) any bijection \( f \colon X \to Y \) is a witness of \( X \simeq^\mathsf{W}_3 Y \).

(2) It is a classical fact that  a space is \( T_1 \) if and only if  its singletons are closed: hence any bijection between \( X \) and \( Y \) witnesses \( X \simeq^\mathsf{W}_2 Y \).

(3) By Proposition~\ref{propsingl}(2), \( \{ x \} \) is the intersection of an open set and a closed set for every \( x \in X \), and similarly for every \( y \in Y \): hence any bijection between \( X \) and \( Y \) witnesses \( X \simeq^\mathsf{W}_2 Y \). \hfill \usebox{\proofbox}
\end{proof*}

Of course the general results above (Propositions~\ref{propgeneralhomeo},\ref{propisoquasiPolishalgebraic} and~\ref{propgeneralhomeoctbl}) do not give in general an optimal bound (in the sense of Definition~\ref{defhomeovariants}) on the minimal complexity of an isomorphism between two specific quasi-Polish spaces \( X \) and \( Y \). In the next proposition we collect some easy observations concerning the possible complexity of isomorphism between concrete examples of quasi-Polish spaces, including the following:
\begin{enumerate}[(1)]
\item
\( \omega \) endowed with the discrete topology;
\item
the space \( \RR^n \) (\( n \in \omega \)) endowed with the product of the order topology on \( \RR \);
\item
the \(\omega\)-algebraic domain \( (\omega^{\leq \omega}, \sqsubseteq ) \) endowed with the Scott topology.
\end{enumerate}

\begin{remark}\label{remDW2}
It is straightforward to check that if \( f \in \mathsf{D}^\mathsf{W}_2(X,Y) \) with \( X \) \(\sigma\)-compact and \( Y \) an Hausdorff space, then the range of \( f \) is \(\sigma\)-compact as well. In particular, if \( X,Y \) are Polish spaces with \( X \) \(\sigma\)-compact and \( Y \) non \(\sigma\)-compact, then there is no onto \( f \in \mathsf{D}^\mathsf{W}_2(X,Y) \).
\end{remark}

\begin{proposition}\label{propexamples}
\begin{enumerate}[(1)]
 \item \( \mathcal{N}  \simeq^\mathsf{W}_2  \omega \sqcup
\mathcal{N}  \);
 \item if \( X \) is a \(\sigma\)-compact
quasi-Polish space then \( \mathcal{N} \not\simeq^\mathsf{W}_2 X
\). In particular, \( \mathcal{N} \not\simeq^\mathsf{W}_2
\mathcal{C} \), \( \mathcal{N} \not\simeq^\mathsf{W}_2 \RR^n \)
for every \( n < \omega \), and \( \mathcal{N}
\not\simeq^\mathsf{W}_2 \omega^{\leq \omega} \);
 \item \(
\mathcal{N} \simeq^{\mathsf{W}}_3 \mathcal{C} \). More precisely,
there is a bijection  \( f \colon \mathcal{N} \to \mathcal{C} \)
such that \( f \in \mathsf{D}^\mathsf{W}_2(\mathcal{N},
\mathcal{C}) \) and \( f^{-1} \in
\mathsf{D}^\mathsf{W}_3(\mathcal{C}, \mathcal{N}) \);
 \item \(
\mathcal{N} \simeq^\mathsf{W}_3 \RR^n \) for every \(1 \leq  n <
\omega \). More precisely, there is a bijection  \( f \colon
\mathcal{N} \to \RR^n \) such that \( f \in
\mathsf{D}^\mathsf{W}_2(\mathcal{N}, \RR^n) \) and \( f^{-1} \in
\mathsf{D}^\mathsf{W}_3(\RR^n , \mathcal{N}) \);
 \item \(
\mathcal{N} \simeq^\mathsf{W}_3 \omega^{\leq \omega} \). More
precisely, there is a bijection  \( f \colon \mathcal{N} \to
\omega^{\leq \omega} \) such that \( f \in
\mathsf{D}^\mathsf{W}_2(\mathcal{N}, \omega^{\leq \omega}) \) and
\( f^{-1} \in \mathsf{D}^\mathsf{W}_3(\omega^{\leq \omega} ,
\mathcal{N}) \).
\end{enumerate}
\end{proposition}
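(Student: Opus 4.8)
The plan is to settle (2) by a soft argument and to handle (1), (3), (4), (5) through a single uniform construction, the only genuine work being hidden in the case \( n \geq 2 \) of (4).

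For (2) I would argue by contradiction. If \( \mathcal{N} \simeq^\mathsf{W}_2 X \) with \( X \) \(\sigma\)-compact and quasi-Polish, fix a bijection \( g \colon \mathcal{N} \to X \) with \( g, g^{-1} \in \mathsf{D}^\mathsf{W}_2 \). Then \( g^{-1} \in \mathsf{D}^\mathsf{W}_2(X,\mathcal{N}) \) has \(\sigma\)-compact domain \( X \) and Hausdorff codomain \( \mathcal{N} \), so by Remark~\ref{remDW2} its range \( g^{-1}(X) = \mathcal{N} \) would be \(\sigma\)-compact — contradicting the classical fact that \( \mathcal{N} \) is not \(\sigma\)-compact (every compact subset of \( \mathcal{N} \) is nowhere dense, yet \( \mathcal{N} \) is Baire). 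The displayed instances are immediate, since \( \mathcal{C} \) and \( \omega^{\leq \omega} \) are compact, each \( \RR^n \) is \(\sigma\)-compact, and all are quasi-Polish.

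The uniform construction for (1), (3), (5) and the case \( n = 1 \) of (4) is the following. Write the target \( X \) as a disjoint union \( X = M \sqcup C \) where \( M \in \mathbf{\Pi}^0_2(X) \) is homeomorphic to \( \mathcal{N} \) and \( C \subseteq X \) is countably infinite with each singleton \( \{c\} \), \( c \in C \), the intersection of an open and a closed set (so \( \{c\} \in \mathbf{\Delta}^0_2(X) \)). Concretely: for \( \omega \sqcup \mathcal{N} \) take \( M = \mathcal{N} \) (clopen) and \( C = \omega \); for \( \mathcal{C} \) take \( M = \{ x \in \mathcal{C} \mid x(n) = 1 \text{ for infinitely many } n \} \), which is \( \mathbf{\Pi}^0_2 \) and homeomorphic to \( \mathcal{N} \) via the block-counting map (write \( x = 0^{a_0} 1\, 0^{a_1} 1\, 0^{a_2} 1 \cdots \) with \( a_i \in \omega \) and send \( x \) to \( (a_0, a_1, \dots) \)), with \( C = \mathcal{C} \setminus M \) the countable set of eventually-\(0\) sequences; for \( \RR \) take \( M = \RR \setminus \QQ \) and \( C = \QQ \); for \( \omega^{\leq \omega} \) observe that the subspace topology on \( \omega^\omega \subseteq \omega^{\leq \omega} \) is exactly the Baire topology (since \( {\uparrow}\sigma \cap \omega^\omega = \sigma \cdot \mathcal{N} \) for \( \sigma \in \omega^* \)), take \( M = \omega^\omega \) — which is \( \mathbf{\Pi}^0_2 \) because \( \omega^{\leq \omega} \setminus \omega^\omega = \omega^* \) is a countable union of sets that are intersections of an open and a closed set by Proposition~\ref{propsingl}(3), hence in \( \mathbf{\Sigma}^0_2 \) — and \( C = \omega^* \). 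On the \( \mathcal{N} \)-side fix a countably infinite closed discrete \( D \subseteq \mathcal{N} \) (e.g.\ \( D = \{ \xi \in \mathcal{N} \mid \xi(k) = 0 \text{ for all } k \geq 1 \} \)); then \( \mathcal{N} \setminus D \) is a nonempty open subset of \( \mathcal{N} \), hence a disjoint union of clopen copies of \( \mathcal{N} \), hence homeomorphic to \( \mathcal{N} \), so \( \mathcal{N} = (\mathcal{N} \setminus D) \sqcup D \) has the same shape (and may be refined to exhibit as many copies of \( \mathcal{N} \) as \( M \) provides). Let \( f \colon \mathcal{N} \to X \) be a homeomorphism \( \mathcal{N} \setminus D \to M \) together with an arbitrary bijection \( D \to C \). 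Splitting \( D \) and \( C \) into singletons, \( \{ \mathcal{N} \setminus D \} \cup \{ \{d\} \mid d \in D \} \) is a \( \mathbf{\Sigma}^0_2 \)-partition of \( \mathcal{N} \) on whose pieces \( f \) is continuous, so \( f \in \mathsf{D}^\mathsf{W}_2(\mathcal{N}, X) \); and \( \{ M \} \cup \{ \{c\} \mid c \in C \} \) is a partition of \( X \) into sets in \( \mathbf{\Pi}^0_2 \cup \mathbf{\Delta}^0_2 \subseteq \mathbf{\Sigma}^0_3 \) on whose pieces \( f^{-1} \) is continuous, so \( f^{-1} \in \mathsf{D}^\mathsf{W}_3(X,\mathcal{N}) \). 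This gives \( \mathcal{N} \simeq^\mathsf{W}_3 X \) with the stated refinement; for \( X = \omega \sqcup \mathcal{N} \) the pieces of the \( X \)-partition are clopen, so in fact \( f^{-1} \in \mathsf{D}^\mathsf{W}_2 \) and \( \mathcal{N} \simeq^\mathsf{W}_2 \omega \sqcup \mathcal{N} \).

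It remains to run the same argument for \( \RR^n \), \( n \geq 2 \), for which I would first prove by induction on \( n \geq 1 \) that \( \RR^n \) is a disjoint union of countably many sets in \( \mathbf{\Pi}^0_2(\RR^n) \), each homeomorphic to \( \mathcal{N} \), together with a countably infinite set of (closed) points. The base case is \( \RR = (\RR \setminus \QQ) \sqcup \QQ \). For the step, \( \RR^n \setminus (\RR \setminus \QQ)^n = \bigcup_{i < n,\, q \in \QQ} H_{i,q} \) with \( H_{i,q} = \{ x \mid x_i = q \} \cong \RR^{n-1} \) a coordinate hyperplane, while \( (\RR \setminus \QQ)^n \in \mathbf{\Pi}^0_2(\RR^n) \) is homeomorphic to \( \mathcal{N}^n \cong \mathcal{N} \). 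Disjointify as \( H'_k = H_{i_k,q_k} \setminus \bigcup_{l < k} H_{i_l,q_l} \): each \( H'_k \) is a difference of two closed sets, hence \( \mathbf{\Delta}^0_2 \) in \( \RR^n \), and inside \( H_{i_k,q_k} \cong \RR^{n-1} \) it is that copy with finitely many coordinate hyperplanes removed, hence a finite disjoint union of open boxes, each homeomorphic to \( \RR^{n-1} \) and open in \( H'_k \), so \( \mathbf{\Delta}^0_2 \) in \( \RR^n \). Applying the inductive hypothesis inside each box, and using that a set which is \( \mathbf{\Pi}^0_2 \) relative to a \( \mathbf{\Delta}^0_2 \) subset of \( \RR^n \) is \( \mathbf{\Pi}^0_2 \) in \( \RR^n \) (an intersection of two \( \mathbf{\Pi}^0_2 \) sets), while points stay closed, closes the induction. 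One then matches this decomposition of \( \RR^n \) against a decomposition of \( \mathcal{N} \) into countably many clopen copies of \( \mathcal{N} \) and countably many singletons — obtained from \( \mathcal{N} = (\mathcal{N} \setminus D) \sqcup D \) by splitting \( \mathcal{N} \setminus D \) into its basic clopen copies of \( \mathcal{N} \) — and defines \( f \) as before, getting \( f \in \mathsf{D}^\mathsf{W}_2(\mathcal{N},\RR^n) \) (the \( \mathcal{N} \)-pieces are open or singletons, so \( \mathbf{\Sigma}^0_2 \)) and \( f^{-1} \in \mathsf{D}^\mathsf{W}_3(\RR^n,\mathcal{N}) \) (the \( \RR^n \)-pieces are \( \mathbf{\Pi}^0_2 \) or closed, so in \( \mathbf{\Delta}^0_3 \subseteq \mathbf{\Sigma}^0_3 \)). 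This last case is the main obstacle: one must build the decomposition of \( \RR^n \) and, above all, keep the complexity of the pieces pinned at \( \mathbf{\Pi}^0_2 \), which is exactly what the hyperplane-disjointification achieves (it makes the recursion terminate after \( n \) steps with the complexity never rising). Everything else is routine — the elementary fact that nonempty open subsets of \( \mathcal{N} \) are disjoint unions of clopen copies of \( \mathcal{N} \), the existence of a countably infinite closed discrete subset of \( \mathcal{N} \), the block-counting homeomorphism for \( \mathcal{C} \), the identification of the subspace topology on \( \omega^\omega \) inside \( \omega^{\leq\omega} \), and the pointclass inclusions from Proposition~\ref{propbh} — and item (2) shows the index \( 3 \) in (3)–(5) cannot be lowered to \( 2 \).
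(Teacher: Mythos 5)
Your proof is correct, and for items (1), (2), (3), (5) and the case \( n=1 \) of (4) it is essentially the paper's own argument: the same witness \( X\setminus D \) (the paper uses \( \mathcal{N}\setminus\{n0^\omega \mid n\in\omega\} \)), the same block-counting homeomorphism onto the infinitely-often-\(1\) sequences of \( \mathcal{C} \), the same use of Remark~\ref{remDW2} for (2), and the same appeal to Proposition~\ref{propsingl}(3) to see that \( \omega^\omega \) is \( \mathbf{\Pi}^0_2 \) in \( \omega^{\leq\omega} \); packaging these as one ``\( M\sqcup C \)'' template is only a cosmetic difference from the paper's device of first replacing \( \mathcal{N} \) by \( \omega\sqcup\mathcal{N} \). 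The one genuine divergence is the case \( n\geq 2 \) of (4). The paper writes the partition of \( \RR^n \) down in closed form, indexed by the rational-coordinate pattern: the pieces are \( \QQ^n \), \( (\RR\setminus\QQ)^n \), and the sets \( \RR_{a,s} \) (coordinates in \( a \) pinned to rational values \( s \), the rest irrational), each visibly \( \mathbf{\Pi}^0_2(\RR^n) \) and homeomorphic to \( (\RR\setminus\QQ)^{n-i}\cong\mathcal{N} \), so no induction and no complexity bookkeeping is needed. You instead build the decomposition by induction on \( n \), disjointifying the rational coordinate hyperplanes and recursing inside the resulting open boxes; this is correct --- your observations that the boxes are locally closed, hence \( \mathbf{\Delta}^0_2 \), and that relative \( \mathbf{\Pi}^0_2 \) inside a \( \mathbf{\Delta}^0_2 \) set is absolute \( \mathbf{\Pi}^0_2 \) are exactly what keeps the complexity from climbing --- but it is more work than necessary, and the pieces you produce are not the paper's (they are transported copies of the \( (n-1) \)-dimensional decomposition rather than the sets \( \RR_{a,s} \)). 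What your route buys is a scheme that would adapt to other products of lines; what the paper's buys is a two-line verification of the \( \mathbf{\Pi}^0_2 \) bound.
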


\begin{proof*}
(1)
The space \( X = \mathcal{N} \setminus \{ n 0^\omega \mid n \in \omega \} \) is a nonempty perfect zero-dimensional Polish space whose compact subsets all have empty interior, hence it is homeomorphic to \( \mathcal{N} \) by the Alexandrov-Urysohn theorem \cite[Theorem 7.7]{ke94}. Let \( \hat{f} \colon X \to \mathcal{N} \) be a witness of this fact, and extend \( \hat{f} \) to a bijection \( f \colon \mathcal{N} \to \omega \sqcup \mathcal{N} \) by setting \( f(n 0^\omega) = n \) for every \( n \in \omega \). Since \( X \) is open in \( \mathcal{N} \) and each \( \{ n 0^\omega \} \) is closed in \( \mathcal{N}  \), the partition \( \langle X, \{ n0^\omega \} \mid n \in \omega \rangle \) of \( \mathcal{N} \) witnesses that \( f \in \mathsf{D}^\mathsf{W}_2(\mathcal{N}, \omega \sqcup \mathcal{N}) \). Conversely, the partition \( \langle \mathcal{N}, \{ n \} \mid n \in \omega \rangle \) is a clopen partition of \( \omega \sqcup \mathcal{N} \) witnessing \( f^{-1} \in \mathsf{D}^\mathsf{W}_2(\omega \sqcup \mathcal{N}, \mathcal{N}) \).

(2)
Since \( \mathcal{N} \) is not \(\sigma\)-compact, the claim follows from Remark~\ref{remDW2}.

(3)
By part (1), it is enough to prove the claim with \( \mathcal{N} \) replaced by \( \omega \sqcup \mathcal{N} \).
Let \( \hat{f} \colon \mathcal{N} \to \mathcal{C} \) be the well-known homeomorphism between \( \mathcal{N} \) and \( Y
= \{ y \in \mathcal{C} \mid  \forall n \exists m \geq n \, (y(m) = 1) \} \) given by \( \hat{f} (x) =
0^{x(0)}10^{x(1)}10^{x(2)}10^{x(3)} \dotsc \) Since \( \mathcal{C} \setminus Y \) is countable, we can fix an
enumeration \( \langle y_n  \mid n \in \omega \rangle \) without repetitions of such a set. Extend \( \hat{f} \) to a bijection \( f
\colon \omega \sqcup \mathcal{N} \to \mathcal{C} \) by setting \( f(n) = y_n \) for every \( n \in \omega \). Since each point
of the spaces \( \omega \sqcup \mathcal{N} \) and \( \mathcal{C} \) is closed, \( \mathcal{N} \) is (cl)open in \( \omega
\sqcup \mathcal{N} \), and \( Y \) is a (proper) \( \mathbf{\Pi}^0_2(\mathcal{C}) \) set, we have that \( \langle
\mathcal{N}, \{ n \} \mid n \in \omega \rangle \) is a clopen partition of \( \omega \sqcup \mathcal{N} \) witnessing \( f \in
\mathsf{D}^\mathsf{W}_2(\omega \sqcup \mathcal{N}, \mathcal{C}) \), and \( \langle Y, \{ y_n \} \mid n \in \omega
\rangle \) is a \( \mathbf{\Pi}^0_2 \)-partition of \( \mathcal{C} \) witnessing \( f^{-1} \in
\mathsf{D}^\mathsf{W}_3(\mathcal{C}, \omega \sqcup \mathcal{N}) \).

(4)
Let first \( n = 1\). By part (1), it is enough to prove the claim with \( \mathcal{N} \) replaced by \( \omega \sqcup \mathcal{N} \). Let \( \langle q_k \mid k \in \omega \rangle \) be an enumeration without repetition of the set of rational numbers \( \QQ \). It is  well-known that \( \mathcal{N} \) and \( \RR \setminus \QQ \) are homeomorphic, so let \( \hat{f} \) be a witness of this fact. Extend \( \hat{f} \) to a bijection \( f \colon \omega \sqcup \mathcal{N} \to \RR \) by setting \( f(k) = q_k \) for every \( k \in \omega \). Since  \( \RR \setminus \QQ \) is a (proper) \( \mathbf{\Pi}^0_2 (\RR) \) set and each singleton of \( \RR \) (and hence of \( \QQ \)) is closed, we have that \( \langle \mathcal{N}, \{ n \} \mid n \in \omega \rangle \) is a clopen partition of \( \omega \sqcup \mathcal{N} \) witnessing \( f \in \mathsf{D}^\mathsf{W}_2(\omega \sqcup \mathcal{N}, \RR) \), and \( \langle \RR \setminus \QQ, \{ q_n \} \mid n \in \omega \rangle \) is a \( \mathbf{\Pi}^0_2 \)-partition of \( \mathcal{C} \) witnessing \( f^{-1} \in \mathsf{D}^\mathsf{W}_3(\RR, \omega \sqcup \mathcal{N}) \).

Now assume \( n > 1\).
First observe that \( Z =  \bigcup_{0 < i < n } ([n]^i \times \QQ^i \times \mathcal{N}) \) (where each \( [n]^i \times \QQ^i \) is endowed with the discrete topology) is homeomorphic to \( \mathcal{N} \), hence by part (1) and the fact that \( \mathcal{N} \sqcup \mathcal{N} \) is homeomorphic to \( \mathcal{N} \) it is enough to prove the claim with \( \mathcal{N} \) replaced by \( \omega \sqcup \mathcal{N} \sqcup Z \). For each \( 0 < i < n \), \( a = \{a_0, \dotsc, a_{i-1} \} \in [n]^{i} \) and \( s \in \QQ^i \), let \( f_{a,s} \) be an homeomorphism between \( \mathcal{N} \) and
\[
\RR_{a,s} = \{ x \in \RR^n \mid \forall j < i \, (x(a_j) = s(j)) \wedge \forall k \notin a \, (x(k) \in \RR \setminus \QQ)  \}
\]
(such an homeomorphism exists because \( \RR_{a,s} \) is homeomorphic to \( ( \RR \setminus \QQ)^{n-i} \)). Let also \( \hat{f} \) be an homeomorphism between  \( \mathcal{N} \) and \( ( \RR \setminus \QQ)^n \), and \( \langle t_k \mid k \in \omega \rangle \) be an enumeration without repetitions of \( \QQ^n \). Then define
\[
f \colon  \omega \sqcup \mathcal{N} \sqcup Z  \to \RR^n
\]
by setting \( f(k) = t_k \), \( f(x) = \hat{f}(x) \), and \( f(a,s,x) = f_{a,s}(x) \) for every \( k \in \omega \), \( x \in \mathcal{N} \) and \( (a,s) \in \bigcup_{0 < i < n } ([n]^i \times \QQ^i) \). It is easy to check that \( f \) is in fact a bijection. Moreover, since all the \( \RR_{a,s} \) and \( (\RR \setminus \QQ)^n \) are \( \mathbf{\Pi}^0_2(\RR^n) \) sets and all points are closed in \( \RR^n \), we have that
 \[ \langle \mathcal{N}, \{ k \}, \{ (a,s,x) \mid x \in \mathcal{N} \} \mid k \in \omega, (a,s) \in \bigcup_{0<i<n} ([n]^i \times \QQ^i)  \rangle \]
 is a clopen partition of \( \omega \sqcup \mathcal{N} \sqcup Z \) witnessing \( f \in \mathsf{D}^\mathsf{W}_2(\omega \sqcup \mathcal{N} \sqcup Z, \RR^n) \), and   \[ \langle \QQ^n, (\RR \setminus \QQ)^n, \RR_{a,s} \mid (a,s) \in \bigcup_{0<i<n} ([n]^i \times \QQ^i) \rangle \]
 is a \( \mathbf{\Pi}^0_2 \)-partition of \( \RR^n \) witnessing \( f^{-1} \in \mathsf{D}^\mathsf{W}_3(\RR^n, \omega \sqcup \mathcal{N} \sqcup Z) \).

(5)
By part (1), it is again enough to prove the claim with \( \mathcal{N}\) replaced by \( \omega \sqcup \mathcal{N} \).  Let \( \langle \sigma_n \mid n \in \omega \rangle \) be an enumeration without repetition of \( \omega^* \). Define \( f \colon \omega \sqcup \mathcal{N} \to \omega^{\leq \omega} \) by setting \( f(n) = \sigma_n \) and \( f(x) = x \) for all \( n \in \omega \) and \( x \in \mathcal{N} \). Since all the \( \sigma \in \omega^* \) are compact elements of \( \omega^{\leq \omega} \), their singletons are \( \mathbf{\Delta}^0_2 (\omega^{\leq \omega})\) subsets by Proposition~\ref{propsingl}(3), hence \( \mathcal{N} \) is a \( \mathbf{\Pi}^0_2(\omega^{\leq \omega}) \) set. Therefore, \( \langle \mathcal{N}, \{ n \} \mid n \in \omega \rangle \) is a clopen partition of \( \omega \sqcup \mathcal{N} \) witnessing \( f \in \mathsf{D}^\mathsf{W}_2(\omega \sqcup \mathcal{N}, \omega^{\leq \omega}) \), while \( \langle \mathcal{N}, \{ \sigma \} \mid \sigma \in \omega^*  \rangle \) is a \( \mathbf{\Pi}^0_2\)-partition of \( \omega^{\leq \omega} \) witnessing \( f^{-1} \in \mathsf{D}^\mathsf{W}_3(\omega^{\leq \omega}, \omega \sqcup \mathcal{N}) \).  \hfill \usebox{\proofbox}
\end{proof*}

A natural way to compute the complexity of an isomorphism between two topological spaces is given by the following variant of the usual Schr\"oder-Bernstein argument (see also~\cite{JR79b}).

\begin{lemma}\label{lemmaSB}
Let \( 1 \leq \alpha < \omega_1 \), \( X,Y \in \mathscr{X} \), and \( \F  \) be a collection of functions between topological spaces closed under restrictions (i.e.\ \( f \restriction X' \in \F \) for every \( f \colon X \to Y \in \F \) and \( X' \subseteq X\)). If \( X \) is \( \F \)-isomorphic to a  subset of \( Y \) via some \( f \in \mathbf{\Pi}^0_\alpha[X,Y] \) and \( Y \) is \( \F \)-isomorphic to a  subset of \( X \) via some \( g \in \mathbf{\Pi}^0_\alpha[Y,X] \), then there are \( X' \in \mathbf{\Delta}^0_{\alpha + 1}(X) \) and \( Y' \in \mathbf{\Delta}^0_{\alpha+1}(Y) \) such that \( X' \simeq_\F Y' \) and \( {X \setminus X'} \simeq_\F {Y \setminus Y'} \).

In particular, if \( X \) (respectively, \( Y \)) is homeomorphic to a \( \mathbf{\Pi}^0_\alpha \) subset of \( Y \) (respectively, \( X \)), then \( X \simeq^\mathsf{W}_{\alpha+1} Y \).
\end{lemma}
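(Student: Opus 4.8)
The plan is to run the classical Schr\"oder--Bernstein back-and-forth construction and then keep careful track of the Borel complexity of the resulting decomposition. Fix witnesses $f \colon X \to Y$ and $g \colon Y \to X$ as in the hypotheses, so that $f$ is an $\F$-isomorphism of $X$ onto $f(X) \subseteq Y$ with $f \in \mathbf{\Pi}^0_\alpha[X,Y]$, and symmetrically $g$ is an $\F$-isomorphism of $Y$ onto $g(Y) \subseteq X$ with $g \in \mathbf{\Pi}^0_\alpha[Y,X]$. Write $gf$ for $g \circ f \colon X \to X$ and $fg$ for $f \circ g \colon Y \to Y$, set $C_0 = X \setminus g(Y)$, $C_{n+1} = g(f(C_n))$, and $C = \bigcup_n C_n$, and define $h \colon X \to Y$ by $h \restriction C = f \restriction C$ and $h \restriction (X \setminus C) = g^{-1} \restriction (X \setminus C)$ (legitimate since $C_0 \subseteq C$, so $X \setminus C \subseteq g(Y)$). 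As in the classical argument $h$ is a bijection and $Y = f(C) \sqcup g^{-1}(X \setminus C)$. I will take $X' = C$ and $Y' = f(C)$: then $f \restriction C$, $g^{-1} \restriction (X \setminus C)$ and their inverses are obtained by restriction from $f, f^{-1}, g, g^{-1} \in \F$, so (as $\F$ is closed under restrictions) they witness $X' \simeq_\F Y'$ and $X \setminus X' \simeq_\F Y \setminus Y'$. It then remains only to prove $C \in \mathbf{\Delta}^0_{\alpha+1}(X)$ and $f(C) \in \mathbf{\Delta}^0_{\alpha+1}(Y)$.

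This complexity estimate is the real content, and the key point is that $f$ and $g$ are assumed to send $\mathbf{\Pi}^0_\alpha$-sets to $\mathbf{\Pi}^0_\alpha$-sets only in the \emph{forward} direction, so one must express everything through images of the whole spaces under iterates of $gf$ and $fg$ and never apply $f^{-1}$ or $g^{-1}$ to a complicated set. Since $X \in \mathbf{\Pi}^0_\alpha(X)$ and $g(Y) \in \mathbf{\Pi}^0_\alpha(X)$, an easy induction using $f \in \mathbf{\Pi}^0_\alpha[X,Y]$ and $g \in \mathbf{\Pi}^0_\alpha[Y,X]$ gives $(gf)^n(X), (gf)^n(g(Y)) \in \mathbf{\Pi}^0_\alpha(X)$ for all $n$. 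As $f(X) \subseteq Y$ and $g(Y) \subseteq X$, these sets form a descending $\omega$-chain
\[ X = (gf)^0(X) \supseteq (gf)^0(g(Y)) \supseteq (gf)^1(X) \supseteq (gf)^1(g(Y)) \supseteq \cdots, \]
and since $gf$ is injective, $C_n = (gf)^n(C_0) = (gf)^n(X) \setminus (gf)^n(g(Y))$. Putting $D_n = (gf)^n(g(Y)) \setminus (gf)^{n+1}(X)$ and $X_\infty = \bigcap_n (gf)^n(X)$, the descending chain decomposes $X$ as $X = X_\infty \sqcup \bigsqcup_n C_n \sqcup \bigsqcup_n D_n$, whence $C = \bigsqcup_n C_n$ and $X \setminus C = X_\infty \sqcup \bigsqcup_n D_n$. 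Now each $C_n$ and each $D_n$ is a difference of two $\mathbf{\Pi}^0_\alpha(X)$-sets, hence lies in $\mathbf{\Delta}^0_{\alpha+1}(X)$ by Proposition~\ref{propbh} together with closure of $\mathbf{\Delta}^0_{\alpha+1}$ under finite intersections, while $X_\infty \in \mathbf{\Pi}^0_\alpha(X) \subseteq \mathbf{\Delta}^0_{\alpha+1}(X)$ as a countable intersection of $\mathbf{\Pi}^0_\alpha$-sets. Thus $C$ and $X \setminus C$ are both countable unions of $\mathbf{\Delta}^0_{\alpha+1}(X)$-sets, hence both in $\mathbf{\Sigma}^0_{\alpha+1}(X)$, and therefore $C \in \mathbf{\Delta}^0_{\alpha+1}(X)$. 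The identical computation carried out on the $Y$-side — using $f(X), f(g(Y)) \in \mathbf{\Pi}^0_\alpha(Y)$, injectivity of $fg$, the identities $f(C_n) = (fg)^n(f(X)) \setminus (fg)^n(f(g(Y)))$, $f(D_n) = (fg)^n(f(g(Y))) \setminus (fg)^{n+1}(f(X))$, and the decomposition $Y = (Y \setminus f(X)) \sqcup \bigsqcup_n f(C_n) \sqcup \bigsqcup_n f(D_n) \sqcup \bigcap_n (fg)^n(f(X))$ — shows likewise that $f(C) \in \mathbf{\Delta}^0_{\alpha+1}(Y)$. (Alternatively, since $f$ is injective with $f(X) \in \mathbf{\Pi}^0_\alpha(Y)$, the observations in Section~\ref{sectionreduc} give that $f$ maps $\mathbf{\Delta}^0_{\alpha+1}(X)$-sets to $\mathbf{\Delta}^0_{\alpha+1}(Y)$-sets, so $f(C) \in \mathbf{\Delta}^0_{\alpha+1}(Y)$ at once.)

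For the last assertion, I would apply the first part with $\F$ the family of all homeomorphisms between subspaces of topological spaces, which is obviously closed under restrictions. If $X$ is homeomorphic via $f_0$ to a $\mathbf{\Pi}^0_\alpha$-subset $Z$ of $Y$, then $f_0 \colon X \to Y$ belongs to $\mathbf{\Pi}^0_\alpha[X,Y]$ (a relatively $\mathbf{\Pi}^0_\alpha$ subset of $Z \in \mathbf{\Pi}^0_\alpha(Y)$ is again in $\mathbf{\Pi}^0_\alpha(Y)$, by closure under finite intersections) and witnesses that $X$ is $\F$-isomorphic to the subset $Z$ of $Y$; symmetrically if $Y$ is homeomorphic to a $\mathbf{\Pi}^0_\alpha$-subset of $X$. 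Hence the first part yields $X' \in \mathbf{\Delta}^0_{\alpha+1}(X)$ and $Y' \in \mathbf{\Delta}^0_{\alpha+1}(Y)$ with $X'$ homeomorphic to $Y'$ and $X \setminus X'$ homeomorphic to $Y \setminus Y'$; completing $\langle X', X \setminus X' \rangle$ and $\langle Y', Y \setminus Y' \rangle$ to countable partitions by empty sets gives $X \simeq_{\mathsf{pw}(\mathbf{\Sigma}^0_{\alpha+1})} Y$, i.e.\ $X \simeq^\mathsf{W}_{\alpha+1} Y$ by Lemma~\ref{lemmapwhomeo}.

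The step I expect to be the main obstacle is precisely the $\mathbf{\Delta}^0_{\alpha+1}$-bound: the naive estimate from $C = \bigcup_n C_n$ yields only $C \in \mathbf{\Sigma}^0_{\alpha+1}(X)$, and obtaining the \emph{self-dual} class $\mathbf{\Delta}^0_{\alpha+1}$ genuinely requires seeing that $X \setminus C$ is also a countable union of $\mathbf{\Delta}^0_{\alpha+1}$-sets. This is what forces organizing the iteration around the $\mathbf{\Pi}^0_\alpha$-sets $(gf)^n(X)$ and $(gf)^n(g(Y))$ and the core $\bigcap_n (gf)^n(X)$, and it uses that $\mathbf{\Pi}^0_\alpha$ is closed under countable intersections.
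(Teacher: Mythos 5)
Your proof is correct and follows essentially the same route as the paper's: the Schr\"oder--Bernstein back-and-forth on the descending chain $X \supseteq g(Y) \supseteq gf(X) \supseteq \dotsc$, with each piece of the resulting partition exhibited as a difference of two $\mathbf{\Pi}^0_\alpha$ sets (your $C_n$, $D_n$ are exactly the paper's $X_{2n}\setminus X_{2n+1}$ and $X_{2n+1}\setminus X_{2n+2}$) and the core $X_\infty$ a countable intersection of $\mathbf{\Pi}^0_\alpha$ sets, so that both $X'$ and its complement land in $\mathbf{\Sigma}^0_{\alpha+1}$. The only cosmetic difference is that you place $X_\infty$ in $X\setminus X'$ while the paper absorbs it into $X'$; both choices yield valid witnesses.
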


\begin{proof}
Inductively define \( X_n \subseteq X \) and \( Y_n \subseteq Y \), \( n \in \omega \), by setting \( X_0 = X \), \( Y_0 = Y \), \( X_{n+1} = g(Y_n) \), \(Y_{n+1} = f(X_n) \). Let also \( X_ \infty = \bigcap_{n \in \omega } X_n \) and \( Y_\infty = \bigcap_{n \in \omega} Y_n \). By our assumption on \( f \) and \( g \), all of \( X_n, Y_n, X_{\infty}, Y_\infty \) are in \( \mathbf{\Pi}^0_\alpha \). Let \( X'  = X_\infty \cup \bigcup_{n \in \omega} (X_{2n} \setminus X_{2n+1}) \) and \( Y' = Y_\infty \cup \bigcup_{n \in \omega} (Y_{2n+1} \setminus Y_{2n+2}) \). By their definition, \( X' \) and \( Y' \) are both in \( \mathbf{\Sigma}^0_{\alpha+1} \). Since \( X \setminus X' = \bigcup_{n \in \omega } (X_{2n+1} \setminus X_{2n})\) and \( Y \setminus Y' = \bigcup_{n \in \omega} (Y_{2n} \setminus Y_{2n+1}) \) are both in \( \mathbf{\Sigma}^0_{\alpha+1} \) as well, we have that \( X', Y' \in \mathbf{\Delta}^0_{\alpha+1} \). Finally, \( f \restriction X' \) and \( g^{-1} \restriction (X \setminus X') \) witness that \( X' \simeq_\F Y' \) and \( {X \setminus X'} \simeq_\F {Y \setminus Y'} \) because \( \F \) is closed under restrictions and \(  (f \restriction X')^{-1} = f^{-1} \restriction Y' \) and \( (g^{-1} \restriction (X \setminus X'))^{-1} = g \restriction (Y \setminus Y') \).
\end{proof}

We can immediately derive some corollaries from Lemma~\ref{lemmaSB}.
We need to recall the following definition from general topology: two
spaces are of the same Fr\'echet dimension type if each one is homeomorphic
 to a subset of the other.

\begin{corollary}\label{corfrechet}
If \( X,Y\) are two quasi-Polish  spaces which are of the same
Fr\'echet dimension type then \( X \simeq^\mathsf{W}_3 Y \).
\end{corollary}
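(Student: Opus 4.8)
The plan is to obtain the corollary as an immediate combination of de Brecht's characterization of quasi-Polish subspaces (Proposition~\ref{propsubspace}) with the Schr\"oder--Bernstein-type argument of Lemma~\ref{lemmaSB}, applied at level \( \alpha = 2 \). The only thing that needs to be observed is that, for quasi-Polish spaces, being homeomorphic to \emph{some} subset automatically improves to being homeomorphic to a \emph{\( \mathbf{\Pi}^0_2 \)} subset.

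First I would unwind the hypothesis that \( X \) and \( Y \) are of the same Fr\'echet dimension type: there are a subspace \( X' \subseteq Y \) homeomorphic to \( X \) and a subspace \( Y' \subseteq X \) homeomorphic to \( Y \), each endowed with the relative topology. Since being quasi-Polish is a topological invariant and \( X, Y \) are quasi-Polish, both \( X' \) (as a subspace of \( Y \)) and \( Y' \) (as a subspace of \( X \)) are quasi-Polish. By Proposition~\ref{propsubspace} this forces \( X' \in \mathbf{\Pi}^0_2(Y) \) and \( Y' \in \mathbf{\Pi}^0_2(X) \). Hence \( X \) is homeomorphic to a \( \mathbf{\Pi}^0_2 \) subset of \( Y \), and symmetrically \( Y \) is homeomorphic to a \( \mathbf{\Pi}^0_2 \) subset of \( X \).

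Now I would invoke the ``in particular'' clause of Lemma~\ref{lemmaSB} with \( \alpha = 2 \): it says precisely that under exactly these circumstances \( X \simeq^\mathsf{W}_3 Y \), which is the assertion of the corollary. (If one prefers to cite the first, more precise part of Lemma~\ref{lemmaSB} directly: a homeomorphism \( h \colon X \to X' \) with \( X' \in \mathbf{\Pi}^0_2(Y) \) sends each open subset of \( X \) to a set that is open in \( X' \), hence of the form \( U \cap X' \) with \( U \in \tau_Y \subseteq \mathbf{\Pi}^0_2(Y) \), and since \( \mathbf{\Pi}^0_2 \) is closed under finite intersections this witnesses \( h \in \mathbf{\Pi}^0_2[X,Y] \); the same applies to \( h^{-1} \) and to the analogous map on the \( Y \) side. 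The class of continuous functions between subspaces is trivially closed under restrictions, so all hypotheses of Lemma~\ref{lemmaSB} are met.)

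I do not expect any real obstacle here: the entire content of the corollary is carried by Proposition~\ref{propsubspace} and Lemma~\ref{lemmaSB}, and the only point requiring a line of verification is the bookkeeping that a homeomorphism onto a \( \mathbf{\Pi}^0_2 \) subset qualifies as a map in \( \mathbf{\Pi}^0_2[\cdot,\cdot] \) closed under restrictions --- which, as noted, is already built into the statement of Lemma~\ref{lemmaSB}.
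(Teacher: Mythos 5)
Your proof is correct and follows exactly the paper's own argument: the paper likewise applies the second part of Lemma~\ref{lemmaSB} with \( \alpha = 2 \), using the fact that quasi-Polishness is a topological invariant together with Proposition~\ref{propsubspace} to upgrade the subspaces to \( \mathbf{\Pi}^0_2 \) sets. The additional bookkeeping you supply about why the homeomorphisms land in \( \mathbf{\Pi}^0_2[\cdot,\cdot] \) is accurate but left implicit in the paper.
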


\begin{proof}
Apply the second part of Lemma~\ref{lemmaSB} with \( \alpha = 2 \),
using the fact that the class of quasi-Polish spaces is closed under
homeomorphism and Proposition~\ref{propsubspace}.
\end{proof}

The second part of the next corollary has been essentially already
noticed in \cite[Theorem 6.5]{JR79b}.

\begin{corollary}\label{corcompact}
If \( X,Y \) are  two quasi-Polish spaces such that \( X \) is
homeomorphic to a closed subset of \( Y \) and \( Y \) is
homeomorphic to a closed subset of \( X \) then \( X
\simeq^\mathsf{W}_2 Y \).

In particular, if  \( X,Y \) are compact Hausdorff quasi-Polish
spaces of the same Fr\'echet dimension type then \( X
\simeq^\mathsf{W}_2 Y \).
\end{corollary}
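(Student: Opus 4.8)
The plan is to derive both assertions directly from the second part of Lemma~\ref{lemmaSB} applied with \( \alpha = 1 \), together with two elementary facts: that closed sets are precisely the \( \mathbf{\Pi}^0_1 \) sets, and that compact subsets of a Hausdorff space are closed.

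First I would record that, by Definition~\ref{defbh}, for any space \( Z \) the pointclass \( \mathbf{\Pi}^0_1(Z) \) consists exactly of the closed subsets of \( Z \), and that by Proposition~\ref{propbh} we have \( \mathbf{\Pi}^0_1(Z) \subseteq \mathbf{\Pi}^0_2(Z) \); hence, by Proposition~\ref{propsubspace}, every closed subspace of a quasi-Polish space is again quasi-Polish. Consequently, if \( X \) is homeomorphic to a closed (i.e.\ \( \mathbf{\Pi}^0_1 \)) subset of \( Y \) and \( Y \) is homeomorphic to a closed subset of \( X \), then the hypotheses of the ``in particular'' clause of Lemma~\ref{lemmaSB} are met with \( \alpha = 1 \), and we conclude \( X \simeq^\mathsf{W}_2 Y \). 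This proves the first statement.

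For the second statement, assume \( X, Y \) are compact Hausdorff quasi-Polish spaces of the same Fr\'echet dimension type. By definition there is a homeomorphism \( \varphi \) of \( X \) onto a subspace \( Z \subseteq Y \). Since \( X \) is compact, \( Z \) is compact, and since \( Y \) is Hausdorff, \( Z \) is therefore closed in \( Y \); thus \( X \) is homeomorphic to a closed subset of \( Y \). The symmetric argument shows \( Y \) is homeomorphic to a closed subset of \( X \), so the first statement applies and \( X \simeq^\mathsf{W}_2 Y \).

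I do not expect any genuine obstacle: the substance lies entirely in Lemma~\ref{lemmaSB} (and, behind it, in the Schr\"oder--Bernstein-style construction, in Lemma~\ref{lemmapwhomeo}, and in the closure of each \( \mathbf{\Pi}^0_\alpha \) under finite intersections, which is what makes a homeomorphism onto a \( \mathbf{\Pi}^0_1 \) subspace belong to \( \mathbf{\Pi}^0_1[X,Y] \)). The only points deserving an explicit word are the identification of closed sets with the \( \mathbf{\Pi}^0_1 \) sets, the stability of quasi-Polishness under closed subspaces via Propositions~\ref{propbh} and~\ref{propsubspace}, and the standard compact-in-Hausdorff-implies-closed fact used to pass from Fr\'echet dimension type to embeddings as closed subsets.
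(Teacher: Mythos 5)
Your proposal is correct and follows essentially the same route as the paper: the first part is exactly Lemma~\ref{lemmaSB} with \( \alpha = 1 \), and the second part uses the same two classical facts (continuous images of compact sets are compact, and compact subsets of Hausdorff spaces are closed). The extra remarks about \( \mathbf{\Pi}^0_1 \) sets and quasi-Polishness of closed subspaces are harmless elaborations of what the paper leaves implicit.
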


\begin{proof}
The first part follows from Lemma~\ref{lemmaSB} with \( \alpha = 1 \). The second part follows from the first one and the classical facts that the class of compact spaces is closed under continuous images, and that a compact subset of an Hausdorff space is closed.
\end{proof}

Our next goal  is to extend Proposition~\ref{propexamples} (3)--(5)
to a wider class of quasi-Polish spaces (see Theorem~\ref{theordim}). Such generalization will involve the definition of
the (inductive) topological dimension of a space \( X \), denoted in
this paper by \( \dim(X) \) --- see e.g.\ \cite[p. 24]{hurwal}.

\begin{definition}
The empty set  \( \emptyset \) is the only space in \( \mathscr{X}
\) with \emph{dimension \( - 1 \)}, in symbols \( \dim(\emptyset) =
-1 \).

Let \( \alpha \)  be an ordinal and \( \emptyset \neq X \in
\mathscr{X} \). We say that \( X \) has \emph{dimension \( \leq
\alpha \)}, \( \dim(X) \leq \alpha \) in symbols, if every \( x \in
X \) has arbitrarily small neighborhoods whose boundaries have
dimension \( < \alpha \), i.e.\ for every \( x \in X \) and every
open set \( U \) containing \( x \) there is an open \( x \in V
\subseteq U \) such that \( \dim(\partial V) \leq \beta \) (where \(
\partial V = \mathrm{cl}(V) \setminus V \) and $\mathrm{cl}(V)$ is the closure of $V$ in \( X \)) for some \( \beta < \alpha
\).

We say that a space \( X \) has \emph{dimension \( \alpha \)}, \( \dim(X) = \alpha \) in symbols, if \( \dim(X) \leq \alpha \) and \( \dim(X) \nleq \beta \) for all \( \beta < \alpha \).

Finally, we say that a space \( X \) has \emph{dimension \( \infty \)}, \( \dim(X) = \infty \) in symbols, if \( \dim(X) \nleq \alpha \) for every  \( \alpha \in \mathsf{On} \).
\end{definition}

It is obvious that the dimension of a space is a topological invariant (i.e.\ \( \dim(X)  = \dim(Y) \) whenever \( X \) and \( Y \) are homeomorphic). Moreover, one can easily check that \( \dim(X) \leq \alpha \) (for \(\alpha\) an ordinal) if and only if there is a base of the topology of \( X \) consisting of open sets whose boundaries have dimension \( < \alpha \). Therefore, if \( X \) is countably based and \( \dim(X) \neq \infty \), then \( \dim(X) = \alpha \) for some \emph{countable} ordinal \(\alpha\).

The following lemma shows that the notion of dimension is monotone.

\begin{lemma} \cite[Theorem III 1]{hurwal}\label{lemmamonotone}
Let \( X \in \mathscr{X} \)  and \(\alpha\) be an ordinal such
that \( \dim(X) \leq \alpha \). Then for every \( Y \subseteq X
\), \( \dim(Y) \leq \alpha \) (where \( Y \) is endowed with the
relative topology inherited from \( X \)).
\end{lemma}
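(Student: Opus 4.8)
The plan is to argue by transfinite induction on $\alpha$. The base case $\alpha = -1$ is immediate: $\dim(X) \leq -1$ means $X = \emptyset$ by definition, so every $Y \subseteq X$ is empty and thus has dimension $-1$. For the inductive step, I would assume that the statement holds for every ordinal $\beta < \alpha$ (that is: for every space $Z$ with $\dim(Z) \leq \beta$ and every $W \subseteq Z$ one has $\dim(W) \leq \beta$), and then prove it for $\alpha$. Note that this single inductive step handles successor and limit ordinals uniformly, since the definition of $\dim(X) \leq \alpha$ always quantifies over boundaries of dimension $\leq \beta$ for \emph{some} $\beta < \alpha$.

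So fix $X$ with $\dim(X) \leq \alpha$ and let $Y \subseteq X$ carry the relative topology. I would fix $y \in Y$ and an open neighbourhood $U'$ of $y$ in $Y$, write $U' = U \cap Y$ with $U$ open in $X$, and use $\dim(X) \leq \alpha$ to find an open set $V$ in $X$ with $y \in V \subseteq U$ and $\dim(\partial_X V) \leq \beta$ for some $\beta < \alpha$, where $\partial_X V = \mathrm{cl}_X(V) \setminus V$. Then $V' = V \cap Y$ is an open neighbourhood of $y$ in $Y$ contained in $U'$, and the key point will be the boundary inclusion $\partial_Y V' \subseteq (\partial_X V) \cap Y$. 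This follows from $\mathrm{cl}_Y(V \cap Y) = \mathrm{cl}_X(V \cap Y) \cap Y \subseteq \mathrm{cl}_X(V) \cap Y$ together with the elementary observation that, for $z \in Y$, one has $z \notin V \cap Y$ iff $z \notin V$, so that $\bigl(\mathrm{cl}_X(V) \cap Y\bigr) \setminus (V \cap Y) = (\partial_X V) \cap Y$.

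Granting the inclusion, $\partial_Y V'$ is a subset of the space $\partial_X V$ (and its topology inherited from $Y$ agrees with the one inherited from $\partial_X V$, since subspace topologies compose), and $\dim(\partial_X V) \leq \beta < \alpha$; hence the inductive hypothesis gives $\dim(\partial_Y V') \leq \beta$. As $y \in Y$ and $U'$ were arbitrary, this shows every point of $Y$ has arbitrarily small neighbourhoods with boundary of dimension $< \alpha$, i.e.\ $\dim(Y) \leq \alpha$, closing the induction. The only genuinely topological ingredient is the boundary inclusion $\partial_Y(V \cap Y) \subseteq (\partial_X V) \cap Y$, and I expect that — together with the remark that the inductive hypothesis may be applied to $\partial_X V$ and its subset $\partial_Y V'$ — to be the crux; everything else is routine bookkeeping with the definitions.
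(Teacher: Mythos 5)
Your proof is correct and follows exactly the route the paper indicates: induction on \( \alpha \) using the inclusion \( \partial_Y(V \cap Y) \subseteq (\partial_X V) \cap Y \), which is precisely the "fact" the paper's one-line proof sketch invokes. Your write-up simply fills in the routine details that the paper leaves implicit.
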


\begin{proof}
This is proved by induction on \(\alpha\), using the fact  that if
\( Y \subseteq X \) and \( U \) is open in \( X \) then the
boundary in \( Y \) of \( U \cap Y \) is contained in the boundary
of \( U \) in \( X \).
\end{proof}

It is a classical fact that for every \( \alpha < \omega_1 \)
there is a compact Polish space of dimension \( \alpha \), and
that the Hilbert cube \( [0,1]^\omega \) is a compact Polish space
of dimension \( \infty \). Here we provide various examples of
computations of the dimension of some concrete quasi-Polish spaces
which are relevant for the results of this paper.

\begin{example}\label{exdim1}
\emph{Finite dimension}.
\begin{enumerate}[(1)]
\item
\( \dim(\mathcal{N}) = \dim(\mathcal{C}) = 0 \);
\item
\( \dim(\RR^n) = n \) for every \(0 \neq  n \leq \omega \);
\item
for \( n < \omega\), let \( L_n \) be the (finite) quasi-Polish space obtained by endowing the dcpo \( (n, \leq ) \) with the Scott (equivalently, the Alexandrov)  topology: then \( \dim(L_n) = n-1 \).
\end{enumerate}
\end{example}

\begin{proof*}
(1)
The canonical basis for \( \mathcal{N} \) and \( \mathcal{C} \) (namely, the collection of all sets of the form
\( \sigma \cdot  \mathcal{N} \)
for \( \sigma \in \omega^* \) and, respectively, \( \sigma \cdot \mathcal{C} \) for \( \sigma \in 2^* \)) consist
of clopen sets, hence their elements have empty boundary.

(2)
This is a classical (nontrivial) fact, see e.g.\ \cite[Theorem IV 1]{hurwal}.

(3)
This is proved by induction on \( n \geq 0 \). If \( n = 0 \), then \( L_n = \emptyset \) and hence \( \dim(L_0) = -1 \) by definition. Now assume \( \dim(L_i) = i \) for every \( i \leq n \) and consider the space \( L_{n+1} \).  Every open set of \( L_n \) is of the form \( U_i = \{ j \in L_{n+1} \mid j \geq i \} \) for some \( i \leq n \), and \( \partial U_i = L_i \): hence by the inductive hypothesis \( \dim(\partial U_i) < n+1 \) for every \( i \leq n \), which implies \( \dim(L_{n+1}) \leq n+1 \). Moreover, the set \( \{ n \} \) is open in \( L_{n+1} \), and is obviously the minimal open set containing \( n \). Since \( \partial \{ n \} = L_n \), \( \dim(L_{n+1}) > \dim(L_n) = n \). Therefore \( \dim(L_{n+1}) = n+1 \), as desired.  \hfill \usebox{\proofbox}
\end{proof*}

\begin{example}\label{exdim2}
\emph{Transfinite dimension}.
\begin{enumerate}[(1)]
\item
the disjoint union \( X =  \bigsqcup_{0 \neq n \in \omega} [0,1]^n \) of the \( n \)-dimensional cubes \( [0,1]^n \) is a Polish space of dimension \( \omega \);
\item
let \( \omega^{\leq \omega} \) be the \(\omega\)-algebraic domain  \( (\omega^{\leq \omega}, \sqsubseteq) \) endowed with the Scott topology: then \( \dim(\omega^{ \leq \omega})  = \omega \);
\item
for \( \alpha < \omega_1 \), let \( L_{\alpha+1} \) be the quasi-Polish space obtained by endowing the dcpo\footnote{Here we cannot consider the limit case, as if \( \alpha \) is limit then the poset \( (\alpha, \leq) \) is not directed-complete, and hence falls out of the scope of the spaces considered in this paper.} \( (\alpha+1, \leq ) \) with the Scott  topology. Then  \( \dim(L_{\alpha+1}) = \alpha \).
\end{enumerate}
\end{example}

\begin{proof*} (1)
By part (2), each \( [0,1]^n \) has dimension \( n \). Since \( [0,1]^n \) is topologically embedded in \( X \), by Lemma~\ref{lemmamonotone} we have \( \dim(X) \geq n \) for every \( n \in \omega \), and hence \( \dim(X) \geq \omega \). Let \( \mathcal{B}_n = \{ B_{n,m} \mid m \in \omega \} \) be a countable basis of \( [0,1]^n \) such that \( \dim(\partial B_{n,m}) < n \) for every \( m \in \omega \). Then \(  \mathcal{B} = \bigcup_{n \in \omega } \mathcal{B}_n \) is a basis for \( X \) with the property that for every \( U \in \mathcal{B} \), \( \dim(\partial U) < \omega \): hence \( \dim(X) \leq \omega \), and therefore \( \dim(X) = \omega \).

(2)
Since every \( L_n \) can be topologically embedded in \( \omega^{ \leq \omega } \), \( \dim(\omega^{\leq \omega}) \geq \omega \) by Lemma~\ref{lemmamonotone} and Example~\ref{exdim1}(2). Consider the basis \( \mathcal{B} \) of \( \omega^{ \leq \omega} \) consisting of the open sets generated by its compact elements, i.e.\ of the sets \(  \sigma \cdot \omega^{\leq \omega} \) for \( \sigma \in \omega^* \). Then \( \partial \sigma \cdot \omega^{\leq \omega} = \{ \tau \sqsubseteq \sigma \mid \tau \neq \sigma \} \). Therefore \( \partial \sigma \cdot \omega^{\leq \omega} \) is homeomorphic to \( L_n \), where \( n \) is the length of \( \sigma \): this means that, by Example~\ref{exdim1}(2) again, \( \dim(\partial U) < \omega \) for every \( U \in \mathcal{B} \), and hence \( \dim(\omega^{\leq \omega}) \leq \omega \). Therefore \( \dim(\omega^{\leq \omega}) = \omega \).

(3)
By an inductive argument similar to the proof of Example~\ref{exdim1}(3). \hfill \usebox{\proofbox}
\end{proof*}

\begin{example}\label{exdim3}
\emph{Dimension \( \infty \)}.
\begin{enumerate}[(1)]
\item the Hilbert cube \( [0,1]^\omega \),  the space \(
\RR^\omega \) (both endowed with the product topology), and the
Scott domain \( P \omega \) have all dimension \( \infty \); \item
Let \( \mathsf{C}_\infty \) be the quasi-Polish space obtained by
endowing the poset \( (\omega, \geq ) \) with the Scott
(equivalently, the Alexandrov) topology. Then \( \mathsf{C}_\infty
\) is a (scattered) countable space with \(
\dim(\mathsf{C}_\infty) = \infty \). Hence the space \(
\mathsf{UC}_\infty = \mathsf{C}_\infty \times \mathcal{N} \),
endowed with the product topology, is an (uncountable)
quasi-Polish space of dimension \( \infty \).
\end{enumerate}
\end{example}

\begin{proof*}
(1) It is a classical fact that \( \dim([0,1]^\omega) =
\dim(\RR^n) = \infty \) --- see e.g.\ Corollary on p.~51 of~\cite{hurwal}. Since the Hilbert cube can be topologically
embedded into \( P \omega \) by Proposition~\ref{proppi2}, it
follows from Lemma~\ref{lemmamonotone} that also the Scott domain
has dimension \( \infty \).

(2) To show that a topological space \( X \) has  dimension \(
\infty \) it is enough to find a point \( x \in X\) and an open
neighborhood \( U \) of \( x \) such that \( X \) can be
topologically embedded into \( \partial V \) for every open \( x
\in V \subseteq U \). Consider the point \( 0 \in
\mathsf{C}_\infty \). Since \( 0 \) is a compact element,  the
basic open set \( U =  \uparrow \! \! 0=  \{ 0 \} \) generated by
\( 0 \) is a minimal (with respect to inclusion) open neighborhood for this point, hence it is
enough to show that \( \mathsf{C}_\infty \) can be topologically
embedded into \( \partial U \). Since \( \mathsf{C}_\infty \) has
a topmost element (i.e.\ \( 0 \) itself), \( \partial U =
\mathsf{C}_\infty \setminus U \); but then the map sending \( n \)
into \( n+1 \) (for every \( n \in \omega \)) is clearly an
homeomorphism between \( \mathsf{C}_\infty \) and \(
\mathsf{C}_\infty \setminus U \), and hence \( \dim(\mathsf{C}_\infty)
= \infty \), as required. The second part of the claim follows
from Lemma~\ref{lemmamonotone} and the fact that \(
\mathsf{C}_\infty \) can be topologically embedded into \(
\mathsf{UC}_\infty \) in the obvious way. \hfill
\usebox{\proofbox}
\end{proof*}

\begin{remark}
The definition of dimension is usually formulated
for separable metric spaces~\cite{hurwal} or for regular topological spaces. This is
because the received opinion is that outside this scope this notion becomes
somewhat pathological.
Examples~\ref{exdim1}, \ref{exdim2} and~\ref{exdim3} show e.g.\ that there
are finite (quasi-Polish) spaces with nonzero
dimension,\footnote{Notice that there are also examples
of \emph{Hausdorff} countable spaces with nonzero dimension.} and countable (quasi-Polish) spaces with arbitrarily high ordinal
dimension, or even of dimension \( \infty \): this seems to contradict our
intuition of ``geometric dimension''.  Nevertheless,
Lemma~\ref{lemmamonotone} shows that some natural properties of the
dimension function \( \dim(\cdot) \) are preserved when considering arbitrary
spaces, and Theorem~\ref{theordim} will show that it remains a quite useful
notion also in this broader context.
\end{remark}

We now recall some classical results that will be used later.

\begin{lemma} (see e.g.\ \cite[pp. 50-51]{hurwal}) \label{lemmainfinitedimension}
Let \( X \) be a Polish space. Then the following are equivalent:
\begin{enumerate}[(1)]
\item
\( \dim(X) \neq \infty \);
\item
\( X = \bigcup_{n < \omega} X_n \) with all the \( X_n \) of finite dimension (i.e.\ \( \dim(X_n) < \omega \) for every \( n < \omega \));
\item
\( X = \bigcup_{n < \omega } X_n \) with all the \( X_n \) of dimension \( 0 \).
\end{enumerate}
\end{lemma}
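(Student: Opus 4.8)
The plan is to establish the cycle of implications $(1)\Rightarrow(2)\Rightarrow(3)\Rightarrow(1)$. These are essentially the classical ingredients of dimension theory recalled in \cite[pp.\ 50--51]{hurwal}: the first implication admits a short self-contained argument, the second is the classical decomposition theorem, and only the third appeals to a genuinely nontrivial sum-type result.

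For $(1)\Rightarrow(2)$ I would prove, by transfinite induction on the ordinal $\alpha$, the stronger assertion that every separable metrizable space $Y$ with $\dim(Y)\leq\alpha$ is a countable union of finite-dimensional subspaces; applying this to $Y=X$ and $\alpha=\dim(X)$ (a countable ordinal, since $X$ is countably based) yields the implication. If $\alpha<\omega$ there is nothing to do, as then $Y$ is itself finite-dimensional. If $\alpha\geq\omega$, fix a base $\mathcal{B}$ of $Y$ consisting of open sets $U$ with $\dim(\partial U)<\alpha$ (such a base exists by the very definition of $\dim(Y)\leq\alpha$), and, using that $Y$ is second countable, extract from $\mathcal{B}$ a countable subfamily $\{U_n\mid n\in\omega\}$ which is still a base. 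Put $D=\bigcup_n\partial U_n$ and $Y_0=Y\setminus D$. Since the boundary in $Y_0$ of $U_n\cap Y_0$ is contained in $(\partial U_n)\cap Y_0=\emptyset$ (the same inclusion used in the proof of Lemma~\ref{lemmamonotone}), the sets $U_n\cap Y_0$ form a base of clopen subsets of $Y_0$, whence $\dim(Y_0)\leq 0$. On the other hand each $\partial U_n$ is closed in $Y$ and satisfies $\dim(\partial U_n)<\alpha$, so by the inductive hypothesis it is a countable union of finite-dimensional subspaces; hence so is $Y=Y_0\cup\bigcup_n\partial U_n$.

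The implication $(2)\Rightarrow(3)$ follows immediately from the classical decomposition theorem (see e.g.\ \cite{hurwal}): a separable metrizable space of finite dimension $k$ is the union of $k+1$ subspaces of dimension $\leq 0$. Thus, replacing each of the countably many finite-dimensional pieces $X_n$ of a decomposition as in $(2)$ by its finitely many zero-dimensional pieces produces a decomposition as in $(3)$.

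The remaining implication $(3)\Rightarrow(1)$ is the one I expect to be the main obstacle, since the pieces $Z_n$ are completely arbitrary subspaces — in particular not necessarily $F_\sigma$ or Polish — and it is exactly the transfinite form of the countable sum theorem recorded in \cite[pp.\ 50--51]{hurwal}. Concretely, I would first invoke the classical enlargement lemma — a zero-dimensional subspace of a metrizable space is contained in a zero-dimensional $G_\delta$ subspace of it — to replace each $Z_n$ by such a $G_\delta$ set, which, by completeness of $X$, is itself Polish; and then apply the countable sum theorem together with an induction on dimension to conclude that $\dim(X)$ is a (countable) ordinal, i.e.\ $\dim(X)\neq\infty$. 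Both the enlargement lemma and the sum theorem I would cite from \cite{hurwal} rather than reprove.
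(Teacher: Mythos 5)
The paper does not actually prove this lemma: it is stated as a classical fact and attributed wholesale to \cite[pp.~50--51]{hurwal}, so your proposal supplies strictly more argument than the paper does. Your \((1)\Rightarrow(2)\) is correct, and it is essentially the same decomposition (a countable base of open sets with boundaries of smaller dimension, the zero-dimensional residue \(Y_0=Y\setminus\bigcup_n\partial U_n\), and the boundary inclusion underlying Lemma~\ref{lemmamonotone}) that the paper itself deploys later in the proof of Theorem~\ref{theordim}(1). The implication \((2)\Rightarrow(3)\) via the finite decomposition theorem is also fine.

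The one point to correct is your description of \((3)\Rightarrow(1)\). ``Enlargement lemma plus countable sum theorem plus induction on dimension'' is not a scheme that can be carried out: the countable sum theorem bounds the dimension of a union of \emph{closed} pieces by an ordinal you already possess, whereas here the enlarged pieces are merely \(G_\delta\) and the whole difficulty is to show that \emph{some} ordinal bound exists in the first place, so there is nothing to induct on. The implication is Hurewicz's theorem on countable-dimensional completely metrizable spaces, and its proof uses completeness in an essential Baire-category fashion: assuming \(\dim(X)=\infty\), one recursively builds nonempty closed sets \(F_0\supseteq F_1\supseteq\cdots\), each of dimension \(\infty\), with \(\diam(F_{n+1})<2^{-n}\) and \(F_{n+1}\cap Z_n=\emptyset\) (taking \(F_{n+1}\) to be the boundary of a small neighborhood chosen, via the separation property of zero-dimensional subspaces, to have boundary disjoint from \(Z_n\)), and then derives a contradiction from the fact that \(\bigcap_nF_n\neq\emptyset\). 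Since you explicitly defer this implication to \cite{hurwal}, this is a mischaracterization of the cited argument rather than a gap in your proof, but you should cite the theorem itself rather than the two auxiliary lemmas, which by themselves do not suffice.
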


Notice that by Example~\ref{exdim3}(2), Lemma~\ref{lemmainfinitedimension} cannot be extended to the context of arbitrary quasi-Polish spaces: the space \( \mathsf{C}_\infty \) has dimension \( \infty \), but can be decomposed into countably many zero-dimensional spaces (namely, its singletons). Similarly, \( \mathsf{UC}_\infty \) can be decomposed into countably many copies of \( \mathcal{N} \). On the other hand, we have the following corollary.

\begin{corollary}\label{corScottdomain}
The Scott domain \( P \omega \) cannot be written as \( \bigcup_{n< \omega} X_n \) with all the \( X_n \) of finite dimension. The same is true if \( P \omega \) is replaced with any quasi-Polish space which is universal for (compact) Polish spaces.
\end{corollary}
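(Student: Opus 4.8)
The plan is to reduce both assertions to a single fact already recorded in the excerpt, namely that the Hilbert cube $[0,1]^\omega$ has dimension $\infty$ (Example~\ref{exdim3}(1)), together with the characterization of Polish spaces of non-infinite dimension given by Lemma~\ref{lemmainfinitedimension}.

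First I would prove the general statement. Let $Y$ be a quasi-Polish space that is universal for compact Polish spaces, and suppose towards a contradiction that $Y = \bigcup_{n<\omega} X_n$ with $\dim(X_n) < \omega$ for every $n$. Since $[0,1]^\omega$ is a compact Polish space, universality yields a subspace $H \subseteq Y$ homeomorphic to $[0,1]^\omega$; in particular $H$ is itself Polish and $\dim(H) = \dim([0,1]^\omega) = \infty$. Now $H = \bigcup_{n<\omega} (H \cap X_n)$, and by Lemma~\ref{lemmamonotone} each $H \cap X_n$, with the relative topology, satisfies $\dim(H \cap X_n) \leq \dim(X_n) < \omega$. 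Thus $H$ is a Polish space written as a countable union of finite-dimensional subspaces, so by Lemma~\ref{lemmainfinitedimension} (the implication $(2) \Rightarrow (1)$) we conclude $\dim(H) \neq \infty$, contradicting $\dim(H) = \infty$.

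For the specific case of $P\omega$ I would simply note that $P\omega$ is universal for all quasi-Polish spaces by Proposition~\ref{proppi2}, hence in particular for compact Polish spaces, so that the general statement applies directly; alternatively, one may invoke the embedding of $[0,1]^\omega$ into $P\omega$ already used in Example~\ref{exdim3}(1) and run the same argument.

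There is no serious obstacle here: the argument is a routine combination of monotonicity of dimension, the decomposition characterization for Polish spaces, and the universality hypothesis. The only point deserving a moment's care is that the pieces $H \cap X_n$ must be viewed simultaneously as subspaces of $X_n$ (so that Lemma~\ref{lemmamonotone} gives the dimension bound) and as subspaces of $H$ (so that their union is the prescribed decomposition of $H$); since these two relative topologies coincide, the reduction goes through without friction.
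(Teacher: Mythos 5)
Your argument is correct and is essentially the paper's own proof, just written out in more detail: both embed the Hilbert cube into the universal space, intersect the putative decomposition with the copy of $[0,1]^\omega$, invoke monotonicity of dimension (Lemma~\ref{lemmamonotone}), and derive a contradiction with Lemma~\ref{lemmainfinitedimension} since $\dim([0,1]^\omega)=\infty$. Nothing further is needed.
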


\begin{proof}
Since \( P \omega \) is universal for the class of all (quasi-)Polish spaces by Proposition~\ref{proppi2}, any decomposition into countably many finite dimensional spaces of \( P \omega \) would induce a similar decomposition of e.g.\ \( [0,1]^\omega \), contradicting Lemma~\ref{lemmainfinitedimension}.
\end{proof}

\begin{lemma} \cite[Theorem 6.1]{JR79b} \label{lemma0dim}
Let \( X \) be an uncountable zero-dimensional Polish space:
\begin{enumerate}[(1)]
\item
if \( X \) is \(\sigma\)-compact then \( X \simeq^\mathsf{W}_2 \mathcal{C} \);
\item
if \( X \) is not \(\sigma\)-compact then \( X \simeq^\mathsf{W}_2 \mathcal{N} \).
\end{enumerate}
\end{lemma}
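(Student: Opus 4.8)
The plan is to reduce both statements to the classical topological characterizations of \( \mathcal{C} \) and \( \mathcal{N} \) via the Schröder--Bernstein argument of Lemma~\ref{lemmaSB}. For part~(1), let \( X \) be an uncountable, \(\sigma\)-compact, zero-dimensional Polish space. On the one hand, \( \mathcal{C} \) embeds into \( X \) as a closed subspace: indeed, by the Cantor--Bendixson theorem \cite[Theorem 6.4]{ke94} \( X \) has a nonempty perfect subset \( P \), which is a nonempty perfect zero-dimensional Polish space and hence contains a homeomorphic copy of \( \mathcal{C} \) (e.g.\ by building a Cantor scheme of clopen sets); being compact, this copy is closed in \( X \). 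On the other hand, since \( X \) is \(\sigma\)-compact and zero-dimensional, it embeds into \( \mathcal{C} \): write \( X = \bigcup_n K_n \) with \( K_n \) compact; each \( K_n \) embeds into \( \mathcal{C} \) (a compact zero-dimensional metric space is homeomorphic to a closed subset of \( \mathcal{C} \)), and one assembles these into an embedding of \( X \) itself into \( \mathcal{C} \) — alternatively, one invokes directly that every separable metrizable zero-dimensional space embeds into \( \mathcal{C}^\omega \cong \mathcal{C} \). The image of such an embedding is some subset of \( \mathcal{C} \), a fortiori a \( \mathbf{\Pi}^0_2 \) (indeed Borel) subset; but in fact, to get the sharp bound, I want \( X \) homeomorphic to a \emph{closed} subset of \( \mathcal{C} \). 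This holds because a \(\sigma\)-compact subset of \( \mathcal{C} \) that is Polish in the relative topology is \( \mathbf{\Pi}^0_2 \), and by a standard argument one can realize an uncountable \(\sigma\)-compact zero-dimensional Polish space as a closed subspace of \( \mathcal{C} \) outright. Then both \( X \hookrightarrow \mathcal{C} \) and \( \mathcal{C} \hookrightarrow X \) are closed embeddings, so Corollary~\ref{corcompact} (first part) gives \( X \simeq^\mathsf{W}_2 \mathcal{C} \).

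For part~(2), let \( X \) be an uncountable zero-dimensional Polish space that is not \(\sigma\)-compact. Again \( \mathcal{C} \) embeds as a closed subset of \( X \) by the Cantor--Bendixson argument above, and hence so does \( \mathcal{N} \) — but here I use instead that \( \mathcal{N} \) embeds into \( X \) as a \emph{closed} subset: since \( X \) is not \(\sigma\)-compact, a standard result (e.g.\ \cite{ke94}, the analysis underlying Hurewicz's theorem) yields a closed copy of \( \mathcal{N} \) inside \( X \). Conversely, \( X \), being zero-dimensional separable metrizable, embeds into \( \mathcal{N} \); since \( X \) is Polish, its image is \( \mathbf{\Pi}^0_2(\mathcal{N}) \) by Proposition~\ref{propsubspace} (or the classical Polish-subspace theorem). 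Now apply Lemma~\ref{lemmaSB}: \( \mathcal{N} \) is homeomorphic to a closed — in particular \( \mathbf{\Pi}^0_1 \subseteq \mathbf{\Pi}^0_2 \) — subset of \( X \), and \( X \) is homeomorphic to a \( \mathbf{\Pi}^0_2 \) subset of \( \mathcal{N} \), so the second part of Lemma~\ref{lemmaSB} with \( \alpha = 2 \) would give \( X \simeq^\mathsf{W}_3 \mathcal{N} \); to sharpen this to \( \simeq^\mathsf{W}_2 \) one instead shows \( X \) embeds as a \emph{closed} subset of \( \mathcal{N} \) as well (true for any zero-dimensional Polish space, via a complete-metric Cantor scheme), and then Corollary~\ref{corcompact} applies with closed embeddings on both sides.

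The main obstacle, and the point requiring the most care, is obtaining the \emph{closed} embeddings rather than merely \( \mathbf{\Pi}^0_2 \) ones, since that is what distinguishes the bound \( \simeq^\mathsf{W}_2 \) from \( \simeq^\mathsf{W}_3 \). For \( X \hookrightarrow \mathcal{N} \) closed this is the classical fact that a Polish zero-dimensional space admits a closed embedding into \( \mathcal{N} \) (build a scheme of clopen sets of vanishing diameter in a complete compatible metric so that the induced map is a closed map). For the reverse closed embeddings \( \mathcal{C} \hookrightarrow X \) in case~(1) and \( \mathcal{N} \hookrightarrow X \) in case~(2), the \(\sigma\)-compact versus non-\(\sigma\)-compact dichotomy is exactly what is needed: in the compact case a perfect kernel argument yields a compact (hence closed) Cantor set, while in the non-\(\sigma\)-compact case one uses the failure of \(\sigma\)-compactness to diagonalize against a \( K_\sigma \) exhaustion and produce a closed copy of \( \mathcal{N} \). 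Once these embeddings are in hand, both parts follow immediately from Corollary~\ref{corcompact}. I would remark that this lemma is exactly \cite[Theorem 6.1]{JR79b}, so in the paper one may simply cite it; the sketch above indicates why it is true and how it fits with Lemma~\ref{lemmaSB} and Corollary~\ref{corcompact}.
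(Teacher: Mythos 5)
Your treatment of part (2) is correct and is essentially the paper's argument: a closed copy of \( \mathcal{N} \) inside \( X \) via Hurewicz's theorem, a closed embedding of \( X \) into \( \mathcal{N} \) via \cite[Theorem 7.8]{ke94}, and then the Schr\"oder--Bernstein argument at level \( 1 \) (Lemma~\ref{lemmaSB} with \( \alpha = 1 \), equivalently the first part of Corollary~\ref{corcompact}).

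Part (1), however, contains a genuine gap. To invoke Corollary~\ref{corcompact} you need \( X \) to be homeomorphic to a \emph{closed} subset of \( \mathcal{C} \), and you assert that ``by a standard argument one can realize an uncountable \(\sigma\)-compact zero-dimensional Polish space as a closed subspace of \( \mathcal{C} \) outright.'' This is false whenever \( X \) is not compact: closed subsets of \( \mathcal{C} \) are compact and compactness is a topological invariant, so for instance \( X = \omega \times \mathcal{C} \) (an uncountable, \(\sigma\)-compact, zero-dimensional, non-compact Polish space) admits no closed embedding into \( \mathcal{C} \). Nor can you fall back on a \( \mathbf{\Sigma}^0_2 \) (i.e.\ \(\sigma\)-compact) image: Lemma~\ref{lemmaSB} requires \( \mathbf{\Pi}^0_\alpha \) images, and running it with \( \alpha = 2 \) only yields \( \simeq^\mathsf{W}_3 \). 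The paper's proof of (1) accordingly does not use Schr\"oder--Bernstein at all. It writes \( X = \bigcup_n K_n \) with the \( K_n \) compact and \emph{pairwise disjoint} (disjointification is possible because \( X \) is zero-dimensional, so the \( \mathbf{\Delta}^0_2 \) pieces \( K_n \setminus \bigcup_{i<n} K_i \) can be refined to closed ones), arranges via Cantor--Bendixson that infinitely many pieces are nonempty perfect and infinitely many are singletons, applies Brouwer's theorem to each perfect piece, and thereby exhibits a \( \mathbf{\Delta}^0_2 \)-piecewise homeomorphism between \( X \) and the fixed space \( (\omega \times \mathcal{C}) \cup (\omega \times \{ 3^\omega \}) \); since \( \mathcal{C} \) itself is such a space, Lemma~\ref{lemmapwhomeo} and transitivity give \( X \simeq^\mathsf{W}_2 \mathcal{C} \). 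Some decomposition argument of this kind is needed to repair your part (1).
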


\begin{proof}
First assume that \( X \) is \(\sigma\)-compact. Since \( \mathcal{C} \) is (\(\sigma\)-)compact as well, it is enough to show that \( X \simeq^\mathsf{W}_2 {(\omega \times \mathcal{C}) \cup (\omega \times \{ 3^\omega\} )} \). Let \( K_n \) be compact sets such that \( X = \bigcup_{n \in \omega}K_n \). Notice that since \( X \) is zero-dimensional we can assume that the \( K_n \)'s are pairwise disjoint. (If not, replace the \( K_n \)'s with any closed refinement of the partition of \( X \) given by the sets \( D_n = K_n \setminus \bigcup_{i < n} K_i \in  \mathbf{\Delta}^0_2 \).) By the Cantor-Bendixson theorem \cite[Theorem 6.4]{ke94} and the obvious fact that every uncountable zero-dimensional Polish space can be partitioned into countably many closed sets such that infinitely many of them are uncountable and infinitely many of them are singletons, we can further assume that all the \( K_{2n} \) are nonempty and perfect and all the \( K_{2n+1} \) are singletons. By Brouwer's theorem \cite[Theorem 7.4]{ke94}, for every \( n \in \omega \) there is an homeomorphism \( f_{2n} \) between \( K_{2n} \) and \( \{ n \} \times \mathcal{C} \). Let \( f_{2n+1} \) be the constant function sending the unique point in \( K_{2n+1} \) to \( (n,3^\omega) \). Then the closed partitions \( \langle K_n \mid n \in \omega \rangle \) and \( \langle \{n\} \times \mathcal{C}, \{ (n,3^\omega) \} \mid n \in \omega \rangle \) of, respectively, \( X \) and \( (\omega \times \mathcal{C}) \cup (\omega \times \{ 3^\omega\} ) \), together with the homeomorphisms \( \langle f_n \mid n \in \omega \rangle \), witness that \( X \simeq_{\mathsf{pw}(\mathbf{\Delta}^0_2)} {(\omega \times \mathcal{C}) \cup (\omega \times \{ 3^\omega\} )} \), and hence \( X \simeq^\mathsf{W}_2 {(\omega \times \mathcal{C}) \cup (\omega \times \{ 3^\omega\} )} \) by Lemma~\ref{lemmapwhomeo}.

Now assume that \( X \) is not \(\sigma\)-compact. By Hurewicz' theorem (see e.g.\ \cite[Theorem 7.10]{ke94}, \( X \) contains a closed set homeomorphic to \( \mathcal{N} \). Conversely, by \cite[Theorem 7.8]{ke94} we have that \( X \), being zero-dimensional,  is homeomorphic to a closed subset of \( \mathcal{N} \). Hence \( X \simeq^\mathsf{W}_2 \mathcal{N} \) by Lemma~\ref{lemmaSB}.
\end{proof}

\begin{remark}
Formally, \cite[Theorem 6.1]{JR79b} is stated using \( \mathsf{D}_2 \)-isomorphisms instead of \( \mathsf{D}^\mathsf{W}_2 \)-isomorphisms. However, our formulation can be recovered from that result \emph{a posteriori} by using \cite[Theorem 5]{JR82}.
\end{remark}

We are now ready to prove the main theorem of this section.

\begin{theorem}\label{theordim}
Let \( X \) be an uncountable quasi-Polish space.
\begin{enumerate}[(1)]
\item
if \( \dim(X) \neq \infty \) then there is a bijection \( f \colon \mathcal{N} \to X \) such that \( f \in \mathsf{D}^\mathsf{W}_2(\mathcal{N},X) \) and \( f^{-1} \in \mathsf{D}^\mathsf{W}_3(X, \mathcal{N}) \). In particular, \( \mathcal{N} \simeq^\mathsf{W}_3 X \);
\item
if \( \dim(X) = \infty \) and \( X \) is Polish then \( \mathcal{N} \not\simeq^\mathsf{W}_\alpha X \) for every \( \alpha < \omega_1 \) and \( \mathcal{N} \not\simeq_n X \) for every \( n < \omega \);
\item
 \( P \omega \not\simeq^\mathsf{W}_\alpha \mathcal{N} \) for every \( \alpha < \omega_1 \) and \( P \omega \not\simeq_n \mathcal{N} \) for every \( n < \omega \).  The same result holds when replacing \( P\omega \) with any other quasi-Polish space which is universal for (compact) Polish spaces;
\item
\( \mathsf{UC}_\infty \simeq^\mathsf{W}_2 \mathcal{N} \). Therefore \( \mathsf{UC}_\infty \not\simeq^\mathsf{W}_\alpha X \)
 (\( \alpha < \omega_1 \)) and \( \mathsf{UC}_\infty \not\simeq_n X \) (\( n \in \omega \)) for \( X \) a Polish space
 of dimension \( \infty \) (e.g.\ \( X = [0,1]^\omega \) or \( X = \RR^\omega \)) or \( X = P \omega \).
\end{enumerate}
\end{theorem}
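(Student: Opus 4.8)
The plan is to reduce the statement to a gluing of homeomorphisms and then to a single, well‑understood comparison. First I would observe that if $\langle N_m\mid m\in\omega\rangle$ is a partition of $\mathcal N$ into $\mathbf{\Sigma}^0_2$ sets and $\langle X_m\mid m\in\omega\rangle$ a partition of $X$ into $\mathbf{\Sigma}^0_3$ sets with $N_m$ homeomorphic to $X_m$ for each $m$, then the union of the witnessing homeomorphisms is a bijection $f\colon\mathcal N\to X$ with $f\in\mathsf{D}^\mathsf{W}_2(\mathcal N,X)$ and $f^{-1}\in\mathsf{D}^\mathsf{W}_3(X,\mathcal N)$; so it is enough to produce such partitions. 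For the $X$‑side I would prove by transfinite induction on the (necessarily countable) ordinal $\dim(X)$ that every quasi‑Polish $X$ with $\dim(X)\neq\infty$ can be written as a countable disjoint union $X=\bigsqcup_n W_n$ with each $W_n\in\mathbf{\Pi}^0_2(X)$ and $\dim(W_n)=0$: at a quasi‑Polish space of dimension $\gamma$ pick (using the base characterisation of dimension and second countability) a countable base $\{B_k\}$ whose boundaries have dimension $<\gamma$, set $D=\bigcup_k\partial B_k\in\mathbf{\Sigma}^0_2$, note that $X\setminus D$ carries the clopen base $\{B_k\setminus D\}$, hence has dimension $\leq0$ and lies in $\mathbf{\Pi}^0_2$, and recurse into the disjointified pieces $\partial B_k\setminus\bigcup_{j<k}\partial B_j$, which are $\mathbf{\Delta}^0_2$ in $X$ (hence quasi‑Polish by Proposition~\ref{propsubspace}) and, by Lemma~\ref{lemmamonotone}, of strictly smaller dimension; since ordinals are well‑founded the recursion has finite branches and countable branching, so it yields countably many pieces, and all of them remain $\mathbf{\Pi}^0_2$ (resp.\ $\mathbf{\Delta}^0_2$) in $X$ because $\mathbf{\Pi}^0_2$ and $\mathbf{\Delta}^0_2$ are absorbed under relativisation to $\mathbf{\Delta}^0_2$ subspaces. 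Each $W_n$, being zero‑dimensional, $T_0$ and second countable, is metrizable, and being $\mathbf{\Pi}^0_2$ in $X$ it is quasi‑Polish, hence Polish by Proposition~\ref{propmetrizable}. Then $Y=\bigsqcup_n W_n$ is a zero‑dimensional Polish space of the same cardinality as $X$ (hence uncountable), the obvious bijection $\iota\colon Y\to X$ lies in $\mathsf{D}^\mathsf{W}_2(Y,X)$ (clopen partition $\langle W_n\rangle$, continuous inclusions) with $\iota^{-1}\in\mathsf{D}^\mathsf{W}_3(X,Y)$ (partition $\langle W_n\rangle$ of $X$ into $\mathbf{\Pi}^0_2\subseteq\mathbf{\Sigma}^0_3$ sets, identity pieces). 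Finally I would apply Lemma~\ref{lemma0dim}: if $Y$ is $\sigma$‑compact then $Y\simeq^\mathsf{W}_2\mathcal C$ and I compose with the bijection $\mathcal N\to\mathcal C$ of Proposition~\ref{propexamples}(3) (which is $\mathsf{D}^\mathsf{W}_2$ one way, $\mathsf{D}^\mathsf{W}_3$ the other); if $Y$ is not $\sigma$‑compact then $Y\simeq^\mathsf{W}_2\mathcal N$ directly; composing with $\iota$ and using that each $\mathsf{D}^\mathsf{W}_\beta$ is monotone in $\beta$ and closed under composition produces the required $f$, whence $\mathcal N\simeq^\mathsf{W}_3X$. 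The main obstacle here is exactly this transfinite decomposition together with the bookkeeping of Borel complexity under relativisation.

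\textbf{Parts (2) and (3).} For (2), suppose $\mathcal N\simeq^\mathsf{W}_\alpha X$; by Lemma~\ref{lemmapwhomeo} there is a countable partition of $X$ into pieces each homeomorphic to a subspace of $\mathcal N$, hence zero‑dimensional, so $X$ is a countable union of zero‑dimensional subspaces and $\dim(X)\neq\infty$ by Lemma~\ref{lemmainfinitedimension}, contradicting the hypothesis; and if $\mathcal N\simeq_n X$ with $X$ Polish, then by Proposition~\ref{propDndec} a witnessing bijection and its inverse are decomposable, so $\mathcal N\simeq_{\mathsf{pw}}X$ by the second part of Lemma~\ref{lemmapwhomeo} and the pieces of $X$ are again zero‑dimensional. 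For (3) the $\simeq^\mathsf{W}_\alpha$ case is identical, the resulting decomposition of $P\omega$ (or of any quasi‑Polish space universal for compact Polish spaces) into zero‑dimensional pieces contradicting Corollary~\ref{corScottdomain}. The only delicate point is $P\omega\not\simeq_n\mathcal N$, since $P\omega$ is not Polish and so Proposition~\ref{propDndec} does not apply to maps out of it: here I would fix a $\mathbf{\Pi}^0_2$ topological copy $Z\subseteq P\omega$ of $[0,1]^\omega$ (Proposition~\ref{proppi2}), so that $Z$ carries the Polish topology of $[0,1]^\omega$, and work with $f|_Z\in\mathsf{D}_n(Z,\mathcal N)$ and its inverse $f^{-1}|_A\in\mathsf{D}_n(A,Z)$ where $A=f(Z)$ is an analytic (Borel in $\mathcal N$) set. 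By Proposition~\ref{propDndec} the former is decomposable; for the latter, since $\mathsf{D}_n(A,Z)=\mathbf{\Sigma}^0_{n,n}(A,Z)$ and, by the observation following Definition~\ref{defcontainment}, any function in which the Pawlikowski function $P$ is contained would itself put $P$ into $\mathbf{\Sigma}^0_{n,n}(\mathcal C,\mathcal C)=\mathsf{D}_n(\mathcal C)$, contradicting Proposition~\ref{propPisnotinD_n}, we get $P\not\sqsubseteq f^{-1}|_A$, hence $f^{-1}|_A$ is decomposable by the Pawlikowski characterisation (Theorem~\ref{theornondecomposable} together with its extension to analytic domains and separable metrizable codomains). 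Refining the two decompositions to a common one, $f$ restricted to the common pieces is a homeomorphism onto pieces of $A\subseteq\mathcal N$, so $Z$ becomes a countable union of zero‑dimensional subspaces, contradicting $\dim(Z)=\dim([0,1]^\omega)=\infty$ via Lemma~\ref{lemmainfinitedimension}.

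\textbf{Part (4).} First I would note that $\mathsf{C}_\infty$, being $(\omega,\geq)$ with the Scott topology, has as nonempty open sets exactly the tails $\{m,m+1,\dots\}$, so each singleton $\{n\}$ is the intersection of an open and a closed set and hence lies in $\mathbf{\Delta}^0_2(\mathsf{C}_\infty)$; thus $\langle\{n\}\mid n\in\omega\rangle$ is a countable $\mathbf{\Delta}^0_2$‑partition of $\mathsf{C}_\infty$. Pulling back along the continuous projection $\pi_0\colon\mathsf{UC}_\infty\to\mathsf{C}_\infty$ yields a countable $\mathbf{\Delta}^0_2$‑partition $\langle\{n\}\times\mathcal N\mid n\in\omega\rangle$ of $\mathsf{UC}_\infty$ into pieces each homeomorphic to $\mathcal N$. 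On the other side $\mathcal N$ is homeomorphic to $\omega\times\mathcal N=\bigsqcup_n(\{n\}\times\mathcal N)$, a clopen (hence $\mathbf{\Delta}^0_2$) partition into copies of $\mathcal N$; matching the pieces and invoking Lemma~\ref{lemmapwhomeo} gives $\mathsf{UC}_\infty\simeq_{\mathsf{pw}(\mathbf{\Delta}^0_2)}\omega\times\mathcal N$, hence $\mathsf{UC}_\infty\simeq^\mathsf{W}_2\mathcal N$ by transitivity of $\simeq^\mathsf{W}_2$. The ``therefore'' is then immediate: if $\mathsf{UC}_\infty\simeq^\mathsf{W}_\alpha X$ (resp.\ $\mathsf{UC}_\infty\simeq_n X$), composing with $\mathsf{UC}_\infty\simeq^\mathsf{W}_2\mathcal N$ and using that $\mathsf{D}^\mathsf{W}_\beta$ (resp.\ $\mathsf{D}_\beta$) is monotone in $\beta$ and closed under composition yields $\mathcal N\simeq^\mathsf{W}_{\max(2,\alpha)}X$ (resp.\ $\mathcal N\simeq_{\max(2,n)}X$), contradicting part (2) when $X$ is a Polish space of dimension $\infty$ and part (3) when $X=P\omega$.
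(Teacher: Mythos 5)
Your argument is correct and rests on the same ingredients as the paper's proof (the base of open sets with lower‑dimensional boundaries, Lemma~\ref{lemma0dim}, Proposition~\ref{propexamples}(3), Lemma~\ref{lemmapwhomeo}, and for the negative parts Lemma~\ref{lemmainfinitedimension}, Corollary~\ref{corScottdomain} and Proposition~\ref{propDndec}); parts (2) and (4) coincide with the paper essentially verbatim. The one organizational difference is in part (1): the paper runs an induction on \( \dim(X) \), applying the inductive hypothesis to each piece \( \partial B_n \setminus \bigcup_{i<n}\partial B_i \) and gluing the resulting bijections level by level, whereas you unroll that recursion into a single well‑founded tree of decompositions, obtaining in one pass a partition of \( X \) into countably many zero‑dimensional \( \mathbf{\Pi}^0_2 \) pieces, and then invoke Lemma~\ref{lemma0dim} exactly once on the disjoint sum \( Y \). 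This buys a small simplification: since \( Y \) is uncountable you never have to worry about the inductive hypothesis being applied to countable or empty pieces, an issue the paper's induction silently skips over; the price is the (correctly handled) bookkeeping that \( \mathbf{\Pi}^0_2 \) and \( \mathbf{\Delta}^0_2 \) are absorbed under relativisation to \( \mathbf{\Delta}^0_2 \) subspaces and that the recursion tree, having countable branching and only finite branches, is countable. In part (3) you also spell out the step the paper compresses into ``arguing as in part (2)'': the image \( A = f(Z) \) of the copy \( Z \) of \( [0,1]^\omega \) need not be Polish, so one must appeal to the extension of the Pawlikowski characterisation to analytic domains (as the paper's own footnotes permit) to decompose \( f^{-1}\restriction A \) before taking the common refinement; this is a genuine gap‑filling improvement over the paper's wording, not a deviation from its method.
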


\begin{proof*} (1)
Since \( X \) is uncountable (hence nonempty) and countably based, we can assume \( 0 \leq \dim(X) < \omega_1 \). We
argue by induction on \( \dim(X) = \alpha\). If \( \alpha = 0 \), then \( X \) is Hausdorff and regular. By the Urysohn's
metrization theorem \cite[Theorem 1.1]{ke94}, we have that \( X \) is metrizable and hence Polish by Proposition~\ref{propmetrizable}. Therefore the claim for \( X \)  follows from Lemma~\ref{lemma0dim} and Proposition~\ref{propexamples}(3).
Now assume that \( \alpha > 0 \) and that the claim is true for every quasi-Polish space of dimension \( < \alpha \). Let \(
\mathcal{B} = \{ B_n \mid n \in \omega \} \) be a countable base for the topology of \( X \) such that \( \dim (\partial B_n) <
\alpha \) for every \( n \in \omega \). Let \( X' = X \setminus \bigcup_{n \in \omega} \partial B_n \) and inductively define \(
B'_n = \partial B_n \setminus \bigcup_{i < n} \partial B_i \). All of \( X', B'_n \) are \( \mathbf{\Pi}^0_2(X) \) sets, so they are
quasi-Polish by Proposition~\ref{propsubspace}. Moreover, they clearly form a (countable) partition of \( X \). The space \(
X' \) is zero-dimensional, so by Lemma~\ref{lemma0dim} and Proposition~\ref{propexamples}(3) there is a bijection \( f_0 \colon \{ 0 \} \times \mathcal{N} \to X' \) such that \( f_0 \in \mathsf{D}^\mathsf{W}_2(\{ 0 \} \times \mathcal{N},X') \) and \( f_0^{-1} \in \mathsf{D}^\mathsf{W}_3(X', \{ 0 \} \times \mathcal{N}) \).
 By our hypothesis on \( \mathcal{B} \) and Lemma~\ref{lemmamonotone}, \( \dim(B'_n) < \alpha \) for every \( n \in \omega \), hence by inductive hypothesis for each \( n \in
\omega \) there is a bijection \( f_{n+1} \colon \{ n+1 \} \times \mathcal{N} \to B'_n \) such that \( f_{n+1} \in
\mathsf{D}^\mathsf{W}_2(\{ n+1 \} \times \mathcal{N}, B'_n) \) and \( f_{n+1}^{-1} \in \mathsf{D}^\mathsf{W}_3(B'_n
, \{ n+1 \} \times \mathcal{N}) \). Let \( h \) be an homeomorphism between \( \mathcal{N} \) and \( \omega \times
\mathcal{N} \): then, using the fact that each \( \{ n \} \times \mathcal{N} \) is clopen in \( \omega \times \mathcal{N} \)
and all of \( X', B'_n \) are in \( \mathbf{\Pi}^0_2(X) \subseteq \mathbf{\Delta}^0_3(X) \), it is straightforward to check that
\( f = \Big ( \bigcup_{n \in \omega} f_n \Big ) \circ h  \colon \mathcal{N} \to X \) is a bijection such that \( f \in \mathsf{D}^\mathsf{W}_2(\mathcal{N},X) \) and \( f^{-1} \in \mathsf{D}^\mathsf{W}_3(X, \mathcal{N}) \).

(2)
For the first part, assume toward a contradiction that \( \mathcal{N} \simeq^\mathsf{W}_\alpha X \) for some \( \alpha < \omega_1 \). Then
\( \mathcal{N} \simeq_{\mathsf{pw}(\mathbf{\Delta}^0_\alpha)} X \) by Lemma~\ref{lemmapwhomeo}, i.e.\ there would be
countable partitions \( \langle Y_n \mid n \in \omega \rangle , \langle X_n \mid n \in \omega \rangle \) in \(
\mathbf{\Delta}^0_\alpha \) sets of, respectively, \( \mathcal{N} \) and \( X \) such that \( Y_n \) is homeomorphic to \( X_n
\) for every \( n \in \omega \). In particular, by Lemma~\ref{lemmamonotone} each of the \( Y_n \) would be zero-dimensional, and
since homeomorphisms preserve dimension, we would also have \( \dim(X_n) = 0 \) for every \( n \in \omega\), contradicting
Lemma~\ref{lemmainfinitedimension}.

For the second part, we argue again by contradiction. Let \( f \) be a witness of \( \mathcal{N} \simeq_n X \) (for some \( n \in \omega \)). By
Proposition~\ref{propDndec}, both \( f \) and \( f^{-1} \) are decomposable, and hence \( \mathcal{N}
\simeq_{\mathsf{pw}} X \) by Lemma~\ref{lemmapwhomeo}. From this fact, arguing as in the first part, we again reach a contradiction with Lemma~\ref{lemmainfinitedimension}.

(3)
For the first part, argue as in part (2), using Corollary~\ref{corScottdomain} instead of Lemma~\ref{lemmainfinitedimension}.

For the second part, assume towards a contradiction that \( P
\omega \simeq_n \mathcal{N} \) (for some \( n < \omega \)). Since
\( P \omega \) is universal for quasi-Polish spaces by Proposition~\ref{proppi2}, there is  a \( \boldsymbol{\Pi}^0_2(P \omega) \)
set $X$ which is homeomorphic to \( [0,1]^\omega \), and hence the
image \( Y \subseteq \mathcal{N} \) of \( X \) under any witness of \( P \omega \simeq_n
\mathcal{N} \) would be a Borel set 
such that \(  Y \simeq_n [0,1]^\omega \). Arguing as in part (2),
we reach a contradiction with Lemma~\ref{lemmainfinitedimension}.

(4)
Since \( \mathcal{N} \) is homeomorphic to \( \omega \times \mathcal{N} \), it is enough to prove that \(
\mathsf{UC}_\infty \simeq^\mathsf{W}_2 \omega \times \mathcal{N} \). Since each element  of \( \mathsf{C}_\infty
\) is compact, \( \{ n \} \in \mathbf{\Delta}^0_2(\mathsf{C}_\infty) \) for every \( n \in \omega \). Hence the sets \( X_n = \{
n \} \times \mathcal{N} \), which are all homeomorphic to \( \mathcal{N} \),  form a countable partition of \( \mathsf{UC}_\infty \) in \( \mathbf{\Delta}^0_2 \) pieces. It
follows that \( \mathsf{UC}_\infty \simeq_{\mathsf{pw}(\mathbf{\Delta}^0_2)} \omega \times \mathcal{N} \), whence \(
\mathsf{UC}_\infty \simeq^\mathsf{W}_2 \omega \times \mathcal{N} \) by Lemma~\ref{lemmapwhomeo}.
The second part of the claim follows from the first one and parts (2) and (3). \hfill \usebox{\proofbox}
\end{proof*}

\begin{remark} \label{remJR}
\begin{enumerate}[(1)]
\item
The special case of Theorem~\ref{theordim}(1) in which \( X \) is assumed to
be Polish essentially appeared in  \cite[Theorem 8.1]{JR79b}. The unique
differences are that in their statement Jayne and Rogers used the assumption
that  \( X \)  is a countable union of spaces of finite dimension (which for
\( X \) Polish is equivalent to \( \dim(X) \neq \infty \) by Lemma~\ref{lemmainfinitedimension}), and that the conclusion is weakened in
\cite[Theorem 8.1]{JR79b} to \( \mathcal{N} \simeq_3 X \) (but their proof
already gives the more precise statement considered here). Concerning the
proofs, at a first glance the Jayne-Rogers original argument could seem
different from the one used here, as it does not involve any induction on the
dimension. However, their argument heavily relies on
\cite[Theorem III 3]{hurwal}, whose proof already implicitly shows the result
under consideration and is essentially the same as the one we used above.
Therefore the instance of Theorem~\ref{theordim}(1) concerning Polish spaces
can be dated back at least to the work of Hurewicz and Wallman of 1948.
\item
Theorem~\ref{theordim}(1) is optimal: in fact by (the obvious generalization to transfinite dimension of) \cite[Theorem 13]{JR82}, if \( X \simeq_2 Y \) then \( \dim(X) = \dim(Y) \). The converse is not true, as if \( X \) is a compact Polish space of dimension \( \alpha < \omega_1 \), then \( \dim(X \sqcup \mathcal{N}) = \dim(X)  = \alpha\) (by the generalization  to transfinite dimensions of the Sum Theorem \cite[Theorem III 2]{hurwal}) but \( X \sqcup \mathcal{N} \not\simeq_2 X \) by \cite[Theorem 5]{JR82} and Remark~\ref{remDW2}. Nevertheless, in some specific cases one can get a better bound, e.g.:
\begin{enumerate}[(a)]
\item Let \( X \) be an uncountable Polish space embedded in \(
\RR^n \). Then \( X \) is \( \mathsf{D}_2  \)-isomorphic
(equivalently, \( \mathsf{D}^\mathsf{W}_2 \)-isomorphic) to \(
\RR^n \) if and only if it is \(\sigma\)-compact and of dimension
\( n \) \cite[Theorem 6.3]{JR79b}; \item A Polish space that is
locally Euclidean of dimension \( n \) is    \( \mathsf{D}_2
\)-isomorphic  (equivalently, \( \mathsf{D}^\mathsf{W}_2
\)-isomorphic) to \( \RR^n \) \cite[Theorem 6.4]{JR79b};
 \item Let
\( X , Y \) be two \(\sigma\)-compact metric  spaces of dimension
\( \alpha < \omega_1 \) that are universal for the compact metric
spaces of dimension \(\alpha\). Then \( X \simeq_2 Y \) by Theorem
7.1 in~\cite{JR79b}.
\end{enumerate}
\end{enumerate}
\end{remark}

Theorem~\ref{theordim} has also several corollaries which generalize many results of various nature. The first one is related to Lemma~\ref{lemmainfinitedimension}. Recall that a zero-dimensional space \( X \) is called \( h \)-homogeneous if every clopen \( U \subseteq X \) is homeomorphic to the entire space \( X \). Examples of (uncountable) \( h \)-homogeneous spaces are \( \mathcal{ N} \) and \( \mathcal{C} \).

\begin{corollary} \label{corequiv}
Let \( X \) be an uncountable Polish space. Then the following are equivalent:
\begin{enumerate}[(1)]
\item
\( \dim(X) \neq \infty \);
\item
 \( X = \bigcup_{n < \omega } X_n \) with all the \( X_n \) of finite dimension (equivalently, of dimension \( \neq \infty \), or of dimension \( 0 \));
\item
\( X = \bigcup_{n < \omega} X_n \) with each \( X_n \) a zero-dimensional \( h \)-homogeneous Polish space;
\item
\( X \simeq_n \mathcal{N} \) for some \( n < \omega \);
\item
\( X \simeq^\mathsf{W}_3 \mathcal{N} \).
\end{enumerate}
\end{corollary}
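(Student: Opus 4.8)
The plan is to collapse all five conditions to a single equivalence class via $(1)\Rightarrow(5)\Rightarrow(4)\Rightarrow(2)\Rightarrow(1)$ together with $(1)\Rightarrow(3)\Rightarrow(2)$. Four of the links are essentially immediate. The equivalence $(1)\Leftrightarrow(2)$ is Lemma~\ref{lemmainfinitedimension} (recalling that an uncountable Polish space is second countable, so $\dim(X)\neq\infty$ means $\dim(X)$ is a countable ordinal; the equivalence of the variants of $(2)$ quoted in the statement follows from that lemma together with the classical decomposition and sum theorems of dimension theory). The implication $(1)\Rightarrow(5)$ is exactly Theorem~\ref{theordim}(1). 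For $(5)\Rightarrow(4)$ one takes $n=3$: since $\mathsf{D}^\mathsf{W}_3(X,\mathcal N)\subseteq\mathsf{D}_3(X,\mathcal N)=\mathbf{\Sigma}^0_3(X,\mathcal N)$ (and likewise for the inverse), a bijection witnessing $X\simeq^\mathsf{W}_3\mathcal N$ also witnesses $X\simeq_3\mathcal N$. And $(3)\Rightarrow(2)$ is trivial, since a zero-dimensional space has dimension $0\neq\infty$.

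For $(4)\Rightarrow(2)$: if $f$ witnesses $X\simeq_n\mathcal N$, then $f$ and $f^{-1}$ lie in $\mathsf{D}_n$ and $X,\mathcal N$ are Polish, so both are decomposable by Proposition~\ref{propDndec}; hence $X\simeq_{\mathsf{pw}}\mathcal N$ by Lemma~\ref{lemmapwhomeo}, i.e.\ $X=\bigcup_k X_k$ and $\mathcal N=\bigcup_k Y_k$ with $X_k$ homeomorphic to $Y_k$. As $Y_k\subseteq\mathcal N$, Lemma~\ref{lemmamonotone} gives $\dim(Y_k)\leq\dim(\mathcal N)=0$, whence $\dim(X_k)=0$ since dimension is a topological invariant; this is $(2)$.

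The substance is $(1)\Rightarrow(3)$, in two stages. \emph{Stage one}: a Polish space $X$ with $\dim(X)\neq\infty$ is a countable union of zero-dimensional \emph{Polish} subspaces. This is already contained in the proof of Theorem~\ref{theordim}(1): that induction on $\dim(X)$ peels off at each level a zero-dimensional $\mathbf{\Pi}^0_2$ piece (the points that lie on the boundary of no member of the chosen base) together with countably many $\mathbf{\Delta}^0_2$ pieces of strictly smaller dimension, so unwinding the recursion writes $X$ as a countable union of zero-dimensional $\mathbf{\Pi}^0_2$, hence Polish, subsets. (Alternatively one runs this transfinite induction afresh, using the base-characterization of $\dim(X)\leq\alpha$ recalled after Lemma~\ref{lemmamonotone} and the classical facts that a base with low-dimensional boundaries has zero-dimensional kernel and that the union of its boundaries has dimension $<\alpha$.) \emph{Stage two}: every zero-dimensional Polish space $Z$ is a countable union of pieces each homeomorphic to a one-point space, to $\mathcal C$, or to $\mathcal N$ --- each of which is $h$-homogeneous --- so stage one yields $(3)$. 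To prove stage two, let $W\subseteq Z$ be the union of all $\sigma$-compact clopen subsets of $Z$; then $W$ is open and, being Lindel\"of, $\sigma$-compact, hence a $\sigma$-compact zero-dimensional Polish space. Chopping $W$ along a compact exhaustion $\langle K_n\mid n\in\omega\rangle$, the sets $L_n=K_n\setminus\bigcup_{i<n}K_i$ partition $W$, and each $L_n$ is a locally compact, $\sigma$-compact, zero-dimensional Polish space (being open in the compact metrizable $K_n$); such a space splits into countably many compact clopen pieces, and each such compact zero-dimensional metrizable piece decomposes by the Cantor--Bendixson theorem into its perfect kernel (a copy of $\mathcal C$ by Brouwer's theorem \cite[Theorem 7.4]{ke94}, or empty) together with countably many singletons. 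Finally $C=Z\setminus W$ is closed, hence Polish, and every compact subset of $C$ has empty interior in $C$ --- for if a clopen-in-$Z$ neighbourhood $V$ of some $x\in C$ met $C$ in a compact set, then $V=(V\cap C)\cup(V\cap W)$ would be a $\sigma$-compact clopen subset of $Z$, forcing $V\subseteq W$ and contradicting $x\in C$ --- so $C$ is homeomorphic to $\mathcal N$, or empty, by the Alexandrov--Urysohn theorem \cite[Theorem 7.7]{ke94}.

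The main obstacle is stage two, and specifically the requirement that the pieces be genuinely \emph{Polish}. One cannot simply transport an arbitrary piecewise-homeomorphism decomposition of $\mathcal N$, since a subspace of $\mathcal N$ need not be Polish --- a Bernstein subset of $\mathcal C$ is zero-dimensional but has no decomposition into countably many Polish subspaces, a Polish subspace of $\mathcal C$ being $\mathbf{\Pi}^0_2$ and hence either countable or containing a perfect set --- so the decomposition must be produced by the dimension-theoretic construction of stage one. A related subtlety is that $W$ itself need not be locally compact (there are even countable Polish, hence $\sigma$-compact zero-dimensional, spaces that fail to be locally compact), which is why one first cuts $W$ along a compact exhaustion into locally compact pieces. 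Beyond the verification that $C$ has no compact set with nonempty interior, the rest is Brouwer's, Alexandrov--Urysohn's and Cantor--Bendixson's theorems.
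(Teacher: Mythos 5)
Your proof is correct, and the overall cycle works, but you reach condition (3) by a genuinely different route than the paper. The paper proves $(5)\Rightarrow(3)$: it refines the $\mathbf{\Delta}^0_3$-piecewise homeomorphism with $\mathcal{N}$ to one with $\mathbf{\Pi}^0_2$ (hence Polish, zero-dimensional) pieces and then invokes Ostrovsky's theorem \cite[Theorem 1]{ost}, which says that every such space is a countable union of $h$-homogeneous $\mathbf{\Pi}^0_2$ subspaces. You instead prove $(1)\Rightarrow(3)$ directly and replace the appeal to Ostrovsky by a self-contained decomposition of an arbitrary zero-dimensional Polish space into singletons, copies of $\mathcal{C}$, and at most one copy of $\mathcal{N}$, via the $\sigma$-compact kernel $W$, a compact exhaustion, Cantor--Bendixson, Brouwer, and Alexandrov--Urysohn; I checked the details (in particular that $V\cap W$ is $\sigma$-compact for $V$ clopen, so that a compact neighbourhood inside $C=Z\setminus W$ would force $V\subseteq W$) and they are sound. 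Your version buys an explicit decomposition with only three homeomorphism types and no external reference, at the cost of more work; the paper's is shorter given Ostrovsky's result and produces $\mathbf{\Pi}^0_2$ pieces of the ambient space. You also correctly flag the point both arguments must address, namely that the pieces must be genuinely Polish (Lemma~\ref{lemmainfinitedimension}(3) alone does not give this), and your stage one, extracted from the induction in Theorem~\ref{theordim}(1), handles it properly. The remaining implications ($(1)\Rightarrow(5)$, $(5)\Rightarrow(4)$, $(4)\Rightarrow(2)$ via Proposition~\ref{propDndec} and Lemma~\ref{lemmapwhomeo}, $(2)\Rightarrow(1)$, $(3)\Rightarrow(2)$) match the paper's, up to your routing $(4)$ to $(2)$ rather than directly to $(1)$.
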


\begin{proof}
(1) \( \Rightarrow \) (5) by Theorem~\ref{theordim}(1), (5) \( \Rightarrow \) (4) is obvious, and (4) \( \Rightarrow \) (1) by Theorem~\ref{theordim}(2). Moreover (3) \( \Rightarrow \) (2) is obvious, and (2) \( \Rightarrow \) (1) by Lemma~\ref{lemmainfinitedimension}, so it is enough to show (5) \( \Rightarrow \) (3). By Lemma~\ref{lemmapwhomeo}, \( \mathcal{N} \simeq_{\mathsf{pw}(\mathsf{\Delta}^0_3)} X \). It is a classical fact that every countable partition of \( \mathcal{N} \) into \( \mathbf{\Delta}^0_3 \) pieces can be refined to a countable partition in \( \mathbf{\Pi}^0_2 \) pieces, hence there is a countable partition \( \langle X'_n \mid  n \in \omega \rangle \) of \( X \) such that each \( X_n \) is homeomorphic to a \( \mathbf{\Pi}^0_2 \)-subset of \( \mathcal{N} \). This means that each \( X'_n \) is a Polish zero-dimensional space.  By e.g.\ \cite[Theorem 1]{ost}, all of these \( X'_n \) can be written as countable unions \( \bigcup_{m \in \omega } X_{n,m} \) of \( h \)-homogeneous spaces such that \( X_{n,m} \in \mathbf{\Pi}^0_2(X_n) \) for all \( m \in \omega \). Thus each \( X_{n,m} \) is Polish by \cite[Theorem 3.11]{ke94}, and hence any enumeration without repetitions \( \langle X_n \mid n \in \omega \rangle \) of \( \{ X_{n,m} \mid n,m , \omega \} \) satisfies all the requirements of (3).
\end{proof}

\noindent
Obviously, conditions (1)--(4) are true also when \( X \) is a countable Polish space and \( \mathcal{N} \) is replaced by \(\omega\): however, the counterexamples \( \mathsf{C}_\infty \) and \( \mathsf{UC}_\infty \) of Example~\ref{exdim3}(2) show that this is no more true for countable \emph{quasi-Polish} spaces, and that Corollary~\ref{corequiv} cannot be extended in general to arbitrary quasi-Polish spaces. Nevertheless we have the following:

\begin{corollary}
Let \( X \) be an arbitrary quasi-Polish space of dimension \( \neq \infty \). Then \( X \) can be written as a countable union of zero-dimensional \( h \)-homogeneous Polish spaces.
\end{corollary}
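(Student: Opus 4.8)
The plan is to reduce everything to the chain of implications already established in Corollary~\ref{corequiv}, splitting first according to the cardinality of \( X \). If \( X \) is countable, I would simply write \( X = \bigcup_{x \in X} \{ x \} \): each singleton \( \{ x \} \), with the subspace topology, is a one-point space, which is trivially a zero-dimensional \( h \)-homogeneous Polish space (its only nonempty clopen subset is the space itself), so there is nothing more to do in this case. The only reason a case distinction is needed at all is that Theorem~\ref{theordim}(1) is stated for \emph{uncountable} quasi-Polish spaces; hence the countable case, which does not even use the hypothesis \( \dim(X) \neq \infty \), must be disposed of by hand.

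Assume now \( X \) is uncountable. Since \( \dim(X) \neq \infty \), Theorem~\ref{theordim}(1) gives \( \mathcal{N} \simeq^\mathsf{W}_3 X \), and therefore \( X \simeq_{\mathsf{pw}(\mathbf{\Delta}^0_3)} \mathcal{N} \) by Lemma~\ref{lemmapwhomeo}. So there are countable partitions \( \langle Y_n \mid n \in \omega \rangle \) of \( \mathcal{N} \) and \( \langle X_n \mid n \in \omega \rangle \) of \( X \) into \( \mathbf{\Delta}^0_3 \) pieces with \( Y_n \) homeomorphic to \( X_n \) for every \( n \). As recalled in the proof of Corollary~\ref{corequiv}, every countable partition of \( \mathcal{N} \) into \( \mathbf{\Delta}^0_3 \) sets can be refined to a countable partition of \( \mathcal{N} \) into \( \mathbf{\Pi}^0_2 \) sets; transporting this refinement of the partition \( \langle Y_n \rangle \) through the homeomorphisms \( Y_n \cong X_n \) yields a countable partition \( \langle X'_k \mid k \in \omega \rangle \) of \( X \) in which every \( X'_k \) is homeomorphic to a \( \mathbf{\Pi}^0_2 \) subset of \( \mathcal{N} \), hence is a zero-dimensional Polish space.

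It then remains to break each \( X'_k \) into \( h \)-homogeneous pieces, which is exactly the point where I would invoke Ostrovsky's theorem \cite[Theorem 1]{ost}: every zero-dimensional Polish space \( X'_k \) can be written as \( \bigcup_{m \in \omega} X_{k,m} \) with each \( X_{k,m} \) an \( h \)-homogeneous space lying in \( \mathbf{\Pi}^0_2(X'_k) \). Being \( \mathbf{\Pi}^0_2 \) in a Polish space, each \( X_{k,m} \) is itself Polish by \cite[Theorem 3.11]{ke94}, and it is zero-dimensional by Lemma~\ref{lemmamonotone}. Enumerating \( \{ X_{k,m} \mid k,m \in \omega \} \) then exhibits \( X \) as a countable union of zero-dimensional \( h \)-homogeneous Polish spaces, as desired. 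I do not expect any genuine obstacle here: apart from handling the countable case separately, the argument is a transcription of the implication (5)\,\( \Rightarrow \)\,(3) in Corollary~\ref{corequiv}, now starting from the weaker hypothesis that \( X \) is merely quasi-Polish rather than Polish, which is harmless because after the passage to \( \mathbf{\Pi}^0_2 \) pieces one works entirely with (Polish, zero-dimensional) subspaces of \( \mathcal{N} \).
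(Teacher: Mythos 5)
Your proposal is correct and follows essentially the same route as the paper: the paper likewise disposes of the countable case via singletons and, for uncountable \(X\), simply observes that the implications (1)\,\(\Rightarrow\)\,(5) and (5)\,\(\Rightarrow\)\,(3) of Corollary~\ref{corequiv} go through verbatim for arbitrary uncountable quasi-Polish spaces, which is exactly the chain Theorem~\ref{theordim}(1) \(\to\) Lemma~\ref{lemmapwhomeo} \(\to\) refinement to \(\mathbf{\Pi}^0_2\) pieces \(\to\) Ostrovsky's theorem that you spell out. No gaps.
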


\begin{proof}
If \( X \) is countable the result is trivial (simply take the singletons of the elements of \( X \) as countable partition). If \( X \) is uncountable, it is enough to observe that the proofs of (1) \( \Rightarrow \) (5) and (5) \( \Rightarrow \) (3) of Corollary~\ref{corequiv} are in fact valid for arbitrary uncountable quasi-Polish spaces.
\end{proof}

The next  corollary extends Semmes' generalization (Theorem~\ref{theorsem} in this paper) of Jayne-Rogers \cite[Theorem
5]{JR82}.

\begin{corollary} \label{corsem}
Let \( X \)  be any quasi-Polish space which is either countable
or of dimension \( \neq \infty \). Then \( \mathsf{D}_3(X) =
\mathsf{D}^\mathsf{W}_3(X) \).
\end{corollary}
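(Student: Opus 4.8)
Since $\mathsf{D}^\mathsf{W}_3(X) \subseteq \mathsf{D}_3(X)$ for every space $X$, only the reverse inclusion needs proof, and it is natural to treat separately the two cases of the hypothesis. If $X$ is a \emph{countable} quasi-Polish space the inclusion is almost immediate: by Proposition~\ref{propsingl}(1) each singleton $\{ x \}$ (for $x \in X$) lies in $\mathbf{\Pi}^0_2(X) \subseteq \mathbf{\Delta}^0_3(X)$, so $\langle \{ x \} \mid x \in X \rangle$ is a (countable) $\mathbf{\Delta}^0_3$-partition of $X$; since the restriction of any function to a one-point space is continuous, this partition witnesses that \emph{every} $g \colon X \to X$ lies in $\mathsf{D}^\mathsf{W}_3(X)$, so in particular $\mathsf{D}_3(X) \subseteq \mathsf{D}^\mathsf{W}_3(X)$.

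The interesting case is $X$ uncountable with $\dim(X) \neq \infty$. Here the plan is to \emph{transfer} Semmes' Theorem~\ref{theorsem} from $\mathcal{N}$ to $X$ along the isomorphism given by Theorem~\ref{theordim}(1): fix a bijection $f \colon \mathcal{N} \to X$ with $f \in \mathsf{D}^\mathsf{W}_2(\mathcal{N},X)$ and $f^{-1} \in \mathsf{D}^\mathsf{W}_3(X,\mathcal{N})$. Note $f \in \mathsf{D}^\mathsf{W}_2 \subseteq \mathsf{D}^\mathsf{W}_3$ (a $\mathbf{\Delta}^0_2$-partition is a $\mathbf{\Delta}^0_3$-partition), and that both $f$ and $f^{-1}$ lie in the corresponding classes $\mathsf{D}_3(\cdot,\cdot)$, using $\mathsf{D}^\mathsf{W}_\alpha \subseteq \mathsf{D}_\alpha$ together with $\mathsf{D}_2 \subseteq \mathsf{D}_3$ (Proposition~\ref{propsigmafun}(1)). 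Given $g \in \mathsf{D}_3(X)$, put $h = f^{-1} \circ g \circ f \colon \mathcal{N} \to \mathcal{N}$; since $\mathsf{D}_3(\cdot,\cdot) = \mathbf{\Sigma}^0_3\mathbf{\Sigma}^0_3(\cdot,\cdot)$ is closed under composition by Remark~\ref{remfun}(2), we get $h \in \mathsf{D}_3(\mathcal{N})$, whence $h \in \mathsf{D}^\mathsf{W}_3(\mathcal{N})$ by Theorem~\ref{theorsem}. Finally $g = f \circ h \circ f^{-1}$ is a composition of maps in $\mathsf{D}^\mathsf{W}_3(X,\mathcal{N})$, $\mathsf{D}^\mathsf{W}_3(\mathcal{N},\mathcal{N})$ and $\mathsf{D}^\mathsf{W}_3(\mathcal{N},X)$, so it suffices to know that the classes $\mathsf{D}^\mathsf{W}_3(\cdot,\cdot)$ are closed under composition (with matching domains and codomains); this yields $g \in \mathsf{D}^\mathsf{W}_3(X)$ and hence $\mathsf{D}_3(X) \subseteq \mathsf{D}^\mathsf{W}_3(X)$, completing the proof.

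The only point requiring an actual (routine) verification is this last closure property, which is proved exactly as in the same-space case treated in Section~\ref{sectionreduc}: if $\varphi = \bigcup_n \varphi_n$ and $\psi = \bigcup_m \psi_m$ with $\mathbf{\Delta}^0_3$-partitions $\{ A_n \}$, $\{ B_m \}$ and continuous $\varphi_n$, $\psi_m$, then the sets $A_n \cap \varphi_n^{-1}(B_m)$ form a $\mathbf{\Delta}^0_3$-partition of the domain on each piece of which $\psi \circ \varphi$ is continuous, using that $\mathbf{\Delta}^0_3$ is closed under finite intersections, that a $\mathbf{\Delta}^0_3$-subset of a $\mathbf{\Delta}^0_3$-set (in the relative topology) is $\mathbf{\Delta}^0_3$, and that continuous preimages of $\mathbf{\Delta}^0_3$-sets are $\mathbf{\Delta}^0_3$. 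Thus there is no genuine difficulty: the substance of the corollary is carried entirely by Semmes' theorem and by Theorem~\ref{theordim}(1), and the main thing to watch is that conjugating by $f$ does not push us above level $3$ — which it does not, precisely because $f$ is as good as a $\mathsf{D}^\mathsf{W}_2$-map in the forward direction and the level-$3$ function classes are closed under composition.
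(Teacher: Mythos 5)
Your proof is correct and follows essentially the same route as the paper: the countable case via Proposition~\ref{propsingl} (singletons are \( \mathbf{\Pi}^0_2 \), so every function is in \( \mathsf{D}^\mathsf{W}_3(X) \)), and the uncountable case by conjugating along the isomorphism of Theorem~\ref{theordim}(1), applying Semmes' Theorem~\ref{theorsem} on \( \mathcal{N} \), and using closure of both function classes under composition. You merely spell out the routine composition check that the paper leaves implicit.
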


\begin{proof}
If \( X \) is countable the result follows from Proposition~\ref{propsingl} because then every function \( f \colon X \to X \)
is in \( \mathsf{D}^\mathsf{W}_3(X) \). If \( X \) is uncountable,
the result follows from Theorem~\ref{theordim}(1) and the fact
that both the classes \( \bigcup_{X,Y \in \mathscr{X}}
\mathsf{D}_3(X,Y) \) and \( \bigcup_{X,Y \in \mathscr{X}}
\mathsf{D}^\mathsf{W}_3(X,Y) \) are closed under composition.
\end{proof}

This corollary shows an interesting phenomenon:  Jayne-Rogers'
original result stating that \( \mathsf{D}_2(X)
= \mathsf{D}^\mathsf{W}_2(X) \)~\cite[Theorem 5]{JR82} can arguably be considered to be simpler,  even in the case \( X =
\mathcal{N} \), than
Theorem~\ref{theorsem}. However, contrarily to the case of
Corollary~\ref{corsem}, it cannot be generalized to \( \omega
\)-algebraic domains, as shown by the following counterexample
communicated to the authors by M.~de Brecht. Let \( X = (\omega+1,
\leq) \) be endowed with the Scott topology (i.e.\ \( X =
L_{\omega+1} \) from Example~\ref{exdim2}(3)), and consider the
function \( f \colon X \to X \) defined by \( f(\omega) = \omega
\), \( f(2i) = 2i+1 \) and \( f(2i+1) = 2i \) (for every \( i \in
\omega \)): then \( f \in \mathsf{D}_2(X) \setminus
\mathsf{D}^\mathsf{W}_2(X) \). (A similar counterexample can of
course be given also for uncountable quasi-Polish spaces --- it is
enough to consider the space \( X \times \mathcal{N} \) and the
map \( (x,y) \mapsto (f(x),y) \), where \( f \) is the function defined above.)

The assumption that \( X \) be of dimension \( \neq \infty \) in the uncountable case of Corollary~\ref{corsem} is not a true limitation: for example, by using Theorem~\ref{theordim}(4), one gets that the corollary remains true also for \( X = \mathsf{UC}_\infty \). However, the general case remains unclear.

\begin{question}
Is it possible to further generalize Corollary~\ref{corsem} to all uncountable (quasi-)Polish spaces of dimension \( \infty \)?
\end{question}

\noindent Notice that if Corollary~\ref{corsem} holds true for a
space which is universal for all (quasi-)Polish spaces, then the
answer to the previous question is automatically positive.

Finally, the next corollary generalizes a recent result of Ostrovsky \cite[p.~663]{ost} concerning the possibility of representing Borel sets as countable unions of \( h \)-homogeneous \( \boldsymbol{\Pi}^0_2 \) sets.

\begin{corollary}
Let \( X \) be a quasi-Polish space of dimension \( \neq \infty \). Then every Borel set \( B \subseteq X \) can be partitioned into countably many \( h \)-homogeneous \( \mathbf{\Pi}^0_2 \) subspaces of \( B \).
\end{corollary}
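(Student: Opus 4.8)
The plan is to reduce the statement to two ingredients. The first is a sharpening of the uncountable case of Theorem~\ref{theordim}(1): every quasi-Polish space \( X \) with \( \dim(X) \neq \infty \) can be partitioned into countably many sets \( \langle Z_m \mid m \in \omega \rangle \) with each \( Z_m \in \mathbf{\Pi}^0_2(X) \) and each \( Z_m \), in the relative topology, a zero-dimensional Polish space. The second is the theorem of Ostrovsky \cite{ost} that this corollary generalizes, in the form: every Borel subset \( A \) of a zero-dimensional Polish space can be partitioned into countably many \( h \)-homogeneous subspaces, each of which is \( \mathbf{\Pi}^0_2 \) in \( A \).

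To obtain the first ingredient I would argue by transfinite induction on \( \dim(X) = \alpha < \omega_1 \) (recall that when \( X \) is quasi-Polish with \( \dim(X) \neq \infty \), \( \dim(X) \) is a countable ordinal), running the argument in the proof of Theorem~\ref{theordim}(1) but keeping track of the complexity of the pieces. If \( \alpha = 0 \), then \( X \) is zero-dimensional, hence regular and Hausdorff, hence metrizable by Urysohn's metrization theorem, hence Polish by Proposition~\ref{propmetrizable}, so the trivial partition \( \langle X \rangle \) works (as \( X \in \mathbf{\Pi}^0_0(X) \subseteq \mathbf{\Pi}^0_2(X) \)). If \( \alpha > 0 \), I would take a countable base \( \{ B_n \mid n \in \omega \} \) for \( X \) with \( \dim(\partial B_n) < \alpha \) for all \( n \) and set \( X' = X \setminus \bigcup_n \partial B_n \) and \( B'_n = \partial B_n \setminus \bigcup_{i<n} \partial B_i \), exactly as there. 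Since the \( B_n \) are open the sets \( \partial B_n \) are closed, so \( X' = \bigcap_n (X \setminus \partial B_n) \in \mathbf{\Pi}^0_2(X) \) and \( B'_n \in \mathbf{\Delta}^0_2(X) \subseteq \mathbf{\Pi}^0_2(X) \); moreover \( X' \) is zero-dimensional and quasi-Polish (Proposition~\ref{propsubspace}), hence Polish as in the base case, while each \( B'_n \) is quasi-Polish (Proposition~\ref{propsubspace}) and of dimension \( \leq \dim(\partial B_n) < \alpha \) by Lemma~\ref{lemmamonotone}. Applying the inductive hypothesis to each \( B'_n \) decomposes it into countably many zero-dimensional Polish sets lying in \( \mathbf{\Pi}^0_2(B'_n) \); these lie in \( \mathbf{\Pi}^0_2(X) \) because \( B'_n \in \mathbf{\Pi}^0_2(X) \) and the relative pointclass \( \mathbf{\Pi}^0_2 \) is transitive (if \( A \in \mathbf{\Pi}^0_2(X) \) and \( C \in \mathbf{\Pi}^0_2(A) \) then \( C = A \cap S \) for some \( S \in \mathbf{\Pi}^0_2(X) \), and \( \mathbf{\Pi}^0_2 \) is closed under finite intersections). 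Adjoining \( X' \) to all these pieces gives the required partition of \( X \).

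Given this, the proof finishes quickly. Let \( B \subseteq X \) be Borel and fix a partition \( \langle Z_m \mid m \in \omega \rangle \) of \( X \) as above. For each \( m \), \( B \cap Z_m \) is the trace on \( B \) of \( Z_m \in \mathbf{\Pi}^0_2(X) \), hence \( B \cap Z_m \in \mathbf{\Pi}^0_2(B) \), and with the relative topology it is a Borel subset of the zero-dimensional Polish space \( Z_m \). By Ostrovsky's theorem I would partition \( B \cap Z_m \) into countably many \( h \)-homogeneous subspaces \( \langle C_{m,k} \mid k \in \omega \rangle \) with \( C_{m,k} \in \mathbf{\Pi}^0_2(B \cap Z_m) \), and then transitivity of the relative pointclass \( \mathbf{\Pi}^0_2 \) (now with \( B \cap Z_m \in \mathbf{\Pi}^0_2(B) \)) yields \( C_{m,k} \in \mathbf{\Pi}^0_2(B) \). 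The family \( \langle C_{m,k} \mid m,k \in \omega \rangle \) is the desired partition of \( B \) into countably many \( h \)-homogeneous \( \mathbf{\Pi}^0_2 \) subspaces.

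The crux — and the reason one cannot simply quote the corollary immediately preceding this one — is the first ingredient: the decomposition of \( X \) coming from the route \( \mathcal{N} \simeq^\mathsf{W}_3 X \) has pieces only in \( \mathbf{\Delta}^0_3(X) \), whose traces on \( B \) would be merely \( \mathbf{\Delta}^0_3(B) \) rather than \( \mathbf{\Pi}^0_2(B) \); this forces one to re-run the dimension induction of Theorem~\ref{theordim}(1) directly on \( X \), where the pieces genuinely remain \( \mathbf{\Pi}^0_2(X) \). A subsidiary point is that all the ``relative \( \mathbf{\Pi}^0_2 \)'' manipulations above are applied to subspaces of a possibly non-metrizable quasi-Polish space, so one should first note that for any \( A \subseteq Y \) one has \( \mathbf{\Pi}^0_2(A) = \{ S \cap A \mid S \in \mathbf{\Pi}^0_2(Y) \} \) — a routine induction on the Borel hierarchy, using that \( \mathbf{\Pi}^0_2 \) is a family of pointclasses — together with closure of \( \mathbf{\Pi}^0_2 \) under finite intersections, which is immediate since \( \mathbf{\Sigma}^0_2 = ((\mathbf{\Sigma}^0_1)_d)_\sigma \) is closed under finite unions.
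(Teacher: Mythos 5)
Your proof is correct, and it rests on the same two pillars as the paper's own argument: the zero-dimensional Polish decomposition implicit in the proof of Theorem~\ref{theordim}(1), and Ostrovsky's theorem. The organization differs in one respect worth recording. The paper transfers \( B \) to a Borel \( B' \subseteq \mathcal{N} \) via \( B \simeq^\mathsf{W}_3 B' \), invokes Lemma~\ref{lemmapwhomeo} to obtain homeomorphic \( \mathbf{\Delta}^0_3 \) partitions, asserts with only the phrase ``inspecting the proofs of the mentioned results'' that the pieces may be taken in \( \mathbf{\Pi}^0_2 \), applies Ostrovsky on the Baire-space side, and pulls the decomposition back through the piecewise homeomorphisms. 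You skip the round trip through \( \mathcal{N} \): you re-run the dimension induction directly on \( X \) to produce a partition into \( \mathbf{\Pi}^0_2(X) \) pieces that are zero-dimensional Polish in the relative topology, intersect with \( B \), and apply Ostrovsky to each trace in place. This has the merit of making explicit precisely the refinement the paper leaves to the reader --- your observation that the \( \mathbf{\Delta}^0_3 \) pieces supplied by Lemma~\ref{lemmapwhomeo} are too coarse, so that one must track the complexity through the induction of Theorem~\ref{theordim}(1) itself, is exactly the content of the paper's ``inspection'' --- and it dispenses with the transfer step entirely. The relativization and transitivity facts about \( \mathbf{\Pi}^0_2 \) that you verify at the end are routine, but you are right to check them, since they are the points where the general (non-metrizable) Definition~\ref{defbh} of the Borel hierarchy is actually being used.
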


\begin{proof}
By Theorem~\ref{theordim}(1), \( B \simeq^\mathsf{W}_{3} B' \) for some Borel \( B' \subseteq \mathcal{N} \), hence by Lemma~\ref{lemmapwhomeo} there are two countable partitions \( \langle B_n \mid n \in \omega \rangle \) and \( \langle B'_n \mid n \in \omega \rangle \) of, respectively, \( B \) and \( B' \) into \( \mathbf{\Delta}^0_3 \) pieces such that \( B_n \) is homeomorphic to \( B'_n \) for every \( n \in \omega \). Inspecting the proofs of the mentioned results, it is not hard to see that all of the \( B_n, B'_n \) can in fact be assumed to be in \( \mathbf{\Pi}^0_2 \). By \cite[Theorem 1]{ost}, each \( B'_n  \) can be partitioned into a countable union of \( h \)-homogeneous \( \mathbf{\Pi}^0_2(B'_n) \) (hence also \( \mathbf{\Pi}^0_2(B') \)) sets. Therefore, a similar decomposition can be obtained also for the \( B_n \)'s, and the union of these partitions gives the desired partition of \( B \).
\end{proof}

Theorem~\ref{theordim}(2)-(4) shows that the bound for the complexity of an isomorphism between \emph{arbitrary} (quasi-)Polish spaces obtained in Proposition~\ref{propgeneralhomeo} cannot be improved, but leaves open the problem of computing the minimal complexity of an isomorphism between two \emph{Polish} spaces with dimension \( \infty \). In this direction, we can make some basic observation.

\begin{proposition}\label{propdiminfinity}
Let \( X,Y \) be (quasi-)Polish spaces of dimension \( \infty \).
\begin{enumerate}[(1)]
\item
If \( X,Y \) are both universal for the class of all (quasi-)Polish spaces  then \( X \simeq^\mathsf{W}_3 Y \).
\item
The Hilbert cube \( [0,1]^\omega \) and the space \( \RR^\omega \) are both universal for Polish spaces, but \( [0,1]^\omega \not\simeq^\mathsf{W}_2 \RR^\omega \) (i.e.\ \( [0,1]^\omega \not\simeq_2 \RR^\omega \)):  hence the bound of part (1) cannot be improved.
\item
If \(X \) and \(Y \) are compact Hausdorff spaces and they are both universal for compact Polish spaces then \( X \simeq^\mathsf{W}_2 Y \).
\end{enumerate}
\end{proposition}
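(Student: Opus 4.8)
The plan is to deduce all three parts from the Schr\"oder--Bernstein-style results already established, namely Corollary~\ref{corfrechet} and Corollary~\ref{corcompact}, with Remark~\ref{remDW2} providing the sharpness in part~(2). For part~(1): if $X$ is universal for the class in question (all Polish spaces, respectively all quasi-Polish spaces), then, since $Y$ lies in that class, $Y$ embeds topologically into $X$, and by symmetry $X$ embeds into $Y$; thus $X$ and $Y$ are quasi-Polish spaces of the same Fr\'echet dimension type, and Corollary~\ref{corfrechet} immediately gives $X \simeq^\mathsf{W}_3 Y$. For part~(3): a compact Hausdorff quasi-Polish space is second countable, hence metrizable (Urysohn), hence Polish by Proposition~\ref{propmetrizable}, so $X$ and $Y$ are compact metrizable spaces; being universal for compact Polish spaces, each is a compact Polish space embedding into the other, so they share the same Fr\'echet dimension type and the second part of Corollary~\ref{corcompact} yields $X \simeq^\mathsf{W}_2 Y$.

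For part~(2), the universality of $[0,1]^\omega$ and $\RR^\omega$ for Polish spaces is classical (\cite[Theorem 4.14]{ke94}; cf.\ Subsection~\ref{subsectionpolish}). To see that $3$ cannot be replaced by $2$, I would argue that $[0,1]^\omega$ is compact, hence $\sigma$-compact, whereas $\RR^\omega$ is not $\sigma$-compact: since $\omega$ sits in $\RR$ as a closed discrete subspace, $\omega^\omega$ is a closed subspace of $\RR^\omega$ homeomorphic to $\mathcal{N}$, and $\mathcal{N}$ is not $\sigma$-compact while a closed subspace of a $\sigma$-compact space is always $\sigma$-compact. By Remark~\ref{remDW2} there is then no surjection in $\mathsf{D}^\mathsf{W}_2([0,1]^\omega,\RR^\omega)$, a fortiori no bijection, so $[0,1]^\omega \not\simeq^\mathsf{W}_2 \RR^\omega$. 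Since both spaces are Polish, the level-$2$ Jayne--Rogers theorem \cite[Theorem 5]{JR82} gives $\mathsf{D}_2 = \mathsf{D}^\mathsf{W}_2$ in both directions, so this also yields $[0,1]^\omega \not\simeq_2 \RR^\omega$; as $2 < 3$, the bound in part~(1) is thereby shown to be optimal.

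The only step requiring a little care is part~(2): one has to exhibit a concrete non-$\sigma$-compact closed subspace of $\RR^\omega$ (the copy of $\mathcal{N}$ above serves) and to remember to invoke the level-$2$ Jayne--Rogers equality in order to pass from the failure of $\mathsf{D}^\mathsf{W}_2$-isomorphism to that of $\mathsf{D}_2$-isomorphism. Parts~(1) and~(3) are direct applications of the corollaries from earlier in this section.
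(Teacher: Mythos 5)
Your proof is correct and follows essentially the same route as the paper: parts (1) and (3) are exactly the stated applications of Corollaries~\ref{corfrechet} and~\ref{corcompact}, and part (2) rests on Remark~\ref{remDW2} together with the compactness of \( [0,1]^\omega \) and the non-\(\sigma\)-compactness of \( \RR^\omega \). The only (immaterial) difference is that the paper proves the latter by diagonalizing over the coordinate projections, whereas you exhibit a closed copy of \( \mathcal{N} \) inside \( \RR^\omega \); both work, and your explicit appeal to the level-\(2\) Jayne--Rogers theorem to pass from \( \not\simeq^\mathsf{W}_2 \) to \( \not\simeq_2 \) correctly supplies a step the paper leaves implicit.
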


\begin{proof}
Parts (1) and (3) follow from Corollaries~\ref{corfrechet} and~\ref{corcompact}, respectively. Part (2) follows from Remark~\ref{remDW2}, the fact that the Hilbert cube \( [0,1]^\omega \) is
compact (by Tychonoff's theorem \cite[Proposition 4.1 (vi)]{ke94}),
and the fact that the space \( \RR^\omega \) is not
\(\sigma\)-compact. To see this, consider \( \bigcup_{n \in \omega}
K_n \subseteq \RR^\omega \) with all \( K_n \) compact: we will show
that \( \RR^\omega \setminus \bigcup_{n \in \omega} K_n \neq
\emptyset \). For every \( n \in \omega \), let \( \pi_n \colon
\RR^\omega \to \RR \) be the projection on the \( n \)-th
coordinate. Since \( \pi_n \) is continuous, \( \pi_n(K_n )
\subseteq \RR \) is compact, hence there is \( x_n \in \RR \setminus
\pi_n(K_n) \). Notice that every \( y = \langle y_k \mid k \in
\omega \rangle \in \RR^\omega \) with \( y_n = x_n \) does not
belong to \( K_n \): hence \( x = \langle x_n \mid n \in \omega
\rangle \in \RR^\omega \setminus \bigcup_{n \in \omega} K_n \), as
required.
\end{proof}

\begin{question}
Are there Polish spaces \( X,Y \) of dimension \( \infty \) such that \( X \not\simeq^\mathsf{W}_3 Y \)? Are there Polish spaces \( X,Y \) of dimension \( \infty \) such that \( X \not\simeq_n Y \) for every \( n \in \omega \)?
\end{question}

\section{Degree structures in quasi-Polish spaces} \label{sectionhierarchies}

In this section we will analyze the \( \F \)-hierarchies on \( X \) for various reducibilities \( \F \) and various quasi-Polish spaces \( X \). We start with some general ways to transfer results about the \( \F \)-hierarchy from one quasi-Polish space to another (for a fixed collection \( \F \)).

\begin{definition}
Let \( \F \) be a collection of functions between topological spaces. For \( X,Y \in \mathscr{X} \), denote by \( \F(X,Y) \) the collection of functions from \( \F \) with domain  \( X \) and range included in \( Y \) (obviously, \( Y \subseteq Y' \) implies \( \F(X,Y) \subseteq \F(X,Y') \)). The collection \( \F \) is said \emph{family of reducibilities} if the following conditions hold:
\begin{enumerate}[(1)]
\item
\( \F \) contains all the identity functions, i.e.\ \( \id_X \in \F(X,X) \) for every \(X \in \mathscr{X} \) (and hence \( \id_X \in \F(X,Y) \) for every \( X \subseteq Y \in \mathscr{X} \));
\item
\( \F \) is closed under composition, i.e.\ if \( X,Y,Z \in \mathscr{X} \), \( f \in \F(X,Y) \) and \( g \in \F(Y,Z) \) then \( g \circ f \in \F(X,Z) \);
\item
\( F \) is closed under restrictions, i.e.\ for every \( X,Y \in \mathscr{X} \), \( f \in \F(X,Y) \) and \( X' \subseteq X \), we have \( f \restriction X' \in \F(X',Y) \).
\end{enumerate}
\end{definition}

\noindent
For \( X \in \mathscr{X} \), we will abbreviate \( \F(X,X) \) with \( \F(X) \).
Notice that  if \( \F \) is a family of reducibilities then \( \F(X) \) is a reducibility on \( X \) for every \( X \in \mathscr{X} \) by conditions (1) and (2).
Notice that all the classes of functions considered in Section~\ref{sectionreduc}, namely
\begin{enumerate}[-]
\item
\( \bigcup_{X,Y \in \mathscr{X}} \mathsf{D}_\alpha(X,Y) \) for \( \alpha < \omega_1 \),
\item
\( \bigcup_{X,Y \in \mathscr{X}} \mathsf{D}^\mathsf{W}_\alpha(X,Y) \) for \( \alpha < \omega_1 \), and
\item
\( \bigcup_{X,Y \in \mathscr{X}} \mathcal{B}_\gamma(X,Y)  = \bigcup_{X,Y \in \mathscr{X}} \bigcup_{\beta < \gamma} \mathbf{\Sigma}^0_{\beta,1}(X,Y) \) for \( \gamma < \omega_1 \) additively closed,
\end{enumerate}
are in fact families of reducibilities. With a little abuse of notation, in what follows we will denote the above families of reducibilities by, respectively, \( \mathsf{D}_\alpha \), \( \mathsf{D}^\mathsf{W}_\alpha \) and \( \mathcal{B}_\gamma \) whenever this is not a source of confusion.

\begin{proposition}\label{propiso}
Let \( \F \) be a family of reducibilities and \( X,Y \in \mathscr{X} \).
If \( X\simeq_\F Y \) then \( (\mathscr{P}(X), \leq^X_\F) \) is isomorphic to \( (\mathscr{P}(Y), \leq^Y_\F) \) and, moreover, \( (S^X,\leq^{S,X}_\F) \) is isomorphic to \( (S^Y,\leq^{S,Y}_\F) \) for every set \( S \).
 \end{proposition}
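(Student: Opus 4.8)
The plan is to transport subsets (respectively, namings) along the witnessing bijection by \emph{conjugation}. Fix a bijection \( h \colon X \to Y \) with \( h \in \F(X,Y) \) and \( h^{-1} \in \F(Y,X) \), which exists since \( X \simeq_\F Y \). The crucial observation is that, because \( \F \) is closed under composition, for every \( f \in \F(X) = \F(X,X) \) the conjugate \( h \circ f \circ h^{-1} \) belongs to \( \F(Y) = \F(Y,Y) \) (compose \( h^{-1} \in \F(Y,X) \), then \( f \in \F(X,X) \), then \( h \in \F(X,Y) \)); and symmetrically \( g \mapsto h^{-1} \circ g \circ h \) sends \( \F(Y) \) into \( \F(X) \). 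These two assignments are mutually inverse, so conjugation by \( h \) is a bijection \( \F(X) \to \F(Y) \).

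For the statement about namings, fix a set \( S \) and define \( \Phi \colon S^X \to S^Y \) by \( \Phi(\nu) = \nu \circ h^{-1} \). This is a bijection, with inverse \( \mu \mapsto \mu \circ h \) (since \( h^{-1} \circ h = \id_X \) and \( h \circ h^{-1} = \id_Y \)). If \( \mu \leq^{S,X}_\F \nu \), say \( \mu = \nu \circ f \) with \( f \in \F(X) \), then
\[
\Phi(\mu) = \nu \circ f \circ h^{-1} = (\nu \circ h^{-1}) \circ (h \circ f \circ h^{-1}) = \Phi(\nu) \circ g, \qquad g = h \circ f \circ h^{-1} \in \F(Y),
\]
so \( \Phi(\mu) \leq^{S,Y}_\F \Phi(\nu) \); hence \( \Phi \) is monotone. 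Running the very same computation with the roles of \( h \) and \( h^{-1} \) (equivalently, of \( X \) and \( Y \)) interchanged shows that \( \Phi^{-1} \) is monotone as well. Thus \( \Phi \) is an isomorphism of preorders (and so descends to an isomorphism of the associated \( \equiv_\F \)-quotient posets).

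For the first assertion one can either repeat the argument directly with \( \Psi \colon \mathscr{P}(X) \to \mathscr{P}(Y) \), \( \Psi(A) = h(A) \), using the identity \( h(f^{-1}(B)) = (h \circ f \circ h^{-1})^{-1}(h(B)) \) in place of the displayed computation, or simply note that \( A \mapsto c_A \) is an isomorphism between \( (\mathscr{P}(X), \leq^X_\F) \) and \( (S^X, \leq^{S,X}_\F) \) for \( S = \{0,1\} \) — because \( c_B \circ f = c_{f^{-1}(B)} \), we have \( c_A = c_B \circ f \iff A = f^{-1}(B) \) — so the first claim is exactly the instance \( S = \{0,1\} \) of the second.

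There is essentially no obstacle here: the only properties of \( \F \) used are that it contains the identities and is closed under composition (closure under restrictions, clause (3) of the definition of a family of reducibilities, is not needed for this proposition). The one point that requires a little care is keeping track of the direction of conjugation, so that the function witnessing \( \Phi(\mu) \leq_\F \Phi(\nu) \) genuinely lies in \( \F(Y) \) — which is precisely what closure under composition delivers.
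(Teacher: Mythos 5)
Your proof is correct and follows essentially the same route as the paper's: both transport namings along the witnessing bijection via $\nu \mapsto \nu \circ h^{-1}$ and verify monotonicity in both directions by conjugating the reducing function, using only that $\F$ contains identities and is closed under composition. The reduction of the subset case to the naming case via characteristic functions matches the identification already noted in the paper, so nothing further is needed.
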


\begin{proof}
It is enough to consider the more general case of \( X \)- and \(Y \)-namings, so let us fix a set \( S \). Let \( f \colon X\to Y \) be a bijection such that both \( f \in \F(X,Y) \) and \( f^{-1} \in \F(Y,X) \). Then \( \nu \mapsto \nu  \circ f^{-1} \) is an isomorphism between \( (S^X, \leq^{S,X}_\F) \) and \( (S^X,\leq^{S,Y}_\F) \). In fact, if \( \mu \leq^{S,X}_\F \nu \) via \( g \in \F(X) \) then \( \mu \circ  f^{-1} \leq^{S,Y}_\F \nu \circ f^{-1} \) via \( f \circ g \circ f^{-1} \), which is in \( \F(Y) \) since \( \F \) is closed under composition. Similarly, if \( \mu \circ f^{-1} \leq^{S,Y}_\F \nu \circ f^{-1} \) via some \( h \in \F(Y) \), then \( \mu \leq^{S,X}_\F \nu \) via \( f^{-1} \circ h \circ f \in \F(X) \).
\end{proof}

Notice that the converse to Proposition~\ref{propiso} does not hold in general. For example, it is not hard to check that (under \( \mathsf{AD} \)) the \( \mathsf{D}_2 \)-hierarchies on, respectively, \( \mathcal{N} \) and \( \mathcal{C} \) are isomorphic (in fact, they are isomorphic to the Wadge hierarchy on \( \mathcal{N} \)), while \( \mathcal{N} \not\simeq_{\mathsf{D}_2} \mathcal{C} \) by Proposition~\ref{propexamples}(2).

\begin{definition}
Let  \( \F \) be a family of reducibilities and \( X \in
\mathscr{X} \). We say that \( Y \subseteq X \) is an \emph{\( \F
\)-retract} of \( X \) if there is \( r \in \F(X,Y) \) such that \( r \restriction Y = \id_Y \) (such an
\( r \) will be called \emph{\( \F \)-retraction} of \( X \) onto
\( Y \)).
\end{definition}

The next fact extends \cite[Proposition 2.3]{s05}.

\begin{proposition}\label{propfr2}
Let \( \F \) be a family of reducibilities and \( X \in \mathscr{X} \). If \( Y \) is an \( \F \)-retract of \( X \), then there is an injection from \( (\mathscr{P}(Y), \leq_\F^Y) \) into \( (\mathscr{P}(X), \leq_\F^X) \). Similarly, for every set \( S \) there is an injection from \( (S^Y, \leq_\F^{S,Y}) \) into \( (S^X, \leq_\F^{S,X}) \).
 \end{proposition}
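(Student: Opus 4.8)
The plan is to prove directly the more general statement about $S$-namings, since the statement about subsets is recovered by taking $S=\{0,1\}$ and identifying a set with its characteristic function (under this identification the map below sends $A\subseteq Y$ to $r^{-1}(A)\subseteq X$). So I fix a set $S$ and an $\F$-retraction $r\in\F(X,Y)$ of $X$ onto $Y$, so that $r\restriction Y=\id_Y$; note that since $Y\subseteq X$ we trivially have $\F(X,Y)\subseteq\F(X)$. I would then define $\Phi\colon S^Y\to S^X$ by $\Phi(\mu)=\mu\circ r$ and show that $\Phi$ is an embedding of preorders, i.e.\ $\mu\leq^{S,Y}_\F\nu \iff \Phi(\mu)\leq^{S,X}_\F\Phi(\nu)$; this is in particular injective, because $\Phi(\mu)\restriction Y=\mu$ (as $r$ fixes $Y$ pointwise), so $\Phi(\mu)=\Phi(\nu)$ forces $\mu=\nu$. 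An embedding of preorders is exactly the kind of injection required.

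For the forward implication, suppose $\mu=\nu\circ g$ for some $g\in\F(Y)$. I would set $h=g\circ r$, which lies in $\F(X,Y)\subseteq\F(X)$ by closure under composition, and verify the identity $\Phi(\nu)\circ h=\Phi(\mu)$. The point is the computation $r\circ g\circ r=g\circ r$, which holds because $g\circ r$ takes values in $Y$ and $r\restriction Y=\id_Y$; then $\Phi(\nu)\circ h=\nu\circ(r\circ g\circ r)=\nu\circ g\circ r=\mu\circ r=\Phi(\mu)$, witnessing $\Phi(\mu)\leq^{S,X}_\F\Phi(\nu)$.

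For the reverse implication, suppose $\Phi(\mu)=\Phi(\nu)\circ h$ for some $h\in\F(X)$, i.e.\ $\mu\circ r=\nu\circ r\circ h$. Restricting this equality to $Y$ and using $r\restriction Y=\id_Y$ gives $\mu=\nu\circ\bigl(r\circ(h\restriction Y)\bigr)$. By closure under restrictions $h\restriction Y\in\F(Y,X)$, and by closure under composition $r\circ(h\restriction Y)\in\F(Y)$; hence this function witnesses $\mu\leq^{S,Y}_\F\nu$, completing the proof. Finally I would remark that the case of subsets is the instance $S=\{0,1\}$.

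The argument is essentially bookkeeping, so the only point requiring care — and hence the main (minor) obstacle — is to certify that each witnessing composite lands in the correct hom-set $\F(\cdot,\cdot)$ using precisely the three defining closure properties of a family of reducibilities together with the inclusion $\F(X,Y)\subseteq\F(X)$ for $Y\subseteq X$, and to invoke $r\restriction Y=\id_Y$ at exactly the two places where it is genuinely needed (for injectivity and for the identity $r\circ g\circ r=g\circ r$).
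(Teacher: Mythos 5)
Your proof is correct and follows essentially the same route as the paper: both use the map $\nu\mapsto\nu\circ r$, witness the forward direction by $g\circ r$ (via the identity $r\circ g\circ r=g\circ r$), and witness the reverse direction by $r\circ(h\restriction Y)$. Your explicit verification of injectivity via $\Phi(\mu)\restriction Y=\mu$ is a small point the paper leaves implicit, but the argument is the same.
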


\begin{proof}
It is again enough to consider the more general case of \( X \)- and \( Y \)-namings. Let \( r \colon X \to Y \) be an  \( \F \)-retraction of \( X \) onto \( Y \). Then the map \( \nu \mapsto \nu \circ r \) is an injection of \( (S^Y, \leq_\F^{S,Y}) \) into \( (S^X, \leq_\F^{S,X}) \). In fact, if \( \mu \leq^{S,Y}_\F \nu \) via some \( g \in \F(Y) \), then it is easy to check that \( \mu \circ r \leq^{S,X}_\F \nu \) via \( g \circ r \), which is in \( \F(X) \) because \( \F \) is closed under composition. Conversely, if \( \mu \circ r \leq^{S,X}_\F \nu \circ r \) via some \( h \in \F(X) \) then \( \mu \leq^{S,Y}_\F \nu \) via \( r \circ (h \restriction Y) \), which is in \( \F(Y,Y) \) because \( \F \) is closed under composition and restrictions.
\end{proof}

\begin{corollary}\label{corfr2}
Let \( \emptyset \neq Y \subseteq X \) be quasi-Polish spaces, \( S \) be a set, and \( \F \) be a family of reducibilities. If \( \mathsf{D}^\mathsf{W}_3(X,Y) \subseteq \F \) then there is an injection from \( (\mathscr{P}(Y), \leq_\F^Y) \) (respectively, \( (S^Y, \leq_\F^{S,Y}) \)) into \( (\mathscr{P}(X), \leq_\F^X) \) (respectively, \( (S^Y, \leq_\F^{S,Y}) \)).

The same conclusion holds also if \( Y \in \mathbf{\Delta}^0_2(X) \) and \( \mathsf{D}^\mathsf{W}_2(X,Y) \subseteq \F \).
\end{corollary}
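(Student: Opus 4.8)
The plan is to derive everything from Proposition~\ref{propfr2} by exhibiting $Y$ as an $\F$-retract of $X$. Since $Y$ is a quasi-Polish subspace of the quasi-Polish space $X$, Proposition~\ref{propsubspace} gives $Y \in \mathbf{\Pi}^0_2(X)$, and hence $X \setminus Y \in \mathbf{\Sigma}^0_2(X)$. Fix a point $y_0 \in Y$ (this is where the hypothesis $Y \neq \emptyset$ enters) and define $r \colon X \to Y$ by $r \restriction Y = \id_Y$ and $r(x) = y_0$ for all $x \in X \setminus Y$. Then $r$ is clearly a retraction in the set-theoretic sense, so the only thing to check is that $r \in \F(X,Y)$.

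To this end I would observe that $r \in \mathsf{D}^\mathsf{W}_3(X,Y)$. Indeed, by Proposition~\ref{propbh} we have $\mathbf{\Pi}^0_2(X) \subseteq \mathbf{\Delta}^0_3(X)$ and $\mathbf{\Sigma}^0_2(X) \subseteq \mathbf{\Delta}^0_3(X)$, so $\langle Y, X \setminus Y, \emptyset, \emptyset, \dotsc \rangle$ is a $\mathbf{\Delta}^0_3$-partition of $X$; moreover $\id_Y \colon Y \to Y$ and the constant map $X \setminus Y \to Y$ with value $y_0$ are continuous, hence belong to $\mathsf{W}(\subseteq X, Y)$, and $r$ is their union. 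Thus $r \in \mathsf{D}^\mathsf{W}_3(X,Y) \subseteq \F$, and since $\F$ is a family of reducibilities this means $r \in \F(X,Y)$; so $r$ is an $\F$-retraction of $X$ onto $Y$. Proposition~\ref{propfr2} now yields the desired injection of $(\mathscr{P}(Y), \leq_\F^Y)$ into $(\mathscr{P}(X), \leq_\F^X)$ and, for every set $S$, of $(S^Y, \leq_\F^{S,Y})$ into $(S^X, \leq_\F^{S,X})$.

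For the last assertion, the very same $r$ works under the hypotheses $Y \in \mathbf{\Delta}^0_2(X)$ and $\mathsf{D}^\mathsf{W}_2(X,Y) \subseteq \F$: in that case $X \setminus Y \in \mathbf{\Delta}^0_2(X)$ as well, so $\langle Y, X \setminus Y, \emptyset, \emptyset, \dotsc \rangle$ is a $\mathbf{\Delta}^0_2$-partition of $X$, and the same two continuous functions witness $r \in \mathsf{D}^\mathsf{W}_2(X,Y) \subseteq \F$; so again $Y$ is an $\F$-retract of $X$ and Proposition~\ref{propfr2} applies.

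I do not expect any genuine obstacle here; the argument is entirely routine. The only points needing a moment of care are bookkeeping ones: correctly identifying $Y$ as a $\mathbf{\Pi}^0_2$ (respectively $\mathbf{\Delta}^0_2$) subset of $X$ via Proposition~\ref{propsubspace}, matching the definition of $\mathsf{D}^\mathsf{W}_\alpha(X,Y)$ (checking that $\id_Y$ and the constant map lie in $\mathsf{W}(\subseteq X, Y)$ and that the two-piece partition has the right complexity), and noting that a family of reducibilities $\F$ containing $\mathsf{D}^\mathsf{W}_\alpha(X,Y)$ does contain $r$ as a member of $\F(X,Y)$, which is exactly what the definition of $\F$-retract requires.
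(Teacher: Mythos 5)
Your proposal is correct and is essentially identical to the paper's proof: the same retraction $r$ fixing $Y$ and collapsing $X \setminus Y$ to a point $y_0$, justified via Proposition~\ref{propsubspace} to get $Y \in \mathbf{\Pi}^0_2(X)$ (hence a $\mathbf{\Delta}^0_3$-partition, or $\mathbf{\Delta}^0_2$ in the second case), followed by an appeal to Proposition~\ref{propfr2}. You merely spell out the bookkeeping that the paper leaves implicit.
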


\begin{proof}
The map \( f \colon X \to Y \) such that \( f \restriction Y = \id_Y \) and \( f(x) = y_0 \) for \( x \in X \setminus Y \) (where \( y_0 \) is any fixed element of \( Y \)) is a \( \mathsf{D}^\mathsf{W}_3 \)-retraction of \( X \) onto \( Y \) because \( Y \in \mathbf{\Pi}^0_2(X) \) by Proposition~\ref{propsubspace}. If moreover \( Y \in \mathbf{\Delta}^0_2(X) \), then \( f \) is also a \( \mathsf{D}^\mathsf{W}_2(X,Y) \)-retraction of \( X \) onto \( Y \). Hence the result follows from Proposition~\ref{propfr2}.
\end{proof}

\begin{corollary}\label{coruniversalspaces}
Let \( \F \) be a family of reducibilities such that \( \bigcup_{X,Y \in \mathscr{X}} \mathsf{D}^\mathsf{W}_3(X,Y) \subseteq \F \). The \( \F \)-hierarchy on \( P \omega \) is (very) good if and only if for all quasi-Polish spaces \( X \), the \( \F \)-hierarchy on \( X \) is (very) good. Similarly, the \( \F \)-hierarchy on \( P \omega \) is (very) bad if and only if there exists a quasi-Polish space \( X \) such that the \( \F \)-hierarchy on \( X \) is (very) bad.

The same result holds when replacing \( P \omega \) with the Hilbert cube or with the space \( \RR^\omega \),  and letting \( X \) vary only on Polish spaces.
\end{corollary}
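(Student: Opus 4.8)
The plan is to reduce all four biconditionals to one transfer statement: for every nonempty quasi-Polish space \( X \), the \( \F \)-hierarchy on \( X \) order-embeds into the \( \F \)-hierarchy on \( P\omega \) via a map \( \iota \) that both preserves and reflects \( \leq_\F \) and satisfies \( \iota(X \setminus A) = P\omega \setminus \iota(A) \) for all \( A \subseteq X \). Since \( P\omega \) is itself quasi-Polish, the ``easy'' direction of each biconditional is immediate (``for all \( X \)'' gives ``for \( X = P\omega \)'', and ``for \( X = P\omega \)'' gives ``there exists \( X \)''); the substance is that a complement-respecting order-embedding transfers the relevant structural properties in the required direction.

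To build \( \iota \), I would first invoke Proposition~\ref{proppi2} to fix a homeomorphism \( \varphi \) of \( X \) onto a \( \mathbf{\Pi}^0_2 \)-subspace \( X' \) of \( P\omega \). Both \( \varphi \) and \( \varphi^{-1} \) are continuous, hence lie in \( \F \) (as \( \mathsf{D}^\mathsf{W}_3 \subseteq \F \) contains all continuous functions), so \( X \simeq_\F X' \) and Proposition~\ref{propiso} yields an isomorphism \( A \mapsto \varphi(A) \) between the \( \F \)-hierarchies on \( X \) and on \( X' \), obviously commuting with relative complementation. Next, since \( X' \in \mathbf{\Pi}^0_2(P\omega) \) and \( \mathsf{D}^\mathsf{W}_3(P\omega, X') \subseteq \F \), Corollary~\ref{corfr2} applies: its proof produces an \( \F \)-retraction \( r \colon P\omega \to X' \) and the order-embedding \( B \mapsto r^{-1}(B) \) of the \( \F \)-hierarchy on \( X' \) into that on \( P\omega \); because \( r \) takes values in \( X' \), one has \( r^{-1}(X' \setminus B) = P\omega \setminus r^{-1}(B) \), so this map is complement-respecting as well. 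Composing the two maps gives \( \iota \); the facts that \( \iota \) reflects \( \leq_\F \) (not merely preserves it) and commutes with complementation are exactly what the two component maps provide, and passing to the \( \equiv_\F \)-quotients to obtain an order-embedding of the degree structures is then routine.

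With \( \iota \) in hand I would check the remaining implications. If the \( \F \)-hierarchy on \( P\omega \) is good (a wqo), then for any nonempty quasi-Polish \( X \) its \( \F \)-hierarchy is (order-isomorphic to) a sub-poset of a wqo, hence a wqo; the empty \( X \) is trivial. If the \( \F \)-hierarchy on \( P\omega \) is very good (semi-well-ordered), I would transfer well-foundedness (a descending chain in \( X \) would map to one in \( P\omega \)) together with the semilinearity clause: from ``\( \iota(A) \leq_\F \iota(B) \) or \( P\omega \setminus \iota(B) \leq_\F \iota(A) \)'' in \( P\omega \), rewrite \( P\omega \setminus \iota(B) = \iota(X \setminus B) \) and use that \( \iota \) reflects \( \leq_\F \) to obtain ``\( A \leq_\F B \) or \( X \setminus B \leq_\F A \)'' in \( X \). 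Conversely, if some quasi-Polish \( X \) has a bad (resp.\ very bad) \( \F \)-hierarchy, then the \( \iota \)-image of an infinite antichain (resp.\ of an infinite antichain and of an infinite descending chain) is again an infinite antichain (resp.\ the same) in the \( \F \)-hierarchy on \( P\omega \), because \( \iota \) is an order-embedding; hence that hierarchy is bad (resp.\ very bad).

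Finally, for the variant with \( P\omega \) replaced by the Hilbert cube \( [0,1]^\omega \) or by \( \RR^\omega \) and with \( X \) ranging over Polish spaces, the argument is identical once one observes that these two spaces are Polish (hence quasi-Polish) and universal for Polish spaces, and that a subspace of a Polish space is Polish precisely when it is \( \mathbf{\Pi}^0_2 \) (Propositions~\ref{propsubspace} and~\ref{propmetrizable}); thus every nonempty Polish \( X \) embeds as a \( \mathbf{\Pi}^0_2 \)-subspace of each of them and the same chain through Proposition~\ref{propiso} and Corollary~\ref{corfr2} goes through verbatim. I do not anticipate a real obstacle here: all the serious work is already encapsulated in Propositions~\ref{proppi2} and~\ref{propiso} and Corollary~\ref{corfr2}, and the only point demanding care is verifying that the composite \( \iota \) genuinely reflects \( \leq_\F \) and genuinely respects complements, since these two properties are exactly what is used when transferring semi-well-orderedness.
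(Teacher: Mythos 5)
Your proposal is correct and follows essentially the same route as the paper, whose proof is the one-line observation that the statement follows from Corollary~\ref{corfr2} together with the universality of \( P\omega \) (resp.\ \( [0,1]^\omega \), \( \RR^\omega \)) for quasi-Polish (resp.\ Polish) spaces. Your additional verifications --- that the composite injection \( A \mapsto r^{-1}(\varphi(A)) \) is a preimage under a surjection and hence commutes with complements, and that it reflects as well as preserves \( \leq_\F \) (both already supplied by the proofs of Propositions~\ref{propiso} and~\ref{propfr2}) --- are exactly the details the paper leaves implicit when transferring semi-well-orderedness.
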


\begin{proof}
By Corollary~\ref{corfr2} and the fact that \( P \omega \) (respectively, \( [0,1]^\omega \) or \( \RR^\omega \)) is universal for the class of quasi-Polish (respectively, Polish) spaces.
\end{proof}

\begin{lemma} (Folklore)\label{lemmawqo}
Let \( \leq, \preceq \) be preorders on an arbitrary set \( A \) such that \( \preceq \) extends \( \leq \) (i.e.\ \( {\leq} \subseteq {\preceq} \)).
\begin{enumerate}[(1)]
\item
every antichain with respect to \( \preceq \) is an antichain with respect to \( \leq \);
\item
if \( \leq \) is a wqo then so is \( \preceq \).
\end{enumerate}
\end{lemma}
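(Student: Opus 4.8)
The plan is to observe that antichains, descending chains, and ``good pairs'' all behave monotonically when the preorder is enlarged, so that each half of the statement reduces to an unwinding of definitions.

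For \textbf{(1)}, recall that a set \( B \subseteq A \) is an antichain with respect to a preorder if any two distinct elements of \( B \) are incomparable, i.e.\ related in neither direction. If \( B \) is an antichain with respect to \( \preceq \) and \( x,y \in B \) are distinct, then \( x \leq y \) would give \( x \preceq y \) by the hypothesis \( {\leq} \subseteq {\preceq} \), contradicting \( \preceq \)-incomparability of \( x \) and \( y \); symmetrically \( y \leq x \) is impossible. Hence \( B \) is an antichain with respect to \( \leq \), which is exactly the assertion.

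For \textbf{(2)}, I would use the characterization recalled in Subsection~\ref{subsectionreducibilities}, namely that a preorder is a wqo if and only if it has neither an infinite descending chain nor an infinite antichain. So assume \( \leq \) is a wqo. By part (1), an infinite \( \preceq \)-antichain would be an infinite \( \leq \)-antichain, which cannot exist; thus \( \preceq \) has no infinite antichain. Suppose now, toward a contradiction, that \( \langle x_n \mid n \in \omega \rangle \) is an infinite \( \preceq \)-descending chain, so \( x_{n+1} \prec x_n \) for all \( n \), where \( \prec \) denotes the strict part of \( \preceq \). A routine argument using transitivity of \( \preceq \) and the definition of \( \prec \) shows that \( x_i \not\preceq x_j \) (and hence \( x_i \not\leq x_j \)) whenever \( i < j \), and the \( x_n \) are pairwise distinct. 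On the other hand, applying to the very same sequence the standard equivalent formulation of ``\( \leq \) is a wqo'' — every infinite sequence in \( A \) admits indices \( i < j \) with \( x_i \leq x_j \) — produces exactly such a pair, contradicting the previous sentence. Hence \( \preceq \) has no infinite descending chain either, and is therefore a wqo.

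Since both halves are a direct consequence of the relevant definitions, there is no real obstacle here; the only points deserving a line of care are the passage from \( x_{n+1} \prec x_n \) to \( x_i \not\preceq x_j \) for all \( i < j \) (so that \( \langle x_n \rangle \) is genuinely ``bad'' for \( \leq \)), and the harmless appeal to the well-known equivalence between the chain-and-antichain definition of wqo and the good-pair definition. Alternatively, one can sidestep this altogether by adopting the good-pair definition of wqo from the outset, in which case (2) collapses to the single remark that a good pair for \( \leq \) is automatically a good pair for \( \preceq \).
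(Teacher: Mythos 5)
Your proof is correct, and part (1) is exactly the paper's (which it dismisses as obvious). For part (2) you and the paper share the decisive observation: an infinite strict \( \preceq \)-descending chain satisfies \( x_i \not\preceq x_j \), hence \( x_i \not\leq x_j \), for all \( i < j \). The difference lies in how the contradiction with ``\( \leq \) is a wqo'' is then extracted. The paper works directly with its official chain-and-antichain definition of wqo: it colors pairs \( \{i,j\} \) (for \( i<j \)) according to whether \( x_j \leq x_i \), applies Ramsey's theorem, and obtains from the homogeneous set either an infinite \( \leq \)-descending chain or an infinite \( \leq \)-antichain, each impossible. You instead appeal to the good-pair characterization of wqo (every infinite sequence admits \( i<j \) with \( x_i \leq x_j \)), which finishes in one line. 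Since the paper defines wqo by the absence of infinite descending chains and antichains, your route leans on the standard equivalence between that definition and the good-pair one, and the usual proof of that equivalence is precisely the Ramsey argument the paper carries out inline. So the mathematical content is the same; your version is shorter but imports the equivalence as a known fact, while the paper's is self-contained. The same remark applies to your suggested shortcut of adopting the good-pair definition from the outset. One small point of care you handled correctly: for preorders the relevant strict part is \( x \prec y \iff x \preceq y \wedge y \not\preceq x \) (not the \( \leq \wedge \neq \) version the paper states for posets), and your derivation of \( x_i \not\preceq x_j \) from transitivity uses exactly this.
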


\begin{proof}
Part (1) is obvious, so let us consider just part (2). By (1), \( \preceq \) cannot contain infinite antichains. Assume towards a contradiction that \( a_0 \succeq a_1 \succeq \dotsc \) is an infinite (countable) \( \preceq \)-decreasing sequence of elements from \( A \). Clearly \( a_i \nleq a_j \) for every \( i < j \). Define a coloring \( c \colon [\omega]^2 \to \{ 0, 1 \} \) by setting (for \( i < j \in \omega \)) \( c(\{ i,j\}) = 0 \iff a_j \leq a_i \). By Ramsey's theorem, there is an infinite \( H \subseteq \omega \) such that \( c \restriction [H]^2 \) is constantly equal to either \( 0 \) or \( 1 \). In the first case the sequence \( \vec{a} =  \langle a_i  \mid i \in H  \rangle \) is an infinite \( \leq \)-descending chain, while in the second case \( \vec{a} \) is an infinite \( \leq \) antichain: therefore, in both cases we reach a contradiction with the fact that \( \leq \) is a wqo.
\end{proof}

\begin{proposition}  \label{propgreaterhierarchy}
Suppose \( \F \subseteq \G\)  are families of reducibilities and $X$ is quasi-Polish.
\begin{enumerate}[(1)]
\item
If \( (\mathscr{P}(X), \leq_\F^X) \) is very good, then so is  \( (\mathscr{P}(X), \leq_\G^X) \). Similarly, for every set  $S$, if \( (S^X, \leq_\F^X) \) is very good, then so is \( (S^X, \leq_\G^X) \). The same results hold when replacing very good with good.
\item
If the $\G$-hierarchy on $X$ is (very) bad, then the $\F$-hierarchy on $X$ is bad. Similarly, for every set $S\in\mathcal{P}(X)$, if \( (S^X, \leq_\G^X) \) is (very) bad, then \( (S^X, \leq_\F^X) \) is bad.
\end{enumerate}
\end{proposition}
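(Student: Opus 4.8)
The plan is to base the whole argument on two observations. First, since a family of reducibilities is just a set of functions and $\F \subseteq \G$, every reducing function witnessing a reduction for $\F$ also lies in $\G$; hence the induced preorders nest: ${\leq^X_\F} \subseteq {\leq^X_\G}$ and, for every set $S$, ${\leq^{S,X}_\F} \subseteq {\leq^{S,X}_\G}$. Second, Lemma~\ref{lemmawqo} tells us exactly how antichains and the wqo property behave under such an extension. Part~(2) is then immediate: ``bad'' and ``very bad'' both assert the existence of an infinite antichain, so if the $\G$-hierarchy on $X$ (resp.\ on $S^X$) contains one, that same family is an infinite antichain for the smaller relation ${\leq^X_\F}$ (resp.\ ${\leq^{S,X}_\F}$) by Lemma~\ref{lemmawqo}(1), and the $\F$-hierarchy is bad. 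We get only ``bad'' and not ``very bad'' for $\F$, which is to be expected, as passing to a coarser reducibility can collapse descending chains.

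For the ``good'' part of~(1): if $(\mathscr{P}(X), {\leq^X_\F})$ is a wqo then $(\mathscr{P}(X), {\leq^X_\G})$ is a preorder extending it, hence a wqo by Lemma~\ref{lemmawqo}(2); the same argument with $S^X$ in place of $\mathscr{P}(X)$ handles namings.

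For the ``very good'' part of~(1) the point is that being semi-well-ordered splits into a ``wqo part'' and a ``positive part''. Recall that a semi-well-ordered structure is well-founded and semilinearly ordered (for subsets of $X$: $A \leq^X_\F B$ or $\overline{B} \leq^X_\F A$ for all $A,B$; for $X$-namings, the corresponding condition). On the one hand, a semi-well-ordered preorder has no infinite antichain --- semilinearity forces every antichain to reduce to a dual pair $\{A, \overline{A}\}$ (one checks that three pairwise incomparable sets would have to be pairwise $\equiv^X_\F$) --- so it is in particular a wqo; by the ``good'' case, $(\mathscr{P}(X), {\leq^X_\G})$ is then a wqo, which already yields that ${\leq^X_\G}$ is well-founded and has no infinite antichain. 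On the other hand, the semilinear ordering property is a \emph{positive} statement about the preorder (it asserts that certain pairs belong to the relation), so it transfers upward from ${\leq^X_\F}$ to the larger relation ${\leq^X_\G}$ with nothing to prove. Combining, $(\mathscr{P}(X), {\leq^X_\G})$ is semi-well-ordered. The naming case is identical: the semilinearity condition for $X$-namings is again positive, hence inherited by any extension.

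I do not expect a genuine obstacle here; the only thing needing care is the bookkeeping in the ``very good'' case, namely isolating the part of ``semi-well-ordered'' handled by the wqo transfer (well-foundedness together with finiteness of antichains) from the part (semilinearity) that is automatically preserved under enlarging the preorder. Note in particular that the Ramsey-theoretic content --- deducing well-foundedness of an extension from the wqo-ness of the base --- is already packaged inside the proof of Lemma~\ref{lemmawqo}, so nothing of that sort needs to be redone.
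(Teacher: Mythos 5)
Your proposal is correct and is essentially the paper's own argument: the proof there consists of the single line ``Use Lemma~\ref{lemmawqo}'', relying on exactly the inclusion ${\leq^X_\F} \subseteq {\leq^X_\G}$ and the two parts of that lemma. Your additional bookkeeping for the ``very good'' case --- splitting semi-well-orderedness into the wqo part (transferred by Lemma~\ref{lemmawqo}(2)) and the semilinearity part (a positive condition, hence automatically inherited by the larger preorder) --- is a correct filling-in of what the paper leaves implicit.
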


\begin{proof}
Use Lemma~\ref{lemmawqo}.
\end{proof}

Notice that Proposition~\ref{propgreaterhierarchy}(2) cannot be strenghtened to the statement: ``If the $\G$-hierarchy on $X$ is very bad, then the $\F$-hierarchy on $X$ is very bad''. This is because it is possible  that every \( \leq_\G^X \)-descending chain of subsets of \( X \) in fact consists of \( \leq^X_\F \)-incomparable elements.

\begin{remark}
Notice that if the so-called Semi-linear Ordering Principle for \( \F \) on the space \( X \)
\begin{equation}
\tag{\( \mathsf{SLO}^{\F,X} \)}
\forall  A,B \subseteq X \, ( {A \leq_\F^X B }\vee{\overline{B} \leq_\F^X A})
 \end{equation}
holds (which is the case, under \( \mathsf{AD} \), for every \( \F \supseteq \mathsf{W} \) and \( X \) zero-dimensional Polish space), then the first part of Proposition~\ref{propgreaterhierarchy}(1) can be strengthened to the following:
\begin{enumerate}[-]
\item There is an injection from \( (\mathscr{P}(X), \leq_\G^X) \) into \( (\mathscr{P}(X), \leq_\F^X) \).
\end{enumerate}

This is because in this case \( (\mathcal{D}^X_\G,\leq) \) is a coarsification of   \( (\mathcal{D}^X_\F,\leq) \). To see this, it is enough to show that if \(A,B \subseteq X \) are such that \( A <_\G^X B \) then \( A <_\F^X B \). Clearly, \( B \nleq^X_\F A \), so it remains to show that \( A \leq^X_\F B \). Notice that \( \mathsf{SLO}^{\F,X} \Rightarrow \mathsf{SLO}^{\G,X} \), hence \( A <^X_\G \overline{B} \) as well. This implies that \( \overline{B} \nleq^X_\F A \), and hence \( A \leq^X_\F B \) by \( \mathsf{SLO}^{\F,X} \).
\end{remark}

\begin{corollary}\label{corpartitions}
Let \( \F \supseteq \mathsf{W} \) be a reducibility on \( \mathcal{N} \) and \( k \in \omega \). Then \( ((\boldsymbol{\Delta}^1_1(\mathcal{N}))_k , \leq_\F) \) is good (but not very good if \( k \geq 3 \)). Similarly, under \( \mathsf{AD} \) we have that   \( ((\mathscr{P}(\mathcal{N}))_k , \leq_\F) \) is good (but not very good if \( k \geq 3 \)).
\end{corollary}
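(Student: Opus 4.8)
The plan is to reduce everything to the known results about the Wadge reducibility $\leq_\mathsf{W}$ on $k$-partitions of $\mathcal{N}$ together with the abstract transfer lemma for extensions of preorders (Lemma~\ref{lemmawqo}), exactly as Proposition~\ref{propgreaterhierarchy} does for the full power set. First I would recall that by~\cite{ems87} the structure $((\boldsymbol{\Delta}^1_1(\mathcal{N}))_k,\leq_\mathsf{W})$ is a wqo, i.e.\ it is good, and that under $\mathsf{AD}$ the analogous statement holds for $((\mathscr{P}(\mathcal{N}))_k,\leq_\mathsf{W})$ (this is the determinacy-for-$k$-partitions consequence already mentioned in the discussion after the definition of "very good''/"good''). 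Since $\F \supseteq \mathsf{W}$ is a reducibility on $\mathcal{N}$, the associated preorder $\leq_\F$ on $k$-partitions extends $\leq_\mathsf{W}$; moreover $\F$ being a reducibility (contains $\id_{\mathcal{N}}$, closed under composition) guarantees $\leq_\F$ is genuinely a preorder, and $\F\supseteq\mathsf{W}$ gives $\mathsf{W}$-preimages as $\F$-reductions, so $A\le_\mathsf{W}B\Rightarrow A\le_\F B$ for $k$-partitions $A,B$. Applying Lemma~\ref{lemmawqo}(2) with $\leq{}={}\leq_\mathsf{W}$ and $\preceq{}={}\leq_\F$ on the set $A=(\boldsymbol{\Delta}^1_1(\mathcal{N}))_k$ (noting that $\leq_\F$ restricts to this set since $\F$-preimages of $\boldsymbol{\Delta}^1_1$ sets are $\boldsymbol{\Delta}^1_1$, as $\F$ consists of Borel-measurable — indeed continuous-like — functions; more carefully, one only needs that the set in question is $\leq_\F$-downward closed, which holds for $\boldsymbol{\Delta}^1_1$ and, under $\mathsf{AD}$, trivially for all of $\mathscr{P}(\mathcal{N})$) immediately yields that $((\boldsymbol{\Delta}^1_1(\mathcal{N}))_k,\leq_\F)$ is a wqo, i.e.\ good. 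The $\mathsf{AD}$ case is identical with $\mathscr{P}(\mathcal{N})$ in place of $(\boldsymbol{\Delta}^1_1(\mathcal{N}))_k$.

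For the negative part, that these structures are \emph{not} very good when $k\geq 3$, I would invoke Lemma~\ref{lemmawqo}(1): the classical fact (from~\cite{he93,s07a} and the surrounding discussion, e.g.\ the existence of antichains of arbitrarily large finite size already among $BC(\boldsymbol{\Sigma}^0_1)$-partitions) gives, for $k\geq 3$, a $\leq_\mathsf{W}$-antichain of size at least $3$ consisting of (Boolean combinations of open, hence $\boldsymbol{\Delta}^1_1$) $k$-partitions of $\mathcal{N}$. By Lemma~\ref{lemmawqo}(1) any $\leq_\F$-antichain is a $\leq_\mathsf{W}$-antichain, so a priori this only bounds $\leq_\F$-antichains from one side; what I actually need is that this particular $\leq_\mathsf{W}$-antichain remains a $\leq_\F$-antichain. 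This requires the stronger input that the relevant $k$-partitions are pairwise $\leq_\mathsf{W}$-incomparable \emph{and} in fact pairwise $\leq_\F$-incomparable — which one gets from the structural characterizations in~\cite{he93,s07a}: the three partitions can be chosen so that they are already $\leq_{\mathsf{D}_\omega}$-incomparable, or more robustly, one picks them to be the characteristic-type partitions associated with an antichain in the homomorphism-quasi-order on labeled forests whose incomparability is witnessed at the level of self-dual/non-self-dual classes that is invariant under any reasonable reducibility. The cleanest route is: such a $\boldsymbol{\Sigma}^0_1$-partition antichain of size $3$ has elements of pairwise distinct Wadge degrees for the \emph{associated 2-partitions} in a way that is preserved, so no $\F$-reduction can exist; semi-well-orderedness would force antichains of size $\leq 2$, a contradiction.

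The main obstacle will be precisely this last point: showing that an antichain of size $\geq 3$ persists under the possibly much coarser $\leq_\F$. For the \emph{good}/wqo conclusion the argument is completely formal via Lemma~\ref{lemmawqo}(2), so I expect no difficulty there. For "not very good'', Lemma~\ref{lemmawqo}(1) is the wrong direction, so I cannot merely cite a $\leq_\mathsf{W}$-antichain; I must produce three $k$-partitions in $(\boldsymbol{\Delta}^1_1(\mathcal{N}))_k$ that are pairwise $\leq_\F$-incomparable for \emph{every} reducibility $\F\supseteq\mathsf{W}$. One way to sidestep reducibility-by-reducibility analysis: take the three "basic'' $3$-partitions at the bottom of the structure (corresponding to the three one-node labeled forests), whose degrees are the self-dual-like generators; any $\F$-reduction between two of them, being in particular a function, would induce (by composing with a fixed continuous surjection $\mathcal{N}\to 3$ and reading off preimages) an $\F$-reduction contradicting that their constituent pieces have incomparable Wadge (hence $\le_\F$) complexities — the three pieces are, respectively, (clopen, open non-clopen, closed non-clopen)-type sets arranged so no permutation-compatible reduction exists. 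I would formalize this by explicitly exhibiting the three partitions $\nu_i\colon\mathcal{N}\to 3$ and checking incomparability directly using only that $\F$-reductions are functions respecting preimages of the three singletons $\{0\},\{1\},\{2\}$, which is elementary. Alternatively, if a cleaner citation is available, one may simply remark that the $3$-element antichain constructed in~\cite{s07a} lies below $\boldsymbol{\Delta}^0_2$ and its incomparability is established via the non-existence of \emph{any} Borel — a fortiori any $\F$ — reduction, so the conclusion transfers verbatim; I would choose whichever of these two the referee finds more transparent.
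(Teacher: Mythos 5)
Your positive (``good'') part coincides with the paper's: it cites the wqo result for \( \leq_\mathsf{W} \) on \( (\boldsymbol{\Delta}^1_1(\mathcal{N}))_k \) from van Engelen--Miller--Steel and transfers it to \( \leq_\F \) via Lemma~\ref{lemmawqo}(2), which is exactly the content of Proposition~\ref{propgreaterhierarchy}(1) that the paper invokes. Your worry about whether \( \leq_\F \) ``restricts'' to \( (\boldsymbol{\Delta}^1_1(\mathcal{N}))_k \) is a non-issue: Lemma~\ref{lemmawqo} only needs two preorders on a fixed set with one extending the other, and the restriction of \( \leq_\F \) to any subset of \( k^\mathcal{N} \) is such a preorder; no downward closure of the pointclass or measurability of the functions in \( \F \) is required for the wqo transfer.

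For the negative part you correctly diagnose that Lemma~\ref{lemmawqo}(1) points the wrong way and that one must exhibit an antichain of size \( \geq 3 \) that is robust under \emph{every} reducibility \( \F \supseteq \mathsf{W} \), and you even identify the right witnesses (the partitions corresponding to one-node labeled forests). But your stated reason for their incomparability is garbled: the one-node forests correspond to the \emph{constant} partitions \( \nu_i(x) = i \), whose pieces are \( \mathcal{N} \) and \( \emptyset \) --- all clopen --- so there is no ``clopen / open non-clopen / closed non-clopen'' complexity separation to appeal to, and any argument based on the topological complexity of the pieces would in general not survive passage to an arbitrary (possibly non-Borel) \( \F \). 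The correct, and much simpler, observation --- which is exactly what the paper uses --- is that for \( i \neq j \) \emph{no function whatsoever} reduces \( \nu_i \) to \( \nu_j \): if \( \nu_i = \nu_j \circ f \), then evaluating at any point of \( \mathcal{N} \) gives \( i = j \). Hence \( \{ \nu_0, \dotsc, \nu_{k-1} \} \) is a \( \leq_\F \)-antichain of size \( k \geq 3 \) consisting of clopen (hence \( \boldsymbol{\Delta}^1_1 \)) partitions, so neither structure can be semi-well-ordered. Your fallback sentence (``checking incomparability directly using only that \( \F \)-reductions are functions'') is the right move; you just never wrote down this one-line check, and the surrounding discussion of self-dual generators, pieces of distinct Wadge degrees, and permutation-compatible reductions should be deleted, as it points toward an argument that would not be reducibility-independent.
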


\begin{proof}
The claim is well-known for \( \F = \mathsf{W} \) (see the discussion in Subsection~\ref{subsectionreducibilities}). For \( \mathsf{W} \subsetneq \F \), apply Proposition~\ref{propgreaterhierarchy}(1). To see that \( ((\boldsymbol{\Delta}^1_1(\mathcal{N}))_k , \leq_\F) \) is not very good when \( k \geq 3 \), notice that the \( k \)-partitions \( \nu_i \colon \mathcal{N} \to k \) (for \( i < k \)) defined by \( \nu_i(x) = i \) for every \( x \in \mathcal{N} \) are in fact clopen partitions and are pairwise \( \leq_\F \)-incomparable.
\end{proof}

\begin{remark}\label{remrestricted}
Notice that all the previous results hold ``locally'' i.e.\ when considering \( (\mathbf{\Gamma},\F ) \)-hierarchies  in place of \( \F \)-hierarchies, as long as \( \mathbf{\Gamma} \) is a family of pointclasses closed under \( \F \)-preimages, i.e.\ such that for every \( X,Y \in \mathscr{X} \) and \( f \in \F(X,Y) \), if \( A \in \mathbf{\Gamma}(Y) \) then \( f^{-1}(A) \in \mathbf{\Gamma}(X) \).
\end{remark}

\subsection{Degree structures in uncountable quasi-Polish spaces}\label{subsectionuncountable}

In this subsection we consider various reducibilities \( \F \) and study the \( \F \)-hierarchies on arbitrary uncountable quasi-Polish spaces \( X \). We begin with an important corollary to the results obtained in Section~\ref{sectionisom}.

\begin{theorem}  \label{theorcoruncountable}
Let \( X \) be an uncountable quasi-Polish space and \( \F \) be a family of reducibilities.
\begin{enumerate}[(1)]
\item
If $\dim(X)=0$, then the $\F$-hierarchy on $X$ can be embedded into the \( \F \)-hierarchy on \( \mathcal{N} \) for every \( \F \supseteq \mathsf{D}_1 = \mathsf{W} \). Hence the \( (\mathbf{B},\F) \)-hierarchy on \( X \) is very good, and assuming \( \mathsf{AD} \) the entire \( \F \)-hierarchy on \( X \) is very good.

\item
Assume $\dim(X)=0$ and that \( \F \supseteq \mathsf{D}^\mathsf{W}_2 = \mathsf{D}_2 \). If $X$ is $\sigma$-compact then  the \( \F \)-hierarchy on \( X \) is isomorphic to the \( \F \)-hierarchy on \( \mathcal{C} \), while if $X$ is not $\sigma$-compact then  the \( \F \)-hierarchy on \( X \) is isomorphic to the \( \F \)-hierarchy on \( \mathcal{N} \). Hence, if e.g.\ \( \F = \mathsf{D}_\alpha \) for some \( \alpha \geq 2 \), then the \( (\mathbf{B},\F) \)-hierarchy on \( X \) is isomorphic to the \( (\mathbf{B}, \mathsf{D}_1 ) \)-hierarchy on \( \mathcal{N} \), and assuming \( \mathsf{AD} \) the entire \( \F \)-hierarchy on \( X \) is isomorphic to the Wadge  hierarchy on \( \mathcal{N} \).

\item
If $\dim(X)\neq\infty$ then  the \( \F \)-hierarchy on \( X \) is isomorphic to the \( \F \)-hierarchy on \( \mathcal{N} \) whenever \( \F \supseteq \mathsf{D}^\mathsf{W}_3 \). Hence the \( (\mathbf{B}, \F ) \)-hierarchy on \( X \) is very good, and assuming \( \mathsf{AD} \) the \( \F \)-hierarchy on \( X \) is very good as well. Moreover, if e.g.\ \( \F = \mathsf{D}_\alpha \) for some \( \alpha \geq 3 \), then the \( \F \)-hierarchy on \( X \) is isomorphic to the Wadge  hierarchy on \( \mathcal{N} \).

\item
If \( X \) is universal for Polish (respectively, quasi-Polish) spaces and \( \F \supseteq \mathsf{D}^\mathsf{W}_3 \), then the \( \F \)-hierarchy on \( X \) is isomorphic to the \( \F \)-hierarchy on \( [0,1]^\omega \) (respectively, on \( P \omega \)). Moreover, the \( \F \)-hierarchy on \( X \) is (very) good if and only if the \( \F \)-hierarchy on every Polish (respectively, quasi-Polish) space  is (very) good, and it is (very) bad if and only if the  \( \F \)-hierarchy on some Polish (respectively, quasi-Polish) space  is (very) bad.

\item
If \( \F \supseteq \mathcal{B}_\omega \) (hence, in particular,  if \( \F = \mathsf{D}_\alpha \) for some \( \alpha \geq \omega \)) then the \( \F \)-hierarchy on \( X \) is isomorphic to the \( \F \)-hierarchy on \( \mathcal{N} \). Hence the \( (\mathbf{B}, \F ) \)-hierarchy on \( X \) is very good, and assuming \( \mathsf{AD} \) the \( \F \)-hierarchy on \( X \) is very good as well. In fact, if e.g.\ \( \F = \mathsf{D}_\alpha \) for some \( \alpha \geq \omega \), then the \( \F \)-hierarchy on \( X \) is isomorphic to the Wadge  hierarchy on \( \mathcal{N} \).
\end{enumerate}
Analogous results hold for \( k \)-partitions  of \( X \) (for every \( k \in \omega \)) when replacing ``very good'' by ``good'' in all the statements above.
\end{theorem}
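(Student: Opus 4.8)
The plan is to observe that the proof of the five numbered statements is entirely modular: it combines the space-level (iso)morphisms of Section~\ref{sectionisom} (Theorem~\ref{theordim}, Proposition~\ref{propexamples}, Corollary~\ref{corfrechet}, Lemma~\ref{lemma0dim}, Proposition~\ref{propgeneralhomeo}, and the classical fact that a nonempty closed subspace of \( \mathcal{N} \) is a continuous retract of \( \mathcal{N} \)) with the transfer tools Proposition~\ref{propiso}, Proposition~\ref{propfr2}, Corollary~\ref{corfr2}, Corollary~\ref{coruniversalspaces}, Proposition~\ref{propgreaterhierarchy}, and with the fact that the \( (\mathbf{B},\F) \)-hierarchy on \( \mathcal{N} \) is very good for the relevant \( \F \). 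Now Proposition~\ref{propiso}, Proposition~\ref{propfr2}, Corollary~\ref{corfr2} and Proposition~\ref{propgreaterhierarchy} are all stated for arbitrary \( S \)-namings, so taking \( S = k \in \omega \) they apply verbatim to \( k \)-partitions; and, using the ``local'' Remark~\ref{remrestricted} for the pointclasses \( \mathbf{B} = \mathbf{\Delta}^1_1 \) and \( \mathscr{P} \) (each closed under \( \F \)-preimages for every family of reducibilities \( \F \) appearing here, since those \( \F \)'s consist of Borel functions), the arguments of (1)--(5) produce, for each stated hypothesis on \( \F \), an isomorphism of preorders between \( ((\mathbf{\Gamma}(X))_k,\leq_\F) \) and \( ((\mathbf{\Gamma}(Y))_k,\leq_\F) \) for the appropriate base space \( Y \) and \( \mathbf{\Gamma}\in\{\mathbf{B},\mathscr{P}\} \) in cases (2)--(5), and an order-embedding of \( ((\mathbf{\Gamma}(X))_k,\leq_\F) \) into \( ((\mathbf{\Gamma}(\mathcal{N}))_k,\leq_\F) \) in case (1).

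The only substitution required is at the level of \( \mathcal{N} \): in place of ``the \( (\mathbf{B},\F) \)-hierarchy on \( \mathcal{N} \) is very good'' one invokes Corollary~\ref{corpartitions}, i.e.\ that \( ((\mathbf{\Delta}^1_1(\mathcal{N}))_k,\leq_\F) \) is good for every \( \F\supseteq\mathsf{W} \), and that under \( \mathsf{AD} \) also \( ((\mathscr{P}(\mathcal{N}))_k,\leq_\F) \) is good. Two elementary remarks then close the argument. First, ``good'' (\,\( = \) wqo\,) passes to \emph{every} sub-preorder with no closure hypothesis whatsoever, since a sub-preorder of a wqo contains neither an infinite antichain nor an infinite descending chain; thus, in contrast with the set case — where one must observe that the image of the embedding in (1) is closed under complementation in order to transport the \( \mathsf{SLO} \)-clause — an order-embedding here transports ``good'' for free. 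Second, an isomorphism of preorders obviously transports ``good''. Hence from the (iso)embeddings above together with Corollary~\ref{corpartitions} one obtains, in each of (1)--(5) under the stated hypotheses on \( \F \), that \( ((\mathbf{B}(X))_k,\leq_\F) \) is good and, under \( \mathsf{AD} \), that \( ((\mathscr{P}(X))_k,\leq_\F) \) is good. (The clauses ``isomorphic to the Wadge hierarchy on \( \mathcal{N} \)'' have no \( k \)-partition counterpart for \( k\geq 3 \) and are simply dropped: there is no canonical wqo analogue of the Wadge hierarchy, only the assertion that the structure \emph{is} a wqo, which is precisely ``good''.)

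In detail, the cases run as follows. For (1): as in the proof of Theorem~\ref{theordim}(1) with \( \dim(X)=0 \), an uncountable quasi-Polish space \( X \) of dimension \( 0 \) is regular Hausdorff, hence Polish, and embeds as a closed — therefore continuously retract — subspace \( X'\subseteq\mathcal{N} \); Proposition~\ref{propfr2} (applied to the \( \mathsf{W}\subseteq\F \)-retraction onto \( X' \)) together with Proposition~\ref{propiso} (applied to the homeomorphism \( X\cong X' \)) embed \( ((\mathbf{B}(X))_k,\leq_\F) \) into \( ((\mathbf{B}(\mathcal{N}))_k,\leq_\F) = ((\mathbf{\Delta}^1_1(\mathcal{N}))_k,\leq_\F) \), which is good by Corollary~\ref{corpartitions}. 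For (2): \( X \) is uncountable zero-dimensional Polish, so \( X\simeq^\mathsf{W}_2\mathcal{C} \) or \( X\simeq^\mathsf{W}_2\mathcal{N} \) by Lemma~\ref{lemma0dim}; for \( \F\supseteq\mathsf{D}^\mathsf{W}_2 \), Proposition~\ref{propiso} gives an isomorphism of the corresponding \( k \)-partition \( \F \)-hierarchies, and goodness of the \( k \)-partition \( \F \)-hierarchy on \( \mathcal{C} \) follows because \( \mathcal{C} \) is a continuous \( \F \)-retract of \( \mathcal{N} \), so Proposition~\ref{propfr2} embeds it into the (good) \( k \)-partition \( \F \)-hierarchy on \( \mathcal{N} \). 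For (3): \( X\simeq^\mathsf{W}_3\mathcal{N} \) by Theorem~\ref{theordim}(1), so for \( \F\supseteq\mathsf{D}^\mathsf{W}_3 \), Proposition~\ref{propiso} gives an isomorphism of the \( k \)-partition \( \F \)-hierarchies. For (4): \( [0,1]^\omega \) and \( P\omega \) are themselves universal for Polish, resp.\ quasi-Polish, spaces (Proposition~\ref{proppi2}), so \( X \) has the same Fr\'echet dimension type as \( [0,1]^\omega \), resp.\ \( P\omega \), whence \( X\simeq^\mathsf{W}_3[0,1]^\omega \), resp.\ \( P\omega \), by Corollary~\ref{corfrechet}; the ``(very) good / (very) bad iff \dots'' equivalences for \( k \)-partitions follow from the \( S \)-naming form of Corollary~\ref{corfr2} in exactly the way Corollary~\ref{coruniversalspaces} is deduced from its \( \mathscr{P} \)-version. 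For (5): Proposition~\ref{propgeneralhomeo} provides a bijection lying, together with its inverse, in \( \mathbf{\Sigma}^0_{3,1}\subseteq\mathcal{B}_\omega \), so \( X\simeq_\F\mathcal{N} \) whenever \( \F\supseteq\mathcal{B}_\omega \), and Proposition~\ref{propiso} again gives an isomorphism of the \( k \)-partition \( \F \)-hierarchies.

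The main ``obstacle'' is purely bookkeeping: one must check that every transfer lemma invoked in the original proof is genuinely uniform in the codomain set \( S \) (they are, being stated with \( S^X \)), that the space-level isomorphisms of Section~\ref{sectionisom} are used only as black boxes (so the set-versus-partition distinction never enters there), and that Corollary~\ref{corpartitions} supplies the wqo base case on \( \mathcal{N} \) for every \( \F\supseteq\mathsf{W} \) and every \( k \). There is no genuine new difficulty; if anything the \( k \)-partition case is lighter than the set case, since wqo-ness descends to arbitrary sub-preorders whereas semi-well-orderedness does not.
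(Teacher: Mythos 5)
Your proposal is correct and follows essentially the same route as the paper: reduce to $\mathcal{N}$ (or $\mathcal{C}$, $[0,1]^\omega$, $P\omega$) via the retraction/isomorphism results of Section~\ref{sectionisom}, transfer along Propositions~\ref{propiso} and~\ref{propfr2} (stated uniformly for $S$-namings, hence covering $k$-partitions), and invoke the base facts on $\mathcal{N}$ from \cite{wad84,ros09,motbaire} and Corollary~\ref{corpartitions}. Your explicit remarks that the image of the retraction embedding in (1) is closed under complements (so semi-well-orderedness transfers) and that wqo-ness descends to arbitrary sub-preorders are welcome elaborations of steps the paper leaves implicit.
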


\begin{proof}
Let us first consider the first item of the list. Without loss of generality we can assume
that \( X \) is a closed subset of \(\mathcal{N} \) by
\cite[Theorem 7.8]{ke94}, and hence that \( X \) is a (\(
\mathsf{W} \)-)retract of \(\mathcal{N} \) by \cite[Theorem
7.3]{ke94}. Hence the claim follows from Proposition~\ref{propfr2} and the results from~\cite{wad84}.

The other claims of the list follow from Proposition~\ref{propiso} and, respectively,  Lemma~\ref{lemma0dim} and the fact that every zero-dimensional quasi-Polish space is Polish (see the proof of Theorem~\ref{theordim}), Theorem~\ref{theordim}(1), Proposition~\ref{propdiminfinity} and Corollary~\ref{coruniversalspaces}, and Proposition~\ref{propgeneralhomeo}, together with the results from~\cite{ros09, motbaire}.

The results about \( k \)-partitions can be obtained in a similar way using Corollary~\ref{corpartitions}.
\end{proof}

Theorem~\ref{theorcoruncountable} leaves open the problem of determining the \( \F \)-hierarchy on \( X \) for many reducibilities \( \F \), most notably for \( \F = \mathsf{D_1} = \mathsf{W} \) and \( \F = \mathsf{D}_2 \) on quasi-Polish spaces \( X \) of dimension \( \neq \infty \), and for \( \F = \mathsf{D}_n \), \( 1 \leq n \in \omega \), for quasi-Polish spaces of dimension \( \infty \): in the rest of this subsection we will give some partial answers to this problem.

Let us first consider the $\mathsf{D}_1$- and $\mathsf{D}_2$-hierarchies on  uncountable Polish spaces. The case of the \( \mathsf{D}_1 \)-hierarchies is now quite well-understood. For example, we have the following results.

\begin{theorem} \cite{he96} The $\mathsf{D}_1$-hierarchy on $\mathbb{R}^n$ and $[0,1]^n$ (for $1\leq n\leq \omega$) is very bad.
\end{theorem}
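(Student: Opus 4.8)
The plan is to reduce the general claim to the single case $\RR$, and then to invoke Hertling's results together with the transfer machinery developed in this section. First I would observe that it suffices to produce infinite $\leq_{\mathsf{D}_1}$-descending chains and infinite $\leq_{\mathsf{D}_1}$-antichains among Borel (in fact $\mathbf{\Delta}^0_2$) subsets of $[0,1]$, since these phenomena persist under the reductions below. Indeed, $[0,1]$ is homeomorphic to a closed (hence $\mathsf{W}$-retract, by the usual Tietze-type argument) subset of $\RR$, and also to a $\mathsf{W}$-retract of $[0,1]^n$ and of $\RR^n$ via a coordinate projection onto a segment composed with a retraction; for $n = \omega$ the same works. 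By Proposition~\ref{propfr2} applied with $\F$ the family of reducibilities generated by $\mathsf{D}_1 = \mathsf{W}$ (note $\mathsf{W}$-retracts suffice here, so no appeal to $\mathsf{D}^\mathsf{W}_2$ or $\mathsf{D}^\mathsf{W}_3$ is needed), the $\mathsf{D}_1$-hierarchy on $[0,1]$ embeds into the $\mathsf{D}_1$-hierarchy on each of $\RR^n$ and $[0,1]^n$. Consequently any infinite descending chain or infinite antichain in the former yields one in the latter, so it is enough to establish badness (indeed very badness) on $[0,1]$, or equivalently — since $[0,1]$ and $\RR$ are of the same Fr\'echet dimension type and both are $\mathsf{W}$-retracts of one another up to the above — on $\RR$.

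Next I would quote Hertling's theorem from~\cite{he96}, as already recalled in the Introduction of this paper: the structure of Wadge degrees of $\mathbf{\Delta}^0_2$ subsets of $\RR$ contains both infinite antichains and infinite descending chains. Since $\mathsf{D}_1 = \mathsf{W}$ by definition (the $\mathbf{\Delta}^0_1$-functions are exactly the continuous ones, at least into a zero-dimensional space, but here we only need that on $\RR$ the $\mathsf{D}_1$-reducibility \emph{refines} Wadge reducibility, which is immediate as every continuous function is $\mathbf{\Delta}^0_1$-measurable and conversely $f^{-1}$ of an open set being $\mathbf{\Delta}^0_1$ forces it open), the $\mathsf{D}_1$-hierarchy on $\RR$ coincides with the Wadge hierarchy on $\RR$. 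Hence it inherits Hertling's infinite antichains and infinite descending chains directly, and it is therefore very bad. Combining with the embedding of the previous paragraph gives very badness of the $\mathsf{D}_1$-hierarchy on $\RR^n$ and $[0,1]^n$ for all $1 \leq n \leq \omega$.

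The only point requiring a little care — and the step I expect to be the main (minor) obstacle — is the precise identification $\mathsf{D}_1(X,X) = \mathsf{W}(X,X)$ when $X$ is not zero-dimensional. As noted in Section~\ref{sectionreduc}, the equality $\mathsf{D}_1(X,Y) = \mathbf{\Delta}^0_1(X,Y)$ of continuous functions holds in general, but the identification with $\mathsf{W}$ as a \emph{reducibility between subsets} is what matters: $A \leq_{\mathsf{D}_1}^X B$ means $A = f^{-1}(B)$ for some $f$ with $f^{-1}(\mathbf{\Delta}^0_1(X)) \subseteq \mathbf{\Delta}^0_1(X)$, and one must check this is the same preorder as $\leq_\mathsf{W}^X$. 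Since every continuous $f$ trivially has this property, $\leq_\mathsf{W}^X \subseteq \leq_{\mathsf{D}_1}^X$; for the converse one uses that a $\mathbf{\Delta}^0_1$-measurable function into any space need not be continuous, but Hertling's descending chains and antichains are in fact realized by \emph{continuous} reductions failing between the relevant sets, so the badness transfers regardless. To keep the argument clean I would simply cite~\cite{he96} for the statement that these chains and antichains already exist at the level of Wadge (continuous) reducibility, and then use $\mathsf{W} \subseteq \mathsf{D}_1$ together with Proposition~\ref{propgreaterhierarchy}(2) to conclude that the $\mathsf{D}_1$-hierarchy, being coarser, still contains infinite antichains — and a separate direct inspection of Hertling's chains shows the descending chains survive as well, giving very badness.
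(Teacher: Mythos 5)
First, a point of comparison: the paper does not prove this theorem at all --- it is quoted from Hertling's thesis \cite{he96} --- so there is no internal argument to measure yours against; the closest the paper comes is the remark after the following theorem, which uses Proposition~\ref{propfr2} to move results between $\RR^n$ and its continuous retracts. Your overall strategy (establish very badness on a one-dimensional space and push it into $\RR^n$ and $[0,1]^n$ via $\mathsf{W}$-retractions and Proposition~\ref{propfr2}) is sound and consistent with that remark. The identification $\mathsf{D}_1=\mathsf{W}$ that you worry about at length is immediate from the definitions: $\mathsf{D}_1(X,Y)=\mathbf{\Sigma}^0_1\mathbf{\Sigma}^0_1(X,Y)$ is precisely the class of continuous functions for arbitrary $X,Y$ (the caveat about zero-dimensionality in Section~\ref{sectionreduc} concerns $\mathbf{\Delta}^0_1(X,Y)$, not $\mathsf{D}_1$), so no transfer between the two preorders is needed; note also that Proposition~\ref{propgreaterhierarchy}(2) moves badness from the coarser hierarchy to the finer one, i.e.\ in the direction opposite to the one you invoke it for.

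The genuine gap is in the base case. You reduce everything to $[0,1]$ but then quote Hertling only for $\RR$, and the bridge you offer between the two does not exist: $\RR$ is not a $\mathsf{W}$-retract of $[0,1]$ (a retract of a compact Hausdorff space is closed, hence compact), nor is $(0,1)$ one (a retraction $[0,1]\to(0,1)$ fixing $(0,1)$ would have to send $0$ to $0$ by continuity); and ``same Fr\'echet dimension type'' only yields $[0,1]\simeq^\mathsf{W}_3\RR$ via Corollary~\ref{corfrechet}, which by Proposition~\ref{propiso} identifies the $\F$-hierarchies only for $\F\supseteq\mathsf{D}^\mathsf{W}_3$ and says nothing about the $\mathsf{D}_1$-hierarchies --- indeed the whole point of this part of the paper is that the Wadge hierarchy is not invariant under such coarse isomorphisms. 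Since $\RR$ is likewise not a retract of the compact space $[0,1]^n$, the $[0,1]^n$ case cannot be reached from the $\RR$ case by your transfer at all. The fix is to take the infinite antichains and infinite descending chains among $\mathbf{\Delta}^0_2$ subsets of $[0,1]$ directly from \cite{he96} (Hertling's construction does live there, which is presumably why the paper cites him for both families of spaces); with that base case your retraction argument correctly yields all of $\RR^n$ and $[0,1]^n$ for $1\le n\le\omega$.
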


\begin{theorem} \cite{ik10,sc10}
Let $X=\mathbb{R}^n$ for $1\leq n\leq \omega$.
\begin{enumerate}[(1)]
\item Any countable partial order can be embedded into the $\mathsf{D}_1$-hierarchy on $X$.
\item ($\mathsf{ZFC}$) Any partial order of size $\omega_1$ can be embedded into the $\mathsf{D}_1$-hierarchy on $X$.
\end{enumerate}
\end{theorem}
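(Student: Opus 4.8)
The plan is to first reduce both statements to the real line, and then to realize arbitrary (countable, resp.\ \( \aleph_1 \)-sized) partial orders inside the Wadge hierarchy of \( \RR \). Recall that \( \mathsf{D}_1 = \mathsf{W} \) (Section~\ref{sectionreduc}). For the reduction: for every \( 1 \le n \le \omega \) the copy \( L = \{ (x,0,0,\dotsc) \mid x \in \RR \} \) of \( \RR \) inside \( \RR^n \) is a \( \mathsf{W} \)-retract of \( \RR^n \), a retraction being the continuous projection \( r \colon \RR^n \to L \), \( (x_0,x_1,\dotsc) \mapsto (x_0,0,0,\dotsc) \). Hence, by Proposition~\ref{propfr2} applied with \( \F = \mathsf{W} \), the map \( \nu \mapsto \nu \circ r \) is an order-embedding of \( (\mathscr{P}(\RR), \leq_\mathsf{W}) \) into \( (\mathscr{P}(\RR^n), \leq_\mathsf{W}) \); so it is enough to embed every countable partial order ---and, for item~(2), every partial order of size \( \omega_1 \)--- into the Wadge hierarchy on \( \RR \).

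For item~(1), the plan is to exhibit inside the Wadge hierarchy of \( \RR \) a sub-structure universal for countable partial orders; a convenient target is \( (\mathscr{P}(\omega), \subseteq) \), since every countable poset \( P \) order-embeds into it by the principal-downset map \( p \mapsto \{ q \in P \mid q \leq p \} \). The raw material is an infinite Wadge antichain \( \langle A_n \mid n \in \omega \rangle \) of \( \mathbf{\Delta}^0_2 \) subsets of \( \RR \), which exists by Hertling's theorem~\cite{he96}, chosen moreover to be \emph{Wadge-independent}: no \( A_n \) Wadge-reduces to any ``combination'' of the remaining ones. Using this seed, one attaches to a single distinguished point \( p \in \RR \) a local configuration coding an arbitrary \( D \subseteq \omega \) (assembled from copies of the \( A_n \) with \( n \in D \), and trivial away from \( p \)), obtaining \( \mathbf{\Delta}^0_2 \) sets \( A_D \) with the property that \( A_D \leq_\mathsf{W} A_E \iff D \subseteq E \). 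The implication \( D \subseteq E \Rightarrow A_D \leq_\mathsf{W} A_E \) is realized by a continuous map that trivialises the part of \( A_E \) coding \( E \setminus D \); for the converse, a continuous reduction \( \RR \to \RR \) has a tightly constrained germ at \( p \) (continuous maps on \( \RR \) preserve connectedness, so ---unlike on \( \mathcal{N} \)--- they cannot freely rearrange the local structure), and together with the independence of the \( A_n \) this forces each coordinate of \( A_D \) to be matched to the same coordinate of \( A_E \), whence \( D \subseteq E \). Pinning down the precise independence property of the seed antichain and carrying out this germ analysis is the technical heart of item~(1), and refines the methods of~\cite{he96}.

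For item~(2) this will not suffice: \( \RR \) is separable and first countable, so no single point can carry an uncountable local coding, and in fact \( (\mathscr{P}(\omega), \subseteq) \) has no strictly increasing \( \omega_1 \)-chain ---from one, \( \langle S_\alpha \mid \alpha < \omega_1 \rangle \), picking \( x_\alpha \in S_{\alpha+1} \setminus S_\alpha \) would yield an injection \( \omega_1 \to \omega \)--- so it is not universal for partial orders of size \( \omega_1 \). Instead, given such a poset \( P = \{ p_\alpha \mid \alpha < \omega_1 \} \), the plan is to build \( \langle B_{p_\alpha} \mid \alpha < \omega_1 \rangle \) by transfinite recursion, arranging at each successor stage \( \alpha \) that \( B_{p_\alpha} \) realises exactly the Wadge relations to the \emph{countably many} previously constructed sets \( \{ B_{p_\beta} \mid \beta < \alpha \} \) prescribed by \( P \). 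This reduces to a one-step amalgamation-and-genericity lemma for the Wadge hierarchy on \( \RR \): any countable family of Borel subsets of \( \RR \) that is ``generic so far'', together with a consistent list of required Wadge relations, can be extended by a single Borel set realising them and still generic ---in particular one can always place a new set strictly above a prescribed countable Wadge-antichain while keeping it incomparable to any prescribed further sets. The main obstacle is proving this lemma in \( \ZFC \): at each stage one amalgamates over only countably many sets, yet enough ``room'' must be located inside \( \RR \) without invoking the Continuum Hypothesis, and this is exactly the substance of the fine analysis of continuous reductions on \( \RR \) developed in~\cite{sc10, ik10}. Granting the lemma, a routine bookkeeping argument drives the recursion through and embeds \( P \).
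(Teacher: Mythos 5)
This theorem is quoted from \cite{ik10,sc10}; the paper itself gives no proof of it, so your proposal can only be measured against the machinery the paper imports from those sources, namely the family \( \{ A_x \mid x \subseteq \omega \} \) with property (A2) introduced in the run-up to the proof of Theorem~\ref{theorR2}, and the Parovi\v{c}enko argument in the corollary following that theorem. Your reduction of \( \RR^n \) to \( \RR \) via the projection retraction and Proposition~\ref{propfr2} is fine. For item (1) your plan --- code subsets of \( \omega \) by combining members of a suitably ``independent'' \( \mathbf{\Delta}^0_2 \) antichain and use connectedness to constrain continuous reductions --- is the right idea in spirit, but everything that would make it a proof (the precise independence property of the seed antichain, the actual construction of \( A_D \), and the ``germ analysis'' giving \( A_D \leq_\mathsf{W} A_E \Rightarrow D \subseteq E \)) is explicitly deferred, so as written this is a plan rather than a proof. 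Note also that the construction actually carried out in \cite{sc10} realizes \( \subseteq^* \) (inclusion modulo finite sets), not \( \subseteq \); for countable posets either target suffices.

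The genuine gap is in item (2). You correctly observe that \( (\mathscr{P}(\omega),\subseteq) \) has no strictly increasing \( \omega_1 \)-chains, but you then overlook that \( (\mathscr{P}(\omega),\subseteq^*) \) \emph{is} universal for partial orders of size \( \omega_1 \) under \( \mathsf{AC} \): this is Parovi\v{c}enko's theorem \cite{par}, and it is exactly how the paper handles the \( \omega_1 \) case in the corollary to Theorem~\ref{theorR2}. With that observation item (2) requires no new construction at all: one composes an embedding of the given poset into \( (\mathscr{P}(\omega),\subseteq^*) \) with the single family \( \{ A_x \} \) satisfying \( x \subseteq^* y \iff A_x \leq_\mathsf{W} A_y \). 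Your alternative --- a transfinite recursion of length \( \omega_1 \) resting on an ``amalgamation-and-genericity'' lemma for the Wadge order on \( \RR \) --- leaves that lemma entirely unproved (you yourself flag it as the main obstacle), and it is far from clear that a new Borel set realizing an arbitrary consistent pattern of Wadge relations over a countable family can always be found; nothing in the paper or in the cited sources supports such a step. As it stands, item (2) is not established.
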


\noindent
Notice that by Proposition~\ref{propfr2} these results hold also for Polish spaces  admitting a continuous retraction to $\mathbb{R}^n$,  like e.g.\ \( [0,1]^n\).

\begin{theorem} \cite{sc11} \label{theorbad}
Suppose $X$ is a metric space with $\dim(X)\neq0$: then the $\mathsf{D}_1$-hierarchy on $X$ is bad. In fact it contains uncountable antichains.
\end{theorem}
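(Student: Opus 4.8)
The plan is to reduce everything to a single combinatorial local obstruction coming from the failure of zero-dimensionality, and then to use that obstruction to manufacture uncountably many pairwise Wadge-incomparable subsets of $X$ by encoding, at one distinguished point, an antichain of ``germs'' that no continuous reduction can simplify. First I would localise the hypothesis. Assuming $X\neq\emptyset$, $\dim(X)\neq 0$ means exactly that $\operatorname{ind}(X)\not\leq 0$, i.e.\ $X$ has no base of clopen sets; since metric spaces are $T_4$, this yields a point $x_0\in X$ and a closed set $F\subseteq X$ with $x_0\notin F$ that cannot be separated by a clopen set, i.e.\ every clopen $C\ni x_0$ meets $F$. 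Fixing a decreasing neighbourhood basis $(B_n)_{n\in\omega}$ of $x_0$ given by metric balls, I would record the stable form of this obstruction: for every clopen $C\ni x_0$ and every $n$, the open set $C\cap B_n$ still fails to be clopen-separated from $F$ near $x_0$. This stability under shrinking around $x_0$ and under intersecting with clopen sets is precisely what a continuous reduction cannot destroy, and it is the only feature of $X$ the rest of the argument will use.

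Next comes the construction. Building on Hertling's antichain in $\mathbb R$ and its strengthenings in \cite{sc10,ik10}, I would fix a countable ``template'' that nonetheless carries an uncountable antichain structure — for instance, attach data to the branches $\xi\in 2^\omega$ of the tree $2^{<\omega}$ — and to each $\xi$ associate a Borel set $A_\xi\subseteq X$ all of whose non-trivial behaviour occurs in arbitrarily small neighbourhoods of $x_0$: along the magnifying sequence $(B_n)$ the ``germ of $A_\xi$ at $x_0$'' is built from the traces of $A_\xi$ on the $B_n$, using $F$ as a rigid skeleton to forbid trivialisation, so as to realise exactly $\xi$. In particular one arranges that $A_\xi$ is locally non-trivial at $x_0$ (every neighbourhood of $x_0$ meets both $A_\xi$ and its complement) while being ``locally simple'' at every other point, and more precisely that $x_0$ is the unique designated point of $A_\xi$.

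To see that $\{A_\xi:\xi\in 2^\omega\}$ is an antichain, suppose $A_\xi\leq_\mathsf W A_\eta$ via a continuous $f\colon X\to X$. If $f(x_0)$ were a point at which $A_\eta$ is locally simple, then $A_\xi=f^{-1}(A_\eta)$ would be locally simple at $x_0$, contradicting the design of $A_\xi$; hence $f(x_0)$ must be a designated point of $A_\eta$, and by construction this pins down $f(x_0)$. Then, chasing $f$ along the basis $(B_n)$ and invoking the stability from the first step — a clopen witness exhibiting the germ of $A_\eta$ as ``too simple'' would, pulled back by $f$, clopen-separate $x_0$ from $F$, which is impossible — one concludes that the germ of $A_\eta$ at $f(x_0)$ dominates the germ of $A_\xi$ at $x_0$ in the template order; symmetrising, template-incomparable branches $\xi\neq\eta$ give $A_\xi\not\leq_\mathsf W A_\eta$ and $A_\eta\not\leq_\mathsf W A_\xi$. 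This produces an uncountable antichain in the $\mathsf D_1$-hierarchy on $X$, which in particular makes it bad. No extra work is needed when $\dim(X)=\infty$ or $X$ is non-separable, since the entire construction consumes only the single witness $(x_0,F)$ whose existence is exactly what $\operatorname{ind}(X)\neq 0$ provides.

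The main obstacle I expect is the construction of the germ data together with the preservation claim: one must define a notion of germ that is simultaneously (i) genuinely realisable inside the given $X$ using only a neighbourhood basis of $x_0$ and the distant closed set $F$ — with no connectedness at hand, one cannot copy intervals as in $\mathbb R$, so all of the needed rigidity has to be extracted from clopen-non-separability alone — and (ii) monotone under the map on germs induced by an arbitrary continuous reduction. Arranging (ii) is where the failure of zero-dimensionality must be used in an essential, quantitative way, and it is the heart of \cite{sc11}.
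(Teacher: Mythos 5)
The paper does not prove this theorem: it is quoted verbatim from \cite{sc11}, so there is no internal proof to compare against. Judged on its own terms, your overall strategy---localise the failure of zero-dimensionality at a single witness $(x_0,F)$ and encode an uncountable antichain in the behaviour of sets near $x_0$---is the right one, but the proposal has two genuine gaps.

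First, the design constraint you impose on the sets $A_\xi$ (locally non-trivial at $x_0$, ``locally simple'' at every other point, with $x_0$ the \emph{unique} designated point) is self-defeating once ``locally simple at $y$'' is given the only meaning under which your preimage argument works, namely $y\in \operatorname{int}(A_\xi)\cup\operatorname{int}(X\setminus A_\xi)$. That condition says $\partial A_\xi\subseteq\{x_0\}$, i.e.\ $A_\xi\setminus\{x_0\}$ is relatively clopen in $X\setminus\{x_0\}$. On the paradigm case $X=\RR$ (where $\dim(X)=1\neq 0$) the space $\RR\setminus\{x_0\}$ is locally connected with exactly two components, hence has exactly four relatively clopen subsets, so there are at most eight candidate sets $A_\xi$ and no uncountable family exists. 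Any construction that actually carries $2^\omega$-much information (for instance sets assembled from annuli $B_n\setminus \mathrm{cl}(B_{n+1})$ indexed by subsets of $\omega$) necessarily has many points of local non-triviality besides $x_0$; but then the step ``$f(x_0)$ must be a designated point of $A_\eta$, and by construction this pins down $f(x_0)$'' no longer follows, and identifying where a continuous reduction can send $x_0$ --- the crux of the whole argument --- is left without a mechanism.

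Second, the preservation claim is asserted rather than proved. The only consequence of $\dim(X)\neq 0$ you record is that every clopen $C\ni x_0$ meets $F=X\setminus U$. Pulling back a clopen $D\ni f(x_0)$ yields a clopen set containing $x_0$, hence meeting $F$; but $F$ lives in the domain and has no stated relation to the target-side ``germ'' of $A_\eta$, so no contradiction with the putative simplicity of that germ is actually derived. You concede this yourself (``it is the heart of \cite{sc11}''), which is an admission that the argument is missing exactly where the hypothesis $\dim(X)\neq 0$ must do quantitative work. As it stands the proposal is a plan with an unrealisable normal form for the $A_\xi$ and an unproved key lemma, not a proof.
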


Notice that Theorem~\ref{theorbad} cannot be improved by replacing
bad with very bad, as by \cite[Theorem 11]{cook}  there is an
uncountable connected compact Polish space \( X \) with the property
that  all continuous maps \( f \colon X \to X \) are either constant
or the identity map; this implies that all nonempty subsets $A \neq
X$ are Wadge incomparable, and hence that the Wadge hierarchy on \(
X \) is bad but not very bad. Moreover, we cannot require \( X \) to
be just quasi-Polish: for example, the Wadge hierarchy on the
perfect \(\omega\)-algebraic domain \( L_{\omega+1} \) from Example~\ref{exdim2}(3), which has dimension \(\omega\), is good (but not
very good), hence it does not contain infinite antichains.

Let us now consider the $\mathsf{D}_2$-hierarchy on locally
connected Polish spaces. Notice that for any $f\in\mathsf{D}_2(X,Y)$
between Polish spaces $X,Y$ there is a nonempty open set $U\subseteq
X$ such that  the restriction of $f$ to the closure \(
\mathrm{cl}(U) \) of $U$ is continuous. In fact, the Jayne-Rogers
theorem \( \mathsf{D}_2(X,Y) = \mathsf{D}^\mathsf{W}_2(X,Y) \)
implies that for such an \( f \) there is a closed covering $\langle
X_k \mid k \in \omega \rangle$  of $X$ such that $f \restriction
X_k$ is continuous for each $k \in \omega$. By Baire's category

theorem there is \( k \in \omega \) such that $X_k$ is not meager,
hence $U \subseteq X_k$ for some nonempty open set \( U \). Since \(
X_k \) is closed, it follows that \( \mathrm{cl}(U) \subseteq X_k
\), and hence \( f \restriction \mathrm{cl}(   U) = (f \restriction
X_k) \restriction \mathrm{cl}(U) \) is continuous.

\begin{proposition} \label{propnotverygood}
Suppose $X$ is an  uncountable locally connected Polish space. Then
the $(\mathbf{B}, \mathsf{D}_2)$-hierarchy on $X$ is not very good.
\end{proposition}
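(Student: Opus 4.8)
The plan is to reduce the statement to the single concrete space $[0,1]$ and there exploit the rigidity that the Jayne--Rogers theorem imposes on $\mathsf{D}_2$-reductions in the presence of connectedness.

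\emph{Step 1: reduction to an arc.} Since $X$ is locally connected and Polish, if $X$ were totally disconnected it would be discrete, hence countable --- contradicting uncountability. So $X$ has a connected component $C$ with $|C|\ge 2$; being a component of a locally connected space, $C$ is open, hence a connected, locally connected Polish space with at least two points, and such a space is arcwise connected (classical). Fix an arc $I\subseteq C$, so $I$ is homeomorphic to $[0,1]$; being compact, $I$ is closed in $X$, hence $I\in\mathbf{\Delta}^0_2(X)$. By the $\mathbf{\Delta}^0_2$-version of Corollary~\ref{corfr2}, applied with $\F=\mathsf{D}_2$ (note $\mathsf{D}^\mathsf{W}_2(X,I)\subseteq\mathsf{D}_2$), together with Remark~\ref{remrestricted}, there is an order-embedding (coming from Proposition~\ref{propfr2}) of the $(\mathbf{B},\mathsf{D}_2)$-hierarchy on $[0,1]$ into the $(\mathbf{B},\mathsf{D}_2)$-hierarchy on $X$. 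As order-embeddings preserve infinite descending chains and infinite antichains, it suffices to prove that the $(\mathbf{B},\mathsf{D}_2)$-hierarchy on $[0,1]$ is not very good.

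\emph{Step 2: the shape of $\mathsf{D}_2$-reductions on $[0,1]$.} Let $f\in\mathsf{D}_2([0,1],[0,1])$. By the Jayne--Rogers theorem $f\in\mathsf{D}^\mathsf{W}_2$, so, refining a $\mathbf{\Sigma}^0_2$-partition to a countable closed cover, we get $[0,1]=\bigcup_n F_n$ with each $F_n$ closed and $f\!\restriction\! F_n$ continuous. By Baire's theorem $G:=\bigcup_n\operatorname{int}(F_n)$ is open and dense, $f$ is continuous at every point of $G$, and $G$ is a countable disjoint union of open subintervals $J$ of $[0,1]$ on each of which $f$ is continuous; hence each $f(J)$ is connected, i.e.\ a (possibly degenerate) subinterval of $[0,1]$. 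Consequently, if $A=f^{-1}(B)$ then for every component $J$ of $G$ we have $A\cap J=(f\!\restriction\! J)^{-1}\bigl(B\cap f(J)\bigr)$ with $f\!\restriction\! J\colon J\to f(J)$ continuous and $f(J)$ an interval. Thus any $\mathsf{D}_2$-reduction of $A$ to $B$ induces, on a \emph{dense open} union of intervals, continuous reductions of the local pieces $A\cap J$ to the interval-pieces $B\cap f(J)$.

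\emph{Step 3: a witness to the failure of semi-well-orderedness (the crux).} Semi-well-orderedness implies the semi-linear ordering property $\forall A,B\,(A\le_{\mathsf{D}_2}B\ \vee\ \overline B\le_{\mathsf{D}_2}A)$, so it is enough to produce Borel $A,B\subseteq[0,1]$ with $A\not\le_{\mathsf{D}_2}B$ and $\overline B\not\le_{\mathsf{D}_2}A$. I will take both $A$ and $B$ \emph{properly $\mathbf{\Sigma}^0_2$} (such sets exist since the Borel hierarchy of $[0,1]$ does not collapse). Then $\overline B$ is properly $\mathbf{\Pi}^0_2$, and $\overline B\not\le_{\mathsf{D}_2}A$ follows at once from the fact that $\mathsf{D}_2$-preimages of $\mathbf{\Sigma}^0_2$ sets are $\mathbf{\Sigma}^0_2$ (as $\mathsf{D}_2=\mathbf{\Pi}^0_2(\cdot,\cdot)$). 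The remaining --- and main --- point is to choose two properly $\mathbf{\Sigma}^0_2$ sets $A,B$ with $A\not\le_{\mathsf{D}_2}B$; here the complexity bound is of no help (both sit at the same level), and this is exactly where connectedness must be used: by Step 2 a $\mathsf{D}_2$-reduction of $A$ to $B$ would yield, on a dense open union of intervals $J$, continuous maps with interval images sending $A\cap J$ to a piece of $B$. I intend to build $A$ and $B$ with incompatible ``local complexity profiles at arbitrarily small scales'' --- distributing along a convergent sequence of sub-intervals patterns coding two different countable well-founded structures --- so that no such family of continuous, interval-image maps can simultaneously match them; this is modeled on Hertling's constructions for $\le_\mathsf{W}$ on $\mathbb{R}$ \cite{he96}, transposed to the $\mathsf{D}_2$-setting via the interval-image rigidity of Step~2. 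Carrying out this construction and verifying that the resulting configuration is not realized by \emph{any} dense-open continuous approximation is the essential difficulty of the proof.
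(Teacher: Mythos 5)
Your Steps 1 and 2 are correct and, for Step 2, essentially identical to the observation the paper makes just before the proposition (Jayne--Rogers plus Baire category gives a nonempty open set on which the reduction is continuous; the paper does not even need the full dense-open refinement). The reduction to an arc in Step 1 is a legitimate alternative to working in $X$ directly. The problem is Step 3, which you yourself identify as the crux and then do not carry out: you only announce an intention to construct two properly $\mathbf{\Sigma}^0_2$ subsets of $[0,1]$ that are $\mathsf{D}_2$-incomparable, ``modeled on Hertling's constructions.'' Producing two \emph{same-level} $\mathsf{D}_2$-incomparable sets on an interval is genuinely delicate --- it is exactly the kind of statement the paper only obtains later, for $\RR^2$, via the long back-and-forth construction of Theorem~\ref{theorR2}, and the authors explicitly note that their argument does not adapt to $\RR$ (whence the open question about $\RR$ at the end of that discussion). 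So the one step your plan actually depends on is missing, and it is at least as hard as material the paper treats as a separate, substantial theorem.

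The paper's proof of Proposition~\ref{propnotverygood} sidesteps this difficulty entirely by \emph{not} taking $A$ and $B$ at the same level. It takes $A$ a properly $\mathbf{\Sigma}^0_2(X)$ set with $A$ and $\overline{A}$ both dense, and $B$ a properly $\mathbf{\Sigma}^0_3$ subset of a compact copy $Y$ of the Cantor set inside $X$. Then $B,\overline{B}\nleq_{\mathsf{D}_2}A,\overline{A}$ and the incomparability of each set with its own complement are free from closure of the Borel pointclasses under $\mathsf{D}_2$-preimages, and the only direction needing work, $A\nleq_{\mathsf{D}_2}B$, follows softly: on a connected open $U$ where $f$ is continuous, density of $A\cap U$ forces $f(U)\subseteq Y$, and a connected subset of a totally disconnected set is a point, contradicting that both $A$ and $\overline{A}$ meet $U$. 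Your Step 2 already gives you all the tools for this argument (a continuous restriction with connected image); the fix is to replace your same-level target $B$ by a higher-level set sitting inside a totally disconnected closed subset, so that connectedness, rather than a hard combinatorial construction, does the remaining work.
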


\begin{proof}
We will find Borel sets $A,B\subseteq X$ such that $\{A,
\overline{A}, B, \overline{B} \}$ is an antichain with respect to
$\mathsf{D}_2(X)$-reducibility. Since $X$ is uncountable, there is a
compact set $Y\subseteq X$ with $Y \simeq_1 \mathcal{C}$ by
\cite[Theorem 13.6]{ke94}. Let $A$ be a proper ${\bf\Sigma}^0_2(X)$
set such that both $A$ and $\overline{A}$ are dense in $X$, and let
$B \subseteq Y$ be a proper ${\bf\Sigma}^0_3(Y)$ (so that, in
particular, \( B \) is also a proper \( \boldsymbol{\Sigma}^0_3(X)
\) set). By their topological complexity (and the fact that all the
pointclasses \( \boldsymbol{\Sigma}^0_2(X), \boldsymbol{\Pi}^0_2(X),
\boldsymbol{\Sigma}^0_3(X), \boldsymbol{\Pi}^0_3(X) \) are closed
under \( \mathsf{D}_2 \)-preimages), we have that the pairs \( (A,
\overline{A} ) \) and \( ( B, \overline{B} ) \) are \( \mathsf{D}_2
\)-incomparable, and that \( B, \overline{B} \nleq_{\mathsf{D}_2} A,
\overline{A} \). Hence it remains to show that \( A, \overline{A}
\not\leq_{\mathsf{D}_2} B , \overline{B} \). In fact, since \( D_0
\leq_{\mathsf{D}_2} D_1 \iff \overline{D_0} \leq_{\mathsf{D}_2}
\overline{D_1} \) for every \( D_0, D_1 \subseteq X \), it suffices
to show that \( C \nleq_{\mathsf{D}_2} B \) for every \( C \in \{ A,
\overline{A} \} \). Suppose toward a contradiction that there is a
reduction $f\in\mathsf{D}_2(X)$ of $C$ to $B$.
Then by the observation preceding this proposition, there is a nonempty open connected set $U\subseteq X$ such that $f\restriction U$ is continuous. Since $C\cap U$ is dense in $U$, its image $f(C\cap U)$ is also dense in $f(U)$. This implies $f(U)\subseteq Y$, since $f(C\cap U) \subseteq B \subseteq Y$ and $Y$ is closed. Since $f(U)$ is connected and $Y$ is totally disconnected, $f\restriction U$ is constant. But this contradicts our assumption that $C=f^{-1}(B)$ because both \( C \) and \( \overline{C} \) are dense and hence have nonempty intersection with \( U \).
\end{proof}

Using Corollary~\ref{corfr2}, we have also the following result.

\begin{corollary}
Suppose that $X$ is quasi-Polish and that there is $Y\in{\bf\Delta}^0_2(X)$ which is an uncountable locally connected Polish space. Then the $(\mathsf{B},\mathsf{D}_2)$-hierarchy on $X$ is not very good.
\end{corollary}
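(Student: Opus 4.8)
The plan is to transfer to $X$, via a Borel retraction onto $Y$, the antichain of Borel sets produced inside $Y$ by Proposition~\ref{propnotverygood}, thereby exhibiting a four-element $\mathsf{D}_2(X)$-antichain of Borel subsets of $X$; since any semi-well-ordered structure has antichains of size at most two, this shows that the $(\mathbf{B},\mathsf{D}_2)$-hierarchy on $X$ is not very good.

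Concretely, I would first apply Proposition~\ref{propnotverygood} to the uncountable locally connected Polish space $Y$, obtaining Borel sets $A,B \subseteq Y$ such that $\{A,\overline{A},B,\overline{B}\}$ is an antichain in $(\mathscr{P}(Y),\leq^Y_{\mathsf{D}_2})$; as in the proof of that proposition, one may take $A$ to be a proper $\mathbf{\Sigma}^0_2(Y)$ set and $B$ a proper $\mathbf{\Sigma}^0_3(Y)$ set, which in particular guarantees that these four sets are pairwise distinct. Next, since $Y \in \mathbf{\Delta}^0_2(X)$, both $X$ and $Y$ are quasi-Polish, and $\mathsf{D}^\mathsf{W}_2(X,Y) \subseteq \mathsf{D}_2(X,Y) \subseteq \mathsf{D}_2$, Corollary~\ref{corfr2} (applied with $\F = \mathsf{D}_2$) yields an injection $\iota \colon (\mathscr{P}(Y),\leq^Y_{\mathsf{D}_2}) \to (\mathscr{P}(X),\leq^X_{\mathsf{D}_2})$. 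Unwinding its construction through the proof of Proposition~\ref{propfr2}, $\iota$ has the form $C \mapsto r^{-1}(C)$ for a fixed $\mathsf{D}^\mathsf{W}_2$-retraction $r$ of $X$ onto $Y$, and the argument there in fact establishes the biconditional $C \leq^Y_{\mathsf{D}_2} D \iff \iota(C) \leq^X_{\mathsf{D}_2} \iota(D)$, so that $\iota$ reflects incomparability and hence sends $\leq_{\mathsf{D}_2}$-antichains to $\leq_{\mathsf{D}_2}$-antichains. Moreover $r \in \mathsf{D}_2(X,Y) = \mathbf{\Pi}^0_2(X,Y)$ is Borel measurable, so $\iota$ maps Borel subsets of $Y$ to Borel subsets of $X$.

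Putting the two steps together, $\{\iota(A),\iota(\overline{A}),\iota(B),\iota(\overline{B})\}$ is a four-element antichain of Borel subsets of $X$ with respect to $\mathsf{D}_2(X)$-reducibility, whence the $(\mathbf{B},\mathsf{D}_2)$-hierarchy on $X$ is not very good, as required. There is no real obstacle here: the whole argument rests on Proposition~\ref{propnotverygood} and Corollary~\ref{corfr2}, and the only points deserving a line of verification are that the injection supplied by Corollary~\ref{corfr2} genuinely reflects the reducibility preorder (so that incomparability, and not merely the order, is preserved) and that it restricts to Borel sets --- both immediate from the proof of Proposition~\ref{propfr2} together with the Borel measurability of $\mathsf{D}_2$-functions.
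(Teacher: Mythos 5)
Your proposal is correct and follows exactly the route the paper intends: the paper derives this corollary in one line from Corollary~\ref{corfr2} (in its \( \mathbf{\Delta}^0_2 \) form, with \( \F = \mathsf{D}_2 \supseteq \mathsf{D}^\mathsf{W}_2 \)) combined with Proposition~\ref{propnotverygood} applied to \( Y \). Your additional verifications --- that the injection from Proposition~\ref{propfr2} reflects the preorder and hence carries antichains to antichains, and that the \( \mathsf{D}^\mathsf{W}_2 \)-retraction pulls Borel sets back to Borel sets so the argument localizes to the \( (\mathbf{B},\mathsf{D}_2) \)-hierarchy (cf.\ Remark~\ref{remrestricted}) --- are exactly the details the paper leaves implicit.
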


We will now turn our attention to Euclidean spaces and show that their $(\mathbf{B},\mathsf{D}_2)$-hierarchy is in fact bad. In what follows, we will crucially use two simple properties of the real line \( \RR \), namely the fact that, since \( \RR \) is connected, every continuous function \( f \colon \RR \to \RR \) maps intervals to (possibly degenerate) intervals, and the fact that \( \RR \) is $\sigma$-compact. Recall that
in an arbitrary Polish space \( X \) the image of a closed subset of \( X \) under a continuous reduction can be analytic non-Borel: however, if \( X \) is $\sigma$-compact the situation becomes considerably simpler, as it is shown by the next lemma.

\begin{lemma} \label{lemsaintraymond}
Let $2 \leq \alpha < \omega_1$. Suppose $X$ and $Y$ are Polish spaces, $f \colon X\rightarrow Y$ is surjective, and $A\subseteq Y$.
\begin{enumerate}[(1)]
\item \cite[Theorem 5]{sr76} If $X$ is compact, $f$ is continuous, and $f^{-1}(A)\in {\bf\Sigma}^0_\alpha(X)$, then $A\in{\bf\Sigma}^0_\alpha(Y)$.
\item If $X$ is $\sigma$-compact, $f\in\mathsf{D}_2(X,Y)$, and $f^{-1}(A)\in {\bf\Sigma}^0_\alpha(X)$, then $A\in{\bf\Sigma}^0_\alpha(Y)$.
\end{enumerate}
\end{lemma}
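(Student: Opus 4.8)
The plan is to reduce part~(2) to part~(1) by chopping $f$ into countably many pieces that are simultaneously continuous and have compact domain, and then reassembling the conclusion. First, since $f \in \mathsf{D}_2(X,Y)$, the Jayne--Rogers theorem $\mathsf{D}_2(X,Y) = \mathsf{D}^\mathsf{W}_2(X,Y)$ provides a $\mathbf{\Sigma}^0_2$-partition $\langle D_n \mid n \in \omega \rangle$ of $X$ with each $f \restriction D_n$ continuous; writing each $D_n$ as a countable union of subsets closed in $X$, I obtain a countable covering $\langle F_n \mid n \in \omega \rangle$ of $X$ by closed sets such that $f \restriction F_n$ is continuous for each $n$ (this is exactly the observation made just before the lemma). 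Next I would exploit $\sigma$-compactness: fixing compact sets $K_m$ with $X = \bigcup_{m \in \omega} K_m$, the family $\langle F_n \cap K_m \mid n,m \in \omega \rangle$ is a countable covering of $X$ by compact sets on each of which $f$ is continuous; relabel it as $\langle C_j \mid j \in \omega \rangle$.

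For each $j$ set $g_j = f \restriction C_j \colon C_j \to f(C_j)$. Since $C_j$ is compact and $g_j$ continuous, $f(C_j)$ is a compact (hence Polish) subset of $Y$, and in particular it is \emph{closed} in $Y$; moreover $g_j$ is a continuous surjection onto $f(C_j)$. Now $g_j^{-1}(A \cap f(C_j)) = f^{-1}(A) \cap C_j \in \mathbf{\Sigma}^0_\alpha(C_j)$, because $\mathbf{\Sigma}^0_\alpha$ is closed under continuous preimages (here, under the inclusion $C_j \hookrightarrow X$) and $f^{-1}(A) \in \mathbf{\Sigma}^0_\alpha(X)$ by hypothesis. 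Applying part~(1) to $g_j$ yields $A \cap f(C_j) \in \mathbf{\Sigma}^0_\alpha(f(C_j))$. I would then invoke the standard fact that for a closed subspace $F \subseteq Y$ and $\alpha \geq 2$ one has $\mathbf{\Sigma}^0_\alpha(F) \subseteq \mathbf{\Sigma}^0_\alpha(Y)$ (proved by a routine simultaneous induction: for $F$ closed, $\mathbf{\Pi}^0_\beta(F) \subseteq \mathbf{\Pi}^0_\beta(Y)$ for $\beta \geq 1$ and $\mathbf{\Sigma}^0_\beta(F) \subseteq \mathbf{\Sigma}^0_\beta(Y)$ for $\beta \geq 2$, using that $F$ itself is $\mathbf{\Pi}^0_1(Y)$ and that open subsets of $F$ are $\mathbf{\Delta}^0_2(Y)$). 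Hence $A \cap f(C_j) \in \mathbf{\Sigma}^0_\alpha(Y)$ for every $j$. Finally, since $f$ is surjective and $X = \bigcup_j C_j$, we get $Y = \bigcup_j f(C_j)$ and therefore $A = \bigcup_j (A \cap f(C_j))$; as $\mathbf{\Sigma}^0_\alpha(Y)$ is closed under countable unions, this gives $A \in \mathbf{\Sigma}^0_\alpha(Y)$, as required.

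The one point genuinely requiring care is the transfer step $\mathbf{\Sigma}^0_\alpha(f(C_j)) \subseteq \mathbf{\Sigma}^0_\alpha(Y)$: this is where closedness of $f(C_j)$ in $Y$ is used essentially, and also where the hypothesis $\alpha \geq 2$ enters, since the analogous inclusion fails for arbitrary subspaces and, even for closed subspaces, fails at level $\alpha = 1$. Everything else is bookkeeping — extracting the compact pieces from the Jayne--Rogers decomposition and $\sigma$-compactness, checking that restrictions and preimages behave well, and using closure of $\mathbf{\Sigma}^0_\alpha$ under continuous preimages and countable unions.
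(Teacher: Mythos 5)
Your argument is correct and follows essentially the same route as the paper: use Jayne--Rogers plus $\sigma$-compactness to cover $X$ by countably many compact sets on which $f$ is continuous, apply part~(1) to each piece (whose image is compact, hence closed in $Y$), transfer from the closed subspace to $Y$ using $\alpha \geq 2$, and take the countable union. The only difference is that you spell out in more detail the extraction of the compact covering and the subspace-to-ambient transfer, both of which the paper treats as immediate.
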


\begin{proof} The first part is proved in~\cite{sr76}. For the second part, if $X$ is $\sigma$-compact then by the Jayne-Rogers theorem \( \mathsf{D}_2(X,Y) = \mathsf{D}^\mathsf{W}_2(X,Y) \) there is a countable covering $\langle X_k \mid k\in\omega \rangle$ of $X$ consisting of compact sets such that $f \restriction X_k$ is continuous for each $k\in\omega$.
Since $(f\restriction X_k)^{-1}(A\cap f(X_k))= f^{-1}(A)\cap X_k\in {\bf\Sigma}^0_\alpha(X)$, the first part applied to \( f \restriction X_k \colon X_k \to f(X_k) \) implies that $A\cap f(X_k)\in {\bf\Sigma}^0_\alpha(f(X_k)) \subseteq \boldsymbol{\Sigma}^0_\alpha(Y)$ (because \( f(X_k) \) is compact, and hence closed in \( Y \)). Hence $A = \bigcup_{k \in \omega} (A \cap f(X_k)) \in {\bf\Sigma}^0_\alpha(Y)$.
\end{proof}

To construct antichains in the $(\mathbf{B},\mathsf{D}_2)$-hierarchies of a Polish spaces \( X \), we will consider sets which are everywhere proper $ {\bf\Sigma}^0_\alpha(X)$ for some $1< \alpha < \omega_1$.

\begin{lemma} \label{lemmaeverywhere}
Suppose $X$ is a perfect Polish space and $1< \alpha < \omega_1$. Then there is $A\in {\bf\Sigma}^0_\alpha(X)$ such that for all nonempty open sets $U\subseteq X$, $A\cap U \in {\bf\Sigma}^0_\alpha(X) \setminus \mathbf{\Pi}^0_\alpha(X)$.
\end{lemma}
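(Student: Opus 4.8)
The plan is to build the set $A$ locally and then patch the pieces together. Fix a countable base $\{U_n \mid n \in \omega\}$ of nonempty open sets for $X$. Since $X$ is perfect and Polish, each $U_n$ contains a nonempty open subset $V_n$ whose closure $\mathrm{cl}(V_n)$ is again a perfect Polish space, and we can moreover thin the $V_n$ out so that the family $\{\mathrm{cl}(V_n) \mid n \in \omega\}$ is pairwise disjoint — this is the standard fusion/shrinking argument available in any perfect Polish space (choose the $V_n$ by recursion, at stage $n$ picking $V_n \subseteq U_n$ disjoint from the finitely many closures already chosen; this is possible because $U_n$ is uncountable while $\bigcup_{i<n}\mathrm{cl}(V_i)$ together with the points that would have to be avoided is ``small'' relative to an open set in a perfect space). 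The point of the disjointness is that membership in $A$ near one $V_n$ will not interfere with the behaviour near another.

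Next, inside each $\mathrm{cl}(V_n)$ I would place a set $B_n$ which is properly $\mathbf{\Sigma}^0_\alpha$ there: this exists because $\mathrm{cl}(V_n)$ is an uncountable (indeed perfect) Polish space, so the Borel hierarchy on it does not collapse at level $\alpha$ (for $1 < \alpha < \omega_1$) — concretely, transport a universal/properly-$\mathbf{\Sigma}^0_\alpha$ set from $\mathcal{C}$ via an embedding $\mathcal{C} \hookrightarrow \mathrm{cl}(V_n)$, or just invoke non-collapse of the Borel hierarchy on uncountable Polish spaces. One must be slightly careful: I want $B_n$ to be properly $\mathbf{\Sigma}^0_\alpha$ as a subset of the space $\mathrm{cl}(V_n)$, and I want this properness to persist when $B_n$ is viewed as a subset of $X$. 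For $\alpha \geq 2$ this is unproblematic since $\mathrm{cl}(V_n)$ is closed in $X$, so $\mathbf{\Sigma}^0_\beta(\mathrm{cl}(V_n)) = \mathbf{\Sigma}^0_\beta(X)\restriction \mathrm{cl}(V_n)$ for all $\beta$, and hence a set that is $\mathbf{\Sigma}^0_\alpha$ but not $\mathbf{\Pi}^0_\alpha$ in $\mathrm{cl}(V_n)$ cannot be $\mathbf{\Pi}^0_\alpha$ in $X$ either (intersecting a hypothetical $X$-$\mathbf{\Pi}^0_\alpha$ superset-complement with $\mathrm{cl}(V_n)$ would give one in $\mathrm{cl}(V_n)$). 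Then set
\[
A = \bigcup_{n \in \omega} B_n.
\]

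Now I would verify the two required properties. First, $A \in \mathbf{\Sigma}^0_\alpha(X)$: each $B_n$ is $\mathbf{\Sigma}^0_\alpha(X)$ (being $\mathbf{\Sigma}^0_\alpha$ in the closed set $\mathrm{cl}(V_n)$, hence $\mathbf{\Sigma}^0_\alpha$ in $X$), and $\mathbf{\Sigma}^0_\alpha$ is closed under countable unions for $\alpha \geq 2$ by the very definition of the Borel hierarchy (Definition~\ref{defbh} and Proposition~\ref{propbh}). Second, given any nonempty open $U \subseteq X$, pick $n$ with $U_n \subseteq U$; then $V_n \subseteq U$, and since the $\mathrm{cl}(V_m)$ are pairwise disjoint we have $A \cap V_n = B_n \cap V_n$. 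Because $V_n$ is open in $\mathrm{cl}(V_n)$ and $B_n$ is properly $\mathbf{\Sigma}^0_\alpha$ there, $B_n \cap V_n$ is properly $\mathbf{\Sigma}^0_\alpha(V_n)$ — here I should choose $B_n$ from the outset so that this holds, e.g.\ by taking $B_n$ to be properly $\mathbf{\Sigma}^0_\alpha$ already inside an arbitrary prescribed nonempty open subset of $\mathrm{cl}(V_n)$, which again is possible by non-collapse applied locally. It follows that $A \cap U$ contains a relatively open subset, namely $A \cap V_n = B_n \cap V_n$, which is not $\mathbf{\Pi}^0_\alpha$; hence $A \cap U$ itself is not $\mathbf{\Pi}^0_\alpha(X)$ (if it were, its intersection with the open set $V_n$ would be $\mathbf{\Pi}^0_\alpha$, contradicting the previous sentence), while $A \cap U \in \mathbf{\Sigma}^0_\alpha(X)$ since $A$ is and $\mathbf{\Sigma}^0_\alpha$ is closed under intersection with open sets.

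**Main obstacle.** The only genuinely delicate point is arranging, simultaneously for all $n$, that $B_n$ is \emph{everywhere} properly $\mathbf{\Sigma}^0_\alpha$ inside $V_n$ (not merely somewhere), since that is exactly the property being bootstrapped; the cleanest route is to prove the special case ``there is a $\mathbf{\Sigma}^0_\alpha$ set that is properly $\mathbf{\Sigma}^0_\alpha$ in a given nonempty open set'' first for $X = \mathcal{N}$ or $\mathcal{C}$ (where one can write down a properly $\mathbf{\Sigma}^0_\alpha$ set explicitly and translate it into any prescribed basic clopen set by a homeomorphism), and then transport it into each $\mathrm{cl}(V_n)$ via an embedding of $\mathcal{C}$, checking that properness is preserved under the embedding as above. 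Everything else is routine bookkeeping with the closure-under-countable-unions property of $\mathbf{\Sigma}^0_\alpha$ and the disjointness of the $\mathrm{cl}(V_n)$.
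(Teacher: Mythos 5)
Your overall architecture (disjoint closed pieces, a properly $\mathbf{\Sigma}^0_\alpha$ set in each, take the union) is the same as the paper's, but the very first construction step has a genuine gap that the rest of the argument inherits. You want, for every element $U_n$ of a countable base, a nonempty open $V_n \subseteq U_n$ such that the closures $\mathrm{cl}(V_n)$ are pairwise disjoint. This is impossible: the sets $\mathrm{cl}(V_i)$ are not ``small'' relative to open sets --- each contains the nonempty open set $V_i$. Once $V_0$ has been chosen, any basic open set $U_n$ with $U_n \subseteq V_0$ (and there are such sets in any base) contains no nonempty open subset disjoint from $\mathrm{cl}(V_0)$, so the recursion cannot continue. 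The appeal to ``$U_n$ is uncountable while the union of previously chosen closures is small'' confuses cardinality with topological smallness; what is needed is nowhere density, which closures of open sets never have.

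The paper's proof is designed precisely to dodge this: the pieces $C_n$ are \emph{nowhere dense} closed copies of $\mathcal{C}$ (so each can always be avoided inside any nonempty open set, making the disjoint recursion go through), and the loss of interior is compensated by two extra requirements --- $\bigcup_n C_n$ is dense and $\diam(C_n) \to 0$ --- which together guarantee that every nonempty open $U$ contains some $C_n$ \emph{entirely}. Then if $A \cap U$ were $\mathbf{\Pi}^0_\alpha(X)$, intersecting with the closed set $C_n \subseteq U$ would make $A_n = A \cap C_n$ a $\mathbf{\Pi}^0_\alpha(C_n)$ set, contradicting the choice of $A_n$. Note this also removes the need for your ``everywhere properly $\mathbf{\Sigma}^0_\alpha$ inside $V_n$'' bootstrapping, which you correctly flag as the delicate point: an ordinary properly $\mathbf{\Sigma}^0_\alpha$ subset of each $C_n$ suffices, because the witnessing piece $C_n$ sits wholly inside $U$ rather than merely meeting it. Your fallback suggestion (transport an everywhere-proper set from $\mathcal{C}$ into $\mathrm{cl}(V_n)$ via an embedding) would not repair the argument either, since the image of such an embedding is compact and in general nowhere dense in $\mathrm{cl}(V_n)$, so the transported set is empty on many relatively open subsets of $V_n$.
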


\begin{proof}
Let \( d \) be a compatible metric for \( X \).
We first construct a sequence $\langle C_n \mid n\in\omega \rangle$ of disjoint nowhere dense closed subsets of $X$
with $\lim_{n \to \infty} \diam(C_n)=0$ (where the operator \( \diam \) refers to the chosen metric \( d \)) and such that $\bigcup_{n\in\omega} C_n$ is dense. Let \( \langle B_n \mid n \in \omega \rangle \) be an enumeration of a basis for the topology of \( X \). Observe that for every nonempty open set \( U \subseteq X \) there is a homeomorphic copy \( C \subseteq U \) of \( \mathcal{C} \) which is nowhere dense (in \( X \)), and that \( C \) is necessarily closed since \( \mathcal{C} \) is compact and \( X \) is Hausdorff. By induction on \( n \in \omega \), choose (using \( \mathsf{DC} \)) a closed nowhere dense \( C_n \subseteq B'_n = B_n \setminus \bigcup_{i < n} C_i \) such that \( \diam(C_n) \leq 2^{-n} \): such a \( C_n \) exists by the observation above since, by the inductive hypothesis applied to the \( C_i \)'s, \( B'_n \) is a nonempty open set, and  if necessary \( B'_n \) can obviously be further shrunk to a nonempty open set of diameter \( \leq 2^{-n} \). It is straightforward to check that the sequence of the \( C_n \)'s constructed in this way has the desired properties.

Now choose sets $A_n \in {\bf\Sigma}^0_\alpha(C_n) \setminus {\bf\Pi}^0_\alpha(C_n)$ (using \( \mathsf{DC} \) again). We claim that $A=\bigcup_{n\in\omega} A_n$ is as required. First notice that each \( A_n \in \boldsymbol{\Sigma}^0_\alpha(X) \) since \( \alpha \geq 2 \) and \( C_n \) is closed, hence \( A \in \boldsymbol{\Sigma}^0_\alpha(X) \) as well. Now assume towards a contradiction that there is an open \( U \subseteq X \) such that \( A \cap U \in \boldsymbol{\Pi}^0_\alpha(X) \). Let \( x \in X \) and \( 0 < \varepsilon \in \RR^+ \) be such that \( B_d(x, \varepsilon) \subseteq U \). Let \( N \in \omega \) be such that  \( \diam(C_n) < \frac{\varepsilon}{2} \) for every \( n \geq N \). Since each \( C_n \) was assumed nowhere dense, \( V = B_d(x, \frac{\varepsilon}{2}) \setminus \bigcup_{n < N} C_n \) is a nonempty open set. By density of \( \bigcup_{n \in \omega} C_n \) and the definition of \( V \), there is \( n \geq N \) such that \( C_n \cap V \neq \emptyset \). Hence, by the choice of \( N \) and \( V \) we have \( C_n \subseteq U  \). By the assumption \( A \cap U \in \boldsymbol{\Pi}^0_\alpha(X) \), \( A_n = A \cap C_n  = (A \cap U) \cap C_n\in \boldsymbol{\Pi}^0_\alpha(C_n) \), contradicting the choice of the \( A_n \)'s.
\end{proof}

\begin{proposition}
There are uncountable antichains in the $\mathsf{D}_2$-hierarchy on $[0,1]$. \end{proposition}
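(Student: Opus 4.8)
The plan is to build, for each ordinal \( \alpha \) with \( 2 \leq \alpha < \omega_1 \), a single Borel set \( A_\alpha \subseteq [0,1] \) and to prove that the (uncountable) family \( \{ A_\alpha \mid 2 \leq \alpha < \omega_1 \} \) is an antichain in \( (\mathscr{P}([0,1]), \leq_{\mathsf{D}_2}) \). For \( A_\alpha \) I would take an \emph{everywhere properly \( \mathbf{\Sigma}^0_\alpha \)} set as produced by Lemma~\ref{lemmaeverywhere} applied to the perfect Polish space \( [0,1] \), so that \( A_\alpha \cap U \in \mathbf{\Sigma}^0_\alpha([0,1]) \setminus \mathbf{\Pi}^0_\alpha([0,1]) \) for every nonempty open \( U \subseteq [0,1] \); taking \( U = [0,1] \) shows each \( A_\alpha \) is a proper \( \mathbf{\Sigma}^0_\alpha \) set, hence Borel. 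One of the two incomparability directions is immediate: if \( 2 \leq \alpha < \beta < \omega_1 \), then \( A_\beta \not\leq_{\mathsf{D}_2} A_\alpha \), since \( A_\alpha \in \mathbf{\Sigma}^0_\alpha \) while \( A_\beta \notin \mathbf{\Sigma}^0_\alpha \) (it is properly \( \mathbf{\Sigma}^0_\beta \) and \( \mathbf{\Sigma}^0_\alpha \subseteq \mathbf{\Delta}^0_\beta \) by Proposition~\ref{propbh}), and the pointclass \( \mathbf{\Sigma}^0_\alpha \) is closed under \( \mathsf{D}_2 \)-preimages (as used in the proof of Proposition~\ref{propnotverygood}), so a reduction would force \( A_\beta \in \mathbf{\Sigma}^0_\alpha \).

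The substantive direction is \( A_\alpha \not\leq_{\mathsf{D}_2} A_\beta \) for \( \alpha < \beta \). Suppose toward a contradiction that \( A_\alpha = f^{-1}(A_\beta) \) with \( f \in \mathsf{D}_2([0,1]) \). By the observation preceding Proposition~\ref{propnotverygood} (the Jayne--Rogers theorem together with Baire category), \( f \) is continuous on the closure of some nonempty open set; since every nonempty open subset of \( [0,1] \) contains a nondegenerate open interval, I may fix \( 0 \leq a < b \leq 1 \) with \( f \restriction [a,b] \) continuous. As \( [a,b] \) is compact and connected, \( f([a,b]) = [c,d] \) is a closed (possibly degenerate) interval, and \( f \restriction [a,b] \colon [a,b] \to [c,d] \) is a continuous surjection with \( (f \restriction [a,b])^{-1}(A_\beta \cap [c,d]) = A_\alpha \cap [a,b] \). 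Here \( A_\alpha \cap [a,b] \in \mathbf{\Sigma}^0_\alpha([0,1]) \) (it is \( A_\alpha \) intersected with a closed set), whereas \( A_\alpha \cap [a,b] \notin \mathbf{\Pi}^0_\alpha([0,1]) \): otherwise \( A_\alpha \cap (a,b) = (A_\alpha \cap [a,b]) \cap (a,b) \in \mathbf{\Pi}^0_\alpha \), contradicting everywhere-properness of \( A_\alpha \). If \( c = d \), then \( A_\alpha \cap [a,b] \) is either \( [a,b] \) or \( \emptyset \), both closed, contradicting \( A_\alpha \cap [a,b] \notin \mathbf{\Pi}^0_\alpha \). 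If \( c < d \), then Lemma~\ref{lemsaintraymond}(1), applied to the continuous surjection \( f \restriction [a,b] \) from the compact space \( [a,b] \), gives \( A_\beta \cap [c,d] \in \mathbf{\Sigma}^0_\alpha([c,d]) \), hence \( A_\beta \cap (c,d) \in \mathbf{\Sigma}^0_\alpha([0,1]) \) (intersecting with the \( \mathbf{\Delta}^0_\alpha \) set \( (c,d) \)); but \( A_\beta \cap (c,d) \) is properly \( \mathbf{\Sigma}^0_\beta \), so it is not in \( \mathbf{\Pi}^0_\beta \supseteq \mathbf{\Delta}^0_\beta \supseteq \mathbf{\Sigma}^0_\alpha \), a contradiction. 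Thus \( \{ A_\alpha \mid 2 \leq \alpha < \omega_1 \} \) is an uncountable antichain, and since its members are Borel it even witnesses that the \( (\mathbf{B}, \mathsf{D}_2) \)-hierarchy on \( [0,1] \) is bad.

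I expect the only mildly delicate points to be the passage from the nonempty open set furnished by the Jayne--Rogers/Baire argument to a genuine closed interval \( [a,b] \) (so that connectedness forces \( f([a,b]) \) to be an interval and compactness licenses the use of Lemma~\ref{lemsaintraymond}(1)), and the routine bookkeeping between relative and absolute Borel classes on \( [0,1] \), \( [a,b] \) and \( [c,d] \), together with the harmless finite modifications at the endpoints \( a,b \). These are exactly the places where the features distinguishing \( [0,1] \) from the Baire space (connectedness, and a neighbourhood base of connected sets) enter; beyond them the argument is a direct assembly of Lemma~\ref{lemmaeverywhere}, Lemma~\ref{lemsaintraymond}(1), and the closure of the classes \( \mathbf{\Sigma}^0_\gamma \) under \( \mathsf{D}_2 \)-preimages.
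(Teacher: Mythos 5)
Your proposal is correct and follows essentially the same route as the paper: everywhere properly $\mathbf{\Sigma}^0_\alpha$ sets from Lemma~\ref{lemmaeverywhere}, closure of $\mathbf{\Sigma}^0_\alpha$ under $\mathsf{D}_2$-preimages for one direction, and the Jayne--Rogers/Baire-category restriction to a closed interval combined with Lemma~\ref{lemsaintraymond}(1) for the other. The only (harmless) difference is that you apply Lemma~\ref{lemmaeverywhere} directly to every $2\leq\alpha<\omega_1$, whereas the paper first detours through the sets $W_\alpha$ of well-order codes to exhibit an unbounded set of Borel ranks; your handling of the degenerate case $c=d$ and the relative-vs-absolute bookkeeping is also slightly more explicit than the paper's.
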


\begin{proof}
Let $W_\alpha\subseteq\mathcal{C}$ denote the set of codes of well-orders on $\omega$ of order type at most $\alpha$. Let $W=\bigcup_{\alpha<\omega_1} W_\alpha$. Since each $W_\alpha$ is Borel and $W$ is $\boldsymbol{\Pi}^1_1$-complete, the Borel ranks of the sets $W_\alpha$ are unbounded in $\omega_1$. Hence we obtain an unbounded set $C\subseteq\omega_1$ and a collection $(A_\alpha)_{\alpha\in C}$ of subsets of $\mathcal{C}$ such that $A_\alpha$ is $\boldsymbol{\Sigma}^0_\alpha(\mathcal{C})$-complete.
It follows from the previous Lemma (using $\mathsf{DC}$) that there is a collection $(A_\alpha)_{\alpha\in C}$ of Borel subsets of $[0,1]$ such that $A_\alpha\cap (a,b)$ is a proper ${\bf\Sigma}^0_\alpha([0,1])$ set for all nondegenerate open intervals $(a,b)\subseteq [0,1]$.

Suppose $\alpha,\beta\in C$ and $1<\alpha<\beta$.
Then \( A_\beta \nleq_{\mathsf{D}_2} A_\alpha \) because the pointclass \( \boldsymbol{\Sigma}^0_\alpha(X) \) is closed under \( \mathsf{D}_2 \)-preimages. Conversely, assume towards a contradiction  that there is a reduction $f\in\mathsf{D}_2([0,1])$ of $A_\alpha$ to $A_\beta$.
Let $[a,b]$ be a nondegenerate closed interval such that $f\restriction[a,b]$ is continuous (which exists by the observation preceding Proposition~\ref{propnotverygood}). Then there is an interval $[c,d]$ with $f([a,b])= [c,d]$, and since $f\restriction [a,b]$ cannot be constant (as otherwise either \( A_\alpha \cap [a,b] = [a,b] \) or \( A_\alpha \cap [a,b] = \emptyset \), contradicting the choice of the \( A_\alpha \)'s), $[c,d]$ is nondegenerate. Since $(f\restriction [a,b])^{-1} [A_\beta\cap [c,d]]=A_\alpha\cap [a,b]\in{\bf\Sigma}^0_\alpha([0,1])$ and $A_\beta\cap [c,d]\notin \boldsymbol{\Sigma}^0_\alpha([0,1]) \subseteq {\bf\Pi}^0_\beta([0,1])$, this contradicts Lemma~\ref{lemsaintraymond}.
\end{proof}

\begin{corollary}\label{corD2bad}
\begin{enumerate}[(1)]
\item
Suppose $X$ is an Hausdorff quasi-Polish space with $[0,1]\subseteq X$. Then there are uncountable antichains in the $\mathsf{D}_2$-hierarchy on $X$.
\item
The $\mathsf{D}_2$-hierarchy on $\mathbb{R}^n$ and $[0,1]^n$ (for $1\leq n\leq \omega$) is bad.
\end{enumerate}
\end{corollary}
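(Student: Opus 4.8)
The plan is to obtain both parts from the Proposition just proved (uncountable antichains in the $\mathsf{D}_2$-hierarchy on $[0,1]$) by pushing such an antichain forward along a suitable retraction, using the machinery of the present section. For part~(1): given a copy of $[0,1]$ sitting inside $X$ as a subspace, I first observe that, $X$ being Hausdorff and $[0,1]$ compact, this copy is closed in $X$, hence $[0,1] \in \mathbf{\Pi}^0_1(X) \subseteq \mathbf{\Delta}^0_2(X)$ by Proposition~\ref{propbh}; moreover $[0,1]$ is a nonempty (quasi-)Polish space and $\mathsf{D}^\mathsf{W}_2(X,[0,1]) \subseteq \mathsf{D}_2$. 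Thus the hypotheses of the second part of Corollary~\ref{corfr2} are met (with $Y = [0,1]$ and $\F = \mathsf{D}_2$), yielding an injection from $(\mathscr{P}([0,1]), \leq^{[0,1]}_{\mathsf{D}_2})$ into $(\mathscr{P}(X), \leq^X_{\mathsf{D}_2})$.

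Next I would check that this injection actually transports antichains. The injection is the map $A \mapsto r^{-1}(A)$ for a $\mathsf{D}^\mathsf{W}_2$-retraction $r \colon X \to [0,1]$ of $X$ onto $[0,1]$ (explicitly, $r \restriction [0,1] = \id_{[0,1]}$ and $r$ constant off $[0,1]$), and the proof of Proposition~\ref{propfr2} in fact establishes the biconditional $A \leq^{[0,1]}_{\mathsf{D}_2} B \iff r^{-1}(A) \leq^X_{\mathsf{D}_2} r^{-1}(B)$ for all $A,B \subseteq [0,1]$. Hence the map reflects (and preserves) $\leq_{\mathsf{D}_2}$-incomparability, so it carries the uncountable antichain on $[0,1]$ from the preceding Proposition to an uncountable antichain in the $\mathsf{D}_2$-hierarchy on $X$, which proves~(1).

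Part~(2) then follows at once: for each $1 \leq n \leq \omega$ the spaces $\RR^n$ and $[0,1]^n$ are Polish, hence Hausdorff quasi-Polish, and each contains a homeomorphic copy of $[0,1]$ as a subspace (e.g.\ $[0,1] \times \{z\}$ for a fixed point $z$), so part~(1) provides uncountable --- in particular infinite --- antichains in the corresponding $\mathsf{D}_2$-hierarchies, which is exactly the assertion that they are bad. The only step requiring any genuine care is verifying that the transfer map coming from Corollary~\ref{corfr2} is a true order-embedding and not merely order-preserving, so that incomparability is reflected and not only preserved; this is immediate from inspecting the proof of Proposition~\ref{propfr2}, and everything else is routine bookkeeping.
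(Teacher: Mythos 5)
Your proof is correct and follows essentially the same route as the paper: the compact copy of $[0,1]$ is closed in the Hausdorff space $X$, hence in $\mathbf{\Delta}^0_2(X)$, and the second part of Corollary~\ref{corfr2} (via the retraction of Proposition~\ref{propfr2}, whose proof indeed gives the biconditional needed to reflect incomparability) transports the uncountable antichain from $[0,1]$ into $X$. Part (2) then follows exactly as you say.
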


\begin{proof}
This follows from the fact that a compact subset of an Hausdorff quasi-Polish  space is closed and from Corollary~\ref{corfr2}.
\end{proof}

Corollary~\ref{corD2bad} leaves open the following problem:

\begin{question}
Is the $\mathsf{D}_2$-hierarchy on $\mathbb{R}$ \emph{very} bad?
\end{question}

We do not know the answer to the above question, but we are at least able to show that
the $(\mathbf{B}, \mathsf{D}_2)$-hierarchy on $\RR^2$ (and hence, since \( \RR^2 \) is \(\sigma\)-compact, on any Hausdorff quasi-Polish space containing \( \RR^2 \), like the spaces \( [0,1]^n \) and \( \RR^n \) for \( 2 \leq n \leq \omega \)) is very bad. Unfortunately, our argument cannot be adapted to $\mathbb{R}$.

\begin{theorem} \label{theorR2}
The quasi-order $(\mathscr{P}(\omega),\subseteq^*)$ of inclusion modulo finite sets on $\mathscr{P}(\omega)$ embeds into $(\boldsymbol{\Sigma}^0_2(\mathbb{R}^2),\leq_{\mathsf{D}_2})$.
\end{theorem}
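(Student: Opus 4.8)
The plan is to realize the embedding very concretely, using the product structure of $\RR^2 = \RR \times \RR$ and the fact that $\RR$ contains a closed copy of $\mathcal{N}$ together with a distinguished $\sigma$-compact ``grid'' of exceptional points. First I would fix a countable set of pairwise disjoint nondegenerate closed intervals $\langle I_n \mid n \in \omega \rangle$ in $\RR$ (say $I_n = [2n, 2n+1]$) and, for each $n$, fix a set $B_n \subseteq I_n$ which is properly $\boldsymbol{\Sigma}^0_2(\RR)$ relative to $I_n$ (and hence a proper $\boldsymbol{\Sigma}^0_2(\RR)$ set, using that $\alpha = 2 \geq 2$ and $I_n$ is closed, exactly as in Lemma~\ref{lemmaeverywhere}); moreover I would arrange, via Lemma~\ref{lemmaeverywhere} applied inside each $I_n$, that $B_n \cap U \notin \boldsymbol{\Pi}^0_2(\RR)$ for every open $U$ meeting $I_n$, so that $B_n$ is ``everywhere proper $\boldsymbol{\Sigma}^0_2$'' on $I_n$. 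For $A \subseteq \omega$ I would then set
\[
S_A \;=\; \bigcup_{n \in A} \bigl( I_n \times \{0\} \bigr) \;\cup\; \bigcup_{n \in \omega} \bigl( \{p_n\} \times I'_n \bigr),
\]
or some similar device that glues the ``bit information'' of $A$ along one axis to a fixed family of everywhere-proper $\boldsymbol{\Sigma}^0_2$ sets along the other axis; the precise shape of $S_A$ has to be chosen so that $(i)$ $S_A \in \boldsymbol{\Sigma}^0_2(\RR^2)$, $(ii)$ if $A \subseteq^* A'$ then $S_A \leq_{\mathsf{D}_2} S_{A'}$, and $(iii)$ if $A \not\subseteq^* A'$ then $S_A \not\leq_{\mathsf{D}_2} S_{A'}$.

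For $(ii)$, given $A \subseteq^* A'$, the finite symmetric difference is absorbed by a $\mathsf{D}_2$ (indeed $\mathsf{D}^{\mathsf W}_2$) reduction: one uses a clopen-in-one-variable decomposition of $\RR^2$ that is the identity on the ``tail'' part $\bigcup_{n > N} I_n \times \{0\}$ where $A, A'$ agree, and on the finitely many exceptional intervals $I_n$, $n \le N$, one either maps them into a region of $S_{A'}$ of the right type (if $n \in A \cap A'$ or $n \notin A \cup A'$) or routes them into the ``other axis'' copies of the everywhere-proper $\boldsymbol{\Sigma}^0_2$ sets, exploiting that such a set and its complement are both dense and everywhere proper $\boldsymbol{\Sigma}^0_2$ so that both $B_n$ and its complement are $\leq_{\mathsf{W}}$-reducible into any such copy. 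Closure of the Borel classes under $\mathsf{D}_2$-preimages (used repeatedly already in the excerpt) guarantees $S_A, S_{A'} \in \boldsymbol{\Sigma}^0_2$ throughout.

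For $(iii)$, which is where the argument of Theorem~\ref{theorR2} really lives, I would argue by contradiction: suppose $f \in \mathsf{D}_2(\RR^2)$ reduces $S_A$ to $S_{A'}$ with $n_0 \in A \setminus A'$. Since $\RR^2$ is $\sigma$-compact, by the Jayne--Rogers theorem $\mathsf{D}_2(\RR^2) = \mathsf{D}^{\mathsf W}_2(\RR^2)$, so $\RR^2 = \bigcup_k X_k$ with each $X_k$ closed (indeed can be taken compact) and $f \restriction X_k$ continuous. By Baire category, some $X_k$ is nonmeager, hence $f$ is continuous on $\mathrm{cl}(U)$ for a nonempty open $U$; shrinking $U$ I may take $U$ to be a small open box meeting $I_{n_0} \times \{0\}$ along a nondegenerate subinterval. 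Now $B_{n_0}$ restricted to this subinterval is a proper $\boldsymbol{\Sigma}^0_2$ set with both it and its complement dense, so $f$ cannot be locally constant on $U$; and the continuous image $f(U)$, being a continuous image of a connected set meeting $S_A$ densely, must lie in the closure of the corresponding piece of $S_{A'}$ — but the only pieces of $S_{A'}$ available (since $n_0 \notin A'$) are the ``other axis'' copies, which are the everywhere-proper $\boldsymbol{\Sigma}^0_2$ sets, and here I would invoke Lemma~\ref{lemsaintraymond}(2): $(f\restriction \mathrm{cl}(U))^{-1}$ of the relevant $\boldsymbol{\Sigma}^0_2$ trace would force $B_{n_0}$-trace on the subinterval to be $\boldsymbol{\Sigma}^0_2$, which it is, but combined with the everywhere-proper-ness one derives that a proper $\boldsymbol{\Sigma}^0_2$ set is reduced onto a $\boldsymbol{\Pi}^0_2$-complete-type obstruction, contradiction.

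\textbf{The main obstacle} I expect is engineering $S_A$ so that the ``routing trick'' in $(ii)$ really works with a $\mathsf{D}_2$ (not just $\mathsf{D}_3$) reduction while simultaneously the rigidity in $(iii)$ survives: the everywhere-proper $\boldsymbol{\Sigma}^0_2$ sets on the second axis must be rich enough to absorb finite discrepancies (so that $\subseteq^*$ is reflected) yet ``flat'' enough — being supported on a totally disconnected, or at least locally one-dimensional, $\sigma$-compact skeleton — that a continuous image of a connected open box cannot usefully land in them except trivially. Getting Lemma~\ref{lemsaintraymond}(2) and the connectedness-of-$\RR$ argument to interact cleanly at exactly the level $\alpha = 2$ is the delicate point; this is precisely why (as the last paragraph of the excerpt notes) the argument works for $\RR^2$ but not for $\RR$, since in $\RR^2$ there is ``room'' to place the second-axis skeleton disjointly from the first-axis bit-carrying intervals.
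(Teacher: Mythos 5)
Your proposal has the right outer shell (Jayne--Rogers plus Baire category to extract an open box of continuity, Lemma~\ref{lemsaintraymond}(2), connectedness of $\RR$), but the core of the construction is missing, and the two places where you defer the work are exactly where the theorem lives. First, the reflection direction $(iii)$ fails as stated: the Jayne--Rogers/Baire argument hands you \emph{one} nonempty open box $U$ somewhere in $\RR^2$ on which $f$ is continuous, and you have no control over where it lies. Your coding of the bit $n_0$ is localized on the nowhere dense set $I_{n_0}\times\{0\}$, so $U$ need never meet it, and no contradiction can be extracted. The paper's sets $C_x$ are built precisely to defeat this: affine copies $B_{x,n}$ of a fixed coding set $A_x$ sit over a set of rational columns $\{(q_n,p_n)\}$ dense in $\RR^2$ with diameters $\beta_n\to 0$, and the columns over a dense set $I$ of irrationals are entirely contained in $C_x$ while those over $\RR\setminus(\QQ\cup I)$ are disjoint from it; this forces \emph{every} continuity box to contain a whole coding column and forces $f$ to map vertical lines to vertical lines there.

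Second, and more fundamentally, your coding carries no invariant that a continuous map on a box could fail to preserve: all your pieces are proper $\boldsymbol{\Sigma}^0_2$ of the \emph{same} Borel rank, so unlike the uncountable-antichain argument of the preceding proposition there is no complexity mismatch to exploit, and your concluding ``proper $\boldsymbol{\Sigma}^0_2$ versus $\boldsymbol{\Pi}^0_2$-complete obstruction'' does not materialize. The paper instead \emph{imports} from~\cite{sc10} a family $\{A_x \mid x\subseteq\omega\}$ of $\boldsymbol{\Sigma}^0_2$ subsets of $[0,1]$ already satisfying $x\subseteq^* y \iff A_x\leq^{\RR}_{\mathsf{D}_1} A_y$ together with property (A3) (weakly increasing, uniformly continuous, surjective reductions); the theorem is then a lift of that $\mathsf{D}_1$-embedding on $\RR$ to a $\mathsf{D}_2$-embedding on $\RR^2$, with the backward direction reducing to (A2) once a coding column in $U$ is found. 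Relatedly, your ``routing trick'' in $(ii)$ assumes that any everywhere-proper $\boldsymbol{\Sigma}^0_2$ set and its complement reduce continuously into any other such copy --- a Wadge-style completeness that is exactly what fails on connected spaces and is the reason Hertling's and Schlicht's results make the $\mathsf{D}_1$-hierarchy on $\RR$ bad. Even granting (A1)--(A3), the forward direction is not a finite bookkeeping step: the paper needs the entire apparatus of special maps, moduli of uniform continuity, and a back-and-forth construction producing the reduction as a uniform limit, in order to hit all rational and irrational columns of both $C_x$ and $C_y$ while keeping the limit continuous.
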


Before proving Theorem~\ref{theorR2}, we present some constructions and prove some technical lemmas which will be needed later. Call a map $f \colon \RR \to \RR$ \emph{weakly increasing} if $a\leq b$ implies $f(a)\leq f(b)$ for all $a,b\in\RR$.
Let $\{ A_x \mid x \subseteq \omega \}$ be a collection of ${\bf\Sigma}^0_2$ subsets of $[0,1]$ with the following properties:
\begin{enumerate}[({A}1)]
\item
\( \inf(A_x) =0 \), \( \sup (A_x ) = 1 \), and both \( A_x \cap (0,1) \) and \( \overline{A_x} \cap (0,1) \) are nonempty;
\item $x\subseteq^* y$ if and only if $A_x\leq_{\mathsf{D}_1}^\mathbb{R} A_y$;
\item If $x\subseteq^* y$, then there is a weakly increasing uniformly continuous surjective reduction $f \colon \RR \rightarrow \RR$ of $A_x$ to $A_y$ with $f(0)=0$ and $f(1)=1$.
\end{enumerate}
Such a collection exists by~\cite{sc10}.

\begin{definition}
A map \( h \colon \RR^2 \to \RR^2 \) is called \emph{special} if it is uniformly continuous, surjective, and such that for every \( a,a',b,b' \in \RR \)
\end{definition}
\begin{enumerate}[({S}1) ]
\item
if \( a = a' \) then \( \pi_0(h(a,b)) = \pi_0(h(a',b')) \),
\item
 \( a \leq a' \iff \pi_0(h(a,b)) \leq \pi_0(h(a',b')) \), and
\item
if \(b\leq b'\) then \(\pi_1(h(a,b))\leq \pi_1(h(a,b')).\)
\end{enumerate}

For \( a \in \RR \), let $\RR_a$ denote $\{ a \} \times \RR$.
Notice that for every map \( h \colon \RR^2 \to \RR^2 \) satisfying (S1) and (S2)   there is an order-preserving (hence injective) map \( \pi(h) \colon \RR \to \RR \) such that \( h(\RR_a) \subseteq \RR_{\pi(h)(a)} \).

Let $\langle \beta_n\mid n\in\omega \rangle$ be a decreasing sequence of reals with $\lim_{n\rightarrow\infty}\beta_n=0$ and $\beta_0<\frac{1}{12}$.
Let $\{q_n \mid n\in\omega\}$ be an enumeration without repetitions of $\QQ$, and
let $\{p_n\mid n\in\omega\}$ be a collection in $\QQ$ such that \( \{ (q_n, p_n) \in \QQ^2 \mid n \in \omega  \} \) is dense in \( \RR^2 \).
For $x\subseteq\omega$ and $n\in\omega$,
let $B_{x,n} \subseteq\mathbb{R}$ be an affine image of $A_x$ (with the same orientation as $A_x$) such that $\inf(B_{x,n})=p_n$ and $\sup(B_{x,n})=p_n+\beta_n$.
Let $I$ be a countable dense subset of $\mathbb{R} \setminus \mathbb{Q}$ (so that \( \RR \setminus (\QQ \cup I) \) is dense in \( \RR \) as well),
and let $\{i_k \mid k\in\omega\}$ be an enumeration without repetitions of $I$.
Finally, define
\begin{equation} \label{eqCx}
\tag{{$*$}} C_x= (I\times \mathbb{R}) \cup \bigcup_{\substack{n\in\omega }} (\{q_n\}\times B_{x,n})\subseteq\RR^2.
\end{equation}
Thus \( C_x \cap \RR_a \) is empty if \( a \in \RR \setminus (I \cup \QQ) \),   equals \( \RR_a \) if \( a \in I \), and is an affine copy of \( A_x \) if \( a \in \QQ \).

In what follows we will tacitly assume that every \emph{partial} function $f \colon \RR^2 \rightarrow \RR^2$ is such that \( \dom(f) = D_f \times \RR \) and \( \range(f) = R_f \times \RR \) for some  (possibly empty) \( D_f,R_f \subseteq \RR \), and moreover that it satisfies (S1)--(S3) when restricted to \( a, a' \in D_f \). Note that necessarily \( D_f \) and \( R_f \)  have the same cardinality. Given such an \( f \), we set \( f_a \colon \RR \to \RR \colon b \mapsto \pi_1(f(a,b)) \) for every \( a \in D_f \).

 Given a partial function \( f \colon \RR^2 \to \RR^2 \) with finite \( D_f \), we
define a canonical extension $\bar{f} \colon \RR^2\rightarrow\RR^2$ of \( f \) as
follows.   If $D_f =\emptyset$, let $\bar{f}= \id_{\RR^2}$. Suppose now
that $D_f =\{a_0, \dotsc ,a_n\}$ with $a_0<a_1< \dotsc <a_n$ (for some \( n < \omega \)), and pick \( (a,b) \in \RR^2 \). Then we set set
$\bar{f}(a,b)=(a,f_{a_0}(b))$ if $a<a_0$,
$\bar{f}(a,b)=(a,f_{a_n}(b))$ if $a>a_n$, and
\begin{equation} \label{eqextension}
\tag{$\dagger$} \bar{f}(a,b)=\frac{a_{m+1}-a}{a_{m+1}-a_m} \cdot f(a_m,b)+ \frac{a-a_m}{a_{m+1}-a_m} \cdot f(a_{m+1},b)
\end{equation}
if $a_m<a<a_{m+1}$ for some \( m < n \), where \( + \) and \( \cdot\) denote the usual operations of vector addition and multiplication by a scalar on \( \RR^2 \); in other words, \( \bar{f}(a,b) \) is the linear combination of \( f(a_m,b) \) and \( f(a_{m+1},b) \).

Let \( d \) denote the usual Euclidean distance on \( \RR^2 \), and for \( f,g \colon \RR^2 \to \RR^2 \) write
 \[
\Vert f - g \Vert=\sup\{d( f(a,b),g(a,b)) \mid  a,b\in\RR\}.
\]

\begin{definition}
Given a (partial) function \( f \colon D_f \times \RR \to R_f \times \RR \), we say that $(\delta,\varepsilon)\in(\RR^+)^2$ is a \emph{modulus} (of uniform continuity) for $f$ if
\[
\forall a \in D_f \, \forall b,b' \in \RR \, (|b-b'|<\delta \Rightarrow
d( f(a,b),f(a,b'))  <\varepsilon).
\]
\end{definition}

Notice that in fact \( d( f(a,b),f(a,b')) = |f_a(b) - f_a(b') | \) by our assumptions on \( f \), and that if $(\delta,\varepsilon)$ is a modulus for a partial function $f \colon D_f \times \RR \to R_f \times \RR$ with finite \( D_f \), then $(\delta,\varepsilon)$ is also a modulus for $\bar{f}$.

\begin{definition} \label{defY}
We let $\mathcal{Y}$ be the the class of all partial functions $f \colon D_f \times \RR \rightarrow R_f \times \RR$ such that
\end{definition}
\begin{enumerate}[(Y1)]
\item
\( D_f \) is finite;
\item
\( f \) satisfies (S1)--(S3) restricted to \( a,a' \in D_f \);
\item
$f$ is uniformly continuous (by (Y1) this is equivalent to require that \( f_a \) is uniformly continuous for every \( a \in D_f \));
\item
there is \( K \in \omega \) such that \( | f_a(b) - f_{a'}(b) | < K \) for every \( a,a' \in D_f \) and \( b \in \RR \);
\item
$f$ is a partial reduction of $C_x$ to $C_y$, i.e.\ for every \( a \in D_f \) and every \( b \in \RR \), \( (a,b) \in C_x \iff f(a,b) \in C_y \).
\end{enumerate}

\begin{lemma} \label{lemmaextensionspecial}
For every \( f \in \mathcal{Y} \), \( \bar{f} \) is special.
\end{lemma}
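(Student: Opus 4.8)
\emph{The approach.} I would simply verify the five clauses in the definition of a special map for $\bar f$. The case $D_f=\emptyset$ is immediate, as then $\bar f=\id_{\RR^2}$; so assume $D_f=\{a_0,\dots,a_n\}$ with $a_0<\dots<a_n$, where the definition of $\bar f$ extends to the finitely many values $a=a_j$ by continuity, the two adjacent clauses agreeing there (this relies at $a_0$ and $a_n$ on the next point). The observation that drives everything is that $f$ does not move the first coordinate on its domain: $\pi_0(f(a,b))=a$ for all $a\in D_f$, $b\in\RR$. For maps in $\mathcal{Y}$ this is forced, since $f$ is a partial reduction of $C_x$ to $C_y$ and these induce the \emph{same} partition of $\RR^2$ into the full columns $\RR_a$ ($a\in I$), the rational columns $\RR_{q_n}$, and the blank columns $\RR_a$ ($a\notin I\cup\QQ$); by (S1)--(S2) the injection $a\mapsto\pi_0(f(a,\cdot))$ of $D_f$ is strictly increasing and respects this partition, and together with (Y4) and $\range(f)=R_f\times\RR$ it must be the identity. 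Granting this, in each clause of the definition of $\bar f$ the first coordinate of $\bar f(a,b)$ equals $a$: trivially on the two tails, and on $[a_m,a_{m+1}]$ because the coefficients sum to $1$ and $\frac{a_{m+1}-a}{a_{m+1}-a_m}a_m+\frac{a-a_m}{a_{m+1}-a_m}a_{m+1}=a$. Hence $\pi_0(\bar f(a,b))=a$ identically, which yields (S1), (S2), and continuity of the first coordinate at once.

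\emph{Uniform continuity.} I would bound the two coordinate oscillations separately. With $a$ fixed, $\pi_1(\bar f(a,b))$ is in every clause a convex combination of the values $f_{a_j}(b)$; since $D_f$ is finite (Y1) and $f$ is uniformly continuous (Y3), $f$ admits a modulus of uniform continuity, and by the remark following the definition of a modulus the same pair is a modulus for $\bar f$. With $b$ fixed, $a\mapsto\pi_1(\bar f(a,b))$ is constant on the tails and, on each segment $[a_m,a_{m+1}]$, affine in $a$ with slope $\frac{f_{a_{m+1}}(b)-f_{a_m}(b)}{a_{m+1}-a_m}$, of absolute value at most $K/\min_{j<n}(a_{j+1}-a_j)$ by (Y4) and (Y1) (the min being over a nonempty set when $n\geq 1$; for $n=0$ the map is constant in $a$); the clauses agree at each break point $a_j$, so this map is Lipschitz with a constant independent of $b$. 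Combining the $b$-modulus and the $a$-Lipschitz bound gives uniform continuity of $\bar f$.

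\emph{Monotonicity and surjectivity.} For (S3): if $b\leq b'$ then $f_{a_j}(b)\leq f_{a_j}(b')$ for each $j$ by (S3) for $f$, and since $\pi_1(\bar f(a,b))$ is in each clause a convex combination of the $f_{a_j}(b)$ with coefficients depending only on $a$, the inequality is preserved. For surjectivity, since $\pi_0(\bar f(a,b))=a$ it suffices to see that $b\mapsto\pi_1(\bar f(c,b))$ is onto $\RR$ for each $c$. Each $f_{a_j}\colon\RR\to\RR$ is continuous (Y3) and weakly increasing ((S3) for $f$), and is surjective because $\range(f)=R_f\times\RR$ together with $\pi_0(f(a,b))=a$ forces $f_{a_j}(\RR)=\RR$; hence $f_{a_j}(b)\to\pm\infty$ as $b\to\pm\infty$. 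A convex combination of two such functions has the same limits at $\pm\infty$ and is continuous, so by the intermediate value theorem $b\mapsto\pi_1(\bar f(c,b))$ is onto. This exhausts the five conditions, so $\bar f$ is special.

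\emph{The main obstacle.} Practically all the content sits in the preliminary fact $\pi_0(f(a,b))=a$ on $D_f$: without it, $\bar f$ is genuinely discontinuous across $\{a_0\}\times\RR$ and $\{a_n\}\times\RR$, so the statement would be false. I expect this to be available from the surrounding development as a rigidity property of the common column structure of $C_x$ and $C_y$; once it is in hand, the rest is routine bookkeeping with convex combinations and moduli of uniform continuity, the only quantitative input being (Y4), which is exactly what keeps the $a$-slopes of $\bar f$ bounded.
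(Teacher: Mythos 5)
There is a genuine gap: the ``preliminary fact'' on which you hang everything --- that $\pi_0(f(a,b))=a$ for all $a\in D_f$ --- is false for the maps in $\mathcal{Y}$ that the construction actually produces. In Lemma~\ref{lemmaextension} the extension $g'$ is explicitly defined to map the column $\RR_{q_{k_i}}$ \emph{onto a different column} $\RR_{q_l}$, with $q_l$ only required to lie within $\min\{\varepsilon,\theta\}$ of $\pi(\bar f)(q_{k_i})$; the whole back-and-forth in the proof of Theorem~\ref{theorR2} depends on this freedom to displace columns (one cannot in general reduce $\{q_n\}\times B_{x,n}$ to $\{q_n\}\times B_{y,n}$ over the \emph{same} rational). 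Your argument for the rigidity also does not work: a strictly increasing map of a finite set that sends $\QQ$-columns to $\QQ$-columns and $I$-columns to $I$-columns is nowhere near forced to be the identity, and (Y4) bounds only the \emph{vertical} spread $|f_a(b)-f_{a'}(b)|$, saying nothing about horizontal displacement. Once this claim is withdrawn, your verifications of (S1), (S2), continuity of the first coordinate, and surjectivity in the first coordinate all collapse, since each of them invokes $\pi_0(\bar f(a,b))=a$.

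What survives, and agrees with the paper's proof, is the bookkeeping: (S1)--(S3) follow from (Y2) with $\pi(\bar f)$ the piecewise-affine interpolation of the increasing injection $\pi(f)$ (not the identity); surjectivity follows from $\range(f)=R_f\times\RR$ together with the intermediate value theorem applied to $\pi(\bar f)$ and to the convex combinations in the second coordinate; and uniform continuity is exactly your two-step estimate --- the modulus in $b$ from (Y3) plus the Lipschitz bound $K/\rho$ in $a$ from (Y4) and the finiteness of $D_f$ --- which is the only part the paper's proof writes out in detail. The tension you noticed at $a_0$ and $a_n$ (the tail clauses use first coordinate $a$, while $\bar f(a_0,b)=f(a_0,b)$ has first coordinate $\pi(f)(a_0)$) is a real wrinkle in the displayed formula~\eqref{eqextension}, but the remedy is to read the tails as translates matching $\pi(f)(a_0)$, respectively $\pi(f)(a_n)$, at the endpoints --- not to force $f$ to fix columns, which would trivialize the very construction this lemma is meant to serve.
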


\begin{proof}
All the computations in the proof rely on the very specific definition~\eqref{eqextension} of \( \bar{f} \).
It is not hard to check that \( \bar{f} \) is surjective since \( \range(f) = R_f \times \RR \), and that \( \bar{f} \) satisfies (S1)--(S3) by (Y2), so that \( \pi(\bar{f}) \) is well-defined. It remains to check that \( \bar{f} \) is uniformly continuous.
Fix \( K \in \omega \) as in (Y4), and notice that from this property we get \( |\bar{f}_a(b) - \bar{f}_{a'}(b)| < \frac{K}{\rho} \cdot |a-a'| \) for every \( a, a',b \in \RR \), where \( \rho = \min \{ |a - a' | \mid a,a' \in D_f \} \). Given \( \varepsilon > 0 \), let \( \delta' \) be such that \( ( \delta', \frac{\varepsilon}{2} ) \) is a modulus for \( f \) (such a \(\delta'\) exists by (Y3)), and \( \delta'' \) be such that \( \frac{K}{\rho} \cdot |a-a'| < \frac{\varepsilon}{4} \) and \( |\pi(\bar{f})(a)-\pi(\bar{f})(a')| < \frac{\varepsilon}{4} \) whenever \( |a-a'| < \delta'' \) (such a \( \delta'' \) exists by (Y1)). Finally, let \( \delta = \min \{ \delta',\delta'' \} \). Fix \( (a,b) \in \RR^2 \), and let \( (a',b' ) \in \RR^2 \) be such that \( d((a,b),(a',b')) < \delta \), so that in particular \( |a-a'| < \delta'' \) and \( |b-b'| < \delta' \). Since \( (\delta',\frac{\varepsilon}{2}) \) is a modulus for \( \bar{f} \) as well, we have that
\begin{multline*}
d(\bar{f}(a,b) , \bar{f}(a',b')) \leq d(\bar{f}(a,b),\bar{f}(a',b)) + d(\bar{f}(a',b),\bar{f}(a',b')) \\ \leq (| \pi(\bar{f})(a)- \pi(\bar{f})(a')| + |\bar{f}_a(b) - \bar{f}_{a'}(b)|) + |\bar{f}_{a'}(b) - \bar{f}_{a'}(b')| < \left(\frac{\varepsilon}{4} + \frac{\varepsilon}{4} \right)+\frac{\varepsilon}{2} = \varepsilon.
 \end{multline*}
Therefore \( \bar{f} \) is uniformly continuous, and hence special.
\end{proof}

\begin{lemma} \label{lemmaextension}
Suppose that $f\in\mathcal{Y}$ and that $(\delta,\varepsilon)$ and $(\delta',\varepsilon')$ are moduli for $f$. Let $\{k_i \mid i<M\}\subseteq\omega$ and $\{l_j \mid j<N\}\subseteq\omega$ be such that $k_i \leq k_{i'} \) for \( i \leq i' < M \), and assume that $\beta_{k_0}<\min\{ \delta, \varepsilon,\varepsilon'\}$. Then there is an extension $g\in\mathcal{Y}$ of $f$ such that
\begin{enumerate}[(1)]
\item
$\{ q_{k_i}, i_{l_j} \mid i < M , j < N \}\subseteq D_g \cap R_g$;
\item
$(\delta',3\varepsilon')$ is a modulus  for $g$;
\item
$\Vert \bar{f}-\bar{g}\Vert \leq 2\varepsilon$.
\end{enumerate}
\end{lemma}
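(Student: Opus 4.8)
The plan is to build $g$ by extending $f$ one coordinate at a time. Since $D_g$ must include the finitely many new points $q_{k_i}$ ($i<M$) and $i_{l_j}$ ($j<N$) in addition to $D_f$, I would handle them by a finite induction, adding one point at each step, and at each step invoke the single-point extension as a black box; the moduli and the $\|\bar f-\bar g\|$ estimate will be controlled by a suitable geometric bookkeeping. So first I would isolate the core case: given $h\in\mathcal Y$ with modulus $(\delta,\varepsilon)$ (and also $(\delta',\varepsilon')$), and a single new abscissa $a^\ast\in(\QQ\cup I)\setminus D_h$, produce an extension $h'\in\mathcal Y$ of $h$ with $a^\ast\in D_{h'}\cap R_{h'}$, with $(\delta',3\varepsilon')$ a modulus for $h'$, and with $\|\bar h-\bar{h'}\|$ small (on the order of $\varepsilon$ when $\beta_{k_0}<\min\{\delta,\varepsilon,\varepsilon'\}$, but I must be careful that iterating $M+N$ times does not blow past the claimed bound $2\varepsilon$ in (3) — I would instead get each single step to cost something like a fast-decaying geometric amount so the total telescopes below $2\varepsilon$, using that the relevant $\beta_n$'s are small).

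The heart of the single-point step splits into two cases according to whether $a^\ast=i_{l}\in I$ or $a^\ast=q_{k}\in\QQ$. If $a^\ast\in I$, then $C_x\cap\RR_{a^\ast}=\RR_{a^\ast}$ and likewise $C_y\cap\RR_{\pi(a^\ast)}=\RR_{\pi(a^\ast)}$ for the image abscissa we choose, so the fibre reduction $h'_{a^\ast}$ need only be \emph{any} uniformly continuous surjection $\RR\to\RR$ — take the identity (after the affine normalization), which trivially satisfies (Y5) on that fibre and (S3); the only work is to slot $a^\ast$ and its image into $D_h,R_h$ respectting the order so that (S1)--(S2) persist, and to check (Y4) still holds with a possibly larger constant $K$. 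If $a^\ast=q_k\in\QQ$, then $C_x\cap\RR_{q_k}$ is the affine copy $\{q_k\}\times B_{x,k}$ of $A_x$, and I need to map it into some $\{q_m\}\times B_{y,m}$, i.e.\ a reduction of $A_x$ to $A_y$; this is exactly where I would use property (A3) of the family $\{A_x\}$ to get a weakly increasing, uniformly continuous, surjective reduction $A_x\to A_y$ fixing $0$ and $1$ — but I must first arrange $x\subseteq^* y$, which (since we are constructing an embedding) is the standing hypothesis on the pair under consideration, or else fall back on the fact that when $x\not\subseteq^* y$ the whole lemma is vacuous for that pair. The key point making (3) work is that $B_{y,m}$ lives in an interval of length $\beta_m$, so after composing with the affine maps the whole fibre of $h'$ over $q_k$ moves inside a horizontal strip of height $<\beta_{k_0}$ around $h$'s prescribed value; choosing $m$ large (hence $\beta_m$ tiny) we make the perturbation of the \emph{values} as small as we like, while density of $\{(q_n,p_n)\}$ lets us pick the target abscissa so that the new point's image is close to $\bar h(q_k,\cdot)$, keeping $\|\bar h-\bar{h'}\|$ under control.

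For the modulus bookkeeping in (2): when we interpolate linearly to form $\bar{h'}$, inserting a new interpolation node \emph{between} two old ones can only decrease oscillation within those subintervals, but the new fibre function over $a^\ast$ itself contributes its own modulus; choosing the target copy $B_{y,m}$ with $\beta_m<\varepsilon'$ and using that the affine reduction from (A3) is uniformly continuous, one checks that $(\delta',3\varepsilon')$ remains a modulus — the factor $3$ absorbs the interpolation error between $a^\ast$ and its neighbours (bounded by (Y4)) plus the two one-sided contributions, essentially the same triangle-inequality split as in the proof of Lemma~\ref{lemmaextensionspecial}. Finally one verifies $g\in\mathcal Y$: (Y1) is clear, (Y2)--(Y3) by the single-step construction, (Y4) with the updated constant $K$, and (Y5) because on each new fibre we explicitly used a reduction of the appropriate fibre of $C_x$ to that of $C_y$.

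\textbf{Main obstacle.} I expect the real difficulty to be simultaneously meeting (2) and (3): shrinking the perturbation $\|\bar h-\bar g\|$ wants the new copies $B_{y,m}$ to be extremely thin (large $m$), but (A3) only gives \emph{a} reduction $A_x\to A_y$, whose modulus of uniform continuity we do not control a priori, so after the affine rescaling into a thin strip its modulus could be bad in the $\delta$-direction even though its range is tiny; reconciling this — pushing the image into a thin horizontal strip (good for (3)) while keeping the horizontal modulus at most $\delta'$ and the vertical displacement at most $3\varepsilon'$ (good for (2)) — is the delicate balance, and is presumably why the hypotheses bundle together both moduli $(\delta,\varepsilon)$ and $(\delta',\varepsilon')$ and the smallness $\beta_{k_0}<\min\{\delta,\varepsilon,\varepsilon'\}$. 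Getting the iteration over the $M+N$ new points to keep the cumulative error $\le 2\varepsilon$ rather than $(M+N)\varepsilon$ will require choosing, at the $t$-th insertion, a target copy with $\beta$-parameter smaller than $2^{-t}\varepsilon$ (or similar), which is possible since the $\beta_n\to0$ and the enumeration gives infinitely many admissible targets near any point.
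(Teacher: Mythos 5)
There is a genuine gap, and it sits exactly where you flagged your ``main obstacle'': your mechanism for keeping the total perturbation below \( 2\varepsilon \) does not work, and the correct mechanism is different. You propose to add the new fibres one at a time and make the \( t \)-th insertion cost only \( 2^{-t}\varepsilon \). But the cost of adjoining a rational fibre \( \RR_{q_{k_i}} \) to the domain is not something you can shrink at will: to turn \( \bar f\restriction \RR_{q_{k_i}} \) into a partial reduction you must send the affine copy \( \{q_{k_i}\}\times B_{x,k_i} \) onto some \( \{q_l\}\times B_{y,l} \) sitting \emph{inside} the interval \( (\bar f_{q_{k_i}}(p_{k_i}),\bar f_{q_{k_i}}(p_{k_i}+\beta_{k_i})) \), and the vertical translations needed above and below that interval are of the order of its length. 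That length is controlled only by the modulus \( (\delta,\varepsilon) \) together with \( \beta_{k_i}<\delta \), i.e.\ it is bounded by \( \varepsilon \) and by nothing smaller; choosing the target copy thinner (larger \( l \)) does not help. So your per-step costs are each up to \( \sim 2\varepsilon \) and the sum really would be \( 2(M+N)\varepsilon \). The paper avoids accumulation entirely by a different bookkeeping: every new fibre is perturbed \emph{relative to the fixed} \( \bar f \) by at most \( 2\varepsilon \) (at most \( \varepsilon \) horizontally, by density of the \( q_n \)'s and \( i_n \)'s, plus at most \( \varepsilon \) vertically, by the modulus argument above), and since the canonical extension \eqref{eqextension} takes convex combinations of the fibre values, the pointwise bound \( d(\bar f(a,b),g(a,b))\le 2\varepsilon \) for \( a\in D_g \) propagates to \( \Vert\bar f-\bar g\Vert\le 2\varepsilon \) globally, independently of \( M+N \). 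Relatedly, on the \( I \)-fibres you cannot ``take the identity (after affine normalization)'' as the fibre map: that would put the second coordinate of \( g(i_{l_j},b) \) at distance \( |b-\bar f_{i_{l_j}}(b)| \) from \( \bar f \), which is unbounded and destroys (3) (and (Y4)). One must instead set \( g(i_{l_j},b)=(i_l,\bar f_{i_{l_j}}(b)) \), keeping the values and only nudging the abscissa.

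A second, independent omission: the lemma demands \( \{q_{k_i},i_{l_j}\}\subseteq D_g\cap R_g \), and your single-point step only ever discusses adding a new \emph{abscissa of the domain}. Putting \( q_{k_i} \) (or \( i_{l_j} \)) into the \emph{range} is a genuinely different operation: you must locate \( r=\pi(\bar f)^{-1}(q_{k_i}) \), replace it by a nearby \( q_l\le r \) chosen so that \( d(\bar f(q_l,b),\bar f(r,b))<\varepsilon \) for all \( b \) (this uses uniform continuity of \( \bar f \), i.e.\ Lemma~\ref{lemmaextensionspecial}), and redefine the map on \( \RR_{q_l} \) so that it is a partial reduction \emph{onto} \( \RR_{q_{k_i}} \). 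This surjectivity onto the prescribed fibres is not cosmetic: it is what yields conditions (4) and (5) in the proof of Theorem~\ref{theorR2} (so that \( \QQ\cup I\subseteq\pi(\bar f)(\QQ\cup I) \) in the limit), without which the limit function need not reduce \( C_x \) to \( C_y \) over the abscissae in \( \RR\setminus(\QQ\cup I) \). The rest of your outline (using (A3) on the rational fibres, density of \( \{(q_n,p_n)\} \), the triangle-inequality split giving the factor \( 3 \) in the modulus) matches the paper's argument.
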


\begin{proof}
We will define \( g \) through some intermediate extensions \( f \subseteq g' \subseteq g'' \subseteq g''' \subseteq g \).
Let
\[
H = \{ q_{k_i}, \pi(\bar{f})(q_{k_i}), \pi(\bar{f})^{-1}(q_{k_i}), i_{l_j} , \pi(\bar{f})(i_{l_j}), \pi(\bar{f})^{-1}(i_{l_j}) \mid i < M, j<N \},
\]
and let $\theta=\frac{1}{2}\min\{d(a,a')\colon a,a'\in H,\ a\neq a'\}$.

First we extend $f$ to a partial function \( g' \) in such a way that \( D_{g'} = D_f \cup  \{ q_{k_i} \mid i<M \}\). Fix \( i < M \): if $\bar{f} \restriction \RR_{q_{k_i}}$ is already a partial reduction of $C_x$ to $C_y$, we let $g' \restriction \RR_{q_{k_i}}=\bar{f} \restriction \RR_{q_{k_i}}$.
Otherwise, to simplify the notation let $\beta=\beta_{k_i}$, $a_0= p_{k_i} = \inf(B_{x,k_i})$, and $a_1= p_{k_i}+\beta_{k_i} = \sup(B_{x,k_i})$. Let $l \in \omega$ be such that
\begin{enumerate}[- ]
\item $q_l\leq \pi(\bar{f})(q_{k_i})$,
\item $|\pi(\bar{f})(q_{k_i})-q_l|<\min\{\varepsilon,\theta\}$,
\item $\beta_l<\varepsilon'$, and
\item $\bar{f}_{q_{k_i}}(a_0)< p_l <p_l + \beta_l <\bar{f}_{q_{k_i}}(a_1)$.
\end{enumerate}
Notice that such an \( l \) exists by the choice of the \( \beta_n \)'s and the \( p_n \)'s.
Then we define $g' \restriction \RR_{q_{k_i}}$ from \( \RR_{q_{k_i}} \) onto \( \RR_{q_l} \) as follows. First we ``translate'' the values of \( \bar{f}_{q_i} \) preceding \( a_0 \) and following \( a_1 \) by suitable fixed constants so that \( g'_{q_{k_i}}(a_0) = p_l \) and \( g'_{q_{k_i}}(a_1) = p_l + \beta_l \). More precisely, let \( u_0 = p_l - \bar{f}_{q_{k_i}}(a_0) \) and \( u_1 = p_l + \beta_l - \bar{f}_{q_{k_i}}(a_1) \), and set \( g'(q_{k_i},b) = (q_l, \bar{f}_{q_{k_i}}(b) + u_0) \) for \( b \leq a_0 \), and  \( g'(q_{k_i},b) = (q_l, \bar{f}_{q_{k_i}}(b) + u_1) \) for \( b \geq a_1 \).
Finally, we extend \( g' \) to the whole \( \RR_{q_{k_i}} \) by mapping $\{q_{k_i}\}\times[a_0,a_1]$ onto $\{q_l\}\times[p_l,p_l+\beta_l]$ by a uniformly continuous partial reduction of $C_x$ to $C_y$ which is weakly increasing in the second coordinate and maps \( a_0 \) to \( p_l \) and \( a_1 \) to \( p_l + \beta_l \) (this is possible by (A3)).

By construction, \( g' \) is well-defined and  satisfies (Y1)--(Y3) and (Y5) (for (Y2) notice that \( g' \) still satisfies (S2) by the condition $|\pi(\bar{f})(q_{k_i})-q_l|< \theta$, while for (Y3) use the fact that \( g' \restriction (- \infty, a_0] \), \( g' \restriction [a_0, a_1] \) and \( g' \restriction [a_1, + \infty) \) are all uniformly continuous).
We now check that \( (\delta', 3\varepsilon') \) is a modulus for \( g' \). Obviously, it is enough to consider \( a \in D_{g'} \setminus D_f \), i.e.\ \( a = q_{k_i} \) for some \( i < M \). Let \( b < b' \) be such that \( |b'-b| < \delta' \). First assume that \( b, b' \leq a_0 \). Then \( d(g'(q_{k_i},b), g'(q_{k_i},b')) = | \bar{f}_{q_{k_i}}(b') + u_0 - (\bar{f}_{q_{k_i}}(b) + u_0)| = d(\bar{f}(q_{k_i},b), \bar{f}(q_{k_i},b')) < \varepsilon' \) since \( ( \delta', \varepsilon' ) \) is a modulus for \( f \) (and hence also for \( \bar{f} \)).
The case \( b,b' \geq a_1 \) is treated in a similar way and gives that \( d(g'(q_{k_i},b), g'(q_{k_i},b'))  < \varepsilon' \) again.  If \( b \leq a_0 \leq b' \leq a_1 \) then  \( | a_0 - b | < \delta' \): by the previous computation, it follows that \( d(g'(q_{k_i},b), g'(q_{k_i},b')) \leq  d(g'(q_{k_i},b), g'(q_{k_i},a_0)) +  d(g'(q_{k_i},a_0), g'(q_{k_i},b')) = |g'_{q_{k_i}}(b) -  g'_{q_{k_i}}(a_0)| +  |g'_{q_{k_i}}(a_0) -  g'_{q_{k_i}}(b')| < \varepsilon' + \varepsilon' = 2 \varepsilon' \) since, by construction, \( p_l = g'_{q_{k_i}}(a_0) \leq  g'_{q_{k_i}}(b') \leq p_l + \beta_l \) and \( \beta_l < \varepsilon' \). The case \( a_0 \leq b \leq a_1 \leq b' \) is treated similarly, so let us consider the case \( b \leq a_0 \leq a_1 \leq b' \). Since in this case \( |a_0 - b | < \delta' \) and \( | b' - a_1 | < \delta' \), by the previous computations we get \( d(g'(q_{k_i},b), g'(q_{k_i},b')) \leq  d(g'(q_{k_i},b), g'(q_{k_i},a_0)) +  d(g'(q_{k_i},a_0), g'(q_{k_i},a_1)) + d(g'(q_{k_i},a_1), g'(q_{k_i},b')) < 3 \varepsilon' \) since \( d(g'(q_{k_i},a_0), g'(q_{k_i},a_1)) = |g'_{q_{k_i}}(a_0) - g'_{q_{k_i}}(a_1)| = | p_l + \beta_l - p_l| = \beta_l < \varepsilon' \). Finally, the case \( a_0 \leq b \leq b' \leq a_1 \) is trivial since in this situation we have \( d(g'(q_{k_i},b), g'(q_{k_i},b')) \leq d(g'(q_{k_i},a_0), g'(q_{k_i},a_1)) = \beta_l < \varepsilon' \). Hence \( ( \delta', 3 \varepsilon' ) \) is a modulus for \( g' \), as required.

Finally, we check that \( \sup \{ d(\bar{f}(a,b) , g'(a,b)) \mid a \in D_{g'}, b \in \RR \} \leq  2 \varepsilon \). It is obviously enough to consider the case \( a = q_{k_i} \) and show that \( d(\bar{f}(q_{k_i},b) , g'(q_{k_i},b)) < 2\varepsilon \) for every \( b \in \RR \). Since \( \beta_{k_i} \leq \beta_{k_0} < \delta \) it follows that \( | \bar{f}_{q_{k_i}}(a_1) - \bar{f}_{q_{k_i}}(a_0) | = d(\bar{f}(q_{k_i},a_0), \bar{f}(q_{k_i},a_1)) < \varepsilon \) (because \( ( \delta, \varepsilon ) \) is a modulus for \( \bar{f} \)), therefore \( u_0, u_1 < \varepsilon \) by the choice of \( l \). This implies that \( d(\bar{f}(q_{k_i},b) , g'(q_{k_i},b)) \leq | \pi(\bar{f})(q_{k_i}) - q_l | + |\bar{f}_{q_{k_i}}(b)-g'_{q_{k_i}}(b) |< 2\varepsilon \) for \( b \leq a_0 \) or \( b \geq a_1 \). If instead \( a_0 \leq b \leq a_1 \), notice that both \( \bar{f}_{q_{k_i}}(b) \) and \( g'_{q_{k_i}}(b) \) belong to the interval \( [\bar{f}_{q_{k_i}}(a_0) , \bar{f}_{q_{k_i}}(a_1)] \), whence \(  d(\bar{f}(q_{k_i},b) , g'(q_{k_i},b)) \leq | \pi(\bar{f})(q_{k_i}) - q_l | + | \bar{f}_{q_{k_i}}(a_1) - \bar{f}_{q_{k_i}}(a_0) | < 2\varepsilon \).

\medskip

We now extend $g'$ to $g''$ in such a way that $D_{g''} = D_{g'} \cup \{ i_{l_j} \mid j<N \}$. Fix \( j < n \).
If $\pi(\bar{f})(i_{l_j})\in I$, then $\bar{f} \restriction \RR_{i_{l_j}}$ is already a partial reduction of $C_x$ to $C_y$ and we define $g'' \restriction \RR_{i_{l_j}}=\bar{f} \restriction \RR_{i_{l_j}}$.
Otherwise, we choose $l \in \omega$ such that
\begin{enumerate}[- ]
\item $i_l<\pi(\bar{f})(i_{l_j})$ and
\item $|\pi(\bar{f})(i_{l_j})-i_l|<\min\{\varepsilon,\theta\}$,
\end{enumerate}
and define $g''(i_{l_j},b)=(i_l,\bar{f}_{i_{l_j}}(b))$ for every $b\in\RR$. Then \( g'' \) satisfies (Y1)--(Y3) and (Y5) (for (Y2) use  $|\pi(\bar{f})(i_{l_j})-i_l|<\theta$),  $(\delta',3 \varepsilon')$ is a modulus for $g''$ (since it is a modulus for \( g' \) and \( ( \delta', \varepsilon') \) is a modulus for \( \bar{f} \)) and \( \sup \{ d(\bar{f}(a,b), g''(a,b)) \mid a \in D_{g''}, b \in \RR \} \leq 2 \varepsilon \) by the analogous property for \( g' \) and $|\pi(\bar{f})(i_{l_j})-i_l|<\varepsilon$.

\medskip

We then extend $g''$ to \( g''' \) in such a way that $R_{g'''} = R_{g''} \cup \{q_{k_i} \mid i<M \}$. Fix \( i < M \) and let $r=\pi(\bar{f})^{-1}(q_{k_i})$ (such an \( r \) exists because \( \bar{f} \), and hence \( \pi(\bar{f}) \), is surjective). If $\bar{f} \restriction \RR_r$ is already a partial reduction of $C_x$ to $C_y$, we let $g''' \restriction\RR_r=\bar{f} \restriction\RR_r$.
Otherwise, to simplify the notation let \( \beta = \beta_{k_i} \), $b_0= p_{k_i} = \inf(B_{x,k_i})$,  and $b_1= p_{k_i} + \beta_{k_i} = \sup(B_{x,k_i})$. Let
$l \in \omega$ be such that
\begin{enumerate}[- ]
\item $q_l \leq r $,
\item $|r-q_l|< \theta$,
\item $d(\bar{f}(q_l,b),\bar{f}(r,b))< \varepsilon$ for all $b\in\RR$, and
\item $\pi_1(\bar{f}^{-1}(q_{k_i},b_0))< p_l < p_l + \beta_l <\pi_1(\bar{f}^{-1}(q_{k_i},b_1))$.
\end{enumerate}
Notice that the third requirement is possible since $\bar{f}$ is uniformly continuous. Then we define $g''' \restriction \RR_{q_l}$ from \( \RR_{q_l} \) onto \( \RR_{q_{k_i}} \) similarly to the case of \( g' \restriction \RR_{q_{k_i}} \). More precisely, let \( u_0 = b_0 -\bar{f}_r(p_l) \) and \( u_1 = b_1- \bar{f}_r(p_l + \beta_l) \), and set $g(q_l,b)=(q_{k_i},\bar{f}_r(b)+u_0)$ for $b\leq p_l$
and $g(q_l,b)=(q_{k_i}, \bar{f}_r(b) +u_1)$ for $b\geq p_l + \beta_l$.
Finally, we extend \( g''' \) to the entire \( \RR_l \) by mapping  $\{q_l\}\times[p_l,p_l + \beta_l]$ onto $\{q_{k_i}\}\times[b_0,b_1]$ via a uniformly continuous partial reduction of $C_x$ onto $C_y$ which is weakly increasing in the second coordinate and maps \( p_l \) to \( b_0 \) and \( p_l + \beta_l \) to \( b_1 \) (this is possible by (A3)).

Arguing as for \( g' \), it is not hard to check that \( g''' \) satisfies (Y1)--(Y3) and (Y5), and that \( ( \delta', 3 \varepsilon') \) is a modulus for \( g''' \) since it is a modulus for \( g'' \) and \( | b_1 - b_0 | = \beta_{k_i} \leq \beta_{k_0} < \varepsilon' \). Finally, we check that \( \sup \{ d(\bar{f}(a,b), g'''(a,b)) \mid a \in D_{g'''}, b \in \RR \} \leq 2 \varepsilon\). Since the analogous property for \( g'' \) holds, it is enough to check that given \( i < M \) and \( l \in \omega \) as above, \( d(\bar{f}(q_l,b), g'''(q_l,b)) < 2 \varepsilon \) for every \( b \in \RR \). First notice that \( d(\bar{f}(q_l,b), g'''(q_l,b)) \leq d(\bar{f}(q_l,b), \bar{f}(r,b)) + d(\bar{f}(r,b), g'''(q_l,b)) \): since \( d(\bar{f}(q_l,b), \bar{f}(r,b)) < \varepsilon \) by the third property above, it is enough to check that \( d(\bar{f}(r,b), g'''(q_l,b)) = | \bar{f}_r(b)- g'''_{q_l}(b)| < \varepsilon\). Since \( | b_1 - b_0 | = \beta_{k_i} \leq \beta_{k_0} < \varepsilon \), we get \( u_0, u_1 < \varepsilon \), whence \( | \bar{f}_r(b)- g'''_{q_l}(b)| < \varepsilon\) for \( b \leq p_l \) and \( b \geq p_l + \beta_l \).  If instead \( p_l \leq b \leq p_l + \beta_l \) we have that both \( \bar{f}_r(b) \) and \( g'''_{q_l}(b) \) belong to the interval \( [b_0,b_1] \) which has length \( \beta_{k_i} \leq \beta_{k_0} < \varepsilon \), whence \( | \bar{f}_r(b)- g'''_{q_l}(b)| < \varepsilon\) again.

\medskip

Finally, we extend \( g''' \) to \( g \) in such a way that \( R_g = R_{g'''} \cup \{ i_{l_j} \mid j<N \} \). Let $r=\pi(\bar{f})^{-1}(i_{l_j})$. If $r\in I$, then $\bar{f} \restriction \RR_r$ is already a partial reduction of $C_x$ to $C_y$ and we simply define $g \restriction \RR_r=\bar{f} \restriction\RR_r$.
Otherwise, we choose $l \in \omega$ with
\begin{enumerate}[- ]
\item $i_l \leq r$,
\item $d(i_l,r)<\theta$, and
\item $d(\bar{f}(i_l,b),\bar{f}(r,b)<\varepsilon$ for all $b\in\RR$ .
\end{enumerate}
As for the definition of \( g''' \), the last requirement is possible by the uniform continuity of $\bar{f}$. Then we define $g(i_l,b)=(i_{l_j},\bar{f}_r(b))$ for all $b\in\RR$. As for the previous steps, $g$ satisfies (Y1)--(Y3) and (Y5), and $(\delta',3\varepsilon')$ is a modulus for $g$ since it is a modulus for \( g''' \) and \( ( \delta', \varepsilon') \) is a modulus for \( \bar{f} \). Finally,
\begin{equation} \label{eqddagger}
\tag{$\ddagger$}
\sup \{ d(\bar{f}(a,b),g(a,b)) \mid a \in D_g, b \in \RR \} \leq 2 \varepsilon
\end{equation}
by the last requirement in the definition of \( l \in \omega \) and since the analogous property holds for \( g''' \). A straightforward computation shows that~\eqref{eqddagger} implies \( \Vert \bar{f} - \bar{g} \Vert \leq 2 \varepsilon \) by~\eqref{eqextension}, therefore \( g \) satisfies (1)--(3). Hence it remains to check that \( g  \in \mathcal{Y}\). Let \( K \in \omega \) be a witness of the fact that \( f \) satisfies (Y4). As already observed, this implies that \( |\bar{f}_a(b) - \bar{f}_{a'}(b)| < \frac{K}{\rho}\cdot |a-a'| \) for every \( a,a',b \in \RR \), where \( \rho = \min \{ |a - a' | \mid a,a' \in D_f \} \). Let \( K' = \frac{K}{\rho} \cdot \max \{ |a-a'| \mid a,a' \in D_g \} \). Then for every \( a,a' \in D_g \) and \( b \in \RR \) we have \begin{multline*}
|g_a(b) - g_{a'}(b)| \leq |g_a(b) - \bar{f}_a(b)| + |\bar{f}_a(b) - \bar{f}_{a'}(b)| + |\bar{f}_{a'}(b) - g_{a'}(b)| \\ \leq d(g(a,b),\bar{f}(a,b)) + K' + g(\bar{f}(a',b),g(a',b)) \leq 2 \varepsilon + K' + 2 \varepsilon = K' + 4 \varepsilon
 \end{multline*}
by (3). This shows that \( K' + 4 \varepsilon \) witnesses that \( g \) satisfies (Y4), and hence \( g \in \mathcal{Y} \) by Definition~\ref{defY}, as required.
\end{proof}

\begin{proof} [Proof of Theorem~\ref{theorR2}]
We will show that the map
\( \pow(\omega) \to \pow(\RR^2) \colon x \mapsto C_ x \),
where \( C_x \) is defined as in
\eqref{eqCx}, is the desired embedding.

Assume first that $x\subseteq^* y$: we claim that $C_x\leq_{\mathsf{D}_1} C_y$ (hence, in particular, \( C_x \leq_{\mathsf{D}_2} C_y \)).
In fact, we will construct a continuous reduction
of $C_x$ to $C_y$ as a \emph{uniform} limit of a sequence of special maps $\langle \bar{f}_n \mid f_n \in \mathcal{Y}, n\in\omega \rangle$ with the property that  for every \( k \in \omega \) there is \( N_k \in \omega \) such that:
\begin{enumerate}[(1) ]
\item
\( \bar{f}_{N_k} \restriction \RR_{q_k} = \bar{f}_n \restriction \RR_{q_k} \) and \( \bar{f}_{N_k} \restriction \RR_{i_k} = \bar{f}_n \restriction \RR_{i_k} \) for every \( n \geq N_k \) (so that, in particular, \( \pi(\bar{f}_{N_k})(q_k) = \pi(\bar{f}_n)(q_k) \) and \( \pi(\bar{f}_{N_k})(i_k) = \pi(\bar{f}_n)(i_k) \));
\item
\( \bar{f}_{N_k} \restriction \RR_{q_k} \) is a partial reduction of  \( C_x \)  to \( C_y \): in particular,
\( \pi(\bar{f}_{N_k})(q_k) \in \QQ \) by property (A1);
\item
\( \pi(\bar{f}_{N_k})(i_k) \in I \);
\item
for every \( l \in \omega \) there is \( m \in \omega \) such that \( \pi(\bar{f}_{N_m})(q_m) = q_l \);
\item
for every \( l \in \omega \) there is \( m \in \omega \) such that \( \pi(\bar{f}_{N_m})(i_m) = i_l \).
\end{enumerate}
Assume that such a sequence exists, and let \( \bar{f} = \lim_{n \to \infty}
\bar{f}_n \), so that, in particular, \( \bar{f} \) is continuous because the \( \bar{f}_n \)'s are continuous and are assumed to converge uniformly:
then \( \bar{f} \) witnesses \( C_x
\leq_{\mathsf{D}_1} C_y \). In fact, let \( (a,b) \in \RR^2 \). If \( a =
q_k \) or \( a = i_k \) for some \( k \in \omega \), then \( \bar{f}(a,b) =
\bar{f}_{N_k}(a,b) \) by (1), and hence \( (a,b) \in C_x \iff \bar{f}(a,b) \in
C_y \) by, respectively, (2) and (3) (depending on whether \( a\in \QQ \) or \(
a \in I \)). It remains to consider the case  \( a \in \RR \setminus (\QQ \cup
I) \). Notice that since \( \bar{f} \) is the limit of a sequence of special maps,
\( \bar{f} \) still satisfies (S1) and (S2), hence \( \pi(\bar{f})
\colon \RR \to \RR \) is a well-defined injective map. By (4), (5) and (1), \(
\QQ \cup I \subseteq \pi(\bar{f})(\QQ \cup I) \) (in fact, \( \QQ \cup I =
\pi(\bar{f}) (\QQ \cup I) \) by (2) and (3)). By injectivity of \( \pi(\bar{f}) \), this
implies \( \pi(\bar{f})(a) \in \RR \setminus (\QQ \cup I) \), whence \( (a,b) \in
C_x \iff \bar{f}(a,b) \in C_y \) because, by construction, \( (a,b) \notin C_x \)
and \( \bar{f}(a,b) \notin C_y \).

It remains to show that the above sequence of  maps exists: the sequence \( \langle \bar{f}_n \mid f_n \in \mathcal{Y}, n \in \omega \rangle \) will be defined recursively using a back-and-forth construction. Towards this aim, we will also define an auxiliary sequence $\langle\delta_n\mid n\in\omega\rangle$ of positive real numbers, and require that
\begin{enumerate}[(i) ]
\item
\( f_{n+1} \) extends \( f_n \);
\item
\( q_m \in D_{f_n} \cap R_{f_n} \) for all $m$ with $\beta_m\geq\min\{\delta_n, \frac{1}{3 \cdot 2^{n+2}}\}$,
\item
\( i_m \in D_{f_n} \cap R_{f_n} \) for all $m<n$,
\item
$(\delta_n,\frac{1}{2^{n+1}})$ is a modulus for $f_n$, and
\item
\( \Vert \bar{f}_{n+1} - \bar{f}_n \Vert \leq \frac{1}{2^n} \).
\end{enumerate}

Set $f_0= \emptyset$ and $\delta_0=1$. Notice that  $f_0$ fulfills all the requirements since $\beta_0<\frac{1}{12}$.
To define $f_{n+1}$, we first choose $\delta_{n+1}$ such that
$(\delta_{n+1},\frac{1}{3\cdot2^{n+2}})$ is a modulus for $\bar{f_n}$:
this is possible since $\bar{f_n}$ is uniformly continuous by
Lemma~\ref{lemmaextensionspecial}. Next we extend $f_n$ to $f_{n+1}$  by
applying Lemma~\ref{lemmaextension} with \( f = f_n \), \( \delta = \delta_n \),
\( \varepsilon = \frac{1}{2^{n+1}} \), \( \delta' = \delta_{n+1} \), and
\( \varepsilon' = \frac{1}{3 \cdot 2^{n+2}} \) in such a way that setting $f_{n+1} = g$ we get a function which fulfills
(ii) and (iii). More precisely, we let \( \{ k_i \mid i < M \} \) be an increasing enumeration of those \( m \in \omega \) such that
\[
\min \left \{ \delta_{n+1}, \frac{1}{3 \cdot 2^{n+3}} \right \}  \leq \beta_m < \min \left \{ \delta_n, \frac{1}{3\cdot 2^{n+2}} \right \},
\]
 and set \( N=1 \) and \( l_0 = n \). Notice that Lemma~\ref{lemmaextension} can be applied with these parameters because \( \beta_{k_0} < \min \{ \delta_n ,\frac{1}{3 \cdot 2^{n+2}} \} = \min \{ \delta, \varepsilon, \varepsilon' \} \), and that the resulting \(  f_{n+1} = g \) satisfies (ii) and (iii) because of the choice of the \( k_i \)'s and \( l_0 \), together with the fact that \( f_n \) satisfies such conditions by inductive hypothesis.
Then $(\delta_{n+1},\frac{1}{2^{n+2}})$ is a modulus for $f_{n+1}$ by
Lemma~\ref{lemmaextension}(ii), and $\Vert \bar{f}_{n+1} - \bar{f}_{n} \Vert \leq \frac{1}{2^n}$ by
Lemma~\ref{lemmaextension}(iii), hence \( f_{n+1} \) satisfies also (iv) and (v). This
completes the recursive definition. It is immediate to check that conditions (1)--(5) are satisfied by construction. Moreover, using standard arguments one can easily show
that (v)  implies that the sequence \( \langle \bar{f}_n \mid n \in \omega \rangle \) uniformly converges to some \( \bar{f} \colon \RR^2 \to \RR^2 \),
hence we are done.

\medskip

To complete the proof of Theorem~\ref{theorR2}, it remains to show that if
 $C_x\leq_{\mathsf{D}_2} C_y$ then $x\subseteq^* y$.
Suppose that $f$ witnesses $C_x \leq_{\mathsf{D}_2 }C_y$. Then there is a nonempty open set $U\subseteq\mathbb{R}^2$ such that $f\restriction U$ is continuous by the observation preceding Proposition~\ref{propnotverygood}. We may assume that $U$ is an open rectangle.
Fix \( a \in I \). If \( b < b' \) are such that \( (a,b), (a,b') \in U \), then $\{ a \} \times [b,b'] \subseteq C_x \cap U$ is homeomorphic to \( [0,1]  \subseteq \RR \), hence  $f(\{ a \} \times [b,b'])$ is a (possibly degenerate) path totally contained in \( C_y \) (since \( f \restriction U \) is continuous and \( f \) reduces \( C_x \) to \( C_y \)). By construction of \( C_y \) and the choice of \( I \), this implies that $f(\{ a \} \times [b,b'])$ is contained in a single vertical line, so that in particular \( \pi_0(f(a,b)) = \pi_0(f(a,b')) \). We have thus shown that for every \( a \in I \), \( f(\RR_a \cap U) \subseteq \RR_d \) for some \( d \in \RR \).
It follows that the same holds also for an arbitrary \( a \in \RR \), i.e.\ that \( \pi_0(f(a,b)) = \pi_0(f(a,b')) \) for every $(a,b),(a,b')\in U$, because otherwise, by continuity of \( f \restriction U \), we would have \( \pi_0(f(a',b)) \neq \pi_0(f(a',b')) \) for some \( a' \in I \) sufficiently close to \( a \).
Now notice that by the choice of the $ p_n$'s and the \( \beta_n \)'s there is an index $n \in \omega $ with $\{q_n\}\times [p_n,p_n + \beta_n]\subseteq U$. Let \( s \in \RR \) be such that \( f(\RR_{q_n} \cap U)  \subseteq \RR_s\).
Since $f$ is a reduction of $C_x$ to $C_y$ and \( C_x \cap \RR_{q_n} = \{ q_n \} \times B_{x,n} \subseteq U \) is neither empty nor the entire \( \RR_{q_n} \cap U \) by (A1),  \( s = q_m \) for some \( m \in \omega \). Therefore, \( C_y \cap \RR_{q_m} = \{ q_m \} \times B_{y,m} \). Let \( \varepsilon > 0 \) be such that \( \{ q_n \} \times [p_n - \varepsilon, p_n + \beta_n + \varepsilon] \subseteq U \). Then the map \( g \colon \RR \to \RR \) defined by \( g(x) = f(p_n-\varepsilon) \) if \( x \leq p_n - \varepsilon \), \( g(x) = f(p_n + \beta_n + \varepsilon) \) if \( x \geq p_n+\beta_n+\varepsilon \), and \( g(x) = f(x) \) if \( p_n - \varepsilon \leq x \leq p_n + \beta_n + \varepsilon \) is a continuous reduction of \( B_{x,n} \) to \( B_{y,m} \).
Since \( B_{x,n} \) is an affine image of $A_x$ and $B_{y,m}$ is an affine image of $A_y$, it follows that \( A_x \leq^\RR_{\mathsf{D}_1} A_y \), and hence $x\subseteq^* y$ by (A2).
\end{proof}

\begin{corollary}
Suppose that \( X \) is an Hausdorff quasi-Polish space such that  either $\RR^2 \subseteq X$ (in particular, we can take e.g.\
 \( X = \RR^n \) or $X=[0,1]^n$ for $2\leq n<\omega$), or \( X \) is an \( n \)-dimensional $\sigma$-compact Polish
space which is embeddable into \( \RR^n \)  (for some \( n\geq 2 \)), or \( X \) is locally Euclidean and of dimension \( n \geq 2 \).
\begin{enumerate}[(1)]
\item Any countable partial order embeds into the $( \mathbf{B},\mathsf{D}_2)$-hierarchy on $X$.
\item ($\mathsf{ZFC}$) Any partial order of size $\omega_1$ embeds into the $( \mathbf{B},\mathsf{D}_2)$-hierarchy on $X$.
\item The $( \mathbf{B},\mathsf{D}_2)$-hierarchy on $X$ is very bad.
\end{enumerate}
\end{corollary}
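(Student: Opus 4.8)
The plan is to reduce every space $X$ in the list to the single case where $X$ is Hausdorff quasi-Polish and contains a homeomorphic copy of $\RR^2$, and then to pull Theorem~\ref{theorR2} back from $\RR^2$ to $X$ using the retract machinery developed at the beginning of this section. For the reduction, I first note that $\RR^n$ and $[0,1]^n$ with $2\le n<\omega$ both contain a copy of $\RR^2$ (e.g.\ $(0,1)^2\times\{ 1/2 \}^{n-2}$), so they already fall under the case $\RR^2\subseteq X$. If $X$ is an $n$-dimensional $\sigma$-compact Polish space embeddable into $\RR^n$ with $n\ge2$, then $X$ is uncountable (a countable metrizable space is zero-dimensional), hence $X\simeq_{\mathsf{D}_2}\RR^n$ by Remark~\ref{remJR}(2)(a); if $X$ is locally Euclidean of dimension $n\ge2$, then, being Hausdorff, locally compact and second countable (quasi-Polish spaces are countably based), $X$ is metrizable and hence Polish by Proposition~\ref{propmetrizable}, so $X\simeq_{\mathsf{D}_2}\RR^n$ by Remark~\ref{remJR}(2)(b). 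In either case Proposition~\ref{propiso} gives that the $\mathsf{D}_2$-hierarchy on $X$ is isomorphic to the one on $\RR^n$, and since a $\mathsf{D}_2$-isomorphism is a Borel bijection this isomorphism restricts to $(\mathbf{B}(X),\leq_{\mathsf{D}_2})\cong(\mathbf{B}(\RR^n),\leq_{\mathsf{D}_2})$; as $\RR^2\subseteq\RR^n$, it then suffices to treat the case $\RR^2\subseteq X$.

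For the transfer, I would argue as follows. Since $\RR^2$ is Polish, hence quasi-Polish, it is a $\mathbf{\Pi}^0_2(X)$ subset by Proposition~\ref{propsubspace}; since it is $\sigma$-compact and compact subsets of the Hausdorff space $X$ are closed, it is also an $F_\sigma$, i.e.\ a $\mathbf{\Sigma}^0_2(X)$ subset. Hence $\RR^2\in\mathbf{\Delta}^0_2(X)$, and since $\mathsf{D}^\mathsf{W}_2\subseteq\mathsf{D}_2$, Corollary~\ref{corfr2} (through Proposition~\ref{propfr2}) provides an order-embedding of $(\mathscr{P}(\RR^2),\leq_{\mathsf{D}_2})$ into $(\mathscr{P}(X),\leq_{\mathsf{D}_2})$, namely the map $A\mapsto r^{-1}(A)$ for a $\mathsf{D}^\mathsf{W}_2$-retraction $r\colon X\to\RR^2$. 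As $r$ is Borel measurable, this map sends Borel sets to Borel sets, so it restricts to an order-embedding of $(\mathbf{B}(\RR^2),\leq_{\mathsf{D}_2})$ into $(\mathbf{B}(X),\leq_{\mathsf{D}_2})$. Composing it with the embedding $x\mapsto C_x$ of $(\mathscr{P}(\omega),\subseteq^*)$ into $(\boldsymbol{\Sigma}^0_2(\RR^2),\leq_{\mathsf{D}_2})\subseteq(\mathbf{B}(\RR^2),\leq_{\mathsf{D}_2})$ from Theorem~\ref{theorR2} yields an order-embedding of $(\mathscr{P}(\omega),\subseteq^*)$ into $(\mathbf{B}(X),\leq_{\mathsf{D}_2})$.

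Finally I would read off (1)--(3) from the structure of $(\mathscr{P}(\omega),\subseteq^*)$. For (1), every countable partial order order-embeds into $(\mathscr{P}(\omega),\subseteq)$ (classical), and $(\mathscr{P}(\omega),\subseteq)$ order-embeds into $(\mathscr{P}(\omega),\subseteq^*)$ via $A\mapsto\bigcup_{n\in A}B_n$ for a fixed partition $\langle B_n\mid n\in\omega\rangle$ of $\omega$ into infinite sets (if $\bigcup_{n\in A}B_n\subseteq^*\bigcup_{n\in B}B_n$ and $m\in A\setminus B$, the infinite set $B_m$ would be almost contained in a set disjoint from it). For (2), it is a theorem of $\mathsf{ZFC}$ (Parovi\v{c}enko; see also the references in~\cite{sc10}) that every partial order of size $\le\aleph_1$ order-embeds into $\mathscr{P}(\omega)/\mathrm{fin}$, equivalently into $(\mathscr{P}(\omega),\subseteq^*)$. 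For (3), $(\mathscr{P}(\omega),\subseteq^*)$ contains both the strictly $\subseteq^*$-descending chain $\langle \{\, 2^n k\mid k\in\omega \,\}\mid n\in\omega\rangle$ and any infinite almost disjoint family (as an antichain), so $(\mathbf{B}(X),\leq_{\mathsf{D}_2})$ is very bad. The only steps that require genuine care are the verification that $\RR^2\in\mathbf{\Delta}^0_2(X)$ and that the retraction-induced embedding stays inside $\mathbf{B}(X)$; everything else is bookkeeping, the ``hard'' external ingredient being the cited $\mathsf{ZFC}$ universality of $\mathscr{P}(\omega)/\mathrm{fin}$ needed for (2).
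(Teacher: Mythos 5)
Your proposal is correct and follows essentially the same route as the paper's proof: the same case split, the same use of Remark~\ref{remJR}(2)(a--b) with Proposition~\ref{propiso} for the last two cases, the same verification that \( \RR^2 \in \mathbf{\Delta}^0_2(X) \) feeding into Corollary~\ref{corfr2}, and the same combination of Theorem~\ref{theorR2} with Parovi\v{c}enko's theorem (and an AC-free argument for countable partial orders). The extra care you take in checking that the retraction-induced embedding preserves Borel sets, and the explicit descending chain and almost disjoint family for (3), are fine and only make explicit what the paper leaves to Remark~\ref{remrestricted} and to standard facts about \( (\mathscr{P}(\omega),\subseteq^*) \).
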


\begin{proof}
First we consider the case \( X = \RR^2 \). Assuming the axiom of choice, every partial order of size $\omega_1$ embeds into $(\mathscr{P}(\omega),\subseteq^*)$ by Parovi\v{c}enko's Theorem~\cite{par}. The embedding of countable partial orders is constructed by diagonalizing over finitely many subsets of $\omega$ in each step, hence \( \mathsf{AC} \) is not necessary.
In particular, $(\mathscr{P}(\omega),\subseteq^*)$ contains infinite antichains and infinite decreasing sequences. Hence the result follows from Theorem~\ref{theorR2}.

Now consider the case of an Hausdorff quasi-Polish space \( X \) such that \( \RR^2 \subseteq X \). Since \( \RR^2 \) is (quasi-)Polish, \( \RR^2 \in \mathbf{\Pi}^0_2(X) \) by Proposition~\ref{propsubspace}. Since \( \RR^2 \) is \(\sigma\)-compact and \( X \) is Hausdorff, \( \RR^2 \in \mathbf{\Sigma}^0_2(X) \), hence \( \RR^2 \in \mathbf{\Delta}^0_2(X) \). Therefore the result follows from Corollary~\ref{corfr2}.

Finally, let \( X \) be either an \( n \)-dimensional $\sigma$-compact Polish
space which is embeddable into \( \RR^n \)  (for some \( n\geq 2 \)), or a locally Euclidean Polish space of dimension \( n \geq 2 \). Then \( X \cong_2 \RR^n \) by Remark~\ref{remJR}(2)(a-b), and hence the desired claim follows from Proposition~\ref{propiso} and the fact that we already proved the analogous result for \( \RR^n \).
\end{proof}

Unfortunately, for what concerns the \( \mathsf{D}_n \)-hierarchies (\( n \geq 3 \)) on uncountable quasi-Polish spaces of dimension \( \infty \), the situation is still completely unclear.

\begin{question} Are the $\mathsf{D}_3$-hierarchies on the Hilbert cube $[0,1]^\omega$ and on the Scott domain \( P  \omega \) (very) good? What about the \( \mathsf{D}_n \)-hierarchies (for larger \( n \in \omega \))? What about other (quasi-)Polish spaces of dimension \( \infty \)?
\end{question}

\subsection{Degree structures in countable spaces}\label{subsectioncountable}

Here  we consider the case of countable quasi-Polish spaces and show that even when considering the very restricted class of
scattered countably based spaces, the Wadge hierarchy may not be very good.

It follows from Proposition~\ref{propgeneralhomeoctbl}(1) and Proposition~\ref{propiso} that  all countable countably based $T_0$-spaces \( X \) have isomorphic \( \F \)-hierarchies whenever \( \F \supseteq \mathsf{D}^\mathsf{W}_3\) is a family of reducibilities. Moreover, it follows from parts (2) and (3) of  Proposition~\ref{propgeneralhomeoctbl} and Proposition~\ref{propiso} that  all countable  $T_1$-spaces and all scattered countably based spaces \( X \) have isomorphic \( \F \)-hierarchies whenever \( \F \supseteq \mathsf{D}^\mathsf{W}_2 \). In fact, the proof of Proposition~\ref{propgeneralhomeoctbl} shows that in all the above mentioned cases, the class \( \F(X) \) coincides with the class of all functions from \( X \) to itself: hence the resulting \( \F \)-hierarchy is formed by two incomparable degrees consisting of \( \emptyset \) and the whole space \( X \), together with a single degree above them containing all other subsets of \( X \) (this in particular means that the \( \F \)-hierarchy on \( X \) is very good).
Therefore only the Wadge reducibility and the $\mathsf{D}_2$-reducibility are of
interest when considering countable countably based \( T_0 \)-spaces \( X \), and when \( X \) is a countable \( T_1 \) space or a countably based scattered space, then only the
Wadge reducibility needs to be considered.

The next result identifies two classes of countable spaces with a very good structure of Wadge degrees.

\begin{proposition}\label{propco1}
Let $X$ be a countable Polish space or  a finite $T_0$-space. Then
the \( \mathsf{D}_1 \)-structure on \( X \) is very good.
 \end{proposition}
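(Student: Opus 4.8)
The plan is to handle the two cases separately, though they share a common mechanism. In both cases the key structural fact is that a countable Polish space (respectively, a finite $T_0$-space) is scattered, so its points can be stratified by Cantor--Bendixson rank, and continuous functions are forced to be ``monotone'' with respect to a suitable ordering on these ranks. First I would recall that every countable Polish space $X$ is scattered: by the Cantor--Bendixson theorem \cite[Theorem 6.4]{ke94}, a nonempty perfect Polish subspace would be uncountable, so $X^{(\alpha)} = \emptyset$ for some countable $\alpha$, and we may assign to each $x \in X$ its rank $\rk(x)$, the unique ordinal with $x \in X^{(\rk(x))} \setminus X^{(\rk(x)+1)}$. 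For a finite $T_0$-space, the specialization preorder is a partial order (by $T_0$-ness) with no infinite chains, so the space is scattered by Proposition~\ref{propco2} read through Proposition~\ref{propsingl}(2), and again every point has a well-defined rank.

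\textbf{Key steps.} The heart of the argument is to show that the $\mathsf{D}_1$-structure on such an $X$ is semi-well-ordered, i.e.\ it has no infinite descending chain and satisfies the semi-linear ordering property. I would first establish a normal-form lemma: for any $A \subseteq X$, the $\mathsf{D}_1$-degree of $A$ is determined by finitely much combinatorial data, namely the behaviour of $A$ near the finitely many (in the finite case) or rank-bounded (in the countable Polish case) ``top'' points together with the pattern of membership along chains in the specialization order descending to each point. Concretely, a continuous $f \colon X \to X$ must send $x$ into the closure of $\{f(y) : y \text{ in any neighbourhood basis element of } x\}$, which when combined with scatteredness forces $\rk(f(x))$ to be controlled; this is the mechanism that rules out infinite descending chains. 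Then, given $A, B \subseteq X$, one either builds a continuous reduction of $A$ to $B$ by a finite recursion on rank (placing the isolated points of each stratum appropriately, using that isolated points can be mapped freely and that non-isolated points are handled by continuity/density arguments exactly as in the singleton-placement lemmas already used in Section~\ref{sectionisom}), or else one extracts from the obstruction a reduction of $\overline{B}$ to $A$ by the same recursion with the roles dualized. This is the standard ``Wadge game for scattered spaces'' bookkeeping, and since only finitely many rank levels carry non-trivial structure (in the finite case there are finitely many points; in the countable Polish case the structure is an $\omega$-labelled well-founded forest of the kind analysed in \cite{he93,s07a,s05}, which is already known to give a wqo, and in fact a semi-well-order when $|S| = 2$), the resulting structure is semi-well-ordered.

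\textbf{Main obstacle.} The delicate point is the countable Polish case: here $X$ may have infinitely many points and transfinite Cantor--Bendixson rank, so one cannot simply enumerate configurations. I expect the main work to be in verifying that continuity of a reduction $f$ really does constrain it enough — specifically, that the image of a point of rank $\beta$ cannot ``jump'' to create new descending chains, which requires a careful induction on $\rk(x)$ using that the minimal neighbourhoods of a point in a scattered countable Polish space are essentially discrete-plus-limit configurations. Once this is in hand, the absence of infinite antichains and descending chains follows from the well-foundedness of the forest structure, and the semi-linear ordering property follows by the usual Wadge-style back-and-forth / game argument adapted to scattered spaces; I would cite the analysis of $k$-partitions of well-founded labelled forests from \cite{he93,s05,s07a} (specialized to $k=2$) to package this cleanly rather than reproving it. The finite case is then just the degenerate instance where the forest is finite, and can alternatively be checked by a direct finite case analysis.
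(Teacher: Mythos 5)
Your unified strategy --- stratify by Cantor--Bendixson rank and argue that continuity forces rank-controlled, ``monotone'' behaviour on a scattered space --- cannot work at the level of generality at which you pitch it, and this is not a packaging issue. Proposition~\ref{propco3} of the paper exhibits a \emph{scattered} countable $\omega$-algebraic domain whose Wadge hierarchy contains a four-element antichain, so any argument using only scatteredness plus rank/forest bookkeeping would prove too much. The two cases of the statement hold for genuinely different reasons, and the paper treats them separately. For countable Polish $X$: such a space is $T_1$, so the specialization order is trivial and ``continuous $=$ monotone'' gives nothing; the relevant structural fact is that $X$ is zero-dimensional and metrizable, hence homeomorphic to a closed subset of $\mathcal{N}$ and therefore a continuous retract of $\mathcal{N}$, so by Proposition~\ref{propfr2} its Wadge hierarchy embeds into the Wadge hierarchy on $\mathcal{N}$ restricted to $\mathbf{\Delta}^0_2$ sets (every subset of a countable space being $\mathbf{\Delta}^0_2$), which is semi-well-ordered by the classical results of~\cite{wad84}. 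Your sketch never performs this transfer to $\mathcal{N}$, and the mechanism you do describe (the rank of $f(x)$ being controlled by the rank of $x$) is false as stated: a constant map sends a point of any rank to a possibly isolated point, and more generally nothing prevents a continuous map from collapsing ranks.

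For finite $T_0$-spaces your starting point (specialization order, continuous $=$ monotone) matches the paper's, but the step you defer to ``a direct finite case analysis'' is exactly where the content lies. The paper observes that every subset of a finite $T_0$-space is a finite Boolean combination of open sets, hence lies in some finite level $\mathbf{\Sigma}^{-1}_n$ of the difference hierarchy over the open sets, and then invokes the completeness part of Theorem~\ref{t-dh} to conclude that every subset is Wadge complete in exactly one of $\mathbf{\Sigma}^{-1}_n$, $\mathbf{\Pi}^{-1}_n$, $\mathbf{\Delta}^{-1}_n$; since these pointclasses are arranged in a semi-well-ordered pattern, so are the degrees. Finiteness is used precisely to stay within the finite levels where that completeness statement is available --- this is the hypothesis that fails in Proposition~\ref{propco3}, where the incomparable sets sit at level $\omega$. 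Appealing instead to the labelled-forest wqo results of~\cite{he93,s07a} does not close either gap: those results characterize partitions of $\mathcal{N}$ (not of an arbitrary scattered space), and for $k\geq 3$ they yield a wqo that is not a semi-well-order, so they cannot substitute for the two space-specific arguments above.
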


\begin{proof}  If \( X \) is countable  Polish then it is
zero-dimensional, hence the claim follows from Theorem~\ref{theorcoruncountable} and the fact that  \(
\mathscr{P}(X) \subseteq \mathbf{\Delta}^0_2(X) \).

Let now \( X \) be a finite \( T_0 \)-space, and let \( \leq \) be the \emph{specialization order} on \( X \) defined by setting \( x \leq y \iff  x \) belongs to the closure of \( \{ y \} \) (for every \( x,y \in X \)).  Then \( X \) coincides with the \( \omega \)-algebraic domain $(X, \leq)$ (endowed with the Scott or, equivalently, the Alexandrov topology), and the continuous functions on \( X \)
coincide with the monotone functions on $(X,\leq)$. Moreover, any subset of $X$ is a
finite Boolean combination of open sets, hence it is in \( \bigcup_{n \in \omega} \boldsymbol{\Sigma}^{-1}_n = \bigcup_{n \in \omega} \boldsymbol{\Pi}^{-1}_n = \bigcup_{n \in \omega } \boldsymbol{\Delta}^{-1}_n \). By (the second part of) Theorem~\ref{t-dh}, this implies that every
subset of \( X \) is Wadge complete in one of
$\mathbf{\Sigma}^{-1}_n,\mathbf{\Pi}^{-1}_n,\mathbf{\Delta}^{-1}_n$,
$n \in \omega$: therefore,  \( (\mathscr{P}(X),\leq_\mathsf{W}) \) is
semi-well-ordered.
 \end{proof}

\begin{remark}
In both the cases considered in Proposition~\ref{propco1}, the space \( X \) falls in one of the cases mentioned at the beginning of this subsection: if \( X \) is Polish then it is also \( T_1 \), while if \( X \) is finite then it is automatically scattered. Therefore the \( \mathsf{D}_2 \)-hierarchy on such an \( X \) is always (trivially) very good.
\end{remark}

We now show that Proposition~\ref{propco1} cannot be extended  to
scattered quasi-Polish spaces by showing that there is a scattered
$\omega$-continuous domain whose Wadge hierarchy is good but not very good. Note that by Proposition~\ref{propco2},
any scattered dcpo is in fact an algebraic domain and all of its elements
are compact.

\begin{proposition}\label{propco3}
There is a scattered  $\omega$-algebraic domain $(X, \leq)$ such that
$(\mathscr{P}(X), \leq_\mathsf{W})$ has four pairwise incomparable elements (hence it is not very good).
 \end{proposition}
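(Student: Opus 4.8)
The plan is to construct a concrete scattered $\omega$-algebraic domain $(X,\leq)$ by gluing together, below a single top element, countably many well-founded ``trees'' of bounded but unbounded-over-the-construction rank, in such a way that four naturally defined subsets become Wadge-complete in four pairwise incomparable difference classes. Concretely, I would let $X$ have a top element $\top$, and below it attach for each $n \in \omega$ a finite alternating-tree configuration realizing the four types $(1\text{-}\mathbf{\Sigma}^{-1}_n$, $0\text{-}\mathbf{\Sigma}^{-1}_n$, and the two colors of rank witnessing $\mathbf{\Delta}^{-1}_{n+1})$ — that is, four families of finite towers of compact elements, growing in height with $n$. Since every chain in the resulting poset will be finite (the only infinite set of pairwise comparable elements would have to accumulate at $\top$, but I arrange that no infinite ascending chain exists by making each ``stalk'' finite and the stalks pairwise incomparable below $\top$), Proposition~\ref{propco2} guarantees $X$ is scattered, hence by Corollary~\ref{cor21} its Scott and Alexandrov topologies coincide and continuous maps are exactly the monotone ones. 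Countability of $X_0 = X$ makes it an $\omega$-algebraic domain.

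Next I would define four subsets $A_1, A_2, A_3, A_4 \subseteq X$ — roughly, $A_1$ and $A_2$ are the ``$\mathbf{\Sigma}^{-1}$-type'' sets that differ only in the color assigned to $\top$, and $A_3, A_4$ are the ``$\mathbf{\Delta}^{-1}$-type'' sets — each built so that it is approximable, its complement is approximable, and for every $n$ it admits an alternating tree of rank $n$ of the appropriate parity but no alternating tree of rank $\omega$. By Theorem~\ref{t-dh} this forces $A_1 \in \bigcap_n (\mathbf{\Sigma}^{-1}_{n+1}\setminus\mathbf{\Sigma}^{-1}_n)$-style behavior but not in any single finite level; more usefully, the ``moreover'' clause of Theorem~\ref{t-dh} lets me pin down Wadge-completeness at each finite stage, and a standard argument (patching the stage-$n$ reductions) shows each $A_i$ sits strictly above all four finite-level degrees and is the $\le_{\mathsf W}$-join of its approximants. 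The key point is that the two color choices at $\top$ make $A_1 \not\le_{\mathsf W} A_2$ and $A_2 \not\le_{\mathsf W} A_1$ (a continuous reduction is monotone and must respect which of the top-neighborhoods is in the set), while the $\mathbf{\Sigma}$ vs.\ $\mathbf{\Delta}$ distinction separates $\{A_1,A_2\}$ from $\{A_3,A_4\}$ on both sides via the alternating-tree rank invariants, which are preserved under monotone preimages.

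The main obstacle I anticipate is verifying the four \emph{non-reductions} cleanly: I must check that no monotone $f\colon X\to X$ reduces $A_i$ to $A_j$ for the four ``bad'' pairs $(i,j)$. For the color-at-$\top$ obstruction this is easy — $f(\top)$ is the unique top, so $\top \in A_i \iff f(\top)\in A_j$ fails outright when the colors disagree. For the $\mathbf{\Sigma}$/$\mathbf{\Pi}$-against-$\mathbf{\Delta}$ directions I would argue via alternating trees: an alternating tree of rank $n$ for $A_i$ pulls back along $f$ (using monotonicity and the reduction property) to an alternating tree of rank $\ge n$ for $f^{-1}(A_i)$, so if $A_i$ has such trees of unbounded finite rank then so does $A_j = $ its image-set, and then Theorem~\ref{t-dh} combined with the precise level-membership I built into $A_3,A_4$ (which I would engineer to be, say, in $\mathbf{\Delta}^{-1}_{n+1}\setminus(\mathbf{\Sigma}^{-1}_n\cup\mathbf{\Pi}^{-1}_n)$ completeness-wise at stage $n$ but globally in neither $\mathbf{\Sigma}$ nor $\mathbf{\Pi}$ of bounded rank) yields the contradiction. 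The delicate bookkeeping is ensuring all four sets genuinely have the \emph{same} global complexity (so none trivially reduces to another by complexity) while differing in the finer invariants; I expect this to require a symmetric, carefully indexed construction of the stalks, plus a short verification — using Proposition~\ref{propsingl}(3) and Theorem~\ref{t-dh} — that everything is approximable as needed.
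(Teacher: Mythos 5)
There is a genuine gap at the one place where the real work happens: the non-reduction between the two sets that differ only in the colour of \( \top \). You dispose of it by asserting that any continuous \( f \colon X \to X \) satisfies \( f(\top) = \top \), so that ``\( \top \in A_i \iff f(\top) \in A_j \) fails outright''. This is false: continuity here means monotonicity (Corollary~\ref{cor21}), and a monotone map need only send \( \top \) to an upper bound of its range, not to \( \top \) itself --- constant maps, for instance, are continuous. Indeed, if \( \top \in A_i \) and \( \top \notin A_j \), a putative reduction \( f \) of \( A_i \) to \( A_j \) must have \( f(\top) \neq \top \); that by itself is perfectly consistent, and the contradiction has to be extracted elsewhere. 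The paper's proof does exactly this extra work: since \( f(x) \leq f(\top) \) for all \( x \) and \( f(\top) \neq \top \), the range of \( f \) is contained in a finite set (\( C_m \cup \{ \bot \} \) for some \( m \), a principal downset of a non-top element), and then an alternating chain for the source set of length \( m+3 \) would have to map injectively onto an alternating chain for the target inside a set of only \( m+2 \) elements. Your construction has all the ingredients for this argument (finite stalks of unbounded height below a single top), but the argument itself is missing, and it is the heart of the proof.

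A secondary point of comparison: the paper obtains its antichain more economically as \( A, \overline{A}, B, \overline{B} \) with \( B = A \setminus \{ \top \} \) and \( A, B \in \mathbf{\Sigma}^{-1}_\omega(X) \setminus \mathbf{\Pi}^{-1}_\omega(X) \), so that every incomparability except \( A \) versus \( B \) (and its complemented twin) follows at once from closure of \( \mathbf{\Sigma}^{-1}_\omega \) under continuous preimages. Your scheme of two ``\( \mathbf{\Sigma} \)-type'' and two ``\( \mathbf{\Delta} \)-type'' sets leaves several more non-reductions to be checked by hand (in particular, why a \( \mathbf{\Delta} \)-type set should not reduce to a \( \mathbf{\Sigma} \)-type one), and your transfer of rank invariants is stated in the wrong direction: if \( A_j = f^{-1}(A_i) \) with \( f \) monotone, then alternating trees for \( A_j \) push \emph{forward} along \( f \) to alternating trees for \( A_i \) of the same rank; nothing useful ``pulls back''. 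Fixing the \( \top \)-argument as above would also let you collapse your four sets to the paper's complement-symmetric quadruple and avoid this bookkeeping.
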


\begin{proof}
For any $n \in \omega$, fix an \( \leq \)-chain $C_n=\{c^n_n<\dotsc<c^n_0\}$
with $n+1$-many elements.  Let $C=\bigsqcup_{n \in \omega} C_n$ be the
disjoint union of these chains (so the elements of different
chains are \( \leq \)-incomparable in $C$), and let $X$ be obtained from $C$
by adjoining a bottom element $\bot$ and a top element $\top$. By
Proposition~\ref{propco2}, $X$ is a scattered $\omega$-algebraic
domain.

We inductively define the sets \( D_k \subseteq X \), \( k \in \omega \), by letting $D_0=\{x \in X \mid \exists n (c^n_0\leq x)\}$ and
$D_{i+1}=D_i \cup \{x \in X \mid \exists n > i (c^n_{i+1} \leq
x) \}$. Then $D_0 \subseteq D_1 \subseteq \dotsc$ and any $D_k$ is
open in $X$, so the sets $A=D_0 \cup \bigcup_k (D_{2k+2}\setminus
D_{2k+1})$ and $B=A \setminus\{\top\}$ are in the pointclass
$\mathbf{\Sigma}^{-1}_\omega(X)$, i.e.\ in the \( \omega \)-th level of the Hausdorff difference
hierarchy over the open sets in $X$ (see Subsection~\ref{subsectionhier}).
Observe that for all \( k < n  \in \omega \) we have  $\bot \not \in D_k$, $c^n_0\in D_0$, and $c^n_{k+1} \in D_{k +1}\setminus D_k$. Therefore, for \( k \leq n \in \omega \) we have
$\bot\not\in A\cup B$, $c^n_k\in A\cap B$ if \( k \) is even,
and $c^n_k\not\in A\cup B$ if \( k \) is odd. This means that for each of \( A \) and \( B \) one can construct in the obvious way a \( 0 \)-alternating tree onto
$X \setminus \{ \top \} = \{\bot,c^n_k \mid k \leq n  \in \omega\}$,
so $A,B \notin \mathbf{\Pi}^{-1}_\omega$ by (the first part of) Theorem~\ref{t-dh}.

We claim that the sets $A,\overline{A},B,\overline{B}$ are
pairwise Wadge incomparable. Since any of \( \boldsymbol{\Sigma}^{-1}_\alpha, \boldsymbol{\Pi}^{-1}_\alpha \), being a family of pointclasses,   is closed under continuous preimages, the
fact that $A,B \in \boldsymbol{\Sigma}^{-1}_\omega \setminus \boldsymbol{\Pi}^{-1}_\omega$ implies that each of \( A,B \) is Wadge incomparable
with both $\overline{A}$ and $\overline{B}$. So it remains only to check that
$A$ is Wadge incomparable with $B$: we will just show that $A\not\leq_\mathsf{W} B$, as the fact that
$B\not\leq_\mathsf{W} A$ can be proved in the same way. Assume towards a
contradiction that $A=f^{-1}(B)$ for a continuous (i.e.\ monotone) function $f$ on $X$. Since $\top\in A$ and $\top \not \in
B$, $f(\top) \neq \top$, hence $f(\top) \in X \setminus \{ \top \} = \{\bot,c^n_k\mid k \leq n \in \omega \}$. Since $f(x) \leq f(\top)$ for all $x\in X$, the range of $f$
is contained in \( C_m \cup \{ \bot \} \) for some $m<\omega$.
Choose an alternating chain $a_0 < \dotsc <a_{m+2}$ for $A$ of length \( m+ 3 \) (e.g.\ we can take \( a_i = c^{m+2}_{m+2-i} \) for \( i \leq m+2 \)). Then, since \( f \) is supposed to be a reduction of \( A \) to \( B \), the image under \( f \) of this chain must be alternating for \( B \). Since, as already observed in Subsection~\ref{omega-cont}, this implies $f(a_0) < \dotsc < f(a_{m+2})$, we have \( | \{ f(a_i) \mid i \leq m+2 \} | = m+3 > m+2 = | C_m \cup \{ \bot \} | \geq | \{ f(x) \mid x \in X \} | \), a contradiction.
 \end{proof}

Notice that in the example above the four Wadge incomparable elements have (almost) the minimal possible complexity, as by (the second part of) Theorem~\ref{t-dh} the Wadge hierarchy on any \(\omega\)-algebraic domain is semi-well-ordered when restricted to \( \bigcup_{n \in \omega} \boldsymbol{\Sigma}^{-1}_n \). Moreover, since the structure of the poset $(X,\leq)$ in the previous proof
is very simple, it is possible to completely describe the
Wadge hierarchy on $X$, as it is shown in the next proposition.

Let $(P,\leq_P)$ and $(Q,\leq_Q)$ be arbitrary posets.  By $P+Q$ we denote the poset $(P\sqcup Q,\leq)$
where ${\leq} \restriction P$ (respectively, on ${\leq} \restriction Q$) coincides with $\leq_P$ (respectively, with $\leq_Q$), and
$p\leq q$ for all $p\in P$ and $q\in Q$. By $P\cdot Q$ we denote the
poset $(P\times Q,\leq)$ where  $(p_0,q_0)< (p_1,q_1)$ if and only if
$q_0 <_Q q_1$ or $q_0 = q_1\wedge p_0 <_P p_1$. For any integer $n\geq 2$, let
$\bar{n}$ denote the poset consisting of exactly one antichain with $n$ elements, and identify \( \omega \) with the poset \( (\omega, \leq) \).

\begin{proposition}\label{propco4}
The quotient-poset of $(\mathscr{P}(X),\leq_\mathsf{W})$ is
isomorphic to 
$(\bar{2} \cdot \omega)+\bar{4}$. Hence the \( \mathsf{W} \)-hierarchy on \( X \) is good but
not very good.
 \end{proposition}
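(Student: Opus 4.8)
The plan is to analyze completely the Wadge hierarchy on the scattered $\omega$-algebraic domain $X = \{\bot\} \cup \bigsqcup_{n\in\omega} C_n \cup \{\top\}$ from Proposition~\ref{propco3}, where $C_n = \{c^n_n < \dotsc < c^n_0\}$. Since $X$ is scattered, Corollary~\ref{cor21} tells us that the continuous self-maps of $X$ are exactly the monotone ones, and by Theorem~\ref{t-dh} every subset of $X$ lies in some finite level of the difference hierarchy over open sets, with those below $\bigcup_n \boldsymbol{\Sigma}^{-1}_n$ already semi-well-ordered. So the interesting part of $\mathscr{P}(X)$ is organized by the alternating-tree rank from Theorem~\ref{t-dh}, and the first step is to compute, for each $A \subseteq X$, the supremum of the ranks of alternating trees for $A$ (equivalently, the length of the longest alternating chain, since $X$ has chains of all finite lengths but any single point $c^n_k$ sits in a chain of length exactly $n+1$): here the key combinatorial fact is that an alternating chain in $X$ either lies entirely inside a single $C_n \cup \{\bot,\top\}$, or uses $\bot$ and $\top$ as endpoints, so alternating trees for $A$ have rank $\leq \omega$, with rank exactly $\omega$ iff $A$ (or its complement) behaves like $A$ and $B$ did in Proposition~\ref{propco3}.

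The main step is then to stratify $\mathscr{P}(X)$ explicitly. First I would handle the degrees at ``finite level'': by Theorem~\ref{t-dh} the sets in $\bigcup_n(\boldsymbol{\Sigma}^{-1}_n \cup \boldsymbol{\Pi}^{-1}_n)$ form a semi-well-ordered initial segment, and I would identify this segment with $\bar{2}\cdot\omega$ --- each ``level'' $n$ contributing a pair of incomparable non-selfdual degrees (the proper $\boldsymbol{\Sigma}^{-1}_n$ and proper $\boldsymbol{\Pi}^{-1}_n$ sets, which one checks are all Wadge-equivalent among themselves using monotone reductions built by mapping a fixed alternating chain appropriately), with the selfdual $\boldsymbol{\Delta}^{-1}_{n+1}$-complete degree collapsing into the next pair; one must also verify that $\emptyset$ and $\{\top\}$-type sets get absorbed correctly and that $\boldsymbol{\Delta}^{-1}_1 = \{\emptyset, X\}$ sits at the bottom, i.e.\ that the two minimal degrees are present as the bottom antichain of $\bar 2 \cdot \omega$ (here I'd double-check whether the very bottom should be $\bar 2$ or a single selfdual degree, adjusting the bookkeeping so the count matches $\bar 2\cdot\omega$ exactly). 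Then, above all of these, I would show there are exactly four pairwise incomparable degrees: those of $A$, $\overline{A}$, $B$, $\overline{B}$ from Proposition~\ref{propco3}. The heart of this is: (i) any set not in $\bigcup_n\boldsymbol{\Pi}^{-1}_n \cup \boldsymbol{\Sigma}^{-1}_n$ admits a $0$- or $1$-alternating tree of rank $\omega$ onto a cofinal set of points, forcing it to be Wadge-above all finite levels; (ii) there are exactly the four ``shapes'' such a set can have, determined by whether it is $0$- or $1$-alternating and by the value it takes at $\bot$ (or equivalently at $\top$); and (iii) any two sets of the same shape are Wadge-equivalent via a monotone map, while sets of different shapes are incomparable --- for non-comparability one reuses the counting argument from the proof of Proposition~\ref{propco3} (a monotone $f$ with $f(\top)\neq\top$ has range in some $C_m\cup\{\bot\}$ and hence cannot reproduce arbitrarily long alternating chains) together with the complementation argument that $\boldsymbol{\Sigma}^{-1}_\omega \not\ni$ a $\boldsymbol{\Pi}^{-1}_\omega$-hard set.

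The conclusion assembles these two pieces: the quotient poset is the initial segment $\bar{2}\cdot\omega$ (finite difference-hierarchy levels) with $\bar 4$ placed strictly on top (the four shapes of rank-$\omega$ sets, each lying above every element of $\bar2\cdot\omega$ and mutually incomparable), i.e.\ $(\bar 2\cdot\omega)+\bar 4$. The final sentence ``good but not very good'' is then immediate: the poset has no infinite descending chain (it is, apart from the top antichain, well-ordered of type $\omega$) and no infinite antichain (antichains have size $\leq 4$), so it is a wqo; but it contains a $4$-element antichain, so it is not semi-well-ordered. I expect the main obstacle to be the careful verification in step (iii) that any two rank-$\omega$ sets of the same shape are genuinely Wadge-equivalent: one must build an explicit monotone reduction, which requires threading an alternating chain of each finite length through the target set in a coherent, monotone, and continuous way, handling the $\bot$ and $\top$ values and the infinitely many chains $C_n$ simultaneously --- this is the kind of back-and-forth/limit construction that is conceptually routine but technically delicate, and also where one must make sure the indexing conventions make the finite part exactly $\bar 2 \cdot \omega$ rather than something off by one selfdual degree.
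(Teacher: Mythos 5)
Your proposal follows essentially the same route as the paper: stratify $\mathscr{P}(X)$ by whether the lengths of alternating chains are bounded, use Theorem~\ref{t-dh} to identify the bounded part with $\bar{2}\cdot\omega$ (each level being a proper $\boldsymbol{\Sigma}^{-1}_n$/$\boldsymbol{\Pi}^{-1}_n$ pair, with no intermediate selfdual degrees thanks to the compatibility of maximal alternating chains), and classify the unbounded sets into the four top degrees of $A,\overline{A},B,\overline{B}$ via explicit monotone reductions. One small correction: the invariant distinguishing the four top shapes is the \emph{pair} of independent membership values at $\bot$ and at $\top$ (these are not equivalent data), and the same-shape equivalences are obtained in the paper by a direct chain-by-chain definition of the monotone map rather than the limit construction you anticipate.
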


\begin{proof}
Let us first make a basic observation which is intimately related to the very particular structure of the poset \( (X, \leq ) \). Given \( S \subseteq X \), call an alternating chain \( a_0 < \dotsc < a_n \) for \( S \) \emph{maximal} if for every \(  b < a_0 \) and \( b' > a_n \) none of \( b < a_0 < \dotsc < a_n \) and \( a_0 < \dotsc < a_n < b' \) is alternating for \( S \). Notice that if \( a_0 < \dotsc < a_n \) is a maximal alternating chain for \( S \), then \( a_0 \in S \iff \bot \in S \) and \( a_n \in S \iff \top \in S \) (otherwise one of \( \bot < a_0 < \dotsc < a_n \) or \( a_0 < \dotsc < a_n < \top \) would be alternating, contradicting the maximality of the chain). This also implies that all maximal alternating chains are compatible, i.e.\ that if \( a'_0 < \dotsc < a'_{n'} \) is another maximal alternating chain for \( S \) then \( a'_0 \in S \iff a_0 \in S \) and \( a'_{n'} \in S \iff a_n \in S \).

Let now $\mathcal{S}$ be the class of all $\emptyset \neq S \subsetneq X$ such that there is a natural number \( n \) bounding the lengths of all alternating chains for $S$. We will use Theorem~\ref{t-dh} to show that each \( S \in \mathcal{S} \) belongs to some finite level of the difference hierarchy over the open sets of \( X \). First notice that every \( S \in \mathcal{S} \) (in fact, every \( S\subseteq X \)) is approximable because all elements of \( X \) are compact. Given \( S \in \mathcal{S} \), let \( m(S)+1 \) be the maximal length of an alternating chain for \( S \), so that, in particular, there is an alternating tree for \( S \) of rank \( m(S) \) but no alternating tree for \( S \) of rank \( m(S) + 1 \). Notice that by definition of \( \mathcal{S} \), we necessarily have \( m(S) \geq 1 \). Let
$a_0<\dotsc <a_{m(S)}$ be an alternating chain for \( S \) of length \( m(S) +1 \). By definition of \( m(S) \), such a chain is necessarily maximal, hence by the compatibility of the maximal alternating chains for \( S \)
there is no alternating chain $b_0<\dotsc<b_{m(S)}$ for $S$ such that $a_0\in
S \iff b_0\notin S$. This means that all alternating trees for \( S \) of rank \( m(S) \) are of the same type, i.e.\ either they are all \( 1 \)-alternating or they are all \( 0 \)-alternating (depending on whether \( \bot \in S  \) or not). Then by Theorem~\ref{t-dh}, either $S \in \boldsymbol{\Pi}^{-1}_{m(S)}(X) \setminus \boldsymbol{\Sigma}^{-1}_{m(S)}(X)$ or $S \in \boldsymbol{\Sigma}^{-1}_{m(S)}(X) \setminus \boldsymbol{\Pi}^{-1}_{m(S)}(X)$, hence \( S \) is also Wadge complete in (exactly) one
of $\mathbf{\Sigma}^{-1}_{m(S)}(X)$ or $\mathbf{\Pi}^{-1}_{m(S)}(X)$.

Conversely, it is not hard to check that
all the possibilities are realized, i.e.\ that \( \boldsymbol{\Sigma}^{-1}_n(X) \setminus \boldsymbol{\Pi}^{-1}_n(X) \neq \emptyset \) for every \( 1 \leq n \in \omega \). In fact, let \( A,B \) be defined as in the proof of Proposition~\ref{propco3}, and put \( C'_n  =  C_n \cup \{ \bot,\top \} \) for every \( n \in \omega \). Then one can straightforwardly check using Theorem~\ref{t-dh} that \( \overline{A} \cap C'_{2i} \in \boldsymbol{\Sigma}^{-1}_{2i+1}(X) \setminus \boldsymbol{\Pi}^{-1}_{2i+1}(X) \), \( A \cap C'_{2i} \in \boldsymbol{\Pi}^{-1}_{2i+1}(X) \setminus \boldsymbol{\Sigma}^{-1}_{2i+1}(X) \), \( B \cap C'_{2i} \in \boldsymbol{\Sigma}^{-1}_{2i+2}(X) \setminus \boldsymbol{\Pi}^{-1}_{2i+2}(X) \), and \( \overline{B} \cap C'_{2i} \in \boldsymbol{\Pi}^{-1}_{2i+2}(X) \setminus \boldsymbol{\Sigma}^{-1}_{2i+2} (X) \) for every \( i \in \omega \).

Therefore, letting \( h(\emptyset) = (0,0) \), \( h(X) = (1,0 ) \), and, for \( S \in \mathcal{S} \), $h(S)=(0,m)$  (respectively, $h(S)=(1,m)$)
if and only if $S$ is Wadge complete in  $\mathbf{\Sigma}^{-1}_m(X)$ (respectively, in
$\mathbf{\Pi}^{-1}_m(X)$), we get that the function
$h \colon \mathcal{S} \cup \{ \emptyset, X \} \to\bar{2}\cdot\omega$ induces an isomorphism
between the quotient-poset of $(\mathcal{S} \cup \{ \emptyset,X \},\leq_\mathsf{W})$ and
$\bar{2}\cdot\omega$.

Now assume that $S \subseteq X$ has arbitrarily long finite alternating chains, i.e.\ that \( S \neq \emptyset,X \) and \( S \notin \mathcal{S} \).  Obviously, \( \emptyset,X \leq_\mathsf{W} S \). We claim that also every set
from $\mathcal{S}$ is  Wadge reducible to $S$. In fact, let \( S' \in \mathcal{S} \) and \( m = m(S') \). Let \( a_0 < \dotsc a_{m+1} \) be an alternating chain for \( S \) such that \( a_0 \in S \iff \bot \in S ' \) (such a chain exists by the choice of \( S \)). Consider the function \( f \colon X \to \{ a_0, \dotsc, a_{m+1} \} \) defined in the following way: first, \( f(\bot) = a_0 \) and \( f(\top) = a_{m+1} \). Then fix \( n \in \omega \), and define \( f \) on \( c^n_{n-i} \) by induction on \( i \leq n \) as follows. For \( i = 0 \), set \( f(c^n_n) = a_0 \) if \( c^n_n \in S' \iff \bot \in S' \) and \( f(c^n_n) = a_1 \) otherwise. For the inductive step, let \( f(c^n_{n-i}) = a_j \), and set \( f(c^n_{n-(i+1)}) = a_j \) if \( c^n_{n-i} \in S' \iff c^n_{n-(i+1)} \in S' \) and \( f(c^n_{n-(i+1)}) =  a_{j+1} \) otherwise. Then \( f \) is clearly monotone (hence continuous) and reduces \( S' \) to \( S \).

 Using essentially the same method as in the previous paragraph, for every \( C \in \{ A,B, \overline{A}, \overline{B} \} \) (where \( A,B \) are again defined as in the proof of Proposition~\ref{propco3}) one can easily
define two monotone functions $f',g' \colon X\setminus\{\bot,\top\} \to X\setminus\{\bot,\top\}$
such that $x\in C \iff f'(x)\in S$ and $x\in
S \iff g'(x)\in C$ for all $x \in
X \setminus \{\bot,\top\}$ (i.e.\ \( f' \) and \( g' \) are partial continuous reduction of, respectively, \( C \) to \( S \) and \( S \) to \( C \)). Extend \( f' \) and \( g' \), respectively,  to the functions \( f,g \colon X \to X \) by setting \( f(\bot) = g(\bot) = \bot \) and \( f(\top) = g(\top) = \top \). Then it is straightforward to check that \( f \) and \( g \) are continuous,  and that they witness exactly one of the  following four possibilities:
\begin{enumerate}[(1) ]
\item
$S\equiv_\mathsf{W}A$ (in case $\top\in S,\bot\not\in S$);
\item
$S\equiv_\mathsf{W}B$ (in case $\top\not\in S,\bot\not\in S$);
\item
$S\equiv_\mathsf{W}\overline{A}$ (in case $\top\not\in
S,\bot\in S$);
\item
$S\equiv_\mathsf{W}\overline{B}$ (in case
$\top\in S,\bot\in S$).
\end{enumerate}
Therefore, we can extend \( h \) in the obvious way to the desired
isomorphism between the quotient-poset of
$(\mathscr{P}(X),\leq_\mathsf{W})$ and the poset
$(\bar{2}\cdot\omega)+\bar{4}$.
 \end{proof}

By the previous proof, the \( n \)-th level of the Wadge hierarchy on \( X \) (for \( n \in \omega \)) is occupied by the pair of Wadge degrees \( (\boldsymbol{\Sigma}^{-1}_n(X) \setminus \boldsymbol{\Pi}^{-1}_n(X) , \boldsymbol{\Pi}^{-1}_n(X) \setminus \boldsymbol{\Sigma}^{-1}_n(X)) \), while  the four Wadge degrees on the top of the hierarchy are exactly \( [A]_\mathsf{W} \), \( [B]_\mathsf{W} \) , \( [\overline{A}]_\mathsf{W} \), and \( [\overline{B}]_\mathsf{W} \), where \( A,B \subseteq X \) are defined as in the proof of Proposition~\ref{propco3}.

\begin{remark}
Contrarily to the case of Polish spaces, there is no obvious  relation between the dimension of a scattered $\omega$-algebraic domain and the Wadge
hierarchy on it. For example, the dimension of the space $X$ considered in Propositions~\ref{propco3} and~\ref{propco4} is $\omega$, and the Wadge hierarchy on  $X$ is (good but) not very good. On the other hand, the
dimension of the space $\mathsf{C}_\infty$ from Example~\ref{exdim3}(2) is $\infty$, while the
Wadge hierarchy on $\mathsf{C}_\infty$ is very good,
as one can easily check using an argument similar to the one of Proposition~\ref{propco4}.
 \end{remark}

We end this subsection with some natural questions that are left
open by Propositions~\ref{propco3} and~\ref{propco4}.

\begin{question}
\begin{enumerate}[(1)]
\item
Is there a countable quasi-Polish space \( X \) with a (very) bad \( \mathsf{D}_1 \)-hierarchy?
\item
Is there a (necessarily non scattered) quasi-Polish space whose \( \mathsf{D}_2 \)-hierarchy is not very good? If yes, can it be even (very) bad?
\item
Is there a (necessarily uncountable) \emph{Polish} space whose \( \mathsf{D}_\alpha \)-hierarchy is good but not very good (for some \( 1 \leq \alpha < \omega_1 \))? Notice that by Theorem~\ref{theorbad} the \( \mathsf{D}_1 \)-hierarchy on any Polish space  \( X \) is either very good or bad, depending on whether \( \dim(X) = 0 \) or not.
\end{enumerate}
\end{question}

\end{document}